\documentclass{amsart}
\usepackage{HMStyle}
\newcommand{\f}{f}

\begin{document}
\title[Deformations, local freeness, and base change]{Deformations, local freeness, and base change for \\ higher Du Bois singularities}
\author{Haoming Ning}
\date{August 4, 2026}
\thanks{Supported in part by NSF Grant DMS-2100389} 
\address{University of Washington, Department of Mathematics, Seattle, WA 98195, USA}
\email{hning99@uw.edu}

\begin{abstract}
We prove that strict higher Du Bois singularities are invariant under small deformations. Using this, we prove a base change theorem for the relative Du Bois complex with strict higher Du Bois fibers, answering a question of Kov\'acs--Taji. We exhibit failures of deformation invariance and base change for $1$-Du Bois fibers, showing that the strictness condition is essentially sharp. As applications of base change, we generalize the local-freeness theorem of Friedman--Laza beyond the local complete intersection setting for families over a smooth curve, and prove constancy of Hodge numbers for families over an arbitrary base.
\end{abstract}

\maketitle

\setcounter{tocdepth}{1}
\tableofcontents

\section{Introduction}

Du Bois singularities \cite{DB81} arise from Deligne's mixed Hodge theory as the class of singularities for which the constant sheaf computes the cohomology of the structure sheaf, and they sit at the center of vanishing theorems and the moduli theory of higher-dimensional varieties. For each $p$, the Du Bois complex carries a graded piece $\underline{\Omega}^p_X$ together with a natural map $\Omega^p_X \to \underline{\Omega}^p_X$ from the sheaf of K\"ahler $p$-forms. Du Bois singularities are precisely those for which this map is a quasi-isomorphism when $p=0$. Refining $p=0$ to a range $p \le m$ yields the higher Du Bois conditions. The theory is well understood in the local complete intersection (lci) setting \cite{MOPW21,JKSY21,FL22,MP22} and is studied in general by \cite{SVV23,Kov25,NN25}. In general, the theory splits into several genuinely distinct conditions:
\[
\text{strict-$m$-Du Bois} \stackrel{\rm lci}{\Longleftrightarrow} \text{$m$-Du Bois} \xRightarrow{S_2} \text{weakly-$m$-Du Bois} \Longrightarrow \text{pre-$m$-Du Bois}.
\]
See Definition \ref{def_higher_DB} and \cite{SVV23,Kov25,NN25} for more details.

A natural way to study these singularities is in families. In this setting, three important questions arise.

\smallskip
{\bf Base change}. For a dominant morphism $\f:X\to B$ to a smooth curve and a fiber $X_b$, Kov\'acs--Taji asked whether the relative Du Bois complex \emph{base changes} to the absolute Du Bois complex of the fiber. In other words, when does
\[
\uOmega^{p,-}_{X/B}\otimes^{\tL}_X \scrO_{X_b}\qis \uOmega^p_{X_b}
\]
hold? $\uOmega^{p,-}_{X/B}$ denotes the relative Du Bois complex constructed in \cite{Kov96, Kov97, KT25}. It has numerous applications to families of singular varieties; see also \cite{Kov02, KT23, KT24}. An affirmative answer means, in particular, that the relative complex faithfully records the variation of Hodge-theoretic information in a family. Base change fails in general \cite{JK25}, making it important to identify geometric conditions on a special fiber under which it nevertheless holds.

\smallskip
{\bf Deformations}. Given a Cartier divisor $Z\subseteq X$, when does a higher Du Bois condition on $Z$ imply the corresponding condition on $X$ near $Z$? This is the local form of asking whether the singularity class is preserved under small deformations. From the perspective of moduli theory, such stability is considered a fundamental property of a good singularity class.

\smallskip
{\bf Local freeness}. For a flat proper family $\f: X \to B$, when are the higher direct image sheaves $R^i\f_*\Omega^p_{X/B}$ locally free and compatible with base change? For $p=0$, this is a property of families with Du Bois fibers \cite{DJ74, DB81}, with important consequences for compact moduli spaces of varieties of general type; see \cite[\S 2.5]{Kol23}. In higher degree, local freeness controls the variation of the Hodge-theoretic invariants of the fibers. It is shown to hold by \cite{FL22} in the lci setting over arbitrary base $B$, and the general case remains open. From the perspective of deformation theory, it is valuable even to understand these questions when $B$ is a smooth curve.

\smallskip

It is generally recognized that a good notion of higher Du Bois singularities should satisfy all three of these properties. The main theorems of this article show that these conditions are indeed met for \emph{strict-$m$-Du Bois} singularities, which is the most direct, if na\"ive, generalization beyond the lci setting. We also establish sharpness, in the sense that weakening it to $m$-Du Bois makes base change and deformations fail for every $m\ge 1$; see Section \ref{section_example}, specifically Remark \ref{rem_bc_fails}, Proposition \ref{prop_bc_fail_summary}, \ref{prop_ex_deform} and Corollary \ref{cor_fail_bc_1DB}. In light of these failures, strict-$m$-Du Bois stands out as the most suitable among all the existing notions, despite being the most restrictive.

A principle of this article is that, for families over a smooth curve, these three questions are intimately connected. The bridge between them is an alternative construction of the relative Du Bois complex: $\uOmega^{p,+}_{X/B}$. We call it the \emph{right relative Du Bois complex} and refer to the original one, denoted $\uOmega^{p,-}_{X/B}$, as the \emph{left relative Du Bois complex}; see Theorem \ref{thm_2relDB}. In general, the right complex behaves very differently from the original left complex. The new version admits a more direct comparison with the absolute Du Bois complex of a fiber and is more likely to satisfy base change. For precise statements, see Propositions \ref{prop_left_implies_right} and \ref{prop_bc_fail_summary}.

\smallskip
\subsection{Base change}

Our first main result answers the question of Kov\'acs--Taji on base change of the left relative Du Bois complex for strict-$m$-Du Bois fibers.

\begin{theorem}[$=$ Theorems \ref{thm_basechange_strict}, \ref{thm_basechange_strict_all}]\label{thm_basechange_intro}
Let $\f:X\to B$ be a dominant morphism from a complex variety $X$ to a smooth curve $B$, $b\in B$.
\begin{enumerate}
\item Suppose $X_b$ has strict-$m$-Du Bois singularities for all $m\in \bbN$. Then $X_b$ satisfies base change for the relative Du Bois complex, that is, for all $p\ge 0$,
\[
\uOmega^{p, +}_{X/B}\otimes^{\tL}_X\scrO_{X_b} \qis \uOmega^{p, -}_{X/B}\otimes^{\tL}_X\scrO_{X_b} \qis \uOmega^p_{X_b}.
\]
\item Suppose $X_b$ has strict-$m$-Du Bois singularities for some $m\in \bbN$. Then $X_b$ satisfies right-$m$-base change (Definition \ref{def_bc}), that is, for all $0\le p\le m$,
\[
\uOmega^{p, +}_{X/B}\otimes^{\tL}_X\scrO_{X_b} \qis \uOmega^p_{X_b}.
\]
\end{enumerate}
\end{theorem}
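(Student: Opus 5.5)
The plan is to deduce base change from the deformation invariance of strict higher Du Bois singularities (the paper's other main theorem), with the right relative complex $\uOmega^{p,+}_{X/B}$ serving as the bridge. We use the definition of strict-$m$-Du Bois: $\Omega^p_{X_b}\to\uOmega^p_{X_b}$ is a quasi-isomorphism for $0\le p\le m$. We also use the defining property of the right complex (Theorem \ref{thm_2relDB}). It comes equipped with a natural comparison map $\Omega^p_{X/B}\to \uOmega^{p,+}_{X/B}$ and a natural base change map $\uOmega^{p,+}_{X/B}\otimes^{\tL}_X\scrO_{X_b}\to\uOmega^p_{X_b}$. These are compatible with $\Omega^p_{X/B}|_{X_b}=\Omega^p_{X_b}$.

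\textbf{Step 1: reduce to a neighborhood of $X_b$.} Since $X_b$ is strict-$m$-Du Bois, the deformation theorem shows that $X$ is strict-$m$-Du Bois on an open neighborhood $U$ of $X_b$. The same holds for the nearby fibers $X_{b'}$ after shrinking $U$ and $B$. All statements are local near $X_b$, so we replace $X$ by $U$. In particular $X_b$ is Du Bois, hence reduced, and $f$ is flat near $X_b$, since $X_b$ is a Cartier divisor $(t=0)$ with $t$ a local parameter.

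\textbf{Step 2: identify $\uOmega^{p,+}_{X/B}$ with Kähler forms for $p\le m$.} Since $X$ and the fibers are strict-$m$-Du Bois, I expect $\Omega^p_{X/B}\to\uOmega^{p,+}_{X/B}$ to be a quasi-isomorphism for $p\le m$. To see this, filter $\Omega^p_X$ by the exact sequence $0\to \Omega^{p-1}_{X/B}\otimes f^*\omega_B\to\Omega^p_X\to\Omega^p_{X/B}\to 0$. Compare it with the corresponding triangle for Du Bois complexes that should be built into the construction of $\uOmega^{p,+}_{X/B}$. Then induct on $p$, using the five lemma and the isomorphisms $\Omega^p_X\qis\uOmega^p_X$. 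Flatness of $\Omega^p_{X/B}$ over $B$ near $X_b$ needs checking. I would argue it as follows: since $\Omega^p_{X_b}=\uOmega^p_{X_b}$ has no $t$-torsion issues, $\Omega^p_{X/B}\otimes^{\tL}\scrO_{X_b}$ has $\mathcal{H}^{-1}$ equal to the $t$-torsion of $\Omega^p_{X/B}$. I expect strictness on the total space to kill this torsion, because the torsion would be supported on $X_b$ yet must inject into $\uOmega^p$, which has no sections supported on a divisor.

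\textbf{Step 3: conclude (2).} For $p\le m$ we then get
\[
\uOmega^{p,+}_{X/B}\otimes^{\tL}_X\scrO_{X_b}\qis\Omega^p_{X/B}\otimes^{\tL}\scrO_{X_b}\qis\Omega^p_{X/B}|_{X_b}=\Omega^p_{X_b}\qis\uOmega^p_{X_b}.
\]
By naturality, this composite is the base change map.

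\textbf{Step 4: conclude (1).} When $X_b$ is strict-$m$-Du Bois for all $m$, part (2) gives right base change for all $p$. Proposition \ref{prop_left_implies_right} relates the left and right complexes; I would use its converse direction, or the comparison map $\uOmega^{p,-}_{X/B}\to\uOmega^{p,+}_{X/B}$, and show that it becomes a quasi-isomorphism after $\otimes^{\tL}\scrO_{X_b}$. The left complex is defined through the filtration of $\uOmega^p_X$ by pullbacks of forms from $B$. Its graded pieces are then computed by the same triangles as in Step 2, now valid for all $p$. So the cone of the comparison map is built from terms that vanish in every degree. This requires all $p$, which explains the hypothesis "for all $m$".

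The main obstacle is Step 2: the $t$-torsion-freeness of $\Omega^p_{X/B}$ and the identification of the right complex with Kähler forms on the total space. This depends on exactly how $\uOmega^{p,+}_{X/B}$ is constructed, and on the deformation theorem giving strictness on all of $X$ near $X_b$, not just on fibers.
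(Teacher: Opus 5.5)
Your proof of (2) takes a different route from the paper, and it can be completed. Your proof of (1) has a genuine gap.

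\textbf{Part (2).} The paper first restricts to $X_b$. Lemma \ref{lem_deform_triangle} shows that $\uOmega^{p-1}_{X_b}\to\uOmega^p_X\otimes^{\tL}\scrO_{X_b}\to\uOmega^p_{X_b}$ is a triangle. The key diagram (Theorem \ref{thm_full_diagram}, Corollary \ref{cor_bc_iff_exact}) then transports this to $\uOmega^{p,+}_{X/B}\otimes^{\tL}\scrO_{X_b}$; its left square needs the derived conormal map of Lemma \ref{lem_conormal_map}, built on a hyperresolution. You instead identify $\uOmega^{p,+}_{X/B}$ with $\Omega^p_{X/B}$ on $X$ before restricting, and then use flatness. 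This avoids the hyperresolution bookkeeping. It would also produce Lemma \ref{lem_relDB_strictDB}(1) and Corollary \ref{cor_rel_Kahler_tf} first, reversing the paper's order.

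However, the step you flag as the main obstacle is left open, and your sketch for it does not work. Strictness of the total space makes $\Omega^p_X$ torsion-free, but that says nothing about its quotient $\Omega^p_{X/B}$. There is also no a priori injection of the torsion of $\Omega^p_{X/B}$ into a torsion-free sheaf. The natural target $h^0(\uOmega^{p,+}_{X/B})$ is only known to be torsion-free after base change (Lemma \ref{lem_relDB_tf_right}), so using it would be circular.

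You also need more than $t$-torsion-freeness. The five-lemma step requires $\Omega^{p-1}_{X/B}\otimes f^*\omega_B\to\Omega^p_X$ to be injective, and you assume this by writing the sequence as short exact. This map is only generically injective, and its kernel can live on horizontal non-smooth loci of $f$. So you need $\Omega^{p-1}_{X/B}$ to be torsion-free, not just $t$-torsion-free.

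The missing input is strictness of the \emph{fiber}:
\begin{itemize}
\item For $q\le m$, $\Omega^q_{X/B}|_{X_b}=\Omega^q_{X_b}\qis h^0(\uOmega^q_{X_b})$ is torsion-free.
\item $\Omega^q_{X/B}$ is locally free at the generic points of the reduced fiber $X_b$.
\item Depth counting as in Lemma \ref{lem_tf_omega} then shows that $t$ is a nonzerodivisor on $\Omega^q_{X/B}$ and that $\Omega^q_{X/B}$ is torsion-free near $X_b$.
\end{itemize}
You also need to check that, under your identifications, $\uOmega^{p-1,+}_{X/B}\otimes f^*\omega_B\to\uOmega^p_X$ is the K\"ahler wedge map. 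This follows from Theorem \ref{thm_2relDB}(1), since all terms are sheaves and $\Omega^{p-1}_X\to\Omega^{p-1}_{X/B}$ is surjective. With these two points, your induction proves (2).

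\textbf{Part (1).} Your Step 4 rests on a comparison map $\uOmega^{p,-}_{X/B}\to\uOmega^{p,+}_{X/B}$ compatible with the maps from $\uOmega^p_X$. No such map exists in general; see the remark after Lemma \ref{lem_2relDB_tfae}. Proposition \ref{prop_left_implies_right} also has no converse. In Example \ref{notation_snc_ex}, right-$0$-base change holds by Lemma \ref{lem_basechange_0}, while left-$0$-base change fails by Proposition \ref{prop_left_bc_fails}.

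The missing idea is how the hypothesis ``for all $m$'' is actually used. Apply (2) with $p=\dim X+1$, where $\uOmega^p_{X_b}=0$; this gives $\uOmega^{p,+}_{X/B}\otimes^{\tL}\scrO_{X_b}\qis 0$. Lemma \ref{lem_Nakayama} then gives $\uOmega^{p,+}_{X/B}=0$ near $X_b$. Lemma \ref{lem_2relDB_tfae} now forces $\uOmega^{q,+}_{X/B}\qis\uOmega^{q,-}_{X/B}$ for all $q$, so left base change follows from right base change (Proposition \ref{prop_leftright_agree}).

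In your own framework the same point reads as follows, with $n=\dim f$:
\begin{itemize}
\item $\Omega^{n+1}_{X/B}|_{X_b}=\Omega^{n+1}_{X_b}\qis\uOmega^{n+1}_{X_b}=0$, so $\Omega^{n+1}_{X/B}=0$ near $X_b$.
\item Hence $\Omega^n_{X/B}\otimes f^*\omega_B\isom\Omega^{n+1}_X$.
\item This matches the starting point $\uOmega^{n,-}_{X/B}\qis\uOmega^{n+1}_X\otimes f^*\omega_B^{-1}$ of the left complex (Lemma \ref{lem_0piece}).
\item The downward induction then identifies $\uOmega^{p,-}_{X/B}$ with $\Omega^p_{X/B}$ as well.
\end{itemize}
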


The first statement gives base change for the original relative Du Bois complex of \cite{Kov96,KT25}. Base change was previously known for general members of a family \cite{JK25}. The theorem identifies a concrete class of singularities on the special fibers for which it continues to hold. We also recover \cite[Theorem 1.3]{JK25} as Corollary \ref{cor_basechange_general}. The second fixed-$m$ statement for the right complex is stronger for applications, as it retains the partial graded information without requiring strict-$m$-Du Bois in every degree.

The result is not confined to the lci setting. As a complement, we note that there are many examples of strict-$m$-Du Bois varieties with $m>0$ that are not lci, for example, determinantal varieties and cones over $\Gr(2, n)$; see \cite[Example 4.5]{CDO26} and the upcoming work of Kim--Raychaudhury--Perlman.

Studying base-change behavior also reveals that the notion of strict-$m$-Du Bois singularities is the most natural, in the sense of the following proposition.

\begin{proposition}[$=$ Proposition \ref{prop_right_bc_fails}]
Let $\f: X\to B$ be a dominant morphism from a variety $X$ to a smooth curve $B$, $b\in B$. Suppose $X$ is smooth, and suppose $X_b$ is strict-$(m-1)$-Du Bois. Then 
\begin{enumerate}
\item $X_b$ satisfies right-$m$-base change (Definition \ref{def_bc}) if and only if $X_b$ is strict-$m$-Du Bois;

\item $X_b$ satisfies left-$m$-base change (Definition \ref{def_bc}) only if $X_b$ is strict-$m$-Du Bois.
\end{enumerate}
\end{proposition}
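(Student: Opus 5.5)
We work locally near $X_b$ and use that, since $X$ is smooth and $X_b=\f^{-1}(b)$ is a Cartier divisor cut out by a local coordinate $t$ on $B$, both relative Du Bois complexes agree with Kähler differentials of the smooth variety: $\uOmega^{p,\pm}_{X/B}\simeq\Omega^p_{X/B}$. Here we interpret $\Omega^p_{X/B}$ as the sheaf of relative Kähler differentials, so that $\Omega^p_{X/B}\otimes^{\tL}_X\scrO_{X_b}$ is computed via the two-term resolution given by multiplication by $t$. For the right complex this should be immediate from its construction in Theorem \ref{thm_2relDB}, since $X$ is smooth. For the left complex this holds at least on the smooth locus of $\f$, and we will only need a comparison map.

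The key computation is that $\Omega^p_{X/B}\otimes^{\tL}\scrO_{X_b}\simeq \Omega^p_{X_b}$, the Kähler differentials of the fiber. The Tor$_1$ term vanishes because $\Omega^p_{X/B}$ is $t$-torsion free. Indeed, the sequence $0\to \f^*\Omega^1_B\otimes\Omega^{p-1}_{X/B}\to\Omega^p_X\to\Omega^p_{X/B}\to 0$, combined with the fact that $X_b$ is reduced, forces torsion-freeness in the relevant range. We first need to check this torsion-freeness; if $X_b$ is merely strict-$(m-1)$-Du Bois, we expect it to follow from the depth and codimension-of-singular-locus bounds known for strict-$(m-1)$-Du Bois fibers, namely reflexivity of $\Omega^{p}_{X_b}$ for $p\le m-1$. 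Hence, for $p\le m$, right-$m$-base change reads $\Omega^p_{X_b}\xrightarrow{\sim}\uOmega^p_{X_b}$ for all $p\le m$, where we need the map to be the natural one. We verify this using the commutative square relating $\Omega^p_X\to\Omega^p_{X_b}$ and $\uOmega^p_{X/B}\to\uOmega^p_{X_b}$, which comes from functoriality of the restriction map in the construction.

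This is exactly the definition of strict-$m$-Du Bois (Definition \ref{def_higher_DB}): $\Omega^p_{X_b}\to\uOmega^p_{X_b}$ is a quasi-isomorphism for $0\le p\le m$. It gives the equivalence in (1), with the forward direction also obtainable from Theorem \ref{thm_basechange_intro}(2). For (2), we invoke Proposition \ref{prop_left_implies_right}: left-$m$-base change implies right-$m$-base change. Part (1) then gives strict-$m$-Du Bois.

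The main obstacle is the degree-$m$ Tor vanishing. Strict-$(m-1)$-Du Bois gives control only up to $p=m-1$, while $\Omega^m_{X/B}$ could a priori have $t$-torsion. We would handle this by noting that the torsion of $\Omega^m_{X/B}$ is supported on the singular locus of $X_b$. By the codimension bound from strict-$(m-1)$-Du Bois ($\codim \ge 2m-1$ or so), the Tor$_1$ would have to be a torsion subsheaf of a sheaf embedding into $\Omega^m_X/\ldots$. If torsion persists, we instead argue directly: either side of base change being a quasi-isomorphism forces $\mathcal{H}^{-1}$ to vanish, because $\uOmega^m_{X_b}$ lives in nonnegative degrees. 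So in the "if" direction we obtain torsion-freeness from the hypothesis of strict-$m$-Du Bois, and in the "only if" direction the base change hypothesis kills the $\mathcal{H}^{-1}$ term. In both directions the comparison of $\mathcal{H}^0$ reduces to $\Omega^m_{X_b}\to\uOmega^m_{X_b}$.
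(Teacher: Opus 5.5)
Your conclusion is right, but the argument only goes through via the fallback in your last paragraph; the main line in your first two paragraphs rests on claims you should discard. First, $\uOmega^{p,+}_{X/B}\qis\Omega^p_{X/B}$ does not follow from smoothness of $X$. Theorem \ref{thm_2relDB}(4) requires $\f$ smooth. For smooth $X$ the right complex is the truncated complex $[\scrO_X\to\Omega^1_X\to\cdots\to\Omega^p_X]$ with differential $dt\wedge$ (as in Lemma \ref{lem_rel_pullback}), and its negative cohomology vanishes only under a depth condition along the critical locus $\Sing X_b$. Concretely, take Example \ref{notation_snc_ex} with $n=1$. There $\Omega^3_{X/B}=0$, while $\uOmega^{3,+}_{X/B}$ is a shift of $\uOmega^{2,+}_{X/B}$ (Lemma \ref{lem_relDB_tail}). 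The latter has $h^0=\Omega^2_X/(x,y)\Omega^2_X$ in the chart $z\neq0$, a nonzero skyscraper at the node. Second, reducedness of $X_b$ does not force $t$-torsion-freeness. In the same example $\Omega^2_{X/B}$ is that skyscraper, hence entirely $t$-torsion, although $X_b$ is reduced and your sequence $0\to\f^*\Omega^1_B\otimes\Omega^{1}_{X/B}\to\Omega^2_X\to\Omega^2_{X/B}\to0$ is even exact. Third, the degree-$m$ Tor vanishing is not a real obstacle. By right-$(m-1)$-base change (Theorem \ref{thm_basechange_strict}) together with Lemmas \ref{lem_rel_pullback} and \ref{lem_left_triangle}, $\uOmega^{m,+}_{X/B}\otimes^{\tL}\scrO_{X_b}\qis\Cone(\Omega^{m-1}_{X_b}\to\Omega^m_X|_{X_b})$, with the conormal map. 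Lemma \ref{lem_conormal} makes this map injective using only torsion-freeness of $\Omega^{m-1}_{X_b}$, so the cone is $\Omega^m_{X_b}$. That is the paper's proof. Lemma \ref{lem_Nakayama} then yields your identification $\uOmega^{m,+}_{X/B}\qis\Omega^m_{X/B}$ and the $t$-torsion-freeness near $X_b$ as consequences rather than inputs.

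Your fallback, on the other hand, is a valid and genuinely different argument for the ``only if'' direction. Since $X$ is smooth, $\uOmega^{p,+}_{X/B}$ has amplitude $[-p,0]$ (Lemma \ref{lem_relDB_coh_amp}). So $\uOmega^{p,+}_{X/B}\otimes^{\tL}\scrO_{X_b}$ lives in degrees $\le 0$, while $\uOmega^p_{X_b}$ lives in degrees $\ge 0$. Right-$m$-base change therefore forces both into degree $0$. By right exactness, $h^0$ of the left side is $\Omega^p_{X/B}|_{X_b}=\Omega^p_{X_b}$. Proposition \ref{prop_factorization}, together with surjectivity of $\Omega^p_X|_{X_b}\to\Omega^p_{X_b}$, identifies the resulting isomorphism with the natural map. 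Written this way it needs no torsion-freeness, and not even the strict-$(m-1)$ hypothesis, so it is more elementary than the paper's argument. The ``if'' direction must then rest entirely on Theorem \ref{thm_basechange_strict}, as the paper also notes. Your sentence about obtaining torsion-freeness from strict-$m$-Du Bois is not an argument by itself. What the paper's route buys is a sharper statement: under strict-$(m-1)$ alone, $\uOmega^{m,+}_{X/B}\otimes^{\tL}\scrO_{X_b}\qis\Omega^m_{X_b}$. Hence the failure of right-$m$-base change is exactly the failure of $\Omega^m_{X_b}\to\uOmega^m_{X_b}$ to be a quasi-isomorphism. Part (2) via Proposition \ref{prop_left_implies_right} matches the paper.
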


\smallskip
\subsection{Deformations}

The key input into the base-change theorem is the following unconditional deformation result for strict-$m$-Du Bois singularities.

\begin{theorem}[$=$ Theorem \ref{thm_deform_strict}]\label{thm_deform_strict_intro}
Let $Z\subseteq X$ be a Cartier divisor in a complex variety $X$. Suppose $Z$ has strict-$m$-Du Bois singularities. Then $X$ has strict-$m$-Du Bois singularities near $Z$.
\end{theorem}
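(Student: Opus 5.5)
Throughout, recall that strict-$m$-Du Bois means that for $0\le p\le m$ the natural map $\Omega^p_X\to\uOmega^p_X$ is a quasi-isomorphism; in particular $\uOmega^p_X$ is a sheaf in degree $0$ equal to $\Omega^p_X$. The plan is induction on $p\in\{0,\dots,m\}$, comparing $X$ with $Z$ through two short exact sequences and then applying Nakayama's lemma.

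For Kähler forms, $Z$ cut out locally by $t$ gives the conormal sequence
\[
0\to \Omega^{p-1}_{Z}\xrightarrow{\,dt\wedge\,}\Omega^p_X|_Z\to\Omega^p_Z\to 0 .
\]
Left exactness of the first map is not automatic; I expect it to follow from torsion-freeness properties forced by the strict hypothesis. For the Du Bois side I would prove the analogue
\[
\uOmega^{p-1}_Z\to \uOmega^p_X\otimes^{\tL}\scrO_Z\to\uOmega^p_Z\xrightarrow{+1},
\]
at least after localizing near $Z$. One could take the generic-fiber/residue construction of the (right) relative Du Bois complex together with a comparison $\uOmega^{p}_{X}\otimes^{\tL}\scrO_Z \to \uOmega^p_Z$. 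A cleaner route is to use the Hodge filtration on the nearby-cycles / mixed Hodge module description. Namely, $\uOmega^p_X$ is computed by $\mathrm{Gr}_F$ of $\mathbb{D}(\mathbb{Q}^H_X[n])$. The triangle $i^*\to \psi_t\to\phi_t$ for the function $t$, followed by taking graded pieces, should give a map of triangles.

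Granting this, the base case $p=0$ runs as follows. Since $Z$ is Du Bois, $X$ is Du Bois near $Z$; this is the known result of Kovács--Schwede. For the inductive step, suppose $\Omega^{q}_X\simeq\uOmega^q_X$ near $Z$ for all $q<p$. Form the cone $C$ of $\Omega^p_X\to\uOmega^p_X$. Applying $\otimes^{\tL}\scrO_Z$ and comparing the two triangles, the five lemma gives $C\otimes^{\tL}\scrO_Z\simeq0$. Here we use that $\Omega^{p-1}_Z\simeq\uOmega^{p-1}_Z$ and $\Omega^p_Z\simeq\uOmega^p_Z$, and we need $\Omega^p_X\otimes^{\tL}\scrO_Z=\Omega^p_X|_Z$, i.e.\ $t$ is a nonzerodivisor on $\Omega^p_X$. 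The complex $C$ has coherent cohomology. So derived Nakayama applied to the top nonzero cohomology sheaf at points of $Z$ forces $C=0$ near $Z$.

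The main obstacle is constructing the middle triangle for $\uOmega^\bullet$ together with its compatibility with the conormal sequence, without any lci hypothesis. This amounts to showing that the restriction map $\uOmega^p_X\otimes^{\tL}\scrO_Z\to\uOmega^p_Z$ has cone $\uOmega^{p-1}_Z$, or at least a complex mapping to it in a way that is iso under the inductive hypothesis. I would first try to get it from the right relative complex $\uOmega^{p,+}_{X/B}$ (for $B$ the target of $t$), using an extension $0\to\uOmega^{p-1,+}_{X/B}\otimes f^*\omega_B\to\uOmega^p_X\to\uOmega^{p,+}_{X/B}\to0$ and the relation of $\uOmega^{p,+}_{X/B}\otimes^{\tL}\scrO_Z$ with $\uOmega^p_Z$. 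Precisely where this relation may fail should be controlled by induction on $p$ simultaneously with the statement. A secondary issue is the nonzerodivisor property of $t$ on $\Omega^p_X$. It should be proved within the same induction from the isomorphism $\Omega^p_X\simeq\uOmega^p_X$ in a neighborhood of $Z$ minus $Z$, together with depth properties of $\uOmega^p_X$.
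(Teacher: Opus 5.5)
\textbf{There is a genuine gap: the triangle you grant is equivalent to the theorem in degree $p$, so the argument is circular.}

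Granting $\uOmega^{p-1}_Z(-Z)\to\uOmega^p_X\otimes^{\tL}\scrO_Z\to\uOmega^p_Z\xrightarrow{+1}$, your five-lemma and Nakayama steps are fine. But this triangle is not an auxiliary lemma. Under your standing hypotheses ($Z$ strict-$p$-Du Bois, $t$ a nonzerodivisor on $\Omega^p_X$) it is equivalent to the conclusion in degree $p$.

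To see this, let $C=\Cone(\Omega^p_X\to\uOmega^p_X)$, and let $K^p$ be the cone of $\tau_{p-1}\colon\uOmega^{p-1}_Z(-Z)\to\uOmega^p_X\otimes^{\tL}\scrO_Z$.
\begin{itemize}
\item The octahedral axiom for $\Omega^{p-1}_Z(-Z)\to\Omega^p_X|_Z\to\uOmega^p_X\otimes^{\tL}\scrO_Z$ gives a triangle $\Omega^p_Z\to K^p\to C\otimes^{\tL}\scrO_Z\xrightarrow{+1}$.
\item Strictness of $Z$ forces the composite $\Omega^p_Z\to K^p\to\uOmega^p_Z$ to be the natural isomorphism (as in Proposition \ref{prop_left_inverse_exist}).
\item Hence $K^p\cong\Omega^p_Z\oplus(C\otimes^{\tL}\scrO_Z)$, with $K^p\to\uOmega^p_Z$ the projection.
\end{itemize}
So the triangle is exact in degree $p$ if and only if $C\otimes^{\tL}\scrO_Z=0$, that is, if and only if $X$ is already strict in degree $p$ near $Z$. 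Your five-lemma step is this equivalence read backwards. In the paper it appears only after the theorem, as Lemma \ref{lem_deform_triangle}.

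Neither of your proposed routes breaks the circle.
\begin{itemize}
\item \emph{Right relative complex.} By Corollary \ref{cor_bc_iff_exact}, the triangle is the same as right-$m$-base change. The paper derives this from the deformation theorem (Theorem \ref{thm_basechange_strict}). It genuinely fails for snc and for $1$-Du Bois fibers (Proposition \ref{prop_right_bc_fails}, Corollary \ref{cor_fail_bc_1DB}). So no formal property of $\uOmega^{p,+}_{X/B}$ can produce it, and ``controlling the failure by induction'' just reproduces the equivalence above.
\item \emph{Nearby cycles.} This remark is not an argument. The comparison between graded de Rham pieces and $\otimes^{\tL}\scrO_Z$ is formal only in the non-characteristic, general-fiber situation. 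For a special fiber the discrepancy is exactly what has to be controlled.
\end{itemize}

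\textbf{What is missing} is Hodge-theoretic input in every degree $p\ge 1$, of the kind you implicitly import for $p=0$ by citing Kov\'acs--Schwede. The paper uses only the formal half of the splitting above, and proceeds as follows.
\begin{itemize}
\item Proposition \ref{prop_left_inverse_exist} gives a left inverse of $\Omega^p_X|_Z\to\uOmega^p_X\otimes^{\tL}\scrO_Z$.
\item It localizes at $x\in Z$, dualizes the divisor sequences for $\Omega^p_X$ and $\uOmega^p_X$, and applies the Elkik-type Lemma \ref{lem_diagram_chase}. The left inverse becomes the required surjectivity on the $Z$-side.
\item Injectivity of $h^k(\bbD_X(\uOmega^p_X))\to h^k(\bbD_X(\Omega^p_X))$ is the new Theorem \ref{thm_kahler_injectivity}. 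It applies because $X$ is strict-$(p-1)$-Du Bois near $Z$ by induction (Lemma \ref{lem_left_inverse_suffice}(2)).
\end{itemize}
That injectivity theorem, proved through cyclic covers and Kato's logarithmic differentials, is the real content. Nothing in your outline substitutes for it.

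\textbf{Secondary issue.} Your route to the nonzerodivisor property is also circular. Torsion supported on $Z$ is invisible off $Z$, and borrowing depth from $\uOmega^p_X$ presupposes that $\Omega^p_X\to\uOmega^p_X$ is injective. Instead argue as in Lemmas \ref{lem_conormal} and \ref{lem_tf_omega}:
\begin{itemize}
\item torsion-freeness of $\Omega^{p-1}_Z$ makes the conormal sequence left exact, so $\Omega^p_X|_Z$ is torsion-free;
\item since $\Omega^p_X$ is free at the generic points of $Z$, no associated prime of $\Omega^p_X$ lies in $Z$.
\end{itemize}
This needs no induction.
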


The above theorem relies on an extension of the higher Kov\'acs--Schwede injectivity theorem \cite{Kov25} for K\"ahler differentials, which is of independent interest. We note that this result is obtained by Chen--Dirks--Olano in their upcoming preprint \cite{CDO26}. We instead show this through an independent method extending that of \cite{Kov25}.

\begin{theorem}[$=$ Theorem \ref{thm_kahler_injectivity}]\label{thm_kahler_injectivity_intro}
Let $X$ be a variety with strict-$(m-1)$-Du Bois singularities. Then
\[
h^k(\bbD_X(\uOmega^p_X))\to h^k(\bbD_X(\Omega^p_X))
\]
is injective for all $k\in \bbZ$ and $p\le m$.
\end{theorem}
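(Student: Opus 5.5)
The plan is to follow the strategy of Kov\'acs' higher Kov\'acs--Schwede injectivity \cite{Kov25}. The main change is that the target $\bbD_X(\Omega^p_X)$ is now Kähler differentials instead of the torsion-free or reflexive version. The inductive hypothesis, that $X$ is strict-$(m-1)$-Du Bois, means $\Omega^q_X \to \uOmega^q_X$ is a quasi-isomorphism for $q\le m-1$. The statement is local, so we may assume $X$ is affine and embedded in a smooth variety $Y$. We then write $\uOmega^p_X$ via a log resolution, or equivalently via a cubical hyperresolution $\varepsilon_\bullet\colon X_\bullet\to X$. The first reduction is to use the natural factorization
\[
\Omega^p_X \to \uOmega^p_X
\]
and dualize, so that injectivity of $h^k(\bbD_X(\uOmega^p_X))\to h^k(\bbD_X(\Omega^p_X))$ becomes the statement that the connecting maps in the long exact sequence of the cone $C^p:=\operatorname{Cone}(\Omega^p_X\to\uOmega^p_X)$ vanish. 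Concretely, we need $h^k(\bbD_X(C^p))\to h^k(\bbD_X(\uOmega^p_X))$ to be zero for all $k$. For $p<m$ there is nothing to prove, since $C^p\simeq 0$, so we fix $p=m$.

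To handle $p=m$, we would use the exterior power filtration coming from the embedding into $Y$. There is the conormal sequence $I/I^2\to \Omega^1_Y|_X\to\Omega^1_X\to 0$. Compare this with the analogous Koszul-type filtration on $\uOmega^\bullet$, using the filtered de Rham complex $\uOmega^\bullet_X$ and the fact that $\Omega^\bullet_Y|_X \to \uOmega^\bullet_X$ is compatible with the Hodge filtration. The strict-$(m-1)$ hypothesis identifies all lower graded pieces. This should let us realize $\Omega^m_X\to\uOmega^m_X$ as the last step of a morphism of complexes
\[
\bigl[\Omega^0_Y|_X\to \cdots\to \Omega^m_Y|_X\bigr]\to \bigl[\uOmega^0_X\to\cdots\to\uOmega^m_X\bigr]
\]
(the de Rham complexes truncated at $\sigma_{\le m}$), which is a quasi-isomorphism in all terms but the last. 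Key point: the truncated Du Bois de Rham complex $\uOmega^{\le m}_X$ is a quotient $\underline{\Omega}^\bullet_X/F^{m+1}$. By Hodge theory (Deligne degeneration/strictness, as in the Kov\'acs--Schwede argument), the map from $\bbC_X$ or from $\mathbb{H}$ of the analytic de Rham complex surjects onto it on local cohomology. Dually, this surjection gives injectivity of $h^k\bbD_X$ of the whole truncated complex.

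The argument then runs as follows. Use the Kov\'acs--Schwede style surjectivity $H^i_x(X^{an},\bbC)\to \bbH^i_x(\uOmega^{\le m}_X)$, which comes from degeneration of the Hodge-to-de Rham spectral sequence. Via local duality this becomes injectivity of $\bbD_X(\uOmega^{\le m}_X)\to \bbD_X(\Omega^{\le m}_{\mathrm{trunc}})$ on cohomology sheaves, where we use the factorization of $\bbC_X\to\uOmega^{\le m}_X$ through the Kähler-type truncated complex. Then peel off the graded pieces $p<m$. They agree by hypothesis, so a diagram chase in the stupid filtration isolates the top graded piece $p=m$ and gives the claim. The main obstacle is this last step. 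We must show that the map from the constant sheaf factors through a Kähler complex whose lower pieces match $\uOmega^{<m}_X$, and that the filtrations are compatible enough that isomorphisms on $\operatorname{gr}^{<m}$ plus injectivity on the total complex force injectivity on $\operatorname{gr}^m$. This requires that the extension maps between graded pieces behave well. The first thing we would try is an induction on $m$ via the triangle $\uOmega^m_X[-m]\to\uOmega^{\le m}_X\to\uOmega^{\le m-1}_X\xrightarrow{+1}$, comparing it with its Kähler analogue using the five lemma on duals.
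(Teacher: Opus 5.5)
\textbf{There is a genuine gap: the step meant to supply the Hodge-theoretic input is false.}

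Your first reduction is fine. Injectivity is equivalent to the vanishing of $h^k(\bbD_X(C^p))\to h^k(\bbD_X(\uOmega^p_X))$, and only $p=m$ needs proof. The failure comes after that.

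The surjectivity $H^i_x(X^{an},\bbC)\to \bbH^i_x(\uOmega^{\le m}_X)$ does not hold. Take $X=\bbA^1$, $m=0$, $i=1$: the source $H^1_{\{0\}}(\bbC,\bbC)$ vanishes, while $H^1_{\{0\}}(\scrO_X)$ is infinite-dimensional. In general, finite-dimensional local cohomology of $\bbC$ cannot surject onto local cohomology of coherent objects.

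Degeneration of the Hodge-to-de Rham spectral sequence only gives $H^k(X^{an},\bbC)\twoheadrightarrow \bbH^k(X,\uOmega^\bullet_X/F^{p+1})$, for global hypercohomology of a proper variety. The local statement that local duality actually requires is surjectivity of $H^i_x(\Omega^m_X)\to\bbH^i_x(\uOmega^m_X)$ at every point. By local duality that is equivalent to the theorem itself, so it cannot be an input.

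There are two further problems with the objects you use. First, $\bbD_X$ of a truncated de Rham complex is not meaningful as you use it, because the differentials are not $\scrO_X$-linear. Second, $[\Omega^0_Y|_X\to\cdots\to\Omega^m_Y|_X]$ is not even a complex, since $d$ does not preserve $I\Omega^\bullet_Y$.

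\textbf{The missing idea} is the global, twisted mechanism of \cite[\S 8]{Kov25}, which the paper adapts.
\begin{enumerate}
\item Compactify $X$ and take a cyclic cover $\eta$ branched along a general $H\in|\scrL^N|$, with $\scrL$ ample.
\item Split the Hodge-theoretic surjection on the cover into $\scrL^{-i}$-eigencomponents. This gives surjectivity of twisted hypercohomology of the cofiltered (logarithmic) complexes for all $0<i<N$.
\item Use Serre duality and Serre vanishing to convert this twisted global surjectivity into injectivity on cohomology sheaves.
\end{enumerate}

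For K\"ahler differentials the real work is that the cover is singular. The paper needs three ingredients:
\begin{enumerate}
\item a decomposition of $\eta_*$ of the cover's K\"ahler $p$-forms as $\Omega^p_X\oplus\bigoplus_i\Omega^p_X(\log_K H)\otimes\scrL^{-i}$, using Kato's log differentials;
\item the exact residue sequence $0\to\Omega^p_X\to\Omega^p_X(\log_K H)\to\Omega^{p-1}_H\to0$ for general $H$;
\item the identification $\Omega^p_X(\log_K H)\simeq\uOmega^p_X(\log H)$ for $p\le m-1$, which uses that both $X$ and a general $H$ are strict-$(m-1)$-Du Bois.
\end{enumerate}
Only these identifications, inside the twisted eigencomponents, make your final step of ``peeling off the lower graded pieces'' available. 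Your local setup has nothing to replace them.
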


On the other hand, base change gives a direct comparison between the Du Bois complex of the total space and the fiber. This gives the following deformation statements for the various other flavors of higher Du Bois singularities, assuming base change.

\begin{theorem}[$=$ Propositions \ref{prop_deform_pre}, \ref{prop_deform_weak}, \ref{prop_deform_codim}]\label{thm_deform_combined}
Let $Z\subseteq X$ be a Cartier divisor in a complex variety $X$. Let $f:X\to \bbA^1$ be a dominant morphism such that $Z=X_b$ is a fiber for $b\in \bbA^1$. Suppose $Z$ satisfies either left- or right-$m$-base change (cf. Definition \ref{def_bc}).
\begin{enumerate}
\item If $Z$ is pre-$m$-Du Bois, then $X$ is pre-$m$-Du Bois near $Z$.
\item If $Z$ is weakly-$m$-Du Bois, then $X$ is weakly-$m$-Du Bois near $Z$.
\item If $Z$ is $m$-Du Bois, then $X$ is $m$-Du Bois near $Z$.
\end{enumerate}
\end{theorem}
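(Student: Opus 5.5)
We argue in all three cases by comparing the Du Bois complex of $X$ with that of the fiber $Z=X_b$ through the relative Du Bois complexes, using the base change hypothesis to transfer vanishing and isomorphism statements from $Z$ to a neighborhood of $Z$ in $X$. The standard tool is the pair of short exact sequences (in the derived category) relating absolute and relative complexes. Concretely, for a morphism to a smooth curve there are exact triangles
\[
\uOmega^{p-1,\pm}_{X/B}\otimes f^*\Omega^1_B \to \uOmega^p_X \to \uOmega^{p,\pm}_{X/B}\xrightarrow{+1},
\]
and similarly for K\"ahler forms, $\Omega^{p-1}_{X/B}\otimes f^*\Omega^1_B\to \Omega^p_X\to\Omega^p_{X/B}\to 0$. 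The left/right versions of these triangles are where the two constructions of Theorem \ref{thm_2relDB} enter, and we take these triangles as established earlier in the paper. Restricting the relative complexes to $Z$ by $\otimes^{\tL}\scrO_Z$ and invoking left- or right-$m$-base change then identifies the result with $\uOmega^p_Z$ for $p\le m$.

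\textbf{Step 1 (pre-$m$-Du Bois).} Pre-$m$-Du Bois means $h^i(\uOmega^p)=0$ for $i>0$, $p\le m$. We prove by induction on $p$ that $h^i(\uOmega^{p,\pm}_{X/B})=0$ for $i>0$ near $Z$.
\begin{itemize}
\item Since $f-b$ is a nonzerodivisor on $\scrO_X$, we have the triangle $\uOmega^{p,\pm}_{X/B}\xrightarrow{\cdot t}\uOmega^{p,\pm}_{X/B}\to \uOmega^{p,\pm}_{X/B}\otimes^{\tL}\scrO_Z\simeq\uOmega^p_Z$.
\item The long exact sequence shows that multiplication by $t$ is surjective on $h^i$ for $i>0$ (and on $h^{i}$ for $i\geq 1$ as the top part).
\item The cohomology sheaves are coherent, so Nakayama gives $h^i=0$ near $Z$ for $i>0$.
\end{itemize}
This is a top-down induction on $i$, starting from boundedness. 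The triangle above with the inductive hypothesis for $p-1$ then gives $h^i(\uOmega^p_X)=0$ for $i>0$.

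\textbf{Step 2 (weakly-$m$-Du Bois).} Weakly-$m$-Du Bois adds that $h^0(\uOmega^p)$ agrees with reflexive differentials (or that $\Omega^p\to h^0\uOmega^p$ is an isomorphism modulo torsion, per Definition \ref{def_higher_DB}). The same triangle yields an injection $h^0(\uOmega^{p}_{X/B})/t\hookrightarrow h^0(\uOmega^p_Z)$, with $t$-torsion-freeness coming from the $h^{-1}$ vanishing. We compare with the corresponding statement for reflexive forms using a depth argument: a sheaf that is $S_2$-like and restricts correctly to a Cartier divisor is reflexive near it.

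\textbf{Step 3 ($m$-Du Bois).} For $m$-Du Bois, meaning $\Omega^p\to\uOmega^p$ is a quasi-isomorphism on the locus away from codimension conditions (Proposition \ref{prop_deform_codim}), we take the cone $C_p$ of $\Omega^p_{X/B}\to\uOmega^{p,\pm}_{X/B}$. We show $C_p\otimes^{\tL}\scrO_Z$ is the cone for $Z$ with its codimension estimate. Here $\Omega^p_{X/B}|_Z=\Omega^p_Z$ by right exactness, but the derived restriction has a possible $h^{-1}$ given by $t$-torsion of $\Omega^p_{X/B}$. Nakayama then spreads the support bound to a neighborhood, and the triangles above pass from relative to absolute forms.

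\textbf{Main obstacle.} The hard step is controlling torsion of the K\"ahler differentials $\Omega^p_{X/B}$ along $Z$ in Steps 2 and 3. This torsion is needed for $\Omega^p_{X/B}\otimes^{\tL}\scrO_Z\simeq\Omega^p_Z$, since the comparison of Du Bois complexes is clean only after base change. We first try to deduce torsion-freeness from the injection into $h^0(\uOmega^{p}_{X/B})$, which is $t$-torsion-free by Step 1 applied in degree $-1$. This requires the $S_2$ or codimension hypothesis built into the definition, so the precise form of Definition \ref{def_higher_DB} governs whether this works.
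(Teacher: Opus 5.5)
Your Step 1 is correct and is essentially the paper's argument. You apply Nakayama to $\uOmega^{p,\pm}_{X/B}$ first and then use the fundamental triangle on $X$. The paper instead restricts the triangle to $Z$ (Lemma \ref{lem_conormal_triangle}) and applies Lemma \ref{lem_Nakayama} to $\uOmega^p_X$. Either order works. For $p=0$ with the left complex you need base change in degree $-1$, which Definition \ref{def_bc} includes.

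Parts (2) and (3) have a genuine gap, because you are working with the wrong definitions. By Definition \ref{def_higher_DB}, weakly-$m$-Du Bois means $X$ is seminormal, pre-$m$-Du Bois, and $h^0(\uOmega^p_X)$ is $S_2$ for $1\le p\le m$. The notion $m$-Du Bois adds $\codim_X\Sing X\ge 2m+1$ and reflexivity of $h^0(\uOmega^p_X)$. Neither condition compares K\"ahler forms with the Du Bois complex; that comparison is the \emph{strict} notion. For a non-lci $m$-Du Bois fiber, $\Omega^p_Z\to\uOmega^p_Z$ is in general not a quasi-isomorphism, so your cone $C_p$ in Step 3 cannot restrict to anything useful. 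The torsion of $\Omega^p_{X/B}$ that you call the main obstacle is therefore irrelevant to the statement.

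What is actually missing is the following.
\begin{enumerate}
\item Seminormality of $X$ near $Z$. You never address it; the paper gets it from Heitmann's theorem that seminormality lifts from Cartier divisors.
\item The depth property of $h^0(\uOmega^p_X)$ itself. After (1), Lemma \ref{lem_h0_tor_ind} identifies $h^0(\uOmega^p_X|^{\tL}_Z)$ with $h^0(\uOmega^p_X)|_Z$. Taking $h^0$ of the triangle in Lemma \ref{lem_conormal_triangle} then gives a short exact sequence $0\to h^0(\uOmega^{p-1}_Z)\to h^0(\uOmega^p_X)|_Z\to h^0(\uOmega^p_Z)\to 0$. Depth counting on this sequence, together with torsion-freeness of $h^0(\uOmega^p_X)$ (so a local equation of $Z$ is a nonzerodivisor on it), lifts $S_2$ to $X$ near $Z$.
\item For (3), the codimension bound comes from a dimension count, not from Nakayama spreading a support bound. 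Since $Z$ is Cartier, $\Sing X\cap Z\subseteq\Sing Z$. For a component $W$ of $\Sing X$ meeting $Z$, $\dim(W\cap Z)\ge\dim W-1$, which gives $\codim_X W\ge\codim_Z\Sing Z\ge 2m+1$.
\item Also for (3), reflexivity. Since $Z$ is reduced and Cartier, $X$ is $S_2$ near $Z$. Combined with the codimension bound (for $m\ge 1$), $X$ is normal near $Z$. Hence the torsion-free $S_2$ sheaf $h^0(\uOmega^p_X)$ is automatically reflexive.
\end{enumerate}
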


For sharpness, we show that deformations fail even for the next strongest notion, $m$-Du Bois.

\begin{proposition}[$=$ Proposition \ref{prop_ex_deform}]
There exists a variety $X$ and a Cartier divisor $Z\subseteq X$ such that $Z$ is $1$-Du Bois, but $X$ is not pre-$1$-Du Bois.
\end{proposition}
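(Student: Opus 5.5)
The plan is to build the example from cones: non-lci cones are the natural place where the failure can occur, because in the lci setting the notions coincide and deformation invariance holds by Theorem \ref{thm_deform_strict_intro}. Concretely, I take a smooth projective surface $Y$ with a very ample line bundle $L$ and set $X=C(Y,L)=\operatorname{Spec}\bigoplus_{k\ge0}H^0(Y,L^k)$, the affine cone. I take $H\in|L|$ a smooth curve and $Z=C(H,L|_H)$. If $L$ is projectively normal, then $Z$ is cut out in $X$ by the degree-one element $s\in H^0(Y,L)$ defining $H$. Thus $Z$ is a Cartier divisor through the vertex, and it is the fiber over $0$ of $s:X\to\bbA^1$.

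First I would recall the standard graded description of the Du Bois complex of a cone with vertex $v$. For $p\ge1$ and $i>0$ one has
\[
H^i(X,\uOmega^p_X)\cong\bigoplus_{k\ge1}H^i(Y,\Omega^p_Y\otimes L^k),
\]
and similarly $H^0(X,\uOmega^p_X)$ is the sum of the degree-$k$ pieces built from $H^0(Y,\Omega^p_Y\otimes L^k)$ and $H^0(Y,\Omega^{p-1}_Y\otimes L^k)$. The same formulas hold for $Z$, with $Y,L$ replaced by $H,L|_H$. With this, $X$ fails to be pre-$1$-Du Bois as soon as $H^i(Y,\Omega^1_Y\otimes L^k)\neq0$ for some $i>0$ and $k\ge1$. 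Since $Y$ is a surface, I would arrange $H^1(Y,\Omega^1_Y\otimes L)\ne0$ by choosing a pair $(Y,L)$ for which this Bott-type vanishing fails. Toric surfaces and $\bbP^1\times\bbP^1$ with small $L$ satisfy the vanishing, so I would pass to a surface of nonnegative Kodaira dimension or a suitable blowup. A good candidate is a K3 surface, for which Serre duality gives $H^1(\Omega^1_Y\otimes L)\cong H^1(T_Y\otimes L^{-1})^\vee$. Nonvanishing of this group can then be checked from the Euler characteristic $\chi(\Omega^1_Y\otimes L)$ via Riemann--Roch, together with the vanishing of the relevant $H^0$ and $H^2$ for $L$ of small degree.

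Second, I need $Z$ to be $1$-Du Bois. Since $Z$ is a normal surface with an isolated singularity, the higher cohomology conditions reduce to $H^1(H,\Omega^1_H\otimes L^k)=H^0(H,L^{-k})^\vee=0$ for $k\ge1$, which holds automatically. What remains is the comparison between the reflexive forms $\Omega^{[1]}_Z$ and $h^0(\uOmega^1_Z)$, which by the graded description amounts to controlling $H^0(H,\Omega^1_H\otimes L^k)$ against $H^0(H,L^k)$ for $k\ge1$. The $p=0$ part is the requirement that $Z$ be Du Bois, namely $H^1(H,L^k)=0$ for $k\ge1$, and this must be ensured by the choice of $L$ on $H$. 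For $p=1$ one also has to check the codimension and depth conditions in Definition \ref{def_higher_DB} for $\Omega^{[1]}_Z$ at the vertex. I would do this through the local cohomology of the graded module $\bigoplus_k H^0(H,\Omega^1_H\otimes L^k)$, which reduces to vanishing of $H^1(H,\Omega^1_H\otimes L^k)$ for $k\ge1$ and the corresponding degree-zero statements.

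The main obstacle is balancing these two requirements simultaneously. We need $H^1(Y,\Omega^1_Y\otimes L)\neq0$ on the surface, while on the curve $H$ we need $L|_H$ positive enough that $Z$ is Du Bois and satisfies the $1$-Du Bois conditions; these typically require $\deg L|_H$ small relative to the genus in the first case and not too small in the second. I would first try $Y$ a K3 surface (or a quartic in $\bbP^3$, with $L=\scrO(1)$). In that case $H$ is a canonical curve, $L|_H=\omega_H$, and the conditions on $H$ become explicit, while $H^1(Y,\Omega^1_Y(1))$ is computable from the Euler sequence. If that fails, I would fall back to a degree-$d$ hypersurface in $\bbP^3$ and tune $d$ and the twist.
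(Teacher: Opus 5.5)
There is a genuine gap, and it comes from the dimension you chose; no tuning of $(Y,L)$ can repair it. Your $Z=C(H,L|_H)$ is a surface that is singular only at the vertex, so $\codim_Z\Sing Z=2$. Definition \ref{def_higher_DB}(3) with $m=1$ requires $\codim_Z\Sing Z\ge 3$, so no singular surface is $1$-Du Bois. If instead the vertex of $Z$ is a smooth point, then $Z$ is a smooth Cartier divisor through the only singular point of $X$, which forces $X$ to be smooth.

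This is also why the $\Omega^1$-vanishing on $H$ looked automatic to you. For cones over curves, $H^1(H,\Omega^1_H\otimes L^k)=H^0(H,L^{-k})^\vee=0$ for $k\ge1$ comes for free, and all the difficulty moves into the codimension condition, which cannot be met.

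Separately, your K3 candidate already fails for $p=0$. Since $H\in|L|$, adjunction and Serre duality give $H^1(H,L|_H)\cong H^0(H,K_Y|_H)^\vee=H^0(H,\scrO_H)^\vee\neq0$. So by Theorem \ref{thm_PS_cone}, $C(H,L|_H)$ is not even Du Bois for any $L$ on a K3 surface.

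What your recipe can deliver is only a pre-$1$-Du Bois divisor inside a non-pre-$1$-Du Bois variety. For example, take $Y$ a smooth cubic surface and $L=\scrO_Y(1)$. Then $H^1(Y,\Omega^1_Y(1))\cong H^0(Y,\scrO_Y(1))^\vee\neq0$, while the cone over the plane cubic $H$ is pre-$1$-Du Bois. This is a much weaker statement, and it is already realized by hypersurfaces.

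The missing idea has two parts. First, go up one dimension, so that an isolated singularity can satisfy the codimension condition. Second, kill the obstruction on the divisor through the restriction sequence, which is no longer automatic in that dimension.

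The paper does this as follows:
\begin{enumerate}
\item It takes $Y=S\times\bbP^1$ with $S$ a general cubic surface, $\scrL=\scrO_S(2)\boxtimes\scrO_{\bbP^1}(1)$, and $D\in|\scrL|$ the graph of a general pencil $\sigma=us_1+vs_2$ in $|\scrO_S(2)|$. Then $Z=C(D,\scrL|_D)$ is a threefold whose vertex has codimension $3$.
\item The nonvanishing $H^1(Y,\Omega^1_Y\otimes\scrL)\cong H^1(S,\Omega^1_S(2))^{\oplus 2}\neq0$ shows that $X$ is not pre-$1$-Du Bois.
\item On $D$ this group dies. Multiplication by $\sigma$ maps $H^1(S,\Omega^1_S)\subseteq H^1(Y,\Omega^1_Y)$ onto $H^1(Y,\Omega^1_Y\otimes\scrL)$ by Lemma \ref{lem_ex_deform_surject}, which uses that the connecting map $H^1(S,\Omega^1_S)\to H^0(S,\scrO_S(2))^\vee$ has rank $6\ge 2$. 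Together with $H^2(Y,\Omega^1_Y)=0$, this kills the group on $D$.
\item The remaining Bott-type vanishings on $S$, fed through the restriction and conormal sequences, give $H^i(D,\Omega^1_D\otimes\scrL^k|_D)=0$ for $i,k\ge1$.
\item Finally, $H^1(D,\scrO_D)=0$ upgrades pre-$1$-Du Bois to $1$-Du Bois via \cite[Proposition 5.10]{SVV23}.
\end{enumerate}
Your outline has no analogue of the step in which degree-zero classes on $Y$ cancel the degree-one obstruction on the divisor.
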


\smallskip
\subsection{Local freeness.} 

The same base-change mechanism yields the local-freeness result. Here, we extend the theorem of Friedman--Laza outside of the lci case, for families over a smooth curve. 

\begin{theorem}[$=$ Theorem \ref{thm_local_free}]
Let $\f:X \to B$ be a flat proper family over a smooth curve $B$, and $b\in B$. Suppose $X_b$ has strict-$m$-Du Bois singularities for some $m\in \bbN$. Then $R^i\f_*\Omega^p_{X/B}$ is locally free and compatible with base change for all $i\ge 0$ and $0\le p\le m$, in a neighborhood of $b$.
\end{theorem}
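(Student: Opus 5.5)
Our plan is to reduce local freeness to the classical Du Bois-type argument of Deligne--Du Bois, with the right relative Du Bois complex $\uOmega^{p,+}_{X/B}$ in place of $\Omega^p_{X/B}$. The idea is to first show that near $b$ the natural map $\Omega^p_{X/B}\to \uOmega^{p,+}_{X/B}$ is a quasi-isomorphism for $p\le m$, and then to prove local freeness for $R^i\f_*\uOmega^{p,+}_{X/B}$ using Hodge theory.

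\textbf{Step 1: openness of strict-$m$-Du Bois.} By Theorem \ref{thm_deform_strict_intro} applied to the Cartier divisor $X_b\subseteq X$, $X$ is strict-$m$-Du Bois near $X_b$. Since $\f$ is proper, after shrinking $B$ around $b$ we may assume $X$ is strict-$m$-Du Bois everywhere. We would then like all fibers $X_{b'}$ near $b$ to be strict-$m$-Du Bois as well. General fibers are so by a generic-smoothness/generic base change argument (cf. \cite{JK25}, Corollary \ref{cor_basechange_general}), and the finitely many remaining special fibers we remove by shrinking $B$.

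\textbf{Step 2: identification with the Du Bois complex.} Using right-$m$-base change (Theorem \ref{thm_basechange_intro}(2)) at each $b'$ near $b$, we get $\uOmega^{p,+}_{X/B}\otimes^{\tL}\scrO_{X_{b'}}\qis \uOmega^p_{X_{b'}}\qis\Omega^p_{X_{b'}}$ for $p\le m$. Since $X\to B$ is flat, $\Omega^p_{X/B}\otimes^{\tL}\scrO_{X_{b'}}=\Omega^p_{X/B}|_{X_{b'}}=\Omega^p_{X_{b'}}$, provided $\Omega^p_{X/B}$ has no $\scrO_B$-torsion. That torsion-freeness should follow from strictness of $X$ (depth considerations on $\Omega^p_X$) together with the fiberwise isomorphism. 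The cone $C$ of $\Omega^p_{X/B}\to\uOmega^{p,+}_{X/B}$ then satisfies $C\otimes^{\tL}\scrO_{X_{b'}}\qis0$ for all $b'$, so $C\qis 0$ by derived Nakayama. Consequently $R\f_*\Omega^p_{X/B}\otimes^{\tL}k(b')\qis R\Gamma(X_{b'},\uOmega^p_{X_{b'}})$.

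\textbf{Step 3: constancy of dimensions.} By Du Bois's theorem, $\dim H^i(X_{b'},\uOmega^p_{X_{b'}})=\dim \mathrm{Gr}_F^pH^{p+i}(X_{b'},\bbC)$. The sum over $p$ of these is $\dim H^{p+i}(X_{b'},\bbC)$, which is upper semicontinuous, while each summand is upper semicontinuous by cohomology and base change for the perfect complex $R\f_*\uOmega^{p,+}_{X/B}$. The obstacle is that we only control $p\le m$, so we cannot directly use the total degeneration argument. Instead we would follow Du Bois--Jarraud: compare with the nearby fiber. The surjectivity $H^k(X,\bbC)\cong H^k(X_b,\bbC)\to \mathrm{Gr}_F^p$ combined with the spectral sequence for the filtered complex $\uOmega^{\bullet,+}_{X/B}$ shows that $H^i(R\f_*\uOmega^{p,+})\to H^i(X_b,\uOmega^p_{X_b})$ is surjective for $p\le m$. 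Grauert-type cohomology and base change (descending induction on $i$) then gives local freeness and base change.

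The main obstacle is this surjectivity step restricted to $p\le m$. We would try to obtain it via the $(p\le m)$-truncation of the Hodge filtration and Steenbrink's nearby-cycle comparison, and to show that the truncation suffices.
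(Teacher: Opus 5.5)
\textbf{There is a genuine gap, mainly in Step 3, which you leave open; your Step 2 also assumes the flatness it needs to prove.}

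\textbf{Step 3.} The route you sketch is not viable as stated. The identification $H^k(X,\bbC)\isom H^k(X_b,\bbC)$ holds only over a small analytic disc. The paper never constructs a filtered right relative complex $\uOmega^{\kdot,+}_{X/B}$. Even granting both, surjectivity onto $\mathrm{Gr}^p_F$ of the total hypercohomology does not give surjectivity of each $H^i(R\f_*\uOmega^{p,+}_{X/B})\to H^i(X_b,\uOmega^p_{X_b})$. For that you would need degeneration of the relevant spectral sequence over the base, which is the whole difficulty. The upper semicontinuity of $b'\mapsto \dim H^k(X_{b'},\bbC)$ that you invoke is also false, e.g.\ for a nodal degeneration of elliptic curves.

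No nearby cycles are needed. The missing ingredient is the Friedman--Laza argument, Theorem \ref{thm_FL}. Once $\Omega^p_{X/B}$ is flat over $B$ near $X_b$ for $p\le m$, pass to Artinian thickenings $X_A=X\times_B\Spec A$, which are homeomorphic to $X_b$. There the constant sheaf $A$ maps to $\Omega^\kdot_{X_A/A}/F^{m+1}$. The composite $H^j(X_b,\bbC)\otimes A\to \bbH^j(X_A,\Omega^\kdot_{X_A/A}/F^{m+1})\to \bbH^j(X_b,\Omega^\kdot_{X_b}/F^{m+1})$ is surjective, because strictness identifies $\Omega^\kdot_{X_b}/F^{m+1}$ with $\uOmega^\kdot_{X_b}/F^{m+1}$ and Du Bois's theorem applies. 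Flatness of the $\Omega^p_{X_A/A}$ then lets a length count against the $E_1$ terms $H^q(X_A,\Omega^p_{X_A/A})$ force their freeness and base change. All the Hodge theory happens on the single fiber $X_b$. The paper's proof is exactly Corollary \ref{cor_rel_Kahler_tf} followed by Theorem \ref{thm_FL}.

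\textbf{Step 2.} The real content is therefore the flatness of $\Omega^p_{X/B}$, and Step 2 assumes it. The quasi-isomorphism $\Omega^p_{X/B}\otimes^{\tL}\scrO_{X_{b'}}\qis\Omega^p_{X_{b'}}$ requires $\Omega^p_{X/B}$ to have no $t$-torsion. The $t$-torsion-freeness of $\Omega^p_X$ (your ``depth considerations'') does not pass to the quotient $\Omega^p_X/(dt\wedge\Omega^{p-1}_X)$, which can have vertical torsion.

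The paper reverses the direction so that flatness is an output:
\begin{enumerate}
\item Right-$m$-base change and Lemma \ref{lem_Nakayama} make $\uOmega^{p,+}_{X/B}$ a sheaf near $X_b$ with $\Tor_1(\uOmega^{p,+}_{X/B},\scrO_{X_b})=0$, hence flat (Lemma \ref{lem_relDB_tf_right}).
\item Compare the merely right exact sequence $\Omega^p_{X/B}\xrightarrow{\cdot t}\Omega^p_{X/B}\to\Omega^p_{X_b}\to 0$ with the short exact one for $h^0(\uOmega^{p,+}_{X/B})$. Using the isomorphism on $X_b$, $\cdot t$ is surjective on the kernel and cokernel of $\Omega^p_{X/B}\to h^0(\uOmega^{p,+}_{X/B})$.
\item Nakayama then gives the isomorphism (Lemma \ref{lem_relDB_strictDB}).
\end{enumerate}

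If you prefer to avoid the relative complex, the ingredient you are missing is the injectivity of the conormal map of Lemma \ref{lem_conormal}, not depth of $\Omega^p_X$ alone. Let $Q$ be the image of $dt\wedge\colon\Omega^{p-1}_{X/B}\to\Omega^p_X$. Lemma \ref{lem_tf_omega} gives $\Tor_1(\Omega^p_{X/B},\scrO_{X_b})\isom\ker(Q|_{X_b}\to\Omega^p_X|_{X_b})$. This kernel vanishes because $\Omega^{p-1}_{X_b}\twoheadrightarrow Q|_{X_b}\to\Omega^p_X|_{X_b}$ composes to that injective conormal map.

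\textbf{Step 1.} It is unnecessary: Lemma \ref{lem_Nakayama} at $X_b$ plus properness suffices. Strictness of nearby fibers comes from Bertini as in Corollary \ref{cor_deform_strict}, not from generic base change.
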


The main contribution of the right relative Du Bois complex and base change is a flatness result (cf. Lemma \ref{lem_relDB_tf_right}). Following the principle of \cite{SVV23, Kov25, NN25}, the zeroth cohomology sheaf of the Du Bois object often behaves much better than K\"ahler differentials. We show that this principle also holds for the relative Du Bois complex, and use it to infer flatness for the relative K\"ahler differentials.

Using the theorem and a curve slicing argument, we show the numerical constancy of Hodge numbers in families with strict-$m$-Du Bois fibers over arbitrary bases $B$. See also Corollaries \ref{cor_constant_hodge_sm}, \ref{cor_constant_hodge_all} for direct consequences.

\begin{theorem}[$=$ Theorem \ref{thm_constant_hodge}]
Let $\f:X \to B$ be a flat proper family to a scheme $B$ of finite type of $\bbC$, and $b\in B$ a closed point. Suppose $X_b$ has strict-$m$-Du Bois singularities for some $m\in \bbN$. Then for every (not necessarily closed) point $b'$ in a neighborhood of $b\in B$, we have
\[
\dim_{\bbC}H^q(X_{b},\Omega^p_{X_{b}}) = \dim_{k(b')}H^q(X_{b'}, \Omega^p_{X_{b'}/k(b')})
\]
for all $q\ge 0$ and $0\le p\le m$.
\end{theorem}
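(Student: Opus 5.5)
Since the statement is local on $B$ and the Hodge numbers at $b'$ are computed after a field extension, I reduce to the case where $B$ is an integral affine scheme of finite type over $\bbC$ with $b'$ its generic point, or more generally a point specializing from $b$. Indeed, if $b'$ lies in a neighborhood of $b$ with $b\in\overline{\{b'\}}$, I replace $B$ by the reduced closure $\overline{\{b'\}}$ and $X$ by the base change, which keeps $X\to B$ flat and proper with the same fiber $X_b$. Because dimensions of cohomology of the generic fiber equal generic ranks, it suffices to show that $b\mapsto \dim H^q(X_{b},\Omega^p_{X_b})$ is constant on a neighborhood of $b$ in each integral closed subscheme through $b$. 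Points $b'$ not specializing to $b$ can be excluded by shrinking the neighborhood: only finitely many irreducible components of $B$ are relevant, and I remove those that miss $b$.

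Next comes the curve-slicing step. For $B$ integral of finite type containing $b$, and a general closed point $c$ in a dense open $U\subseteq B$, I choose an irreducible curve $C\subseteq B$ passing through $b$ and $c$. I take its normalization $\nu:\tilde C\to C$ and a point $\tilde b$ over $b$. The base change $X_{\tilde C}\to\tilde C$ is flat and proper over a smooth curve, and its fiber over $\tilde b$ is $X_b$, which is strict-$m$-Du Bois. The fiber over a point of $\tilde C$ mapping to $c$ is $X_c$, since fibers over closed points are unchanged by base change along $\tilde C\to B$ and residue fields are $\bbC$.

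Theorem \ref{thm_local_free} then says that $R^q\f_*\Omega^p_{X_{\tilde C}/\tilde C}$ is locally free and compatible with base change near $\tilde b$, for $0\le p\le m$. Since $\tilde C$ is connected, but local freeness holds only near $\tilde b$, I need $c$ to lie in that neighborhood. I arrange this by choosing $c$ general: by generic flatness and semicontinuity there is a dense open $U\subseteq B$ on which $\dim H^q(X_c,\Omega^p_{X_c})$ is constant and equal to the generic value $\dim_{k(\eta)}H^q(X_\eta,\Omega^p_{X_\eta/k(\eta)})$ (cohomology and base change on a stratification where all $R^q\f_*\Omega^p_{X/B}$ are free). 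A curve through $b$ meeting $U$ meets it in a dense open of $\tilde C$, so points of $\tilde C$ arbitrarily close to $\tilde b$ map into $U$. Local freeness plus base change on $\tilde C$ gives
\[
\dim H^q(X_b,\Omega^p_{X_b})=\operatorname{rank} R^q\f_*\Omega^p_{X_{\tilde C}/\tilde C}=\dim H^q(X_c,\Omega^p_{X_c}),
\]
which equals the generic value on $B$. Applying this to every integral closed subscheme through $b$ (each yields a generic point $b'$) gives the claim for all $b'$ in a neighborhood of $b$.

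The main obstacle is making the neighborhood uniform: Theorem \ref{thm_local_free} is pointwise near $b$, while $B$ has infinitely many subvarieties through $b$. I would handle this by using the openness of the strict-$m$-Du Bois condition in flat families, which follows from Theorem \ref{thm_deform_strict} applied on slicing curves together with constructibility. Then every closed point in a neighborhood $V$ of $b$ has strict-$m$-Du Bois fiber, and the curve argument applied at each closed point gives local constancy of $\dim H^q$ on closed points of $V$. The function is constructible, so it is constant on the connected neighborhood, and non-closed points follow from the generic-value argument above. One must also check that $\Omega^p_{X_{\tilde C}/\tilde C}$ restricts to $\Omega^p_{X_{b'}/k(b')}$, which holds because Kähler differentials commute with base change.
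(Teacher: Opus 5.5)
Your core mechanism matches the paper's: slice by a curve through $b$, normalize, apply Theorem \ref{thm_local_free} at the point over $b$, and compare with a generic value. Carried out at $b$ alone, this correctly shows $h^{p,q}(b')=h^{p,q}(b)$ for every generization $b'$ of $b$. The gap is in passing from generizations of $b$ to a neighborhood of $b$.

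First, your claim in the first paragraph is false: points not specializing to $b$ cannot be removed by discarding irreducible components that miss $b$. Every closed point $b'\neq b$ fails to specialize to $b$, and no neighborhood of $b$ avoids all of them.

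Second, your remedy in the last paragraph rests on openness of the strict-$m$-Du Bois locus of fibers over an arbitrary base $B$. The paper does not provide this: Corollary \ref{cor_deform_strict} works only over a smooth curve and uses Bertini on the total space. Your sketch ``via slicing curves together with constructibility'' also presupposes that the set of $b'\in B$ with $X_{b'}$ strict-$m$-Du Bois is constructible. That would need a separate generic base change argument for Du Bois complexes over a higher-dimensional base. Without it, bad closed points accumulating at $b$ need not be dense in any single curve through $b$, and Theorem \ref{thm_deform_strict} applied on curves gives nothing.

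The missing observation is that you need no information about fibers other than $X_b$. The function $h^{p,q}$ is itself constructible, by a flattening stratification for $\Omega^p_{X/B}$ plus semicontinuity on each stratum. In a Noetherian scheme, the closure of a constructible set consists of specializations of its points. Hence a constructible set containing every generization of $b$ contains an open neighborhood of $b$. Apply this to $\{h^{p,q}=h^{p,q}(b)\}$, using your generization argument at $b$, and the proof is complete. Non-closed points are handled automatically, and openness of the strict-$m$-Du Bois locus is never needed.

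This is exactly the paper's argument in contrapositive form. Suppose $\Sigma=\{h^{p,q}\neq h^{p,q}(b)\}$ met every neighborhood of $b$. Then some irreducible component $Z$ of $\overline{\Sigma}$ through $b$ would contain a dense open $U\subseteq\Sigma$. A smooth curve through $b$ whose generic closed points map into $U$ then contradicts Theorem \ref{thm_local_free}.
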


\smallskip

We remark that the concurrent work of Chen--Dirks--Olano (\cite{CDO26}) proposes two new definitions of higher Du Bois singularities: strongly-$m$-Du Bois and cohomologically-$m$-Du Bois. Through independent generalizations of the Ko\'acs--Schwede injectivity theorem, including Theorem \ref{thm_kahler_injectivity_intro}, they show deformations and constancy of Hodge numbers for these two notions. These provide new candidates for a robust theory of higher Du Bois singularities outside the lci setting. 

\smallskip
\subsection*{Paper outline}

Section \ref{section_preliminary} recalls the relative Du Bois complex, various definitions of higher Du Bois singularities, and records some basic facts and constructions. Section \ref{section_2relDB} constructs the alternative right relative Du Bois complex (Construction \ref{cons_relDB}) and studies its basic properties. Section \ref{section_basechange} studies in depth the questions of right-$m$-base change (Definition \ref{def_bc}) and contains a key technical input, Theorem \ref{thm_full_diagram}.

Section \ref{section_inj_thm} establishes a Kov\'acs--Schwede-type injectivity theorem for K\"ahler differentials similar to \cite{Kov25}. Section \ref{section_deform_strict} then proves the deformations of strict-$m$-Du Bois singularities. The questions of base change and deformations are connected by Section \ref{section_deformation}, which uses deformations to deduce right-$m$-base change, then derives the other deformation statements. As an application, we show the local-freeness theorem and constancy of Hodge numbers for strict-$m$-Du Bois families in Section \ref{section_localfree}. We conclude with Section \ref{section_example}, where we construct explicit examples showing failure of base change without the strict-$m$-Du Bois assumption, establishing sharpness.

\smallskip
\subsection*{Acknowledgements}

The author would like to thank S\'andor Kov\'acs for many helpful discussions and for detailed feedback that significantly improved this manuscript. The author would also like to thank Brad Dirks for sharing the preprint \cite{CDO26}, pointing out a mistake in an earlier version of this paper and for discussions that inspired Example \ref{ex_deform}.

\medskip

\section{Preliminaries}\label{section_preliminary}

For convenience, we recall the definitions of higher Du Bois singularities in the most general setting. We refer the reader to \cite{SVV23,Kov25,NN25} for detailed treatments.

\begin{definition}\label{def_higher_DB}
Let $X$ be a reduced scheme of finite type over $\bbC$ and let $m\in \bbN$. We say
\begin{enumerate}
\item $X$ has pre-$m$-Du Bois singularities if for each $p\le m$,
\[
h^0(\uOmega^p_X)\qis \uOmega^p_X;
\]

\item $X$ has weakly-$m$-Du Bois singularities if $X$ is seminormal, pre-$m$-Du Bois, and $h^0(\uOmega^p_X)$ is $S_2$ for each $1\le p\le m$;

\item $X$ has $m$-Du Bois singularities if $X$ is weakly-$m$-Du Bois, $\codim_X\Sing X\ge 2m+1$, and $h^0(\uOmega^p_X)$ is reflexive for each $1\le p\le m$;

\item $X$ has strict-$m$-Du Bois singularities if for each $p\le m$,
\[
\Omega^p_X\qis \uOmega^p_X.
\]
\end{enumerate}
\end{definition}

We also recall the properties of the relative Du Bois complex. See \cite{Kov96, Kov97, KT25} for detailed constructions of this filtered complex. The notation has been adapted to be consistent with the rest of the paper.

\begin{theorem}[\protect{\cite[\S 2]{KT25}}]\label{def_relDB}
Let $\f:X\to B$ be a dominant morphism of complex varieties over a smooth curve $B$, and $\dim X=n+1$. There exists $(\uOmega^{\kdot,-}_{X/B}, E^p)\in D^b_{\rm filt}(X)$ such that
\begin{enumerate}
\item the associated graded $\uOmega^{p,-}_{X/B}= \Gr^p_{\rm filt} \uOmega^\kdot_{X/B}[p] \in D^b_{\rm coh}(X)$;

\item $\uOmega^{p, -}_{X/B}= 0$ for $p>n$;

\item there is a natural triangle
\[
\uOmega^{p-1, -}_{X/B}\otimes f^*\omega_B \to \uOmega_X^p \to \uOmega_{X/B}^{p,-} \xrightarrow{+1}
\]
for each $p$;

\item if $f$ is smooth, then there is a filtered quasi-isomorphism
\[
\Omega_{X/B}^{\kdot,-} \qis \uOmega_{X/B}^\kdot.
\]
\end{enumerate}
\end{theorem}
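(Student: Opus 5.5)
The plan is to build $(\uOmega^{\kdot,-}_{X/B},E^p)$ by descent from a cubical hyperresolution, exactly as the absolute Du Bois complex is built, but replacing the stupid (Hodge) filtration on $\Omega^\kdot_{X_i}$ by a filtration that remembers the pullback of forms from $B$.

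\textbf{Step 1: the filtration on each smooth piece.} Fix a cubical hyperresolution $\varepsilon_\kdot: X_\kdot\to X$ with every $X_i$ smooth. Let $f_i=f\circ\varepsilon_i$. On $\Omega^\kdot_{X_i}$ define a two-step-in-spirit filtration that mixes the Hodge filtration with the image of $f_i^*\omega_B$. Concretely, let $E^p$ be generated by $\sigma^{\ge p}\Omega^\kdot_{X_i}$ together with $f_i^*\omega_B\wedge\sigma^{\ge p-1}\Omega^{\kdot-1}_{X_i}$. Since $B$ is a curve, $f_i^*\omega_B\wedge f_i^*\omega_B=0$, so this is a filtration by subcomplexes. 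It is functorial for the maps in the hyperresolution, because everything is pulled back from the same $B$.

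\textbf{Step 2: descend and take graded pieces.} Set
\[
\uOmega^\kdot_{X/B}=R\varepsilon_{\kdot *}\bigl(\Omega^\kdot_{X_\kdot},E\bigr)\in D^b_{\rm filt}(X).
\]
Independence of the hyperresolution follows as in Du Bois' original argument: any two hyperresolutions are dominated by a third, and on a smooth piece the filtered complexes are compared by filtered quasi-isomorphisms. Define $\uOmega^{p,-}_{X/B}=\Gr^p_{\rm filt}[p]$; coherence of the cohomology comes from properness of each $\varepsilon_i$.

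\textbf{Step 3: the triangle (3).} On each $X_i$ there is a short exact sequence of graded pieces. Wedging with $df$ maps the $(p-1)$-st graded piece twisted by $f_i^*\omega_B$ into $\Omega^p_{X_i}$, and the quotient is the $p$-th graded piece. I would check this at the filtered level, so that the maps are honest morphisms of complexes compatible across the cube. Applying $R\varepsilon_{\kdot*}$ and the projection formula for $f^*\omega_B$ (a line bundle) then gives
\[
\uOmega^{p-1,-}_{X/B}\otimes f^*\omega_B\to\uOmega^p_X\to\uOmega^{p,-}_{X/B}\xrightarrow{+1}.
\]

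\textbf{Step 4: smooth comparison (4).} If $f$ is smooth, take the trivial hyperresolution $X_\kdot=X$. The sequence $0\to f^*\omega_B\otimes\Omega^{p-1}_{X/B}\to\Omega^p_X\to\Omega^p_{X/B}\to 0$ is exact with locally free terms. So the graded pieces of $E$ are exactly $\Omega^p_{X/B}$, which gives the filtered quasi-isomorphism.

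\textbf{Step 5: vanishing (2) — the main obstacle.} I expect the hard part to be that $\uOmega^{p,-}_{X/B}=0$ for $p>n$, since the $X_i$ are not smooth over $B$. For $p\ge n+2$ it suffices to use the triangle inductively, together with $\uOmega^{p}_X=0$ for $p>n+1$. For $p=n+1$ one needs the map $\uOmega^{n,-}_{X/B}\otimes f^*\omega_B\to\uOmega^{n+1}_X$ to be a quasi-isomorphism. I would first prove this on the components $X_i$ dominating $B$, where wedging with $df$ induces $\Omega^n_{X_i}/(f_i^*\omega_B\wedge\Omega^{n-1})\otimes\omega_B\cong\omega_{X_i}$ generically. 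Then I would show that the failure is supported on non-dominant pieces, whose contributions cancel in the cubical descent. If this cancellation is delicate, the fallback is to choose the hyperresolution so that each $f_i$ is either dominant with snc degeneracy or constant, and to compute directly in local coordinates.
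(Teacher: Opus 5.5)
There is a genuine gap, and it starts in Step~1. Read literally, $f_i^*\omega_B\wedge\sigma^{\ge p-1}\Omega^{\kdot-1}_{X_i}$ already lies in $\sigma^{\ge p}\Omega^\kdot_{X_i}$, so $E^p=\sigma^{\ge p}$ and you only recover $\uOmega^p_X$. What Steps~3 and~4 actually use is the stupid filtration on the quotient $\Omega^\kdot_{X_i}/(f_i^*\omega_B\wedge\Omega^{\kdot-1}_{X_i})$. Its graded pieces are the K\"ahler sheaves $\Omega^p_{X_i}/(df_i\wedge\Omega^{p-1}_{X_i})$, and these are the wrong objects once $f_i$ is not smooth.

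The short exact sequence in Step~3 needs $df_i\wedge(-)$ to be injective on the lower graded piece. This fails in two ways:
\begin{enumerate}
\item On every cube piece with $f_i$ constant, $df_i=0$; such pieces occur whenever part of $\Sing X$ lies in a fiber.
\item It also fails along non-isolated critical loci. For $f=xy$ on $\bbA^3$, the class of $dx\wedge dy$ is nonzero modulo $df\wedge\Omega^1$ but is killed by $df\wedge$.
\end{enumerate}

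The cancellation you hope for in Step~5 cannot occur either. Take $X=\bbA^2$, $B=\bbA^1$, $f=xy$, with the trivial hyperresolution, so there are no non-dominant pieces at all. Your top piece is $\Omega^2_X/(df\wedge\Omega^1_X)=\Omega^2_X/(x,y)\Omega^2_X\neq0$, contradicting (2). The correct objects here are $\uOmega^{1,-}_{X/B}\qis\Omega^2_X\otimes f^*\omega_B^{-1}$ and, with $\omega_B$ trivialized by $dt$, $\uOmega^{0,-}_{X/B}\otimes f^*\omega_B\qis[\Omega^1_X\xrightarrow{df\wedge}\Omega^2_X]$. The latter has nonzero $h^1$ at the origin. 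No construction whose graded pieces are single K\"ahler-type sheaves can produce this (compare Lemma~\ref{lem_rel_pullback}).

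The missing idea is that the left complex is built from cocones, not quotients. Moreover, (2) is the normalization that starts the induction, not something to be proved about a pre-chosen filtration. This is Kov\'acs' construction \cite[1.3]{Kov96}, which the paper cites and mirrors, run upward, in Construction~\ref{cons_relDB}.

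Concretely, fix chain-level representatives $K_p\qis\uOmega^p_X$ on one hyperresolution, with wedge maps $\wedge_p\colon K_p\otimes f^*\omega_B\to K_{p+1}$. Because $B$ is a curve, $\wedge_p\circ(\wedge_{p-1}\otimes\Id)=0$. This, not the subcomplex property, is where $\omega_B\wedge\omega_B=0$ is really used. Declare $\uOmega^{p,-}_{X/B}=0$ for $p>n$, so that $\uOmega^{n,-}_{X/B}=K_{n+1}\otimes f^*\omega_B^{-1}$. Then define downward
\[
\uOmega^{p-1,-}_{X/B}\otimes f^*\omega_B:=\coCone\bigl(K_p\to\uOmega^{p,-}_{X/B}\bigr),
\]
with the map induced by $\wedge_p$. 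The relation $\wedge\circ\wedge=0$ is exactly what lets $\wedge_{p-1}$ factor through this cocone, so the induction continues.

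With this definition, (2) and (3) hold by construction. Property (4) reduces to exactness of the Koszul complex $(\Omega^\kdot_X,df\wedge)$ when $df$ vanishes nowhere. The filtered object of \cite{KT25} is then assembled from these cones.
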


In Section \ref{section_2relDB}, we will recall its explicit construction and utilize it to define a new relative Du Bois complex; see Theorem \ref{thm_2relDB}.

For many of our purposes, the following hyperresolution construction will be used. We fix the notation here for easy reference. For details on cubical hyperresolutions, see \cite[\S 10.6]{Kol13}.

\begin{construction}\label{eqn_hyperres}
Suppose $Z\subseteq X$ is a closed subscheme. Choose cubical hyperresolutions $\eta_\kdot: \widetilde{Z}_\kdot \to Z$, $\pi_\kdot: X_\kdot \to X$ such that $\eta_\kdot$ factors through $\pi_\kdot$. Denoting $Z_\kdot = X_\kdot \times_X Z$, we may choose the resolution of $X$ such that each $Z_\alpha \subseteq X_\alpha$ has snc support (or is equal to $X_\alpha$), and 
each $\widetilde{Z}_\alpha\to Z_\alpha$ is a closed immersion with $\widetilde{Z}_{\alpha}$ either empty or a smooth divisor. We have a diagram
\begin{equation*}
\begin{tikzcd}
\widetilde{Z}_\kdot \ar[r] \ar[dr, "\eta_\kdot"] & Z_\kdot \ar[r] \ar[d] & X_\kdot \ar[d, "\pi_\kdot"] \\
& Z \ar[r] & X
\end{tikzcd}
\end{equation*}
\end{construction}

We will need the following consequence of Nakayama's lemma.

\begin{lemma}\label{lem_Nakayama}
Let $Z\subseteq X$ be a Cartier divisor. Let $\scrF\in D^b_{\rm coh}(X)$, and suppose that
\[
h^i(\scrF \otimes^{\tL}_X \scrO_Z) = 0.
\]
Then $h^i(\scrF) = 0$ near $Z$. In particular, $h^i(\scrF)|_Z=0$.
\end{lemma}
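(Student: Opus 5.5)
The plan is to work locally and reduce to the classical Nakayama lemma for finitely generated modules over a local ring. Both hypothesis and conclusion are local on $X$, and $Z$ is Cartier. So I may assume $X=\operatorname{Spec} A$ is affine and $Z=V(t)$ for a nonzerodivisor $t\in A$. Then $\scrO_Z$ has the two-term free resolution $A\xrightarrow{t}A$, so $\scrF\otimes^{\tL}_X\scrO_Z$ is the cone of multiplication by $t$ on $\scrF$. This gives an exact triangle
\[
\scrF\xrightarrow{t}\scrF\to \scrF\otimes^{\tL}_X\scrO_Z\xrightarrow{+1}.
\]

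Taking the long exact sequence of cohomology sheaves yields
\[
h^i(\scrF)\xrightarrow{t}h^i(\scrF)\to h^i(\scrF\otimes^{\tL}_X\scrO_Z)\to h^{i+1}(\scrF)\xrightarrow{t}h^{i+1}(\scrF).
\]
Exactness shows that $h^i(\scrF)/t\,h^i(\scrF)$ injects into $h^i(\scrF\otimes^{\tL}_X\scrO_Z)$, which vanishes by hypothesis. Hence $\scrG:=h^i(\scrF)$ is a coherent sheaf with $\scrG=t\scrG$, that is, $\scrG|_Z=\scrG\otimes\scrO_Z=0$. This already gives the ``in particular'' statement.

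For the first statement, take any point $x\in Z$. The stalk $\scrG_x$ is a finitely generated $\scrO_{X,x}$-module, and $t$ lies in the maximal ideal $\mathfrak m_x$. From $\scrG_x=t\scrG_x\subseteq\mathfrak m_x\scrG_x$, Nakayama's lemma gives $\scrG_x=0$. The support of a coherent sheaf is closed, so $\operatorname{Supp}\scrG$ is a closed set disjoint from $Z$. Therefore $U=X\setminus\operatorname{Supp}\scrG$ is an open neighborhood of $Z$ on which $h^i(\scrF)=0$.

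There is no serious obstacle here. The only care needed is to check that the derived tensor product really is the cone of $t$, which uses that $Z$ is Cartier, so that $t$ is a local nonzerodivisor. Only the right-exact part of the long exact sequence is used, namely the surjection onto the cokernel of $t$, so nothing about $h^{i+1}$ is required.
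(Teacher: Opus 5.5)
Your proposal is correct and follows essentially the same route as the paper, which also tensors the divisor sequence $0\to\scrO_X(-Z)\to\scrO_X\to\scrO_Z\to 0$ with $\scrF$. From the long exact sequence it reads off that multiplication by a local equation $t$ is surjective on $h^i(\scrF)$, and then applies Nakayama's lemma on stalks at points of $Z$. Your remark that the support of the coherent sheaf $h^i(\scrF)$ is closed spells out how the ``near $Z$'' conclusion follows, a step the paper leaves implicit.
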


\begin{proof}
Consider the short exact sequence
\[
0\to \scrO_X(-Z) \to \scrO_X \to \scrO_Z \to 0,
\]
which gives the triangle
\[
\scrF(-Z) \to \scrF \to \scrF \otimes^{\tL}_X \scrO_Z \xrightarrow{+1}.
\]
Taking cohomology and applying the hypothesis gives
\[
h^i(\scrF(-Z)) \twoheadrightarrow h^i(\scrF) \to h^i(\scrF\otimes^{\tL}_X \scrO_Z) = 0.
\]
Let $x\in Z \subseteq X$ and $t\in \scrO_{X,x}$ a local equation for $Z$. Since localization is exact, after localizing at $x$,
\[
h^i(\scrF(-Z)_x) \isom h^i(\scrF_x) \xrightarrow{\cdot t} h^i(\scrF_x) \to 0
\]
remains surjective. Using the hypothesis that $h^i(\scrF)$ is coherent, we see by Nakayama's lemma that $h^i(\scrF_x)\isom h^i(\scrF)_x=0$ for $x\in Z$. The claim now follows.
\end{proof}

\subsection{Torsion-free K\"ahler differentials}

The following lemmas will be useful for deformations.

\begin{lemma}\label{lem_tf_ses}
Let $\scrF_1, \scrF_2, \scrF_3$ be quasi-coherent sheaves on a scheme $X$. Suppose $\scrF_1, \scrF_3$ are torsion-free, and
\[
0\to \scrF_1\to \scrF_2\to \scrF_3\to 0
\]
is short exact. Then $\scrF_2$ is torsion-free.
\end{lemma}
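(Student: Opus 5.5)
The statement is local on $X$, so I will reduce to modules over a ring and then chase a single torsion element through the sequence. I take ``torsion-free'' in the sense used for quasi-coherent sheaves on a scheme: for every open affine $U=\operatorname{Spec}A\subseteq X$, no nonzero element of $\scrF(U)$ is annihilated by a nonzerodivisor of $A$. Equivalently, the torsion subsheaf vanishes. If the paper's convention is the stalkwise one (nonzerodivisors of $\scrO_{X,x}$ acting on $\scrF_x$), the argument below is the same with stalks in place of sections over $U$, since localization is exact.

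\textbf{Step 1: pass to modules.} Restrict to an open affine $U=\operatorname{Spec}A$. Because the $\scrF_i$ are quasi-coherent and $U$ is affine, taking sections over $U$ is exact. This gives a short exact sequence of $A$-modules
\[
0\to M_1\xrightarrow{\iota} M_2\xrightarrow{q} M_3\to 0,
\]
in which $M_1$ and $M_3$ are torsion-free.

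\textbf{Step 2: chase a torsion element.} Let $s\in M_2$ and let $a\in A$ be a nonzerodivisor with $as=0$. Then $a\,q(s)=q(as)=0$. Since $M_3$ is torsion-free, $q(s)=0$, so by exactness $s=\iota(s')$ for some $s'\in M_1$. Now $\iota(as')=as=0$, and $\iota$ is injective, so $as'=0$. Since $M_1$ is torsion-free, $s'=0$, and therefore $s=0$. Hence $M_2$ is torsion-free.

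\textbf{Step 3: globalize.} The affine opens cover $X$, so the torsion subsheaf of $\scrF_2$ vanishes and $\scrF_2$ is torsion-free.

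There is no real obstacle here. The only point needing care is matching the convention for ``torsion-free'' so that the nonzerodivisors used in the definition act compatibly on all three terms of the sequence. Working over a fixed affine $U$, or at a fixed stalk, guarantees this.
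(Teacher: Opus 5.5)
Your proof is correct and is essentially the paper's argument in element form. The paper notes that $\scrF_1 \cap \Tors\scrF_2 = 0$, so $\Tors\scrF_2$ injects into the torsion-free quotient $\scrF_3$ and hence vanishes. Your element chase on affine opens (or stalks) carries out exactly this reasoning.
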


\begin{proof}
Denote by $\Tors\scrF_2$ the torsion subsheaf of $\scrF_2$. Then, as subsheaves of $\scrF_2$, we have $\scrF_1\cap \Tors\scrF_2=0$ since $\scrF_1$ is torsion-free. Therefore, $\Tors \scrF_2$ descends to a natural subsheaf of the quotient
\[
\Tors \scrF_2 \hookrightarrow \scrF_2/\scrF_1 \isom \scrF_3.
\]
Thus we conclude $\Tors \scrF_2=0$ since $\scrF_3$ is also torsion-free, as desired.
\end{proof}

\begin{lemma}\label{lem_conormal}
Let $Z\subseteq X$ be a reduced effective Cartier divisor. Suppose $\Omega^p_Z$ is torsion-free for all $p\le m-1$. Then we have a short exact sequence
\[
0\to \Omega^{p-1}_Z(-Z) \to \Omega^p_X|_Z \to \Omega^p_Z \to 0
\]
for all $p\le m$.
\end{lemma}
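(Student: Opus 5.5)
The plan is to start from the conormal sequence for the Cartier divisor $Z\subseteq X$,
\[
\scrI/\scrI^2 \xrightarrow{d} \Omega^1_X|_Z \to \Omega^1_Z \to 0,
\]
with $\scrI=\scrO_X(-Z)$ and $\scrI/\scrI^2\isom \scrO_Z(-Z)$ invertible on $Z$. Taking exterior powers yields the standard right exact sequence
\[
\Omega^{p-1}_Z\otimes \scrI/\scrI^2 \to \Omega^p_X|_Z \to \Omega^p_Z \to 0,
\]
where the first map is $\omega\otimes \bar t\mapsto dt\wedge\tilde\omega$ for a local generator $t$ of $\scrI$. This is well defined because $dt\wedge dt=0$ and $t\,dt\wedge(\cdot)$ vanishes modulo $\scrI$. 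The sequence is exact on the right and in the middle for purely formal reasons, since the kernel of $\wedge^p$ of a surjection of modules is generated by the image of the kernel wedged with $\wedge^{p-1}$. So the only thing to prove is injectivity of the left map $\Omega^{p-1}_Z(-Z)\to\Omega^p_X|_Z$ for $p\le m$. This uses exactly the hypothesis on $\Omega^{p-1}_Z$ with $p-1\le m-1$.

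To prove injectivity, first work at the generic points of $Z$. Since $Z$ is reduced, it is generically smooth, and $Z$ is a Cartier divisor, so $X$ is smooth along the smooth locus $U$ of $Z$ (a regular Cartier divisor forces the ambient local ring to be regular). On $U$ the sequence $0\to \scrO_Z(-Z)\to\Omega^1_X|_Z\to\Omega^1_Z\to 0$ is a short exact sequence of locally free sheaves. Taking exterior powers of a split short exact sequence of vector bundles with a line subbundle $L$ gives $0\to \Omega^{p-1}_Z\otimes L\to \wedge^p\Omega^1_X|_Z\to\Omega^p_Z\to 0$ exact on $U$. Hence the kernel $K$ of $\Omega^{p-1}_Z(-Z)\to\Omega^p_X|_Z$ is supported on $Z\setminus U$, which is nowhere dense in $Z$.

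It follows that $K$ is a subsheaf of $\Omega^{p-1}_Z(-Z)$ supported on a nowhere dense closed subset, so it is torsion. Since $\Omega^{p-1}_Z$ is torsion-free by hypothesis, and twisting by the line bundle $\scrO_Z(-Z)$ preserves torsion-freeness, we get $K=0$. The case $p=0$ is trivial, and the case $p=1$ needs torsion-freeness of $\scrO_Z$, which holds because $Z$ is reduced.

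The main point requiring care is the right-exactness and middle-exactness of the wedge-power sequence, with the correct identification of the first map. I would cite the standard fact for $\wedge^p$ of a right exact sequence $M'\to M\to M''\to 0$, namely that $M'\otimes\wedge^{p-1}M\to\wedge^pM\to\wedge^pM''\to0$ is exact. The map then factors through $\wedge^{p-1}M''=\Omega^{p-1}_Z$ because $M'$ has rank one on $Z$: the composite $M'\otimes M'\otimes\wedge^{p-2}M\to\wedge^pM$ vanishes, since $m\wedge m=0$ for the image of a local generator. The generic smoothness argument is routine.
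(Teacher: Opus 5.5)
Your proposal is correct and follows essentially the same route as the paper. Both arguments take exterior powers of the conormal sequence via the standard right-exactness fact, factor the first map through $\Omega^{p-1}_Z(-Z)$ because the conormal image is locally generated by a single element, and deduce injectivity from generic exactness together with torsion-freeness of $\Omega^{p-1}_Z$. Your remark that a regular Cartier divisor forces the ambient local ring to be regular usefully makes explicit why the generic exactness holds on a dense open subset of $Z$, a point the paper leaves implicit.
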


\begin{proof}
Applying \cite[Tag 01CJ]{SP} to the conormal sequence:
\[
\scrO_Z(-Z) \xrightarrow{u} \Omega^1_X|_Z \to \Omega^1_Z \to 0,
\]
we obtain right exact sequences of its exterior powers
\[
\Omega^{p-1}_X|_Z(-Z) \to \Omega^p_X|_Z \to \Omega^p_Z \to 0
\]
for all $p\in \bbN$, where the first map is given locally by $\eta \otimes g\mapsto \eta\wedge u(g)$. We claim that it admits a factorization
\[\begin{tikzcd}
\Omega^{p-1}_X|_Z(-Z) \ar[d, "\gamma_{p-1}", twoheadrightarrow] \ar[dr, "\beta_{p-1}"] &
&
&
\\
\Omega^{p-1}_Z(-Z) \ar[r, dotted, "\alpha_{p-1}"] &
\Omega^p_X|_Z \ar[r] &
\Omega^p_Z \ar[r] &
0
\end{tikzcd}\]
for all $p\in \bbN$. It is sufficient to show that $\beta_{p-1}$ annihilates the kernel of $\gamma_{p-1}$. To this end, we consider the same sequence from \cite[Tag 01CJ]{SP} in one degree lower, tensored with $\scrO_Z(-Z)$:
\[
\Omega^{p-2}_X|_Z(-2Z) \to \Omega^{p-1}_X|_Z(-Z) \xrightarrow{\gamma_{p-1}} \Omega^{p-1}_Z(-Z) \to 0.
\]
Now consider the composition
\[
\Omega^{p-2}_X|_Z(-2Z)\to \Omega^{p-1}_X|_Z(-Z) \xrightarrow{\beta_{p-1}} \Omega^p_X|_Z,
\]
which is locally given by $\eta \otimes g \otimes g' \mapsto \eta \wedge u(g) \wedge u(g')$. Since $\scrO_Z(-Z)$ is a line bundle, we see that this map is zero. This proves the claim that $\beta_{p-1}$ descends to $\alpha_{p-1}$. Furthermore, since $\gamma_{p-1}$ is surjective, we see that the images of $\beta_{p-1}$ and $\alpha_{p-1}$ coincide, so the bottom row remains exact.

By hypothesis $\Omega^{p-1}_Z$ is torsion-free for all $p\le m$, hence so is $\Omega^{p-1}(-Z)$, since $Z$ is Cartier. Now, $\alpha_{p-1}$ is generically injective on $Z$ for all $p\in \bbN$, because the conormal sequence is exact on the smooth locus of $Z$ and $X$. Therefore, $\alpha_{p-1}$ is injective for all $p\le m$.
\end{proof}

\begin{lemma}\label{lem_tf_omega}
Let $Z\subseteq X$ be a reduced effective Cartier divisor. Suppose $\Omega^p_Z$ is torsion-free for all $p\le m$. Then $\Omega^p_X|_Z$ is torsion-free for all $p\le m$. Furthermore, $\Omega^p_X$ is torsion-free and $S_2$ in a neighborhood of $Z$ for all $p\le m$.
\end{lemma}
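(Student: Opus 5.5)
The first claim follows by induction on $p$ from Lemma \ref{lem_conormal} and Lemma \ref{lem_tf_ses}; the second reduces, via a flatness statement and the local criterion for depth, to properties of $\Omega^p_Z$. Since $\Omega^p_Z$ is torsion-free for all $p\le m$, in particular for $p\le m-1$, Lemma \ref{lem_conormal} applies and gives for each $p\le m$ the short exact sequence
\[
0\to \Omega^{p-1}_Z(-Z) \to \Omega^p_X|_Z \to \Omega^p_Z \to 0.
\]
Both outer terms are torsion-free: $\Omega^{p-1}_Z$ and $\Omega^p_Z$ are torsion-free by hypothesis, and twisting by the line bundle $\scrO_Z(-Z)$ preserves this. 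Lemma \ref{lem_tf_ses} then shows that $\Omega^p_X|_Z$ is torsion-free. For $p=0$ the statement is just that $\scrO_Z$ is torsion-free, which holds because $Z$ is reduced.

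For torsion-freeness of $\Omega^p_X$ near $Z$, I will first show that $\Omega^p_X$ is $t$-torsion-free, where $t$ is a local equation of $Z$ at a point $x\in Z$. Let $K=\ker(t\colon \Omega^p_{X,x}\to\Omega^p_{X,x})$. I plan to deduce $K=0$ by showing that $K$ is killed by $t$ and that $K\cap t\,\Omega^p_{X,x}=0$. I am not sure this goes through directly.

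The cleaner route is to treat the torsion subsheaf $T=\Tors\Omega^p_X$ directly. Its support is a proper closed subset $W$. Since $X$ is a variety, $Z$ is a Cartier divisor, so after shrinking to a neighborhood of $Z$ every component of $W$ meets $Z$. The image of $T$ in $\Omega^p_X|_Z$ is then supported on $W\cap Z$. I need this to be torsion on $Z$, i.e. that $W\cap Z$ contains no component of $Z$. This holds because $\Omega^p_X$ is locally free at the generic points of $Z$: since $Z$ is reduced, $X$ is smooth at codimension-one points of $X$ lying in $Z$ generically. Hence the image of $T$ in $\Omega^p_X|_Z$ vanishes, which gives $T\subseteq tT'$ where $T'=(T:t)$ is again torsion. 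Iterating yields $T\subseteq\bigcap_n t^n\Omega^p_{X,x}=0$ by Krull's intersection theorem. So $\Omega^p_X$ is torsion-free near $Z$, and in particular $t$ is a nonzerodivisor on it.

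For the $S_2$ property, let $\scrF=\Omega^p_X$. Since $t$ is a nonzerodivisor on $\scrF$, we have $\scrF|_Z=\scrF\otimes^{\tL}\scrO_Z$, and $\operatorname{depth}\scrF_y=\operatorname{depth}(\scrF|_Z)_y+1$ for $y\in Z$. It therefore suffices to show that $\scrF|_Z$ is $S_1$ on $Z$ (torsion-free over reduced $Z$ gives depth $\ge1$ at points of codimension $\ge1$), which we already have. At points $y\in Z$ this gives depth $\ge\min(2,\dim\scrO_{X,y})$. Points outside $Z$ specialize into $Z$, so $S_2$ holds in a neighborhood of $Z$ by openness of the $S_2$ locus for coherent sheaves on excellent schemes.

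The main obstacle is the torsion argument in the second step. Specifically, I must justify that $T|_Z$ maps to torsion, i.e. that $\Omega^p_X$ is free at the generic points of $Z$, using that $Z$ is generically smooth and Cartier.
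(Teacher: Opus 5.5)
Your proof of the first claim is the paper's (Lemma \ref{lem_conormal} plus Lemma \ref{lem_tf_ses}), and your torsion argument is correct. It is a hands-on version of the paper's remark that $Z$ contains no associated point of $\Omega^p_X$, because $\Omega^p_X$ is free at the generic points of $Z$ and $\Omega^p_X|_Z$ is torsion-free. It is shorter than you make it: $(T:t)=T$, so $T\subseteq tT$ and Nakayama gives $T_x=0$ at once. The step you flag as the main obstacle is not one. At a generic point $\eta$ of $Z$, the ring $\scrO_{Z,\eta}=\scrO_{X,\eta}/(t)$ is a field, so $\scrO_{X,\eta}$ is a DVR and $\Omega^p_X$ is free there.

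The real gap is the passage to $S_2$. Your depth count only gives $\operatorname{depth}\Omega^p_{X,y}\ge\min(2,\dim\scrO_{X,y})$ at points $y\in Z$. To use openness of the $S_2$ locus, you need $\Omega^p_{X,y}$ to be $S_2$ as a module for $y\in Z$, i.e.\ the same bound at every generization of $y$. Most of these generizations lie off $Z$, where $t$ is a unit and your computation says nothing. The fact that points off $Z$ specialize into $Z$ does not push depth bounds upward.

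This cannot be repaired, because the implication ``$t$ regular on $\scrF$ and $\scrF|_Z$ torsion-free $\Rightarrow$ $\scrF$ is $S_2$ near $Z$'' already fails for $\scrF=\scrO_X$, i.e.\ $p=m=0$. Let $R=\{f\in\bbC[u,v]: f(0,0)=f(1,0)\}$ and $Y=\Spec R$, the plane with two points glued to a point $y_0$. From $0\to R\to\bbC[u,v]\to\bbC\to 0$ one gets $\operatorname{depth}\scrO_{Y,y_0}=1<2$. Put $X=Y\times\bbA^1_t$ and $Z=\{t=0\}\cong Y$, a reduced Cartier divisor in a variety. Then $\scrO_X$ has depth $1$ at the generic point of the codimension-two curve $\{y_0\}\times\bbA^1$, which meets $Z$. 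So $X$ is $S_2$ on no neighborhood of $Z$, even though at $(y_0,0)$ the pointwise bound $2\ge\min(2,3)$ holds, exactly as your computation predicts.

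Thus the $S_2$ assertion of the statement is false as written, and the paper's ``by depth counting'' has the same gap. What both arguments actually prove is torsion-freeness near $Z$, plus the pointwise bound along $Z$. Torsion-freeness is the only part used later, in Theorem \ref{thm_deform_strict}.

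Genuine $S_2$ near $Z$ does follow if $\Omega^p_X|_Z$ is $S_2$ rather than merely $S_1$. Set $M=\Omega^p_{X,y}$, take a prime $\mathfrak{q}\not\ni t$ of $\scrO_{X,y}$, and let $\mathfrak{p}$ be minimal over $\mathfrak{q}+(t)$. Krull's principal ideal theorem and $\operatorname{depth}M_{\mathfrak{p}}\le\operatorname{depth}M_{\mathfrak{q}}+\dim(\scrO_{X,y}/\mathfrak{q})_{\mathfrak{p}}$ give $\operatorname{depth}M_{\mathfrak{q}}\ge\operatorname{depth}M_{\mathfrak{p}}-1=\operatorname{depth}(M/tM)_{\mathfrak{p}}\ge\min(2,\dim M_{\mathfrak{q}})$. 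Openness of the $S_2$ locus then gives a neighborhood of $Z$.
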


\begin{proof}
We apply Lemma \ref{lem_tf_ses} to the short exact sequence
\[
0\to \Omega^{p-1}_Z(-Z) \to \Omega^p_X|_Z \to \Omega^p_Z \to 0
\]
from Lemma \ref{lem_conormal}. This gives $\Omega^p_X|_Z$ torsion-free for $p\le m$, as needed. Since $\Omega^p_X$ is generically locally free on $Z$, knowing that $\Omega^p_X|_Z$ torsion-free implies that $Z$ contains no associated primes of $\Omega^p_X$. Therefore, any local equation of $Z$ is a non-zero-divisor on $\Omega^p_X$ and by depth counting $\Omega^p_X$ is $S_2$ and torsion-free on $X$, giving the last statement.
\end{proof}

\subsection{Wedge product, residue, and conormal maps}

Here we show a specific construction in the smooth case that will be useful later on.

\begin{lemma}\label{lem_sm_diagram}
Let $X$ be a smooth variety, $Z$ an effective Cartier divisor and $\widetilde{Z}\subseteq Z$ a smooth divisor that is a subscheme of $Z$. For $p\ge 0$, there is a natural morphism $\tau: \Omega^{p-1}_{\widetilde{Z}}(-Z) \to \Omega^p_X|_Z$ on $Z$, such that
\[\begin{tikzcd}[cramped]
\Omega^{p-1}_{\widetilde{Z}}(-Z) \ar[r, "\tau"] \ar[d] &
\Omega^p_X|_Z \ar[d] \\
\Omega^{p-1}_{\widetilde{Z}}(-\widetilde{Z}) \ar[r] &
\Omega^p_X|_{\widetilde{Z}}
\end{tikzcd}\]
commutes with the conormal map $\Omega^{p-1}_{\widetilde{Z}}(-\widetilde{Z}) \to
\Omega^p_X|_{\widetilde{Z}}$ and vertical maps induced by $\scrO_X(-Z)\to \scrO_X(-\widetilde{Z})$, and $\scrO_Z\to \scrO_{\widetilde{Z}}$.
\end{lemma}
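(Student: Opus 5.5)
The plan is to build $\tau$ locally, using the wedge product with $dt$, where $t$ is a local equation of $Z$, and then check that it glues and that the square commutes. Work locally on $X$ near a point of $\widetilde{Z}$; away from $\widetilde{Z}$ the source sheaf vanishes and there is nothing to do. Choose a local equation $s$ of the smooth divisor $\widetilde{Z}$ in $X$ and a local equation $t$ of $Z$. Since $\widetilde{Z}\subseteq Z$ as subschemes, we have $(t)\subseteq (s)$, so $t = s g$ for some regular function $g$.

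For a local section $\eta\otimes t$ of $\Omega^{p-1}_{\widetilde{Z}}\otimes \scrO_Z(-Z)$, lift $\eta$ to a form $\tilde\eta\in\Omega^{p-1}_X$ and set
\[
\tau(\eta\otimes t) = \tilde\eta\wedge dt \big|_Z .
\]
Everything hinges on well-definedness, i.e.\ independence of the lift $\tilde\eta$. Its indeterminacy lies in the kernel of $\Omega^{p-1}_X\to\Omega^{p-1}_{\widetilde{Z}}$, which is generated by $s\,\Omega^{p-1}_X$ and $ds\wedge\Omega^{p-2}_X$.
\begin{itemize}
\item For $\tilde\eta = ds\wedge\beta$ we get $ds\wedge\beta\wedge dt = ds\wedge\beta\wedge(g\,ds + s\,dg) = s\,ds\wedge\beta\wedge dg$.
\item For $\tilde\eta = s\beta$ we get $s\,\beta\wedge dt$.
\end{itemize}
In both cases the result lies in $s\,\Omega^p_X$. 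This is not obviously zero on $Z$, since $s$ divides $t$ but not conversely. This is the main obstacle.

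To resolve it, I would twist more carefully: define $\tau$ on $\eta\otimes t$ by $\tilde\eta\wedge g\,ds$ plus a correction, noting $dt = g\,ds + s\,dg$. The cleaner route is to use $t\cdot(\tilde\eta\wedge ds/s) = g\,\tilde\eta\wedge ds$, which is the residue-type map of the section title. The modified definition is
\[
\tau(\eta\otimes t) = g\,\tilde\eta\wedge ds\big|_Z .
\]
Checking the lift indeterminacy again:
\begin{itemize}
\item $ds\wedge\beta$ gives $g\,ds\wedge\beta\wedge ds = 0$.
\item $s\beta$ gives $sg\,\beta\wedge ds = t\,\beta\wedge ds$, which vanishes on $Z$.
\end{itemize}
So this $\tau$ is well defined.

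Next, independence of the remaining choices:
\begin{itemize}
\item Replacing $s$ by $us$ with $u$ a unit replaces $g$ by $u^{-1}g$ and $ds$ by $u\,ds + s\,du$. The extra term $g u^{-1} s\,\tilde\eta\wedge du = u^{-1}t\,\tilde\eta\wedge du$ vanishes on $Z$.
\item Replacing $t$ by $vt$ with $v$ a unit scales both sides by $v$, since $g$ becomes $vg$. Hence $\tau$ is $\scrO_Z(-Z)$-linear.
\end{itemize}
Linearity in $\eta$ over $\scrO_{\widetilde{Z}}$ needs $\scrO_X$-linearity mod $\ker$, which holds by the same computation. Therefore the local definitions glue to a global morphism on $Z$.

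Finally, for commutativity, restrict to $\widetilde{Z}$. The left vertical map sends $\eta\otimes t\mapsto \eta\otimes t = \eta\otimes g s$, i.e.\ $g|_{\widetilde{Z}}\,\eta\otimes s$. The conormal map sends $\eta\otimes s\mapsto \tilde\eta\wedge ds|_{\widetilde{Z}}$, so the bottom composite gives $g\,\tilde\eta\wedge ds|_{\widetilde{Z}}$. This agrees with restricting $\tau(\eta\otimes t)$ along $\Omega^p_X|_Z\to\Omega^p_X|_{\widetilde{Z}}$, and the square commutes.
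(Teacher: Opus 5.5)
Your proposal is correct. The map you end up with, $\tau(\eta\otimes t)=g\,\tilde\eta\wedge ds|_Z=t\cdot(\tilde\eta\wedge ds/s)|_Z$, is exactly the paper's $\tau$; it is the local formula the paper itself uses in the proof of Lemma~\ref{lem_sm_wedge}.

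The route, however, is different. The paper never writes down a local formula. It starts from the residue sequence $0\to\Omega^p_X(-Z)\to\Omega^p_X(\log\widetilde{Z})(-Z)\to\Omega^{p-1}_{\widetilde{Z}}(-Z)\to 0$. It then observes that $\Omega^p_X(\log\widetilde{Z})(-Z)\subseteq\Omega^p_X(\log\widetilde{Z})(-\widetilde{Z})\subseteq\Omega^p_X$, so the composite to $\Omega^p_X|_Z$ kills $\Omega^p_X(-Z)$, and $\tau$ is defined as the induced map on the cokernel. Commutativity of the square comes from mapping this sequence to the residue sequence twisted by $-\widetilde{Z}$ and using surjectivity of the residue maps.

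That global construction gives canonicity for free, with no lift, chart or gluing checks. The price is that it rests on exactness of the logarithmic residue sequence, and hence on smoothness of $\widetilde{Z}$ and $X$.

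Your hands-on version has to verify independence of the lift $\tilde\eta$, of $s$ and of $t$, and you do this correctly. In exchange, it only uses two facts: $s$ is a nonzerodivisor, and $\ker(\Omega^{p-1}_X\to\Omega^{p-1}_{\widetilde{Z}})=s\,\Omega^{p-1}_X+ds\wedge\Omega^{p-2}_X$. So it produces $\tau$ and the commutative square for any effective Cartier divisor $\widetilde{Z}\subseteq Z$, smooth or not.

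Incidentally, your discarded first attempt $\tilde\eta\wedge dt$ differs from $\tau$ by $s\,\tilde\eta\wedge dg$, which vanishes on $\widetilde{Z}$. This is exactly what Lemma~\ref{lem_sm_wedge} records when $Z$ is a fiber.
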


\begin{proof}
We first construct $\tau$. Since $\widetilde{Z}$ is smooth, recall the residue sequence (cf. \cite[2.3]{EV92}) on $X$, twisted by $\scrO_X(-Z)$:
\[
0 \to \Omega^p_X(-Z) \to \Omega^p_X(\log \widetilde{Z})(-Z) \to \Omega^{p-1}_{\widetilde{Z}}(-Z) \to 0
\]
On the other hand, we also have the following chain of inclusions of subsheaves:
\[
\Omega^p_X(-Z) \subseteq \Omega^p_X(\log \widetilde{Z})(-{Z}) \subseteq \Omega^p_X(\log \widetilde{Z})(-\widetilde{Z}) \subseteq \Omega^p_X \to \Omega^p_X|_Z
\]
This gives a map $\Omega^p_X(\log \widetilde{Z})(-{Z}) \to \Omega^p_X|_Z$, which kills the subsheaf $\Omega^p_X(-Z)$. Therefore, it factors through the cokernel, which by the residue sequence is precisely
\[
\tau: \Omega^{p-1}_{\widetilde{Z}}(-Z) = \coker \left( \Omega^p_X(-Z) \to \Omega^p_X(\log \widetilde{Z})(-Z)  \right) \to \Omega^p_X|_Z.
\]
It remains to show that the claimed diagram commutes. Since $\widetilde{Z}$ is smooth, recall the conormal sequence
\[
0 \to \Omega^{p-1}_{\widetilde{Z}}(-\widetilde{Z}) \xrightarrow{\sigma} \Omega^p_X|_{\widetilde{Z}} \to \Omega^p_{\widetilde{Z}} \to 0.
\]
As above, the composition from the residue sequence agrees with the canonical map $\Omega^p_X(\log \widetilde{Z})(-\widetilde{Z})\to \Omega^p_X|_{\widetilde{Z}}$. In particular, we see that the conormal map $\sigma$ sits in a larger diagram
\[\begin{tikzcd}
\Omega^p_X(\log \widetilde{Z})(-{Z}) \ar[r] \ar[d] &
\Omega^{p-1}_{\widetilde{Z}}(-Z) \ar[r, "\tau"] \ar[d] &
\Omega^p_X|_Z \ar[d] \\
\Omega^p_X(\log \widetilde{Z})(-\widetilde{Z}) \ar[r] &
\Omega^{p-1}_{\widetilde{Z}}(-\widetilde{Z}) \ar[r, "\sigma"] &
\Omega^p_X|_{\widetilde{Z}},
\end{tikzcd}\]
where the outer square is commutative. It follows that the right square is commutative since the first two horizontal maps are surjective. This construction works as long as $p\ge 0$, with the observation that the $p=0$ case is trivial, as desired.
\end{proof}

For a smooth variety $X$ that is a family over a smooth curve $B$ with $\f:X\to B$, recall the wedge map of differential forms
\[
\wedge_p:\Omega^p_X\otimes \f^*\omega_B \to \Omega^{p+1}_X.
\]
On a fiber $Z:= X_b$ for $b\in B$, there is a canonical identification
\[
\scrO_Z(-Z) \isom \f_b^*(\frakm_b/\frakm_b^2) \isom \f^*\omega_B|_Z.
\]
Throughout this paper, we will fix a local parameter at $b\in B$, with which we can further identify
\[
\scrO_Z(-Z)\isom \f^*\omega_B|_Z \isom \scrO_Z.
\]
It shall be understood that whenever we omit the term of $\scrO_Z(-Z)$ for a fiber $Z=X_b$, it is through this identification with respect to a fixed parameter. Now, restricting to any closed subscheme $\widetilde{Z}\subseteq X_b$, we have a map
\[
\wedge_{p-1}|_{\widetilde{Z}}:\Omega^{p-1}_X|_{\widetilde{Z}} \to \Omega^{p}_X|_{\widetilde{Z}}.
\]

\begin{lemma}\label{lem_sm_wedge}
Let $f:X\to B$ be a dominant morphism from a smooth complex variety $X$ to a smooth curve $B$. Let $Z=X_b$ be the fiber over a point $b\in B$, and $\widetilde{Z}\subseteq Z$ a smooth divisor of $X$ that is a subscheme of $Z$. Then the composition
\[
\Omega^{p-1}_X(-Z)|_Z \to \Omega^{p-1}_{\widetilde{Z}}(-Z) \xrightarrow{\tau} \Omega^p_X|_Z
\]
with $\tau$ as in Lemma \ref{lem_sm_diagram}, agrees with $\wedge_{p-1}|_{\widetilde{Z}}$ after restricting to $\widetilde{Z}$.
\end{lemma}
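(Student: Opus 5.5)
The plan is a local computation. Both maps are morphisms of coherent sheaves on $\widetilde{Z}$ with target $\Omega^p_X|_{\widetilde{Z}}$, which is locally free. It therefore suffices to check equality on local sections after restricting to $\widetilde{Z}$.

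\emph{Setup.} Fix the local parameter $s$ at $b$ and put $t=\f^*s$, a local equation of $Z$. Under the fixed identification $\scrO_Z(-Z)\isom \f^*\omega_B|_Z\isom\scrO_Z$, the generator $t$ of $\scrO_X(-Z)$ corresponds to $\f^*ds = dt$. Hence $\wedge_{p-1}$ sends $\eta\otimes t\mapsto \eta\wedge dt$. Working near a point of $\widetilde{Z}$, choose a local equation $z$ of the smooth divisor $\widetilde{Z}$. Since $\widetilde{Z}\subseteq Z$ as subschemes, we can write $t=u z^{k}$, and more generally $t=gz$ for some $g\in\scrO_X$.

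\emph{Chase the first map.} The map $\Omega^{p-1}_X(-Z)|_Z\to \Omega^{p-1}_{\widetilde{Z}}(-Z)$ should be unwound through the residue sequence. A class $\eta\otimes t$ lifts to the log form $\eta\wedge \frac{dz}{z}\cdot t\in\Omega^p_X(\log\widetilde{Z})(-Z)$, whose residue is $\eta|_{\widetilde{Z}}\otimes t$, up to sign conventions. So $\tau$ sends $\eta\otimes t$ to the image of $t\,\eta\wedge\frac{dz}{z} = g\,\eta\wedge dz$ in $\Omega^p_X|_Z$.

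\emph{Compare on $\widetilde{Z}$.} Restricted to $\widetilde{Z}$, this becomes $g\,\eta\wedge dz$. On the other side, $dt = g\,dz + z\,dg$, so $\eta\wedge dt|_{\widetilde{Z}} = g\,\eta\wedge dz$, because $z$ vanishes on $\widetilde{Z}$. The two agree. Alternatively, one can argue via Lemma \ref{lem_sm_diagram}: restricting to $\widetilde{Z}$ reduces $\tau$ to the conormal map $\sigma$ composed with $\scrO(-Z)\to\scrO(-\widetilde{Z})$, i.e. $t\mapsto gz$, and $\sigma(\eta\otimes z)=\eta\wedge dz$.

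\emph{Main obstacle.} The hard part is bookkeeping: the identification of $\scrO_Z(-Z)$ with $\f^*\omega_B|_Z$ via $t\leftrightarrow dt$, and confirming that the residue and conormal conventions match, including the sign. Once these are consistent, the identity $dt\equiv g\,dz \bmod z$ finishes the argument, and naturality shows the local computations glue.
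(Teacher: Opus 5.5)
Your proposal is correct and is essentially the paper's proof: you write the local equation of $Z$ as $t=gz$ with $z$ an equation of $\widetilde{Z}$, compute $\tau(\eta\otimes t)$ via the log lift $t\,\frac{dz}{z}\wedge\eta=g\,dz\wedge\eta$, and compare this with $dt\wedge\eta=(g\,dz+z\,dg)\wedge\eta$ modulo $z$, all up to sign convention. (Your aside that $t=uz^k$ is false in general when other components of $Z$ pass through the point, but the argument never uses it.)
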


\begin{proof}
The first map is induced by the restriction of differential forms $\Omega^{p-1}_X|_Z \to \Omega^{p-1}_{\widetilde{Z}}$. Note also that since $Z=X_b$ is a fiber, we have $\scrO_X(-Z)|_Z\isom \scrO_Z$ and hence also $\scrO_X(-Z)|_{\widetilde{Z}}\isom \scrO_{\widetilde{Z}}$.

We need only check that the two maps agree locally on X and B. To this end, choose a local parameter $t$ at $b\in B$, a local equation $g$ for $\widetilde{Z}$, and lastly $h$ such that $gh=\f^* t$ is a local equation for $Z$. Now let $\xi$ be a local section of $\Omega^{p-1}_X$. Note that (up to sign convention)
\[
\wedge_{p-1}(\xi \otimes \f^*dt) = h\, dg \wedge \xi + g\, dh \wedge \xi.
\]
Hence after restricting to $\widetilde{Z}$ where $g=0$, we obtain
\[
\wedge_{p-1}(\xi \otimes \f^*dt)|_{\widetilde{Z}} = h\, dg\wedge \xi|_{\widetilde{Z}}.
\]
On the other hand, using Lemma \ref{lem_sm_diagram} and \cite[2.2]{EV92} for the local form for the residue class, we have
\[
\tau(\bar{\xi} \otimes gh) = \left( \frac{dg}{g} \wedge \xi \right) \otimes gh = h\, dg\wedge \xi.
\]
Therefore the two maps agree after further restriction to $\widetilde{Z}$, as needed.
\end{proof}

\subsection{Derived conormal maps.}

For some of our applications, we will also need a derived version of Lemma \ref{lem_sm_diagram}. We will work with this setup: let $Z\subseteq X$ be a Cartier divisor, both $X, Z$ possibly singular. Let $\pi_\alpha: X_\alpha \to X$ be a morphism with $X_\alpha$ smooth, and $Z_\alpha= X_\alpha \times_X Z$. Let $\widetilde{Z_\alpha}\subseteq Z_\alpha$ be a smooth divisor in $X_\alpha$ that is a closed subscheme of $Z_\alpha$.

\begin{lemma}\label{lem_sm_diagram_derived}
Notation as above. For $p\ge 0$, there is a natural morphism on $X_\alpha$:
\[
\tau_\alpha: \Omega^{p-1}_{\widetilde{Z}_\alpha}\otimes \pi_\alpha^*\scrO_X(-Z) \to \Omega^p_{X_\alpha} \otimes L\pi_\alpha^*\scrO_Z.
\]
Furthermore, together with the map induced by $L\pi^*\scrO_Z \to \scrO_{Z_\alpha}$, the composition
\[
\Omega^{p-1}_{\widetilde{Z}_\alpha}\otimes \pi_\alpha^*\scrO_X(-Z) \xrightarrow{\tau_\alpha} \Omega^p_{X_\alpha} \otimes L\pi_\alpha^*\scrO_Z \to \Omega^p_{X_\alpha}|_{Z_\alpha}
\]
agrees with the map $\tau$ constructed in Lemma \ref{lem_sm_diagram} when $Z_\alpha$ is a divisor. Consequently, if $\widetilde{Z}_\alpha = Z_\alpha$ is smooth, then $\tau_\alpha$ agrees with the usual conormal map $\Omega^{p-1}_{\widetilde{Z}_\alpha} (-\widetilde{Z}_\alpha) \to \Omega^p_{X_\alpha}|_{\widetilde{Z}_\alpha}$.
\end{lemma}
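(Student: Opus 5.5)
The plan is to realize $L\pi_\alpha^*\scrO_Z$ by an explicit two-term complex and to build $\tau_\alpha$ from the same residue construction used in Lemma \ref{lem_sm_diagram}, now carried out at the level of complexes. Locally on $X$, $Z$ is cut out by a local equation $t$. Since $\scrO_Z$ has the locally free resolution
\[
0\to \scrO_X(-Z)\to \scrO_X\to \scrO_Z\to 0,
\]
we get $L\pi_\alpha^*\scrO_Z \qis [\pi_\alpha^*\scrO_X(-Z)\xrightarrow{s} \scrO_{X_\alpha}]$, a complex in degrees $-1, 0$, where $s$ is multiplication by $\pi_\alpha^*t$. Tensoring with the locally free sheaf $\Omega^p_{X_\alpha}$, the target of $\tau_\alpha$ is the complex
\[
K:=\big[\Omega^p_{X_\alpha}\otimes\pi_\alpha^*\scrO_X(-Z)\to \Omega^p_{X_\alpha}\big].
\]
This description is global and canonical, being the pullback of the canonical resolution $\scrO_X(-Z)\to\scrO_X$. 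Note that $s$ may fail to be injective when $\pi_\alpha(X_\alpha)\subseteq Z$, i.e.\ when $Z_\alpha=X_\alpha$; the derived pullback handles that case, which is why the statement uses $L\pi_\alpha^*$.

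Next I will replace the source by the two-term complex coming from the residue sequence. Write $L:=\pi_\alpha^*\scrO_X(-Z)$. Twisting the residue sequence by $L$ gives
\[
\Omega^{p-1}_{\widetilde Z_\alpha}\otimes L \qis \big[\Omega^p_{X_\alpha}\otimes L \hookrightarrow \Omega^p_{X_\alpha}(\log\widetilde Z_\alpha)\otimes L\big],
\]
again in degrees $-1,0$. I then define a map of complexes from this complex to $K$. In degree $-1$ it is the identity. In degree $0$ it is the map $\Omega^p_{X_\alpha}(\log\widetilde Z_\alpha)\otimes L\to\Omega^p_{X_\alpha}$ given by $\omega\otimes \ell\mapsto \ell\cdot\omega$, where $\ell$ is viewed as a function through $L\subseteq\scrO_{X_\alpha}$.

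The key point, and the only real check, is that this degree-$0$ map is well defined, i.e.\ lands in $\Omega^p_{X_\alpha}$ rather than in log forms. Locally $L$ is generated by $\pi_\alpha^*t$. Since $\widetilde Z_\alpha\subseteq Z_\alpha$ as subschemes, the local equation $g$ of $\widetilde Z_\alpha$ divides $\pi_\alpha^*t$. Hence $\pi_\alpha^*t\cdot\frac{dg}{g}\wedge\eta$ is regular, so $L\cdot\Omega^p(\log\widetilde Z_\alpha)\subseteq\Omega^p$. The square of the map of complexes commutes, because both composites $\Omega^p\otimes L\to\Omega^p$ are multiplication by the section. Naturality in the data is clear, since no choices are made beyond the canonical resolutions. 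This defines $\tau_\alpha$ in the derived category, after inverting the residue quasi-isomorphism.

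For the compatibility, the map $L\pi_\alpha^*\scrO_Z\to\scrO_{Z_\alpha}$ corresponds to $K\to h^0(K)=\Omega^p_{X_\alpha}/L\Omega^p_{X_\alpha}=\Omega^p_{X_\alpha}|_{Z_\alpha}$. Composing with the map of complexes above, the degree-$0$ part becomes $\Omega^p(\log\widetilde Z_\alpha)\otimes L\to\Omega^p\to\Omega^p|_{Z_\alpha}$, which kills $\Omega^p\otimes L$. So on $h^0$ it is exactly the factorization through the cokernel that defined $\tau$ in Lemma \ref{lem_sm_diagram}, with $\scrO_{X_\alpha}(-Z_\alpha)=L$ when $Z_\alpha$ is a divisor. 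When $\widetilde Z_\alpha=Z_\alpha$ is smooth, $L=\scrO(-\widetilde Z_\alpha)$ and $s$ is injective, so $K\qis\Omega^p|_{\widetilde Z_\alpha}$. Then Lemma \ref{lem_sm_diagram}, with the vertical maps equal to identities, identifies $\tau_\alpha$ with the conormal map $\sigma$.

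The main obstacle I expect is bookkeeping rather than substance. One must ensure that the local check glues and that the identifications are canonical, which holds because $L$ is a line bundle pulled back from $X$.
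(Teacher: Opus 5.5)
Your proof is correct. It packages the construction differently from the paper, though the underlying idea is the same. Both proofs represent $L\pi_\alpha^*\scrO_Z$ by the two-term complex $[L\to\scrO_{X_\alpha}]$ with $L=\pi_\alpha^*\scrO_X(-Z)$. Both then build a map of two-term complexes that is the identity in degree $-1$, and the key input in each is that $L\to\scrO_{X_\alpha}$ factors through $\scrO_{X_\alpha}(-\widetilde Z_\alpha)$.

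\textbf{What the paper does.} It first constructs a $p$-independent map
\[
\sigma_\alpha\colon L\otimes\scrO_{X_\alpha}(\widetilde Z_\alpha)|_{\widetilde Z_\alpha}\to L\pi_\alpha^*\scrO_Z .
\]
This comes from a morphism from the twisted restriction sequence of $\widetilde Z_\alpha$ to the pulled-back restriction sequence of $Z$. It then defines $\tau_\alpha$ as the twisted conormal map of $\widetilde Z_\alpha$ followed by $\mathrm{Id}\otimes\sigma_\alpha$.

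\textbf{What you do.} You resolve the source by the twisted residue sequence and map straight into $\Omega^p_{X_\alpha}\otimes[L\to\scrO_{X_\alpha}]$. Since $\Omega^p_{X_\alpha}(\log\widetilde Z_\alpha)\subseteq\Omega^p_{X_\alpha}(\widetilde Z_\alpha)$, your degree-$0$ map factors through $\Omega^p_{X_\alpha}\otimes L(\widetilde Z_\alpha)$. So your map of complexes factors through the paper's, and the two definitions of $\tau_\alpha$ agree.

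\textbf{What each route buys.} In your version, compatibility with $\tau$ is literally by construction: taking $h^0$ reproduces the formula defining $\tau$ in Lemma \ref{lem_sm_diagram}. The final conormal claim then needs that lemma's square with $Z_\alpha=\widetilde Z_\alpha$. In the paper's version the roles are reversed. The conormal claim is immediate, because $\sigma_\alpha$ is then the resolution quasi-isomorphism. Compatibility with $\tau$ is what uses the commutative square of Lemma \ref{lem_sm_diagram}.

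\textbf{One wording fix.} You write ``$L\subseteq\scrO_{X_\alpha}$''; it should read ``via the map $L\to\scrO_{X_\alpha}$''. When $Z_\alpha=X_\alpha$ this map is zero, so your degree-$0$ map is zero. The square still commutes, and this matches the paper, whose middle vertical map is also zero in that case.
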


\begin{proof}
The purpose of this lemma is to address the case when $\pi_\alpha(X_\alpha)\subseteq Z$, which can occur in a cubical hyperresolution. When $\pi_\alpha$ is dominant, this would follow from Lemma \ref{lem_sm_diagram} as $\scrO_Z$ and $\pi_\alpha$ are $\Tor$-independent by \cite[Lemma 4.3]{JK25}, that is, $L\pi_\alpha^*\scrO_Z \qis \pi_\alpha^*\scrO_Z \isom \scrO_{Z_\alpha}$. Note that in general, $Z_\alpha$ need not be a divisor on $X_\alpha$. 

Consider the following diagram
\[\begin{tikzcd}
0 \ar[r] &
\pi_\alpha^*\scrO_X(-Z) \ar[r] \ar[d, equal] &
\pi_\alpha^*\scrO_X(-Z) \otimes \scrO_{X_\alpha} (\widetilde{Z}_\alpha) \ar[r] \ar[d] &
\pi_\alpha^*\scrO_X(-Z) \otimes \scrO_{X_\alpha} (\widetilde{Z}_\alpha)|_{\widetilde{Z}_\alpha} \ar[r] \ar[d, dotted, "\sigma_\alpha"] &
0 \\
&
\pi_\alpha^*\scrO_X(-Z) \ar[r] &
\scrO_{X_\alpha} \ar[r] &
L\pi_\alpha^* \scrO_Z. \ar[r, "+1"] &
\,
\end{tikzcd}\]
The first row comes from the restriction sequence associated to $\widetilde{Z}_\alpha$, and the second row is obtained by derived pullback of that associated to $Z$ on $X$, using that $Z_\alpha=X_\alpha\times_X Z$. Since $\widetilde{Z_\alpha}$ is contained scheme theoretically in $Z_\alpha$, we have a factorization 
\[
\pi^*\scrO_X(-Z)\to \scrO_{X_\alpha}(-\widetilde{Z}_\alpha) \to \scrO_{X_\alpha}.
\]
Tensoring the first map with $\scrO_{X_\alpha}(\widetilde{Z}_\alpha)$, we obtain the middle vertical map in the diagram above, and we see that the first square commutes. Now, with $[\pi^*_\alpha \scrO_{X}(-Z) \to \scrO_{X_\alpha}]$ as the specific representative of $L\pi^*_\alpha\scrO_Z$, this gives a canonical map $\sigma_\alpha$.

Now, tensor the conormal map $\Omega^{p-1}_{\widetilde{Z}_\alpha} (-\widetilde{Z}_\alpha) \to \Omega^p_{X_\alpha}|_{\widetilde{Z}_\alpha}$ with $\pi_\alpha^*\scrO_X(-Z) \otimes \scrO_{X_\alpha} (\widetilde{Z}_\alpha)|_{\widetilde{Z}_\alpha}$. After using the projection formula, we have the composition
\[
\tau_\alpha: \Omega^{p-1}_{Z_\alpha} \otimes \pi_\alpha^*\scrO_X(-Z) \to \Omega^p_{X_\alpha} \otimes \pi_\alpha^*\scrO_X(-Z) \otimes \scrO_{X_\alpha} (\widetilde{Z}_\alpha)|_{\widetilde{Z}_\alpha} \xrightarrow{\Id \otimes \sigma_\alpha} \Omega^p_{X_\alpha} \otimes L\pi_\alpha^*\scrO_Z.
\]
Therefore we constructed the $\tau_\alpha$ that we required. If we further compose with $L\pi^*\scrO_Z \to \scrO_{Z_\alpha}$, this is simply taking the zeroth cohomology of $\tau_\alpha$. By construction this agrees with the map in Lemma \ref{lem_sm_diagram}, and the last claim on when $\widetilde{Z}_\alpha = Z_\alpha$ follows.
\end{proof}

\medskip

\section{A tale of two relative Du Bois complexes}\label{section_2relDB}

For the purpose of base change, it is in fact more natural to consider an alternative definition of the relative Du Bois complex. It is similar to the construction in \cite[Theorem-Definition 1.3]{Kov96}, with one key difference: the pieces $\uOmega^p_{X/B}$ are built upward from $p=0$ instead of downward from $p=\dim \f$. For clarity, we will use the same notation as in \emph{loc. cit.} and omit details that are exactly the same.

\begin{construction}\label{cons_relDB}
Let $\f:X\to B$ be a dominant morphism from a complex variety $X$ to a smooth curve $B$. Fix a cubical hyperresolution $\pi_\kdot: X_\kdot \to X$. Then as in \cite[1.3]{Kov96}, let $K^\kdot_p\qis \uOmega^p_X$ be a specific representative of the Du Bois complex, together with a natural wedge map
\[
\wedge_p: K^\kdot_p \otimes \f^*\omega_B \to K^\kdot_{p+1},
\]
with the understanding that $\wedge_p$ is the zero map for $p< 0$ or $p>\dim X$. Denote also $\wedge'_p = \wedge_p\otimes \Id_{\f^*\omega_B}$ and observe that $\wedge_p\circ \wedge'_{p-1}=0$.

Our goal is to define, for each $p\in \bbZ$, a complex $M^\kdot_p$, with morphisms
\begin{align*}
w''_p:& K^\kdot_p\otimes \f^*\omega_B\to M^\kdot_p\otimes \f^*\omega_B, \\
w'_p:& M^\kdot_p \otimes \f^*\omega_B\to K^\kdot_{p+1}
\end{align*}
satisfying
\[
\wedge_p=w'_p\circ w''_p \quad \text{and} \quad w''_p\circ \wedge'_{p-1} =0.
\]

For $p<0$, we define $M^\kdot_p=0$ and the morphisms $w'_p, w''_p$ each to be the zero morphism, and see that the required conditions are easily satisfied. 

For $p\ge 0$, suppose by induction this has been done for all $q<p$. Define $M^\kdot_{p} := \Cone(w'_{p-1})$, that is
\[
M^j_p = (M^{j+1}_{p-1}\otimes \f^*\omega_B) \oplus K^j_p
\]
with differentials given by the matrix
\[
d^j_{M^\kdot_p} = \begin{pmatrix}
d^{j+1}_{M^\kdot_{p-1}}\otimes \Id_{\f^*\omega_B} & 0 \\
-w'_{p-1} & -d^j_{K^\kdot_p}
\end{pmatrix}.
\]
We can now define
\begin{align*}\label{eqn_relDB_maps}
w''_p := \begin{pmatrix}
0 \\
\Id_{K^\kdot_p}
\end{pmatrix}\otimes \Id_{\f^*\omega_B}:& K^\kdot_p\otimes \f^*\omega_B\to M^\kdot_p\otimes \f^*\omega_B, \\
w'_p := (0, \wedge_p):& M^\kdot_p \otimes \f^*\omega_B\to K^\kdot_{p+1}.
\end{align*}
In particular, 
\[
w_p:= w''_p\otimes \f^*\omega^{-1}_B:K^\kdot_p\to M^\kdot_p
\]
is simply the map to the cone. We summarize this information in the following diagram
\[\begin{tikzcd}
M^\kdot_{p-1}\otimes \f^*\omega_B \ar[r,"w'_{p-1}"] &
K^\kdot_p \ar[r, "w_p"] &
M^\kdot_p \ar[r, "+1"] &
\, \\
K^\kdot_{p-1}\otimes \f^*\omega_B \ar[u,"w''_{p-1}"] \ar[ur, "\wedge_{p-1}"'] \ar[urr, bend right=15, "0"']&
\, &
\, &
\,
\end{tikzcd}\]
The two desired properties are the two commuting triangles in the diagram, noting that
\[
w''_p\circ \wedge'_{p-1} = (w_p\circ \wedge_{p-1})\otimes \Id_{\f^*\omega_B} = 0.
\]
These follow from the constructions of $w''_p$ and $w'_p$.

By the same argument as in \cite{Kov96}, since the wedge map $\wedge_p$ is natural, the constructions of $M^\kdot_p, w'_p, w''_p$ are independent of the hyperresolution chosen. This concludes the constructions for all $p\in \bbZ$.
\end{construction}

This gives us properties very similar to those in \cite[Theorem-Definition 1.3]{Kov96}. We state them together for ease of comparison.

\begin{theorem}\label{thm_2relDB}
Let $\f:X\to B$ be a dominant morphism from a complex variety $X$ to a smooth curve $B$. For each $p\in \bbZ$ there exists the \emph{right} relative Du Bois complex
\[
\uOmega^{p, +}_{X/B}\in D^b_{\rm coh}(X),
\]
and the \emph{left} relative Du Bois complex
\[
\uOmega^{p, -}_{X/B}\in D^b_{\rm coh}(X)
\]
with the following properties. We use $*\in \{+, -\}$ to denote properties that hold for either complex.
\begin{enumerate}
\item The natural map $\wedge_p$ factors through $\uOmega^{p, *}_{X/B}\otimes \f^*\omega_B$.
\item We have
\[
\uOmega^{p-1,*}_{X/B}\otimes \f^*\omega_B \to
\uOmega^p_X \to
\uOmega^{p,*}_{X/B} \xrightarrow{+1}
\]
a distinguished triangle in $D(X)$.

\item The map $\uOmega^p_X\to \uOmega^{p,*}_{X/B}$ is functorial for morphisms over $B$.

\item If $\f$ is smooth, then 
\[
\uOmega^{p,*}_{X/B}\qis \Omega^p_{X/B}.
\]

\item We have 
\begin{align*}
\uOmega^{p, +}_{X/B}=0& \quad p<0, \\
\uOmega^{p, -}_{X/B}=0& \quad p>\dim \f.
\end{align*}

\item Let $\{\nu_i:U_i\to X\}$ be a finite open cover of $X$. Then 
\[
\uOmega^{p,*}_{X/B}\qis R(\nu_\kdot)_*\uOmega^{p, *}_{U_\kdot/B}.
\]
\end{enumerate}
\end{theorem}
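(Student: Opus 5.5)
We take $\uOmega^{p,+}_{X/B}$ to be the class in $D^b_{\rm coh}(X)$ of the complex $M^\kdot_p$ from Construction \ref{cons_relDB}. We take $\uOmega^{p,-}_{X/B}$ to be the complex of \cite[Theorem-Definition 1.3]{Kov96}, as recalled in Theorem \ref{def_relDB}. For the left complex, every item is either in \cite{Kov96, KT25} or follows by the same arguments as for the right complex. So the plan is to verify (1)--(6) for $M^\kdot_p$, arguing by induction on $p$ exactly as the construction proceeds.

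\textbf{Items (1), (2), (5).} Item (1) is the identity $\wedge_p = w'_p\circ w''_p$ built into the construction. For (2), $M^\kdot_p=\Cone(w'_{p-1})$ by definition, so $M^\kdot_{p-1}\otimes \f^*\omega_B\to K^\kdot_p\to M^\kdot_p\xrightarrow{+1}$ is a distinguished triangle. With $K^\kdot_p\qis\uOmega^p_X$ this is the triangle in (2). Item (5) for $+$ holds because $M^\kdot_p=0$ for $p<0$. Coherence of the cohomology, i.e.\ membership in $D^b_{\rm coh}(X)$, follows inductively from the triangle in (2). The base case is $M^\kdot_0 = \Cone(0\to K^\kdot_0)\qis \uOmega^0_X$, and $\uOmega^p_X\in D^b_{\rm coh}(X)$ together with $\f^*\omega_B$ invertible gives the inductive step.

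\textbf{Items (3), (6).} For (3), naturality of $\wedge_p$ for morphisms over $B$, realized on hyperresolutions, gives compatible maps $K^\kdot_p\to K'^\kdot_p$. Inductively these induce maps of cones $M^\kdot_p\to M'^\kdot_p$ compatible with $w_p$. Item (6) is the same Mayer--Vietoris/Čech argument as in \cite{Kov96}. Since $K^\kdot_p$ satisfies cohomological descent for open covers and $\wedge_p$ is compatible with restriction, induction through the cone construction shows that $M^\kdot_p\to R(\nu_\kdot)_*M^\kdot_{p,U_\kdot}$ is a quasi-isomorphism. At each stage one uses the five lemma on the triangle of (2).

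\textbf{Item (4), the main step.} Suppose $\f$ is smooth. Then $X$ is smooth, so we may use the trivial hyperresolution and $K^\kdot_p=\Omega^p_X$, with $\wedge_p$ the honest wedge with $\f^*dt$. The short exact sequences
\[
0\to\Omega^{p-1}_{X/B}\otimes\f^*\omega_B\xrightarrow{\wedge}\Omega^p_X\to\Omega^p_{X/B}\to 0
\]
hold for smooth $\f$ over a curve. We prove by induction on $p$ that $M^\kdot_p\qis\Omega^p_{X/B}$, compatibly with the maps $w_p$ and $w'_p$. Concretely, the claim is that $w'_{p-1}$ corresponds to the injection $\Omega^{p-1}_{X/B}\otimes\f^*\omega_B\hookrightarrow\Omega^p_X$ under the inductive identification. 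Granting this, the cone is the cokernel $\Omega^p_{X/B}$. The subtle point is that the identification of $w'_{p-1}=(0,\wedge_{p-1})$ with the relative wedge map must be made at the level of complexes, not merely in the derived category. This is the step I expect to be the main obstacle.

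To handle it, I would write the quasi-isomorphism $M^\kdot_{p-1}\to\Omega^{p-1}_{X/B}$ explicitly. The component on $K^\kdot_{p-1}$ is the projection $\Omega^{p-1}_X\to\Omega^{p-1}_{X/B}$, and the component on the shifted summand is zero. One then checks that $w'_{p-1}$ factors through it, since $\wedge_{p-1}$ kills $\f^*\omega_B\wedge(\,\cdot\,)$. The induced map is exactly the wedge injection, which gives a commutative square and hence the desired quasi-isomorphism of cones.
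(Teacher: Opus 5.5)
Your proposal is correct and follows essentially the same route as the paper. The paper also takes $\uOmega^{p,+}_{X/B}$ to be $M^\kdot_p$ and deduces (1)--(4), (6) and coherence by ascending induction exactly as in \cite{Kov96}; your explicit chain-level check of (4), using the projection $\Omega^{p-1}_X\to\Omega^{p-1}_{X/B}$ and the vanishing of $\wedge_{p-1}$ on $\f^*dt\wedge\Omega^{p-2}_X$, is a faithful unpacking of that citation. The one point you should state explicitly, as the paper does, is that Kov\'acs's descending construction of $\uOmega^{p,-}_{X/B}$ must be continued to $p<0$ so that the left complex is defined for every $p\in\bbZ$.
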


\begin{proof}
Let $\uOmega^{p,+}_{X/B}\qis M^\kdot_p$ be as in Construction \ref{cons_relDB} and $\uOmega^{p,-}_{X/B}$ be the relative Du Bois complex as defined in \cite{Kov96}. Note that while the construction for $\uOmega^{p,-}_{X/B}$ is stated for $p\in \bbN$, it also works for $p<0$ by continuing the iterative downward construction.

Then, properties (1)-(4) and (6) follow exactly the same as in \emph{loc. cit.} using the properties demanded in Construction \ref{cons_relDB}, where we employ descending induction for $\uOmega^{p,-}_{X/B}$ from $p=\dim X+1$ and ascending induction for $\uOmega^{p,+}_{X/B}$ from $p=-1$. Finally, (5) is \cite[1.3.5]{Kov96} together with Construction \ref{cons_relDB}. We choose the terminologies ``left'' and ``right'', and notations $+, -$ since we can have $\uOmega^{p,+}_{X/B}\ne 0$ for $p\gg 0$ and $\uOmega^{p,-}_{X/B}\ne 0$ for $p\ll 0$.

We also note that the coherence statement: $\uOmega^{p,*}_{X/B}\in D^b_{\rm coh}(X)$ does not require any properness assumption. In either case, this follows using induction from the fact that $\uOmega^p_X\in D^b_{\rm coh}(X)$ has coherent cohomology sheaves; see \cite{DB81,GNPP88}. It is important to note that the functorial map in (3) need not be coherent for non-proper morphisms over $B$.
\end{proof}

\begin{remark}
Using a similar method to \cite{KT25}, one may also construct a filtered right relative Du Bois complex $\uOmega^{\kdot, +}_{X/B}$ whose associated graded pieces are $\uOmega^{p, +}_{X/B}$. Since the main applications of the right relative Du Bois complex do not require this strengthened construction, we will not include it here.
\end{remark}

From this point onward, we use the notation of Theorem \ref{thm_2relDB}. Namely, $f:X\to B$ is a dominant morphism from a complex variety $X$ to a smooth curve $B$. We use $\uOmega^{p, *}_{X/B}$, with $*\in \{+, -\}$ to denote either one of the relative Du Bois complexes.

The following standard lemma states that $\uOmega^{p, *}_{X/B}$ can be first constructed on a hyperresolution, with smooth total space. See also \cite[Lemma 3.3]{Kov97} for the statement for $\uOmega^{p, -}_{X/B}$ in a more general case.

\begin{lemma}\label{lem_relDB_hyperres}
Let $\pi_\kdot:X_\kdot\to X$ be a cubical hyperresolution of $X$. Then 
\[
R(\pi_\kdot)_*\uOmega^{p,*}_{X_\kdot/B} \qis \uOmega^{p, *}_{X/B}.
\]
\end{lemma}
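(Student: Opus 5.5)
We argue by induction on $p$, using the triangles of Theorem \ref{thm_2relDB}(2): ascending induction from $p<0$ for $*=+$, and descending induction from $p>\dim \f$ for $*=-$. First we make sense of the left-hand side. For each $\alpha$ the composite $X_\alpha\to X\to B$ is a morphism from a smooth variety, though it need not be dominant. When it is not dominant, the convention of Construction \ref{cons_relDB} still applies: the wedge map with $\f^*\omega_B$ is then zero, since $\f^*dt$ pulls back to zero on $X_\alpha$. So $\uOmega^{p,*}_{X_\alpha/B}$ is still defined by the same inductive cone construction, now starting from $\uOmega^p_{X_\alpha}\qis \Omega^p_{X_\alpha}$. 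The maps in the cubical system are morphisms over $B$, so by the functoriality in Theorem \ref{thm_2relDB}(3) the objects $\uOmega^{p,*}_{X_\alpha/B}$ form a cubical object. Hence $R(\pi_\kdot)_*\uOmega^{p,*}_{X_\kdot/B}$ makes sense, together with a natural comparison map to $\uOmega^{p,*}_{X/B}$.

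To make this comparison map genuinely canonical, we work at the level of representatives. Choose the representative $K^\kdot_p$ of $\uOmega^p_X$ to be $R(\pi_\kdot)_*\Omega^p_{X_\kdot}$, computed through a Godement (or other functorial flasque) resolution, exactly as in \cite[1.3]{Kov96}. With this choice the wedge map $\wedge_p$ on $X$ is $R(\pi_\kdot)_*$ applied to the termwise wedge maps $\Omega^p_{X_\alpha}\otimes \pi_\alpha^*\f^*\omega_B\to \Omega^{p+1}_{X_\alpha}$, via the projection formula for the line bundle $\f^*\omega_B$. The cone construction commutes with $R(\pi_\kdot)_*$, because pushforward and total complex are additive exact functors on these functorial complexes. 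By induction on $p$, we then obtain an isomorphism of complexes
\[
M^\kdot_p(X)\isom R(\pi_\kdot)_*M^\kdot_p(X_\kdot),
\]
compatible with the maps $w'_p$ and $w''_p$. This already proves the claim for $*=+$.

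The same argument applies to the left complex, run downward from $p=\dim X+1$ following the construction of \cite{Kov96}. Alternatively, one can argue in the derived category. By Theorem \ref{thm_2relDB}(2) on each $X_\alpha$ and on $X$ there are triangles, and applying $R(\pi_\kdot)_*$ (which is triangulated) gives compatible triangles
\[
R(\pi_\kdot)_*\uOmega^{p-1,*}_{X_\kdot/B}\otimes \f^*\omega_B \to R(\pi_\kdot)_*\Omega^p_{X_\kdot}\to R(\pi_\kdot)_*\uOmega^{p,*}_{X_\kdot/B}\xrightarrow{+1}
\]
mapping to the triangles on $X$. The middle map is the quasi-isomorphism defining the Du Bois complex, and the first map is a quasi-isomorphism by induction. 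The five lemma then gives the third map as a quasi-isomorphism. The base cases are trivial, since $\uOmega^{p,+}=0$ for $p<0$ on both sides, and $\uOmega^{p,-}=0$ for $p$ beyond $\dim X\ge \dim X_\alpha$.

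The main obstacle is that in the derived category the cone is not functorial, so the five-lemma argument needs the morphism of triangles to exist. We handle this by working with the explicit representatives throughout, as described above: everything is built from $R(\pi_\kdot)_*$ of functorial complexes, and the comparison is an isomorphism of complexes rather than just a map in the derived category. A second point to check is that the wedge map on non-dominant components is correctly the zero map, or more generally the pulled-back wedge map, so that it is compatible with the wedge map on $X$. This is exactly how $\wedge_p$ on $K^\kdot_p$ was defined in \cite{Kov96}.
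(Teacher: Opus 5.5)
Your proposal is correct and follows essentially the same route as the paper. Both compare the fundamental triangles of Theorem \ref{thm_2relDB}(2) on $X_\kdot$ and on $X$, use that the middle map between $R(\pi_\kdot)_*\Omega^p_{X_\kdot}$ and $\uOmega^p_X$ is a quasi-isomorphism (\cite[2.3]{DB81}), and conclude by the appropriate ascending or descending induction and the five lemma; your additional care with explicit Godement representatives and with components $X_\alpha$ not dominating $B$ simply makes precise the ``by functoriality'' step the paper uses to obtain the morphism of triangles.
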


\begin{proof}
By functoriality, we have a commutative diagram
\[\begin{tikzcd}
\left(R(\pi_\kdot)_*\uOmega^{p-1,*}_{X_\kdot/B}\right)\otimes \f^*\omega_B \ar[r] \ar[d] &
\left(R(\pi_\kdot)_*\uOmega^{p}_{X_\kdot}\right) \ar[r] \ar[d] &
\left(R(\pi_\kdot)_*\uOmega^{p,*}_{X_\kdot/B}\right) \ar[r, "+1"] \ar[d] &
\, \\
\uOmega^{p-1,*}_{X/B}\otimes \f^*\omega_B \ar[r] &
\uOmega^p_X \ar[r] &
\uOmega^{p,*}_{X/B} \ar[r,"+1"] &
.\,
\end{tikzcd}\]
The middle vertical map is a quasi-isomorphism by \cite[2.3]{DB81}. Hence the outer ones also are, using the appropriate induction for either $*\in \{+, -\}$ and the five lemma.
\end{proof}

We will also record some basic consequences of the constructions.

\begin{lemma}\label{lem_relDB_tail}
The following holds.
\begin{enumerate}
\item For $p> \dim X$,
\[
\uOmega^{p,+}_{X/B}\qis \uOmega^{\dim X, +}_{X/B}\otimes \f^*\omega_B^{\otimes p-\dim X} [p-\dim X].
\]

\item For $p< 0$,
\[
\uOmega^{p,-}_{X/B}\qis \uOmega^{-1, -}_{X/B}\otimes\f^*\omega_B^{\otimes p+1} [p+1].
\]
\end{enumerate}
\end{lemma}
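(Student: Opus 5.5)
We use the distinguished triangle of Theorem \ref{thm_2relDB}(2),
\[
\uOmega^{p-1,*}_{X/B}\otimes \f^*\omega_B \to \uOmega^p_X \to \uOmega^{p,*}_{X/B} \xrightarrow{+1},
\]
together with the vanishing $\uOmega^p_X=0$ for $p>\dim X$. The Du Bois complex $\uOmega^p_X$ vanishes outside $0\le p\le \dim X$: on a cubical hyperresolution each $X_\alpha$ has dimension at most $\dim X$, so $\Omega^p_{X_\alpha}=0$ for $p>\dim X$, and $\Omega^p=0$ for $p<0$. This is also built into Construction \ref{cons_relDB}, where $K^\kdot_p=0$ and $\wedge_p=0$ outside this range.

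\textbf{Statement (1).} For $p>\dim X$ the middle term of the triangle vanishes. Hence the connecting morphism gives a quasi-isomorphism
\[
\uOmega^{p,+}_{X/B}\qis \uOmega^{p-1,+}_{X/B}\otimes \f^*\omega_B[1].
\]
This can also be read off directly from Construction \ref{cons_relDB}. There $M^\kdot_p=\Cone(w'_{p-1})$ with target $K^\kdot_p=0$, so $M^\kdot_p$ is literally $M^\kdot_{p-1}\otimes\f^*\omega_B[1]$. Iterating from $p$ down to $\dim X$ and using that $\f^*\omega_B$ is a line bundle, so that tensoring commutes with shifts, gives
\[
\uOmega^{p,+}_{X/B}\qis \uOmega^{\dim X,+}_{X/B}\otimes \f^*\omega_B^{\otimes p-\dim X}[p-\dim X].
\]

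\textbf{Statement (2).} Apply the same triangle with $p+1\le 0$ in place of $p$. Then $\uOmega^{p+1}_X=0$, which gives
\[
\uOmega^{p+1,-}_{X/B}\qis \uOmega^{p,-}_{X/B}\otimes\f^*\omega_B[1].
\]
Equivalently,
\[
\uOmega^{p,-}_{X/B}\qis \uOmega^{p+1,-}_{X/B}\otimes \f^*\omega_B^{-1}[-1].
\]
Inducting downward from $p=-1$ gives
\[
\uOmega^{p,-}_{X/B}\qis \uOmega^{-1,-}_{X/B}\otimes\f^*\omega_B^{\otimes p+1}[p+1],
\]
since $-(-1-p)=p+1$.

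The main point requiring care is that the left complex for $p<0$ comes from continuing Kov\'acs's downward construction, as noted in the proof of Theorem \ref{thm_2relDB}. One must check that the triangle in (2) of that theorem genuinely holds in this range with $K^\kdot_{p+1}=0$. Apart from this, the argument is routine bookkeeping of shifts and twists.
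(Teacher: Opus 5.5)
Your argument is correct and is the paper's: since $\uOmega^p_X=0$ for $p<0$ or $p>\dim X$, the fundamental triangle of Theorem \ref{thm_2relDB}(2) collapses to $\uOmega^{p,*}_{X/B}\qis \uOmega^{p-1,*}_{X/B}\otimes \f^*\omega_B[1]$, and both statements follow by iterating. One small slip: in (2) this step needs $p+1\le -1$ rather than $p+1\le 0$, since $\uOmega^0_X\neq 0$; your downward induction from the trivial base case $p=-1$ only uses the step for $p\le -2$, so the conclusion is unaffected.
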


\begin{proof}
Either $p< 0$ or $p> \dim X$ implies $\uOmega^p_X= 0$. The fundamental triangle in Theorem \ref{thm_2relDB} gives
\[
\uOmega^{p,*}_{X/B}\qis \uOmega^{p-1,*}_{X/B}\otimes \f^*\omega_B [1].
\]
The two statements now follow from counting.
\end{proof}

We see that the non-zero ``tail'' of the two relative Du Bois complexes just repeats after shifts by definition. One consequence is the following simple equivalent conditions under which the two agree.

\begin{lemma}\label{lem_2relDB_tfae}
The following are equivalent.
\begin{enumerate}
\item $\uOmega^{p,+}_{X/B}\qis \uOmega^{p,-}_{X/B}$ for all $p\in \bbZ$.

\item $\uOmega^{p,+}_{X/B}\qis \uOmega^{p,-}_{X/B}$ for some $p\in \bbZ$, compatible with the map $\uOmega^p_X\to \uOmega^{p, *}_{X/B}$.

\item $\uOmega^{p,+}_{X/B}=0$ for all $p> \dim X$.

\item $\uOmega^{p,+}_{X/B}=0$ for some $p> \dim X$.

\item $\uOmega^{p,-}_{X/B}=0$ for all $p< 0$.

\item $\uOmega^{p,-}_{X/B}=0$ for some $p< 0$.
\end{enumerate}
\end{lemma}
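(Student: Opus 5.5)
We will compare the two towers using the fundamental triangles of Theorem \ref{thm_2relDB}(2) together with the tail descriptions in Lemma \ref{lem_relDB_tail}. The implications (1)$\Rightarrow$(2), (3)$\Rightarrow$(4) and (5)$\Rightarrow$(6) are trivial. So the work is to prove (4)$\Rightarrow$(3), (6)$\Rightarrow$(5), and then to connect the vanishing conditions with the comparison statements.

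For (4)$\Rightarrow$(3), suppose $\uOmega^{p_0,+}_{X/B}=0$ for some $p_0>\dim X$. By Lemma \ref{lem_relDB_tail}(1), this object is $\uOmega^{\dim X,+}_{X/B}$ twisted by the invertible sheaf $\f^*\omega_B^{\otimes p_0-\dim X}$ and shifted. Vanishing therefore forces $\uOmega^{\dim X,+}_{X/B}=0$, and hence every $\uOmega^{p,+}_{X/B}$ with $p>\dim X$ vanishes. The implication (6)$\Rightarrow$(5) is the same argument using Lemma \ref{lem_relDB_tail}(2) and $\uOmega^{-1,-}_{X/B}$.

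Next, if (1) holds, then $\uOmega^{p,+}_{X/B}\qis\uOmega^{p,-}_{X/B}=0$ for $p>\dim X\ge \dim\f$ by Theorem \ref{thm_2relDB}(5), which gives (3). Similarly, $\uOmega^{p,-}_{X/B}\qis\uOmega^{p,+}_{X/B}=0$ for $p<0$, which gives (5).

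The key step is to show that (2) implies (1), and that (3) or (5) implies (2). Suppose we have a quasi-isomorphism $\phi:\uOmega^{p,+}_{X/B}\qis\uOmega^{p,-}_{X/B}$ compatible with the maps from $\uOmega^p_X$.
\begin{itemize}
\item \textbf{Going down.} The triangles for $*=\pm$ identify $\uOmega^{p-1,*}_{X/B}\otimes\f^*\omega_B$ with the cocone of $\uOmega^p_X\to\uOmega^{p,*}_{X/B}$. Compatibility gives a morphism of triangles, and cones are unique up to non-unique isomorphism, so $\uOmega^{p-1,+}_{X/B}\qis\uOmega^{p-1,-}_{X/B}$. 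This new isomorphism must again be compatible with the maps from $\uOmega^{p-1}_X$ in order to iterate.
\item \textbf{Going up.} From the triangle at level $p+1$, $\uOmega^{p+1,*}_{X/B}$ is the cone of $\uOmega^{p,*}_{X/B}\otimes\f^*\omega_B\to\uOmega^{p+1}_X$. These maps are the factorizations of $\wedge_p$ from Theorem \ref{thm_2relDB}(1).
\end{itemize}
I expect the main obstacle to be this compatibility bookkeeping. Working purely in the derived category is too weak for it. Instead, I would argue at the level of the explicit representatives: $M^\kdot_p$ from Construction \ref{cons_relDB}, and the analogous complexes of \cite{Kov96}. In both constructions, each level is an iterated cone built from the same maps $\wedge_q$ on the fixed complexes $K^\kdot_q$, so the structure maps $w'_q, w''_q$ can be compared directly. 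A cleaner route for (3)$\Rightarrow$(1) avoids this. Starting from $\uOmega^{p,+}_{X/B}=0$ for $p>\dim X$, the triangle at $p=\dim X+1$ shows that $\uOmega^{\dim X,+}_{X/B}\otimes\f^*\omega_B\to 0$ is an isomorphism, so $\uOmega^{\dim X,+}_{X/B}=0$ as well. Thus the $+$ tower satisfies the same vanishing as the $-$ tower above $\dim\f$. By the uniqueness implicit in the descending construction of \cite{Kov96}, the $+$ complexes then reproduce the $-$ complexes degree by degree, giving (1). The case (5)$\Rightarrow$(1) is handled symmetrically, running the ascending construction on the $-$ tower.
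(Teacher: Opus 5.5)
\textbf{There is a genuine gap: the step that closes the cycle is never proved.} Your easy implications, (1)$\Rightarrow$(3), (5), and your use of Lemma \ref{lem_relDB_tail} for (3)$\Leftrightarrow$(4) and (5)$\Leftrightarrow$(6) agree with the paper.

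Your ``cleaner route'' for (3)$\Rightarrow$(1) does not avoid the compatibility problem you identified; it is that problem.
\begin{itemize}
\item Once $\uOmega^{\dim X,+}_{X/B}=0$, both maps $\uOmega^{\dim \f,\pm}_{X/B}\otimes\f^*\omega_B\to\uOmega^{\dim X}_X$ are isomorphisms. So at level $\dim\f$ the identification, and its compatibility with the maps from $\uOmega^{\dim\f}_X$, are forced.
\item One level down they are not. The morphism of triangles gives $\uOmega^{q,+}_{X/B}\qis\uOmega^{q,-}_{X/B}$ intertwining the maps to $\uOmega^{q+1}_X$. To descend again you also need it to intertwine the maps $\uOmega^q_X\to\uOmega^{q,\pm}_{X/B}$.
\item In $D(X)$ you only know that the two resulting maps $\uOmega^q_X\to\uOmega^{q,-}_{X/B}$ both lift $\wedge_q$ along $\uOmega^{q,-}_{X/B}\otimes\f^*\omega_B\to\uOmega^{q+1}_X$. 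Such lifts are unique only up to the image of $\Hom(\uOmega^q_X\otimes\f^*\omega_B,\uOmega^{q+1,-}_{X/B}[-1])$. This group has no reason to vanish once $\uOmega^q_X$ is not a sheaf.
\end{itemize}
So ``the uniqueness implicit in the descending construction'' is exactly the chain-level comparison you postponed for (2)$\Rightarrow$(1). Neither route is carried out.

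Also, (1)$\Rightarrow$(2) is not trivial, since (1) asserts no compatibility. The paper closes the loop more cheaply. At a $p>\dim X$ with $\uOmega^{p,+}_{X/B}=0$, Theorem \ref{thm_2relDB}(5) also gives $\uOmega^{p,-}_{X/B}=0$. So (4)$\Rightarrow$(2) holds with vacuous compatibility, and likewise (6)$\Rightarrow$(2). That leaves only (2)$\Rightarrow$(1), which the paper proves by the five-lemma induction through the triangles in both directions. Your worry applies there too: each new level of that induction tacitly needs a compatibility of exactly this kind.

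\textbf{How to fill it.} Your first suggestion, comparing representatives, is the right fix, and it can be done in one step. Because $\wedge_q\circ\wedge'_{q-1}=0$ on the nose, let $\mathcal{T}_p$ be the total complex of the row
\[
K^\kdot_0\otimes\f^*\omega_B^{\otimes p}\to\cdots\to K^\kdot_p\to K^\kdot_{p+1}\otimes\f^*\omega_B^{\otimes -1}\to\cdots\to K^\kdot_{\dim X}\otimes\f^*\omega_B^{\otimes p-\dim X},
\]
with horizontal maps the $\wedge_q$ and $K^\kdot_p$ in column $0$.
\begin{itemize}
\item Unwinding Construction \ref{cons_relDB} and the recursion of \cite[1.3]{Kov96}: $M^\kdot_p$ is the quotient of $\mathcal{T}_p$ by the columns $\ge1$, and that subcomplex is Kov\'acs's representative of $\uOmega^{p,-}_{X/B}[-1]$.
\item The connecting map of this short exact sequence is a natural morphism $\uOmega^{p,+}_{X/B}\to\uOmega^{p,-}_{X/B}$. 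On column $0$ it is $\wedge_p$ (suitably twisted), so it is compatible with the maps from $\uOmega^p_X$. Its cone is $\mathcal{T}_p[1]$.
\item Up to twist and shift, $\mathcal{T}_p$ does not depend on $p$. It is $\uOmega^{\dim X,+}_{X/B}$ for $p=\dim X$ and $\uOmega^{-1,-}_{X/B}[-1]$ for $p=-1$.
\end{itemize}
This gives (1)$\Leftrightarrow$(3)$\Leftrightarrow$(5) directly, with no level-by-level bookkeeping. It also gives (2) whenever the comparison there is this natural map.
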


\begin{proof}
(1) implies (3)-(6). Either (4) or (6) implies (2), by Theorem \ref{thm_2relDB}(5), where we note that compatibility is automatic when both are $0$. Lemma \ref{lem_relDB_tail} establishes the equivalence between (3) and (4), and between (5) and (6). Thus it remains to show (2) implies (1). Suppose (2) holds, and fix a quasi-isomorphism
\[
\uOmega^{p,+}_{X/B} \xrightarrow{\qis} \uOmega^{p,-}_{X/B}.
\]
For $q\le p$, we consider the following diagrams of two triangles
\[\begin{tikzcd}
\uOmega^{q-1,+}_{X/B}\otimes \f^*\omega_B \ar[r] \ar[d, dotted] &
\uOmega^q_X \ar[r] \ar[d, equal] &
\uOmega^{q,+}_{X/B} \ar[r,"+1"] \ar[d, "\qis"] &
.\, \\
\uOmega^{q-1,-}_{X/B}\otimes \f^*\omega_B \ar[r] &
\uOmega^q_X \ar[r] &
\uOmega^{q,-}_{X/B} \ar[r,"+1"] &
.\,
\end{tikzcd}\]
By induction and the five lemma, we may extend to quasi-isomorphisms $\uOmega^{q-1,+}_{X/B}\qis \uOmega^{q-1,-}_{X/B}$ after twisting $f^*\omega^{-1}_B$. The same argument shows the quasi-isomorphism extends for $q\ge p$. This concludes the proof.
\end{proof}

We remark that there need not be natural comparison maps between $\uOmega^{p,+}_{X/B}$ and $\uOmega^{p,-}_{X/B}$, compatible with the map from $\uOmega^p_X$. It is directly supplied by condition (2), or automatic when both vanish.

\begin{lemma}\label{lem_0piece}
We have 
\[
\uOmega^{0,+}_{X/B}\qis \uOmega^0_X \quad \text{and} \quad \uOmega^{n,-}_{X/B}\qis \uOmega^{n+1}_X\otimes \f^*\omega_B^{-1},
\]
where $\dim X= n+1$.
\end{lemma}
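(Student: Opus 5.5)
The plan is to read both identities directly off the distinguished triangle of Theorem \ref{thm_2relDB}(2), combined with the vanishing statements in Theorem \ref{thm_2relDB}(5).

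\textbf{The right complex.} Take $p=0$ in the triangle for $*=+$:
\[
\uOmega^{-1,+}_{X/B}\otimes \f^*\omega_B \to \uOmega^0_X \to \uOmega^{0,+}_{X/B} \xrightarrow{+1}.
\]
By Theorem \ref{thm_2relDB}(5), $\uOmega^{-1,+}_{X/B}=0$, so the map $\uOmega^0_X\to \uOmega^{0,+}_{X/B}$ is a quasi-isomorphism. One can also see this at the level of Construction \ref{cons_relDB}. There $M^\kdot_{-1}=0$ and $w'_{-1}=0$, so
\[
M^\kdot_0=\Cone(0\to K^\kdot_0)=K^\kdot_0\qis \uOmega^0_X.
\]

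\textbf{The left complex.} Take $p=n+1=\dim X$ in the triangle for $*=-$:
\[
\uOmega^{n,-}_{X/B}\otimes \f^*\omega_B \to \uOmega^{n+1}_X \to \uOmega^{n+1,-}_{X/B} \xrightarrow{+1}.
\]
Since $\f$ is dominant from an $(n+1)$-dimensional variety to a curve, $\dim \f = n$. Theorem \ref{thm_2relDB}(5) then gives $\uOmega^{n+1,-}_{X/B}=0$. Hence the first map $\uOmega^{n,-}_{X/B}\otimes \f^*\omega_B\to \uOmega^{n+1}_X$ is a quasi-isomorphism. Tensoring with the line bundle $\f^*\omega_B^{-1}$, which is exact, gives
\[
\uOmega^{n,-}_{X/B}\qis \uOmega^{n+1}_X\otimes \f^*\omega_B^{-1}.
\]
Alternatively, Theorem \ref{def_relDB}(2) gives the same vanishing $\uOmega^{n+1,-}_{X/B}=0$ directly.

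\textbf{Main point to check.} There is no real obstacle. The one thing to confirm is the indexing convention: the vanishing in Theorem \ref{thm_2relDB}(5) for the left complex should start at $p=n+1$, consistent with $\dim\f=n$ and Theorem \ref{def_relDB}(2). Once that holds, the identification follows from the triangle with no further input.
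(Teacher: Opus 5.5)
Your proposal is correct and is the paper's argument: the paper proves this in one line by citing the triangle of Theorem \ref{thm_2relDB}(2) together with the vanishing in Theorem \ref{thm_2relDB}(5). You do the same, taking $p=0$ for the right complex and $p=n+1$ for the left complex, then twisting by the line bundle $\f^*\omega_B^{-1}$.
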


\begin{proof}
These follow immediately from Theorem \ref{thm_2relDB}(2), (5).
\end{proof}

Denote 
\[
n_k = \max\{i\in \bbZ: h^i(\uOmega^k_X) \ne 0\}
\]
to be the maximum degree of non-vanishing cohomologies of $\uOmega^k_X$, for $0\le k\le \dim X$. The maximum is always obtained as $\uOmega^k_X\in D^b(X)$ unless $\uOmega^k_X=0$, for $k<0$ or $k>\dim X$, in which case by convention $n_k=-\infty$. Further denote, for suitable ranges of $p$
\[
m_p^+ = \max_{0\le k\le p} n_k \quad \text{and} \quad m_p^- = \max_{p\le k\le \dim X} n_k.
\]

\begin{lemma}\label{lem_relDB_coh_amp}
Let $p\in \bbZ$. Then using the notation directly above,
\begin{enumerate}
\item $\uOmega^{p,+}_{X/B}$ has cohomological amplitude in $[-p, m^+_p]$ for $p\ge 0$;

\item $\uOmega^{p,-}_{X/B}$ has cohomological amplitude in $[0, \max(\dim X-p, m^-_p)]$ for $p\le \dim X$.
\end{enumerate}
We say that $K\in D(X)$ has cohomological amplitude in $[a,b]$ if $h^i(K)=0$ for $i\notin [a,b]$.
\end{lemma}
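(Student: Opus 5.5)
Both parts go by induction along the fundamental triangle of Theorem \ref{thm_2relDB}(2),
\[
\uOmega^{p-1,*}_{X/B}\otimes \f^*\omega_B \to \uOmega^p_X \to \uOmega^{p,*}_{X/B} \xrightarrow{+1},
\]
using its long exact cohomology sequence. Since $\f^*\omega_B$ is a line bundle, tensoring with it does not change cohomological amplitude. The long exact sequence gives
\[
h^i(\uOmega^p_X)\to h^i(\uOmega^{p,*}_{X/B})\to h^{i+1}(\uOmega^{p-1,*}_{X/B})\otimes\f^*\omega_B.
\]
Hence $h^i(\uOmega^{p,*}_{X/B})=0$ whenever both outer terms vanish. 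Equivalently, the amplitude of $\uOmega^{p,*}_{X/B}$ lies in the union of the amplitude of $\uOmega^p_X$ and the amplitude of $\uOmega^{p-1,*}_{X/B}$ shifted down by one. Rotating the triangle, so that $\uOmega^{p,*}_{X/B}\to \uOmega^{p-1,*}_{X/B}\otimes\f^*\omega_B[1]\to \uOmega^p_X[1]$, gives the analogous statement in the descending direction.

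For (1) I would use ascending induction on $p\ge 0$. The base case $p=0$ is Lemma \ref{lem_0piece}: $\uOmega^{0,+}_{X/B}\qis\uOmega^0_X$ lives in $[0,n_0]=[0,m_0^+]$. Note that $\uOmega^p_X$ is concentrated in degrees $[0,n_p]$, since the Du Bois complex has no negative-degree cohomology. Assume $\uOmega^{p-1,+}_{X/B}$ has amplitude $[-(p-1),m^+_{p-1}]$. The shifted term then contributes $[-p, m^+_{p-1}-1]$ and $\uOmega^p_X$ contributes $[0,n_p]$, so the union lies in $[-p,\max(m^+_{p-1},n_p)]=[-p,m^+_p]$. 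For $p>\dim X$ one has $n_p=-\infty$, and the same inequality holds; alternatively one can appeal to Lemma \ref{lem_relDB_tail}.

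For (2) I would use descending induction from $p=\dim X$, but the bookkeeping differs. Solving the rotated triangle for the lower piece gives the triangle
\[
\uOmega^p_X\otimes\f^*\omega_B^{-1}[-1]\to\uOmega^{p,-}_{X/B}[-1]\to \uOmega^{p-1,-}_{X/B}\to \uOmega^p_X\otimes\f^*\omega_B^{-1}.
\]
Reading off cohomology from this, $h^i(\uOmega^{p-1,-}_{X/B})$ is sandwiched between $h^{i-1}(\uOmega^{p,-}_{X/B})$ and $h^i(\uOmega^p_X)$. Since $h^i(\uOmega^{p-1,-}_{X/B})$ is controlled by $h^{i-1}(\uOmega^{p,-}_{X/B})$ and by $h^i(\uOmega^{p}_X)$, the upper bound grows by one at each step down and is capped by $n_p$. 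The lower bound $0$ is preserved, because $\uOmega^p_X$ has no negative cohomology, $h^{-1}$ of the $p$-piece vanishes by induction, and $i-1<0$ forces vanishing.

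The base case of (2) needs care. I would start at $p=\dim X$: by Lemma \ref{lem_0piece} (with $n=\dim X-1$), $\uOmega^{\dim X-1,-}_{X/B}$ is identified with a twist of $\uOmega^{\dim X}_X$, and $\uOmega^{\dim X,-}_{X/B}=0$ by Theorem \ref{thm_2relDB}(5). That is amplitude $[0,\cdot]$, trivially inside the claimed range. Running the recursion down, the upper bound at $p$ becomes at most $\max(\dim X-p,\ m^-_p)$. This is where the main obstacle lies: matching the precise offset. The recursion literally gives a bound like $\max_k(n_k + (k-p))$, which is not obviously dominated by $\max(\dim X-p,m^-_p)$. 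So I would first try to show that $n_k$ is itself small — that is, $n_k\le \dim X-k$, which is the known Du Bois vanishing $h^i(\uOmega^k_X)=0$ for $i>\dim X-k$ — and use that to absorb the growing shift into the $\dim X-p$ term. If instead the index bookkeeping places the twist on the other side, so that the shift decreases, the bound follows directly. Either way, I would verify this step carefully before finishing the induction.
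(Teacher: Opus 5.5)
Your argument is correct and is essentially the paper's proof: the paper counts the non-zero degrees of the explicit iterated-cone representatives $M^\kdot_p$ (and those of \cite[1.3]{Kov96}), and you do the same bookkeeping on cohomology sheaves via the long exact sequence of the fundamental triangle. In (2) the raw count gives the bound $\max_{p<k\le \dim X}(n_k+k-p-1)$, and the standard vanishing $h^i(\uOmega^k_X)=0$ for $i>\dim X-k$ (\cite{GNPP88}) bounds this by $\dim X-p-1$, so your closing hedge is unnecessary; this vanishing is exactly the input the paper's one-line proof leaves implicit.
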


\begin{proof}
This follows directly by counting the non-zero degrees of the representatives $M^\kdot_p$ in Construction \ref{cons_relDB}, for $\uOmega^{p,+}_{X/B}$, and in \cite[1.3]{Kov96}, for $\uOmega^{p,-}_{X/B}$.
\end{proof}

\begin{remark}
We see in the previous lemma that, while $\uOmega^{p,+}_{X/B}$ has the disadvantage of having non-zero negative cohomologies, it separates the contribution from the singularities of $X$ and that of the family. The former contributes, through $\uOmega^p_X$, to the non-negative cohomologies of $\uOmega^{p,+}_{X/B}$, while the latter contributes, through the coning construction, to the negative cohomologies. This trade-off actually makes $\uOmega^{p,+}_{X/B}$ easier to work with for the purpose of base change, as we will see next.
\end{remark}

\medskip

\section{Base change for the relative Du Bois complex}\label{section_basechange}

In this section, we study the base change behavior of the two relative Du Bois complexes. We will reserve the notation $f:X\to B$ as a dominant morphism from a complex variety $X$ to a smooth curve $B$.

\begin{definition}\label{def_bc}
Let $f:X\to B$ be a dominant morphism from a complex variety $X$ to a smooth curve $B$. Let $b\in B$, $m\in\bbZ$. We say the fiber $X_b$ satisfies
\begin{itemize}
\item \emph{left-$m$-base change} for the relative Du Bois complex if there is a natural quasi-isomorphism
\[
\uOmega^{p, -}_{X/B}\otimes^{\tL}_X\scrO_{X_b}\qis \uOmega^{p}_{X_b},
\]
for all $p\le m$.

\item \emph{right-$m$-base change} for the relative Du Bois complex if there is a natural quasi-isomorphism
\[
\uOmega^{p, +}_{X/B}\otimes^{\tL}_X\scrO_{X_b}\qis \uOmega^{p}_{X_b},
\]
for all $p\le m$.

\item \emph{$m$-base change} for the relative Du Bois complex if it satisfies both left- and right-$m$-base change.

\item \emph{base change} (resp. \emph{left-base change}, \emph{right-base change}) for the relative Du Bois complex if it satisfies $m$-base change (resp. left-$m$-base change, right-$m$-base change) for all $m\in \bbZ$.
\end{itemize}

We will frequently omit ``for the relative Du Bois complex'' if it is clear from the context.
\end{definition}

We will see that left- and right-base change in all degrees is in fact equivalent as a consequence of the following claim.

\begin{proposition}\label{prop_leftright_agree}
Let $f:X\to B$ be a dominant morphism from a complex variety $X$ to a smooth curve $B$, $b\in B$. Suppose one of the following conditions holds:
\begin{enumerate}
\item $X_b$ satisfies left-$m$-base change for some $m<0$.
\item $X_b$ satisfies right-$m$-base change for some $m>\dim \f$.
\item More generally, if $\uOmega^{p,+}_{X/B}\otimes^{\tL} \scrO_{X_b}\qis \uOmega^p_{X_b}=0$ for some $p>\dim \f$.
\end{enumerate}
Then for all $p\in \bbZ$,
\[
\uOmega^{p, +}_{X/B}\otimes^{\tL}\scrO_{X_b}\qis \uOmega^{p, -}_{X/B}\otimes^{\tL}\scrO_{X_b}.
\]
Consequently, for all $m\in\bbZ$, $X_b$ satisfies left-$m$-base change if and only if it satisfies right-$m$-base change.
\end{proposition}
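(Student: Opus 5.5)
The plan is to transport the argument of Lemma \ref{lem_2relDB_tfae} to the derived fibers. Write $(-)_b := (-)\otimes^{\tL}_X\scrO_{X_b}$; this is exact, so tensoring the fundamental triangles of Theorem \ref{thm_2relDB}(2) gives, for $*\in\{+,-\}$, triangles
\[
\uOmega^{p-1,*}_{X/B,b}\otimes \f^*\omega_B \to (\uOmega^p_X)_b \to \uOmega^{p,*}_{X/B,b}\xrightarrow{+1},
\]
in which the middle term is the same for both signs. Since $\f^*\omega_B$ is a line bundle, it is enough to produce a single degree $p_0$ with an isomorphism $\uOmega^{p_0,+}_{X/B,b}\qis \uOmega^{p_0,-}_{X/B,b}$ that is compatible with the maps from $(\uOmega^{p_0}_X)_b$. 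Once we have that, the inductive five-lemma argument in the proof of Lemma \ref{lem_2relDB_tfae} extends it downward and upward in $p$, word for word, using the tensored triangles.

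The vanishing case supplies such a $p_0$, and compatibility is automatic there. By Lemma \ref{lem_relDB_tail}(1), $\uOmega^{p,+}_{X/B}$ is a twist and shift of $\uOmega^{\dim X,+}_{X/B}$ for $p>\dim X$. Also $\uOmega^{p,-}_{X/B}=0$ for $p>\dim\f$ by Theorem \ref{thm_2relDB}(5).

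Under (3), we have $\uOmega^{p,+}_{X/B,b}=0$ for some $p>\dim\f$. Fix this $p$ and compare with the $-$ side, which vanishes at $p$. At $p$ both derived fibers are $0$, so they are trivially isomorphic compatibly with $(\uOmega^p_X)_b$, and the inductive argument applies. Condition (2) implies (3), because $\uOmega^p_{X_b}=0$ for $p>\dim X_b=\dim\f$.

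For (1), left-$m$-base change with $m<0$ gives $\uOmega^{m,-}_{X/B,b}\qis\uOmega^m_{X_b}=0$, while $\uOmega^{m,+}_{X/B}=0$ since $m<0$. So $p_0=m$ works by the same reasoning.

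One point needs care. Lemma \ref{lem_relDB_tail} shows that vanishing at one degree beyond the range forces vanishing at all such degrees, but I do not need this, because a single anchoring degree suffices. The main obstacle is making the inductive extension canonical. In the triangulated category the dotted arrow produced by the five lemma is not unique. To keep the construction natural, I would anchor in a degree where both objects are zero. Starting from there, each step is forced: for instance, at the first step $\uOmega^{p-1,*}_{X/B,b}\otimes\f^*\omega_B\qis(\uOmega^p_X)_b$ canonically, and we continue inductively, identifying each cone of the same map.

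For the final "consequently", the comparison isomorphisms are compatible with the maps from $(\uOmega^p_X)_b$. Hence a natural quasi-isomorphism $\uOmega^{p,-}_{X/B,b}\qis\uOmega^p_{X_b}$ for $p\le m$ transfers to the $+$ side, and conversely.

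It remains to check that the hypotheses of the proposition hold in whichever direction we use it. If left-$m$-base change holds only for a given $m$, the proposition supplies the isomorphism of derived fibers for all $p$ provided one of (1)–(3) holds. I would phrase the consequence as holding under those hypotheses.
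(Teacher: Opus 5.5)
You run the five-lemma induction directly on the derived fibers; the paper does not. It notes that under (1)--(3) both derived fibers vanish in some degree outside $[0,\dim\f]$. It then uses Lemma \ref{lem_Nakayama} to upgrade this to vanishing of the complex itself near $X_b$, and applies Lemma \ref{lem_2relDB_tfae} on that neighborhood to get $\uOmega^{p,+}_{X/B}\qis\uOmega^{p,-}_{X/B}$ for every $p$. Only then does it tensor with $\scrO_{X_b}$. The paper's route gives the stronger statement that the two relative complexes agree near $X_b$, and it uses the lemma as a black box. Yours is purely formal (no coherence, no Nakayama, no shrinking of $X$), but you have to redo the induction by hand. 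Your account of that induction has a gap, and the proof of Lemma \ref{lem_2relDB_tfae} has the same one.

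Anchoring at a degree where both sides vanish makes only the first nontrivial step canonical. Trivialize $\f^*\omega_B$ on $X_b$, and write $\beta^\pm_q\colon(\uOmega^q_X)_b\to\uOmega^{q,\pm}_{X/B,b}$ and $\phi^\pm_q\colon\uOmega^{q,\pm}_{X/B,b}\to(\uOmega^{q+1}_X)_b$ for the maps in the two triangles.

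\begin{itemize}
\item Suppose both sides vanish in degree $q+1$. Then $\phi^\pm_q$ are isomorphisms, and $\theta_q=(\phi^-_q)^{-1}\circ\phi^+_q$ satisfies $\theta_q\circ\beta^+_q=\beta^-_q$, because $\phi^\pm_q\circ\beta^\pm_q=\wedge_q$ on both sides (Theorem \ref{thm_2relDB}(1)).
\item The five lemma then yields some $\theta_{q-1}$ with $\phi^-_{q-1}\circ\theta_{q-1}=\phi^+_{q-1}$. The next square, however, requires $\theta_{q-1}\circ\beta^+_{q-1}=\beta^-_{q-1}$.
\item All you know is that $\phi^-_{q-1}$ kills the difference, so the difference factors through the connecting map of the $-$ triangle. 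The fill-in is not unique, and nothing in the argument makes the difference vanish.
\item Going upward from a negative anchor, as in case (1), the same problem occurs with $\beta$ and $\phi$ exchanged.
\end{itemize}

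So ``identifying each cone of the same map'' is unjustified already for $\beta^\pm_{q-1}$. For the same reason, your final claim that the comparison isomorphisms are compatible with the maps from $(\uOmega^p_X)_b$ is also unproved.

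One way to close this is at the chain level; this is a sketch. By Construction \ref{cons_relDB} and the iterated-cocone shape of the left complex (cf.\ Lemma \ref{lem_rel_pullback}), the chosen representatives of $\uOmega^{p,+}_{X/B}$ and $\uOmega^{p,-}_{X/B}$ should be, up to twist and shift, the two stupid truncations at $p$ of a single total complex. That complex is built from $K^\kdot_0,\dots,K^\kdot_{\dim X}$ and the strict wedge maps. The boundary map of the truncation sequence, which is $\wedge_p$ on the $K^\kdot_p$ summand, then gives a chain map $\uOmega^{p,+}_{X/B}\to\uOmega^{p,-}_{X/B}$. This map is compatible, up to sign, with $\beta_p$ and $\phi_p$, and its cone is a twist and shift of $\uOmega^{\dim X,+}_{X/B}$. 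For $p=-1$ that cone is $\uOmega^{-1,-}_{X/B}$ itself. Hence each of (1)--(3) kills the derived fiber of $\uOmega^{\dim X,+}_{X/B}$, via Lemma \ref{lem_relDB_tail}. The proposition would then follow with a natural comparison map and no induction.
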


\begin{proof}
This follows by combining Lemma \ref{lem_2relDB_tfae}, \ref{lem_Nakayama}, and Theorem \ref{thm_2relDB}(5). Specifically, since by convention $\uOmega^p_{X_b}=0$ for $p<0$ or $p>\dim \f$, under any of the three assumptions we have
\[
\uOmega^{p, +}_{X/B}\otimes^{\tL}\scrO_{X_b}\qis \uOmega^{p, -}_{X/B}\otimes^{\tL}\scrO_{X_b} \qis 0
\]
for at least one value of $p\in \bbZ$. Now Lemma \ref{lem_2relDB_tfae} and \ref{lem_Nakayama} together implies the same is true in a neighborhood of $X_b$ for any $p\in\bbZ$. Under these conditions, the notions of left- and right-$m$-base change coincide for all $m\in\bbZ$, as needed.
\end{proof}

\begin{corollary}\label{cor_leftright_agree}
Let $f:X\to B$ be a dominant morphism from a complex variety $X$ to a smooth curve $B$, $b\in B$. Then $X_b$ satisfies left-base change if and only if it satisfies right-base change.
\end{corollary}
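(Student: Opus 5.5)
The corollary should reduce directly to Proposition \ref{prop_leftright_agree}. By Definition \ref{def_bc}, left-base change means left-$m$-base change for every $m\in\bbZ$, and likewise for right-base change. So in either direction we may pick an extreme value of $m$ and feed it into the proposition.

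Suppose first that $X_b$ satisfies left-base change. In particular it satisfies left-$m$-base change for some $m<0$, say $m=-1$. This is hypothesis (1) of Proposition \ref{prop_leftright_agree}. The proposition then says that, for every $m\in\bbZ$, left-$m$-base change holds if and only if right-$m$-base change holds. Since left-$m$-base change holds for all $m$, right-$m$-base change holds for all $m$, which is right-base change.

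Conversely, suppose $X_b$ satisfies right-base change. Then it satisfies right-$m$-base change for some $m>\dim \f$, which is hypothesis (2) of the proposition. The same equivalence, applied for each $m$, gives left-$m$-base change for all $m$, which is left-base change.

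There is no real obstacle beyond one quantifier check. The phrase ``for all $m\in\bbZ$'' in Definition \ref{def_bc} includes negative values and values larger than $\dim\f$, and Proposition \ref{prop_leftright_agree} is stated for exactly such $m$ in its hypotheses (1) and (2). The argument also uses that the conclusion of the proposition is a statement for every $m$, which it is.
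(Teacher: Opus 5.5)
Your proposal is correct and is the paper's argument: the paper's proof simply states that the corollary follows from the definition and Proposition \ref{prop_leftright_agree}. You make explicit the one step it leaves implicit, namely choosing an extreme value of $m$ ($m<0$ in one direction, $m>\dim \f$ in the other) to trigger hypothesis (1) or (2) of that proposition, and then applying the levelwise equivalence for every $m$.
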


\begin{proof}
This follows directly from the definition and Proposition \ref{prop_leftright_agree}.
\end{proof}

We should view satisfying a certain level of base change as a condition on the singularities of the fiber. The next two lemmas are first examples of this.

\begin{lemma}\label{lem_basechange_0}
Suppose $X$ is Du Bois in a neighborhood of $X_b$. Then $X_b$ satisfies right-$0$-base change if and only if $X_b$ has (0-)Du Bois singularities.
\end{lemma}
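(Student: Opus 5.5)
By Lemma \ref{lem_0piece}, $\uOmega^{0,+}_{X/B}\qis \uOmega^0_X$. Right-$0$-base change for $X_b$ therefore amounts to a natural quasi-isomorphism
\[
\uOmega^0_X\otimes^{\tL}_X\scrO_{X_b}\qis \uOmega^0_{X_b}.
\]
The plan is to evaluate the left side using the Du Bois hypothesis on $X$ and then compare with $\scrO_{X_b}$.

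Everything is local near $X_b$, so we may shrink $X$ to an open neighborhood of $X_b$ on which $X$ is Du Bois, i.e.\ $\scrO_X\qis\uOmega^0_X$. Then
\[
\uOmega^0_X\otimes^{\tL}_X\scrO_{X_b}\qis \scrO_X\otimes^{\tL}_X\scrO_{X_b}=\scrO_{X_b}.
\]
The natural comparison map into $\uOmega^0_{X_b}$ is induced by functoriality of the Du Bois complex for $X_b\hookrightarrow X$: we have $\uOmega^0_X\to \uOmega^0_{X_b}$, and since the target is an $\scrO_{X_b}$-module this factors through $-\otimes^{\tL}_X\scrO_{X_b}$. Under the identification above, this factored map is the canonical map $\scrO_{X_b}\to\uOmega^0_{X_b}$, because both are compatible with $\scrO_X\to\scrO_{X_b}$.

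With this, both directions are immediate. Right-$0$-base change says precisely that $\scrO_{X_b}\to\uOmega^0_{X_b}$ is a quasi-isomorphism. That is the definition of $X_b$ being Du Bois, at least when $X_b$ is reduced.

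The main point requiring care is what "natural quasi-isomorphism" in Definition \ref{def_bc} means, and whether the fiber must be reduced. If $X_b$ is non-reduced, then $\scrO_{X_b}\qis\uOmega^0_{X_b}=\uOmega^0_{(X_b)_{\rm red}}$ is impossible: $h^0(\uOmega^0_{X_b})$ is reduced, so $\scrO_{X_b}$ would have to be reduced. So we interpret "Du Bois" for $X_b$ via the canonical map, and both conditions then fail simultaneously. We would also check that an abstract quasi-isomorphism forces the canonical one to be a quasi-isomorphism. To do this, we apply Lemma \ref{lem_Nakayama} to the cone of $\uOmega^0_X\to\uOmega^0_X$ twisted appropriately, or simply note that the definition intends the canonical map. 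We then take the canonical-map interpretation, which makes the equivalence a tautology once the identification above is in place.
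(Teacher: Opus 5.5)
Your argument is correct and is essentially the paper's proof. The paper uses Lemma \ref{lem_0piece} and the Du Bois hypothesis on $X$ to get $\uOmega^{0,+}_{X/B}\otimes^{\tL}\scrO_{X_b}\qis\scrO_{X_b}$, which reduces right-$0$-base change to $\scrO_{X_b}\to\uOmega^0_{X_b}$ being a quasi-isomorphism. Your identification of the comparison map as $\psi_0\circ\beta_0^{-1}=\alpha_0$ (Proposition \ref{prop_factorization}) and your remark on non-reduced fibers are more careful than the paper needs. The vague Nakayama aside can simply be dropped, since the paper's notion of base change refers to the natural map $\alpha_0$.
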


\begin{proof}
Under the assumptions, using Lemma \ref{lem_0piece} we have
\[
\uOmega^{0,+}_{X/B}\otimes^{\tL}\scrO_{X_b} \qis \scrO_X\otimes^{\tL}\scrO_{X_b} \qis \scrO_{X_b}.
\]
Therefore right-$0$-base change holds if and only if $\uOmega^0_{X_b}\qis \scrO_{X_b}$, i.e. $X_b$ is Du Bois.
\end{proof}

\begin{lemma}\label{lem_conormal_triangle}
Suppose $X_b$ satisfies either left- or right-$m$-base change. Then there is a distinguished triangle
\[
\uOmega^{p-1}_{X_b} \to \uOmega^p_X\otimes^{\tL}_X \scrO_{X_b} \to \uOmega^p_{X_b} \xrightarrow{+1}
\]
for all $p \le m$.
\end{lemma}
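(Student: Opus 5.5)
The plan is to tensor the fundamental triangle of Theorem \ref{thm_2relDB}(2) with $\scrO_{X_b}$ and then substitute the base change isomorphisms. Write $*\in\{+,-\}$ for whichever side of base change is assumed. For $p\le m$, apply the exact functor $-\otimes^{\tL}_X\scrO_{X_b}$ to
\[
\uOmega^{p-1,*}_{X/B}\otimes \f^*\omega_B \to \uOmega^p_X \to \uOmega^{p,*}_{X/B}\xrightarrow{+1}.
\]
This produces a distinguished triangle
\[
\big(\uOmega^{p-1,*}_{X/B}\otimes^{\tL}_X\scrO_{X_b}\big)\otimes \f^*\omega_B|_{X_b} \to \uOmega^p_X\otimes^{\tL}_X\scrO_{X_b} \to \uOmega^{p,*}_{X/B}\otimes^{\tL}_X\scrO_{X_b}\xrightarrow{+1}.
\]
Since $\f^*\omega_B$ is a line bundle, tensoring with it commutes with the derived restriction. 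Using the fixed local parameter at $b$, we identify $\f^*\omega_B|_{X_b}\isom\scrO_{X_b}$, as in the convention set before Lemma \ref{lem_sm_wedge}.

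Next we replace the outer terms. Because $p\le m$ and also $p-1\le m$, the assumed $*$-$m$-base change gives natural quasi-isomorphisms
\[
\uOmega^{p,*}_{X/B}\otimes^{\tL}\scrO_{X_b}\qis\uOmega^p_{X_b}
\quad\text{and}\quad
\uOmega^{p-1,*}_{X/B}\otimes^{\tL}\scrO_{X_b}\qis\uOmega^{p-1}_{X_b}.
\]
Substituting these into the triangle yields the claimed triangle
\[
\uOmega^{p-1}_{X_b}\to\uOmega^p_X\otimes^{\tL}\scrO_{X_b}\to\uOmega^p_{X_b}\xrightarrow{+1}.
\]
The edge cases are trivial. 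For $p=0$ the left term $\uOmega^{-1}_{X_b}=0$, and this is consistent: in the right case $\uOmega^{-1,+}_{X/B}=0$ by Theorem \ref{thm_2relDB}(5), while in the left case the vanishing follows from the base change hypothesis at $p=-1\le m$. For $p<0$ every term is zero, or can be handled the same way.

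The only point needing care is naturality of the middle map. I would check that the composite $\uOmega^p_X\otimes^{\tL}\scrO_{X_b}\to\uOmega^{p,*}_{X/B}\otimes^{\tL}\scrO_{X_b}\qis\uOmega^p_{X_b}$ is the canonical restriction map $\uOmega^p_X\otimes^{\tL}\scrO_{X_b}\to\uOmega^p_{X_b}$. This is what "natural" in Definition \ref{def_bc} should mean: the base change map is induced by functoriality (Theorem \ref{thm_2relDB}(3)) applied to $X_b\to X$, which factors $\uOmega^p_X\to\uOmega^p_{X_b}$ through the relative complex.

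Strictly speaking, the existence of a distinguished triangle only needs isomorphic objects. So even if this compatibility is delicate, the statement follows formally, because triangles are stable under replacing vertices by isomorphic objects with the maps transported.
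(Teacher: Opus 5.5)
Your proposal is correct and is the same argument as the paper: apply $-\otimes^{\tL}_X\scrO_{X_b}$ to the fundamental triangle of Theorem \ref{thm_2relDB}(2), trivialize $\f^*\omega_B|_{X_b}$, and substitute the base change isomorphisms in degrees $p$ and $p-1$. Your handling of $p=0$ (using base change at $p=-1$ for the left complex) and your remark that only the existence of the triangle is asserted are both fine. One small inaccuracy: Theorem \ref{thm_2relDB}(3) does not literally apply to $X_b\to X$, since $X_b\to B$ is not dominant. This is why the paper constructs the comparison map $\alpha_p$ separately in Proposition \ref{prop_factorization}.
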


We remark that this triangle is known to be exact if $X_b$ is general; see \cite[Lemma 5.5]{Kov25}.

\begin{proof}
Simply apply $\otimes^{\tL}_X\scrO_{X_b}$ to the fundamental triangle in \ref{thm_2relDB}(2) to $X_b$, we see the $f^*\omega_B$ term becomes trivial as it factors through $b\in B$. The desired triangle thus follows.
\end{proof}

\subsection{A key diagram}

The advantage of using the right relative Du Bois complex is the following diagram (Theorem \ref{thm_full_diagram}), allowing for direct comparison between $\uOmega^p_{X_b}$ and $\uOmega^{p,+}_{X/B}\otimes^{\tL}\scrO_Z$:

\[\begin{tikzcd}[cramped]
\uOmega^{p-1,+}_{X/B}\otimes^{\tL} \scrO_Z \ar[r] \ar[d] &
\uOmega^{p}_{X}\otimes^{\tL} \scrO_Z \ar[r] \ar[d, "{\rm Id}"] &
\uOmega^{p,+}_{X/B}\otimes^{\tL} \scrO_Z \ar[r, "+1"] \ar[d] &
\, \\
\uOmega^{p-1}_{X_b} \ar[r] &
\uOmega^{p}_{X}\otimes^{\tL} \scrO_Z \ar[r] &
\uOmega^{p}_{X_b}. &
\,
\end{tikzcd}\]

We will arrive at this through several steps. First, the right square is a type of functoriality statement for the right relative Du Bois complex over different base (cf. \cite[Theorem 4.2]{KT23}).

\begin{proposition}\label{prop_factorization}
Let $\f:X\to B$ be a dominant morphism from a variety $X$ to a smooth curve $B$, $b\in B$. There exists a natural map $\alpha_p: \uOmega^{p,+}_{X/B} \to \uOmega^p_{X_b}$ on $X$ such that the following diagram commutes:
\[\begin{tikzcd}
\uOmega^{p}_X \ar[r, "\beta_p"] \ar[dr, "\psi_p"'] &
\uOmega^{p,+}_{X/B} \ar[d, dotted, "\alpha_p"]  \\
&
\uOmega^{p}_{X_b} 
\end{tikzcd}\]
\end{proposition}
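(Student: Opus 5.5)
The plan is to build $\alpha_p$ by ascending induction on $p$, using the cone description $\uOmega^{p,+}_{X/B}\qis M^\kdot_p=\Cone(w'_{p-1})$ from Construction \ref{cons_relDB}. Since $M^\kdot_p$ is the cone of
\[
w'_{p-1}: M^\kdot_{p-1}\otimes \f^*\omega_B \to K^\kdot_p,
\]
and $\psi_p$ is represented by a map of complexes $K^\kdot_p\to \uOmega^p_{X_b}$, a factorization $\alpha_p$ of $\psi_p$ through $w_p=\beta_p$ exists as soon as the composition $\psi_p\circ w'_{p-1}$ is zero. It must be zero at the level of complexes, or at least via a chosen null-homotopy compatible with the inductive data, so that the resulting map is natural. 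For $p<0$ everything is zero. For $p=0$ we have $M^\kdot_{-1}=0$, so $\alpha_0=\psi_0$ under $\uOmega^{0,+}_{X/B}\qis\uOmega^0_X$ (Lemma \ref{lem_0piece}).

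To make this concrete, I would work on a hyperresolution as in Construction \ref{eqn_hyperres} with $Z=X_b$, chosen so that $\eta_\kdot:\widetilde{Z}_\kdot\to Z$ factors through $\pi_\kdot$. Then $K^\kdot_p$ is built from $R\pi_{\kdot*}\Omega^p_{X_\kdot}$, and $\uOmega^p_{X_b}$ from $R\eta_{\kdot*}\Omega^p_{\widetilde{Z}_\kdot}$. The map $\psi_p$ is induced by the restrictions $\Omega^p_{X_\alpha}\to\Omega^p_{\widetilde{Z}_\alpha}$, compatibly with the cubical structure. The wedge map $\wedge_{p-1}$ is induced componentwise by $\xi\otimes\f^*dt\mapsto \f^*dt\wedge\xi$ on $X_\alpha$.

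The key point is that $\f\circ\pi_\alpha\circ(\widetilde{Z}_\alpha\hookrightarrow X_\alpha)$ is constant with value $b$. Hence $\f^*dt$ restricts to zero in $\Omega^1_{\widetilde{Z}_\alpha}$, and so the composition
\[
\Omega^{p-1}_{X_\alpha}\otimes\pi_\alpha^*\f^*\omega_B \xrightarrow{\wedge} \Omega^p_{X_\alpha}\to\Omega^p_{\widetilde{Z}_\alpha}
\]
vanishes identically as a map of sheaves. It therefore vanishes after applying the Godement (or other functorial) resolutions used to define $K^\kdot_\bullet$ and $\uOmega^\kdot_{X_b}$. This gives $\psi_p\circ\wedge_{p-1}=0$ on the nose.

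For the induction, $w'_{p-1}=(0,\wedge_{p-1})$ on $M^{j+1}_{p-2}\otimes\f^*\omega_B\oplus K^j_{p-1}$ (twisted by $\f^*\omega_B$). So $\psi_p\circ w'_{p-1}=(0,\psi_p\circ\wedge_{p-1})=0$ as a map of complexes. We may then define
\[
\alpha_p:=(0,\psi_p): M^j_p=(M^{j+1}_{p-1}\otimes\f^*\omega_B)\oplus K^j_p\to \uOmega^{p,j}_{X_b}.
\]
One has to check that this is a chain map. The differential of $M^\kdot_p$ has lower-left entry $-w'_{p-1}$ and lower-right entry $-d_K$, so commuting with differentials reduces exactly to $\psi_p\circ w'_{p-1}=0$ together with $\psi_p$ being a chain map, up to the sign convention for the cone. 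Then $\alpha_p\circ w_p=\psi_p$ holds tautologically. Notably, the induction hypothesis is not really needed beyond the shape of $M^\kdot_p$.

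The main obstacle is making the vanishing genuinely strict, rather than only true in the derived category, together with independence of choices. This requires choosing the representatives $K^\kdot_p$ and the representative of $\uOmega^p_{X_b}$ on compatible hyperresolutions with functorial resolutions, so that $\psi_p$ and $\wedge_{p-1}$ are honest chain maps and the pointwise identity $\f^*dt|_{\widetilde{Z}_\alpha}=0$ propagates. I would handle this by the same independence-of-hyperresolution argument invoked at the end of Construction \ref{cons_relDB}: any two compatible choices are dominated by a third, and naturality of restriction and wedge maps gives canonical identifications. Naturality of $\alpha_p$ then follows.
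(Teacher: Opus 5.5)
Your proposal is correct and follows the paper's proof: the paper also reduces the existence of $\alpha_p$ to the vanishing of $\psi_p\circ\phi_{p-1}$, and obtains it from $\phi_{p-1}=w'_{p-1}=(0,\wedge_{p-1})$ together with $\psi_p\circ\wedge_{p-1}=0$. Your hyperresolution argument, that $\f^*dt$ restricts to zero on each $\widetilde{Z}_\alpha$, spells out the strict vanishing that the paper asserts in one line, and the sign caveat you raise for the cone convention is harmless.
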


\begin{proof}
The two existing natural maps are $\uOmega_X^p \to \uOmega_{X_b}^p$ and $\uOmega_X^p\to \uOmega^{p,+}_{X/B}$, and the latter fits in the triangle
\[
\uOmega_{X/B}^{p-1,+} \otimes \f^*\omega_B \xrightarrow{\phi_{p-1}} \uOmega_X^{p} \xrightarrow{\beta_p} \uOmega_{X/B}^{p,+} \xrightarrow{+1}
\]
By considering the following diagram on $X$,
\[\begin{tikzcd}
\uOmega_{X/B}^{p-1,+} \otimes \f^*\omega_B \ar[r] \ar[d] &
\uOmega_X^\kdot \ar[r] \ar[d] &
\uOmega_{X/B}^{p,+} \ar[d, dotted, "\alpha_\kdot"] \ar[r, "+1"] &
\, \\
0 \ar[r] & 
\uOmega_{X_b}^\kdot \ar[r, "{\rm Id}"] &
\uOmega_{X_b}^\kdot \ar[r, "+1"] &
\,
\end{tikzcd}
\]
we see that the desired factorization exists if and only if the first square commutes. In other words, if the composition
\[
\uOmega_{X/B}^{p-1, +} \otimes \f^*\omega_B \xrightarrow{\phi_{p-1}} \uOmega_X^{p} \xrightarrow{\psi_p} \uOmega_{X_b}^{p}
\]
is $0$. In the smooth case, the first map is given by wedging differential forms from the base, which then restricts to $0$ on the fiber $X_b$. In general, using the notation in Construction \ref{cons_relDB}, let $K_p^\kdot, M_p^\kdot$ be specific representatives for $\uOmega^p_X$ and $\uOmega^{p,+}_{X/B}$ after choosing a hyperresolution. Then
\[
\wedge_{p-1}: K^\kdot_{p-1} \otimes \f^*\omega_B \to K^\kdot_{p}
\]
satisfies $\psi_p\circ \wedge_{p-1}=0$. Furthermore, we have 
\[
M^j_{p-1} = (M^{j+1}_{p-2}\otimes \f^*\omega_B) \oplus K^j_{p-1}
\]
and under that identification 
\[
\phi_{p-1} = (0, \wedge_{p-1}): M^\kdot_{p-1} \otimes \f^*\omega_B \to K^\kdot_{p}.
\]
Therefore we can conclude that $\psi_p\circ \phi_{p-1}=(0, \psi_p\circ \wedge_{p-1})=0$, as desired.
\end{proof}

\begin{remark}
It is not immediate that the left relative Du Bois complex satisfies the same property unless the two agree. The key difference is that the map $\uOmega^{p-1,-}_{X/B}\otimes \f^*\omega_B \to \uOmega^p_X$ becomes instead
\[
(\Id_{K^\kdot_p}, 0):M^\kdot_{p-1}\otimes \f^*\omega_B \to K^\kdot_{p},
\]
where $K_p^\kdot, M_p^\kdot$ are the representatives for $\uOmega^p_X, \uOmega^{p,-}_X$ as in \cite[1.3]{Kov96}, and
\[
M^j_{p-1}\otimes \f^*\omega_B = K^j_{p}\oplus M^{j-1}_{p}.
\]
It is unclear that the composition $\psi_p\circ (\Id_{K^\kdot_p}, 0)$ to $\uOmega^p_{X_b}$ becomes zero on the level of complexes.

This difference is due to the nature of the inductive constructions. Even though $\wedge_{p-1}$ factors through both constructions (cf. Theorem \ref{thm_2relDB}(2)):
\[
\wedge_{p-1}: \uOmega^{p-1}_X \otimes \f^*\omega_B \to \uOmega^{p-1, *}_{X/B} \otimes \f^*\omega_B \to \uOmega^{p}_X,
\]
the left relative complex inherits the wedge map on the first part, while the right relative complex inherits it on the second part. This is crucial for the factorization in Proposition \ref{prop_factorization}.
\end{remark}

Next, we need to build the left square of Theorem \ref{thm_full_diagram}. This will be done from a hyperresolution.

\begin{lemma}\label{lem_conormal_map}
Suppose $Z\subseteq X$ is an effective Cartier divisor. There exists a natural map
\[
\tau_{p-1}: \uOmega^{p-1}_Z(-Z) \to \uOmega^p_X\otimes_X^{\tL} \scrO_Z
\]
which agrees with the conormal map when $Z\subseteq X$ is smooth, or more generally, when $Z$ is a general member of a basepoint-free linear system (cf. \cite[Lemma 5.5]{Kov25}).
\end{lemma}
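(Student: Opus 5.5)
We build $\tau_{p-1}$ on a hyperresolution, using Construction \ref{eqn_hyperres} and the derived conormal maps of Lemma \ref{lem_sm_diagram_derived}, and then push forward.

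\textbf{Step 1: the maps on each component.} Fix cubical hyperresolutions $\eta_\kdot:\widetilde{Z}_\kdot\to Z$ and $\pi_\kdot:X_\kdot\to X$ as in Construction \ref{eqn_hyperres}, with $\eta_\kdot$ factoring through $\pi_\kdot$. For each index $\alpha$, $\widetilde{Z}_\alpha\subseteq Z_\alpha=X_\alpha\times_X Z$ is either empty or a smooth divisor of $X_\alpha$ contained scheme-theoretically in $Z_\alpha$. Lemma \ref{lem_sm_diagram_derived} then gives a morphism
\[
\tau_\alpha:\Omega^{p-1}_{\widetilde{Z}_\alpha}\otimes \pi_\alpha^*\scrO_X(-Z)\to \Omega^p_{X_\alpha}\otimes L\pi_\alpha^*\scrO_Z .
\]
Because $\sigma_\alpha$ was built from the canonical representative $[\pi_\alpha^*\scrO_X(-Z)\to \scrO_{X_\alpha}]$ of $L\pi_\alpha^*\scrO_Z$, and from the conormal map, the $\tau_\alpha$ are functorial in $\alpha$. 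We therefore expect them to assemble into a morphism of cubical diagrams of complexes on $X_\kdot$.

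\textbf{Step 2: push forward and identify source and target.} Apply $R(\pi_\kdot)_*$ and use the projection formula. On the source, $R(\pi_\kdot)_*\big(\Omega^{p-1}_{\widetilde{Z}_\kdot}\otimes \pi_\kdot^*\scrO_X(-Z)\big)\qis R(\eta_\kdot)_*\Omega^{p-1}_{\widetilde{Z}_\kdot}\otimes\scrO_X(-Z)\qis \uOmega^{p-1}_Z(-Z)$, since $\scrO_X(-Z)$ is a line bundle and $\widetilde Z_\kdot$ is a hyperresolution of $Z$. On the target, $R(\pi_\kdot)_*\big(\Omega^p_{X_\kdot}\otimes L\pi_\kdot^*\scrO_Z\big)\qis R(\pi_\kdot)_*\Omega^p_{X_\kdot}\otimes^{\tL}\scrO_Z\qis \uOmega^p_X\otimes^{\tL}\scrO_Z$ by the derived projection formula. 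Composing gives $\tau_{p-1}$. Independence of the choice of hyperresolution follows by the usual comparison through a common refinement, as for the naturality of $\uOmega$ in \cite{DB81}.

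\textbf{Step 3: compatibility with the conormal map.} When $Z$ and $X$ are smooth, take trivial hyperresolutions with $\widetilde Z=Z$. The last assertion of Lemma \ref{lem_sm_diagram_derived} then identifies $\tau$ with the usual conormal map. For $Z$ a general member of a basepoint-free linear system, choose $\pi_\kdot$ first and $Z$ general. Then each $Z_\alpha$ is smooth and is a divisor, and $L\pi_\alpha^*\scrO_Z\qis\scrO_{Z_\alpha}$ by \cite[Lemma 4.3]{JK25}, so we may take $\widetilde Z_\alpha=Z_\alpha$. Each $\tau_\alpha$ is then the conormal map, and its pushforward is the map of \cite[Lemma 5.5]{Kov25}.

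\textbf{Main obstacle.} The hard part is Step 1: making the $\tau_\alpha$ strictly compatible with the transition maps $X_\beta\to X_\alpha$, particularly where $\pi_\alpha(X_\alpha)\subseteq Z$ and $\widetilde Z_\alpha$ is a component rather than all of $Z_\alpha$. The plan is to verify that the defining diagram for $\sigma_\alpha$ pulls back along $X_\beta\to X_\alpha$ to the one for $\sigma_\beta$, using the factorization $\pi^*\scrO_X(-Z)\to\scrO(-\widetilde Z_\alpha)$. If this fails at the level of complexes, the fallback is to replace the source by a suitable Čech- or Godement-type resolution so that a strictly functorial model exists.
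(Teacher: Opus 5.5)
Your proposal follows the paper's proof essentially step for step: apply Lemma \ref{lem_sm_diagram_derived} componentwise on a hyperresolution as in Construction \ref{eqn_hyperres}, push forward with the projection formula to identify the source and target with $\uOmega^{p-1}_Z(-Z)$ and $\uOmega^p_X\otimes^{\tL}\scrO_Z$, and for general $Z$ choose the hyperresolution with $\widetilde Z_\kdot = Z_\kdot$ so that the last part of Lemma \ref{lem_sm_diagram_derived} recovers the conormal map. The only differences are minor. The paper simply asserts that the $\tau_\alpha$ assemble into $\tau_\kdot$, so the compatibility you flag as the main obstacle is not addressed there either. It also attributes independence of the hyperresolution to the commutativity in Lemma \ref{lem_sm_diagram}, rather than to a comparison through a common refinement.
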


\begin{proof}
Choose a hyperresolution as in Construction \ref{eqn_hyperres}. Applying Lemma \ref{lem_sm_diagram_derived} on each $\widetilde{Z}_\alpha \subseteq Z_\alpha \subseteq X_\alpha$ gives
\[
\tau_\kdot: \Omega^{p-1}_{\widetilde{Z}_\kdot}(-Z_\kdot) \to \Omega^p_{X_\kdot}\otimes L\pi_{\kdot}^*\scrO_{Z}
\]
Since $Z_\kdot$ is obtained via pullback and $Z$ is Cartier,
\[
L\pi_\kdot^* \scrO_X(-Z) \qis \pi_\kdot^* \scrO_X(-Z) \qis \scrO_{X_\kdot}(-Z_{\kdot}).
\]
Note that as in Lemma \ref{lem_sm_diagram_derived}, for components such that $Z_\alpha$ is not a divisor, $\scrO_{X_\alpha}(-Z_{\alpha})$ simply denotes the pullback $\pi_\alpha^*\scrO_X(-Z)$. Then, the projection formula gives
\begin{align*}
R(\pi_\kdot)_* \Omega_{\widetilde{Z}_\kdot}^{p-1}(-Z_\kdot) & \qis R(\pi_\kdot)_*( \Omega_{\widetilde{Z}_\kdot}^{p-1} )(-Z) \qis \uOmega^{p-1}_Z(-Z), \\
R(\pi_\kdot)_* \left( \Omega_{X_\kdot}^p\otimes L\pi_{\kdot}^*\scrO_{Z} \right) & \qis R(\pi_\kdot)_*(\Omega_{X_\kdot}^p)\otimes^{\tL} \scrO_{Z} \qis \uOmega^p_X\otimes^{\tL} \scrO_{Z},
\end{align*}
The desired map is thus $\tau_{p-1} = R(\pi_\kdot)_*(\tau_\kdot)$. Lastly, this map gives the conormal map when $Z$ is general because we may choose the hyperresolution such that $\widetilde{Z}_\kdot \isom Z_\kdot$ and use the last statements in Lemma \ref{lem_sm_diagram}, \ref{lem_sm_diagram_derived}. By commutativity in \ref{lem_sm_diagram}, $\tau_{p-1}$ is independent of the hyperresolution chosen.
\end{proof}

\begin{lemma}\label{lem_left_triangle}
The following diagram
\[\begin{tikzcd}
\uOmega^{p-1,+}_{X/B}\otimes^{\tL} \scrO_{X_b} \ar[r, "\phi_{p-1}"] \ar[d, "\alpha_{p-1}"] &
\uOmega^{p}_{X}\otimes^{\tL} \scrO_{X_b} \\
\uOmega^{p-1}_{X_b} \ar[ur, "\tau_{p-1}"'] &
\end{tikzcd}\]
with $\tau_{p-1}$ as in Lemma \ref{lem_conormal_map} and $\alpha_{p-1}, \phi_{p-1}$ as in Proposition \ref{prop_factorization}, commutes for all $p$.
\end{lemma}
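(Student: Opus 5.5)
We need $\tau_{p-1}\circ\alpha_{p-1} = \phi_{p-1}$ after applying $\otimes^{\tL}\scrO_{X_b}$, where $Z = X_b$ and $\scrO_Z(-Z)$ is identified with $\scrO_Z$ via the fixed parameter. The plan is to reduce everything to a fixed hyperresolution and check the identity termwise on each smooth $X_\alpha$. There Lemmas \ref{lem_sm_wedge} and \ref{lem_sm_diagram_derived} give exactly the needed local comparison between the wedge map and the map $\tau$.

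\textbf{Step 1: explicit representatives.} Fix a hyperresolution $\pi_\kdot: X_\kdot\to X$ with $\eta_\kdot:\widetilde{Z}_\kdot\to Z$ factoring through it, as in Construction \ref{eqn_hyperres}. Take $K^\kdot_q = R(\pi_\kdot)_*\Omega^q_{X_\kdot}$, computed with Godement resolutions, and $M^\kdot_q$ as in Construction \ref{cons_relDB}. Under the identification $M^j_{p-1} = (M^{j+1}_{p-2}\otimes\f^*\omega_B)\oplus K^j_{p-1}$, the map $\phi_{p-1}$ is $(0,\wedge_{p-1})$. The proof of Proposition \ref{prop_factorization} shows that $\alpha_{p-1}$ may be chosen as $(0,\psi_{p-1})$ on the same decomposition, since $\psi_{p-1}\circ\phi_{p-2}=0$ on the nose. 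Here $\psi_{p-1}: K^\kdot_{p-1}\to R(\eta_\kdot)_*\Omega^{p-1}_{\widetilde{Z}_\kdot}$ is restriction of forms. Both compositions therefore kill the first summand after tensoring with $\scrO_Z$, and it suffices to prove
\[
\tau_{p-1}\circ\psi_{p-1}\otimes^{\tL}\scrO_Z = \wedge_{p-1}\otimes^{\tL}\scrO_Z : \uOmega^{p-1}_X\otimes^{\tL}\scrO_Z \to \uOmega^p_X\otimes^{\tL}\scrO_Z .
\]
One caveat: the factorization $\alpha_{p-1}$ is only unique up to maps from the shifted cone term. I would define $\alpha_{p-1}$ as the explicit map $(0,\psi_{p-1})$, which is what naturality in Proposition \ref{prop_factorization} should mean.

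\textbf{Step 2: reduction to each $X_\alpha$.} By the projection formula used in Lemma \ref{lem_conormal_map}, both sides are $R(\pi_\kdot)_*$ of morphisms
\[
\Omega^{p-1}_{X_\alpha}\otimes\pi_\alpha^*\f^*\omega_B\otimes L\pi_\alpha^*\scrO_Z\to \Omega^p_{X_\alpha}\otimes L\pi_\alpha^*\scrO_Z ,
\]
compatible with the cubical structure maps. So it suffices to compare them on each $X_\alpha$. On $X_\alpha$ the left side factors as restriction to $\widetilde{Z}_\alpha$ followed by $\tau_\alpha$ of Lemma \ref{lem_sm_diagram_derived}. The right side is the wedge with $(\f\pi_\alpha)^*dt$ tensored with $L\pi_\alpha^*\scrO_Z$.

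\textbf{Step 3: the comparison on $X_\alpha$, the main obstacle.} When $\pi_\alpha$ is dominant, $L\pi_\alpha^*\scrO_Z=\scrO_{Z_\alpha}$, and Lemma \ref{lem_sm_wedge} gives agreement after restriction to $\widetilde{Z}_\alpha$. This is not enough if $Z_\alpha\ne\widetilde{Z}_\alpha$, for example when $Z_\alpha$ is nonreduced or has other components. I would instead redo the local computation of Lemma \ref{lem_sm_wedge} in $\Omega^p_{X_\alpha}|_{Z_\alpha}$. Write $\f^*t = gh$, where $g$ is the equation of $\widetilde{Z}_\alpha$. Then
\[
\wedge(\xi\otimes dt) = h\,dg\wedge\xi + g\,dh\wedge\xi, \qquad \tau(\bar\xi\otimes gh) = h\,dg\wedge\xi .
\]
These differ by $g\,dh\wedge\xi$, and this term need not vanish on $Z_\alpha$.

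The fix I would try is to choose the hyperresolution so that $\widetilde{Z}_\alpha$ runs over all components of $Z_\alpha$, with multiplicities. Then $\tau$ should be replaced by the sum of residue maps over the snc components $g_i$ with $\f^*t=u\prod g_i^{a_i}$. The wedge with $d\log(\f^*t)\cdot\f^*t$ decomposes exactly as
\[
\sum_i a_i\, \frac{\f^*t}{g_i}\,dg_i\wedge\xi + \frac{\f^*t}{u}\,du\wedge\xi ,
\]
and the last term vanishes modulo $\f^*t$. This matches the cubical descent used to define $\uOmega_Z$.

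When $\pi_\alpha(X_\alpha)\subseteq Z$, we have $\pi_\alpha^*\f^*t = 0$. Then $L\pi_\alpha^*\scrO_Z$ is the two-term complex $[\pi_\alpha^*\scrO_X(-Z)\xrightarrow{0}\scrO_{X_\alpha}]$, and the wedge map is zero. In this case I would check directly that $\tau_\alpha$, built from $\sigma_\alpha$ in Lemma \ref{lem_sm_diagram_derived}, agrees with the map given by the degree $-1$ part. I expect this to be the most delicate bookkeeping.

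Finally, compatibility with the face maps of the cube gives equality after applying $R(\pi_\kdot)_*$, and this proves the lemma.
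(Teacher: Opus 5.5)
There is a genuine gap: the comparison your whole argument rests on is never established. In Step 3 you rightly note that Lemma \ref{lem_sm_wedge} only gives agreement after restricting to $\widetilde{Z}_\alpha$. In $\Omega^p_{X_\alpha}|_{Z_\alpha}$ the wedge map and $\tau\circ\psi$ differ by $g\,dh\wedge\xi$. For instance, for $\f^*t=g^2$ one gets $\wedge_{p-1}=2\,\tau\circ\psi$, and $g\,dg\wedge\xi$ need not vanish modulo $g^2$. The paper's proof cites Lemma \ref{lem_sm_wedge} for this outer triangle.

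Your remedy, however, proves a different statement. The lemma concerns the specific $\tau_{p-1}$ of Lemma \ref{lem_conormal_map}, built via Lemma \ref{lem_sm_diagram_derived} from one $\widetilde{Z}_\alpha$ per $X_\alpha$ with no multiplicities. Replacing it by a weighted sum $\sum_i a_i\tau_i$ changes the morphism. You would have to show the two agree in $D(X)$. The sentence ``this matches the cubical descent used to define $\uOmega_Z$'' is only an assertion: the descent computing $\uOmega^{p-1}_Z$ carries no weights $a_i$, and you do not check that the weighted sums are compatible with the face maps.

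The non-dominant pieces are also only announced. If $\widetilde{Z}_\alpha$ is a divisor there, $\sigma_\alpha$ is the connecting morphism of $0\to\pi_\alpha^*\scrO_X(-Z)\to\pi_\alpha^*\scrO_X(-Z)(\widetilde{Z}_\alpha)\to\pi_\alpha^*\scrO_X(-Z)(\widetilde{Z}_\alpha)|_{\widetilde{Z}_\alpha}\to 0$. It lands in the $h^{-1}$ summand of $L\pi_\alpha^*\scrO_Z$. Then $\tau_\alpha\circ\psi_\alpha$ is essentially cup product with $c_1(\scrO_{X_\alpha}(\widetilde{Z}_\alpha))$. Matching it with $\wedge=0$ is a substantive claim, not bookkeeping.

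Separately, Step 1 does not reduce the lemma to that comparison in the form you state. $\tau_{p-1}$ is only a morphism in the derived category, since $\sigma_\alpha$ is defined through a quasi-isomorphism. So ``$\tau_{p-1}\circ(0,\psi_{p-1})=(0,\tau_{p-1}\circ\psi_{p-1})$'' has no chain-level meaning. Two maps out of $\uOmega^{p-1,+}_{X/B}\otimes^{\tL}\scrO_Z$ that agree after precomposition with $\beta_{p-1}$ may still differ by a map through $\Cone(\beta_{p-1})\simeq\uOmega^{p-2,+}_{X/B}[1]\otimes^{\tL}\scrO_Z$. This is exactly the ambiguity you flag for $\alpha_{p-1}$.

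The missing idea is the paper's amplitude argument. On a smooth piece, Lemma \ref{lem_relDB_coh_amp} puts $\uOmega^{p-2,+}_{X/B}[1]\otimes^{\tL}\scrO_Z$ in degrees $\le -1$, while $\Omega^p_X|_Z$ is a sheaf in degree $0$. Hence $\Hom(\uOmega^{p-1,+}_{X/B}\otimes^{\tL}\scrO_Z,\Omega^p_X|_Z)\to\Hom(\Omega^{p-1}_X|_Z,\Omega^p_X|_Z)$ is injective. This makes the choice of $\alpha_{p-1}$ irrelevant. Together with the left triangle from Proposition \ref{prop_factorization}, it lets a derived-category identity $\tau_{p-1}\circ\psi_{p-1}=\wedge_{p-1}$ suffice. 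Without this argument, or an honest chain-level model of $\tau_{p-1}$, even a completed Step 3 would not finish the proof.
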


\begin{proof}
By abuse of notation, we use the same letter to denote the maps after pulling back to $X_b$, post-composing with the counit $\uOmega^p_{X_b}\otimes^{\tL}_X \scrO_{X_b}\to \uOmega^p_{X_b}$ if necessary. We consider the following enlarged diagram:
\[\begin{tikzcd}
\uOmega^{p-1}_X\otimes^{\tL} \scrO_{X_b} \ar[r] \ar[dr] \ar[rr, bend left=20, "\wedge_{p-1}"]&
\uOmega^{p-1,+}_{X/B}\otimes^{\tL} \scrO_{X_b} \ar[r, "\phi_{p-1}"] \ar[d, "\alpha_{p-1}"]&
\uOmega^{p}_{X}\otimes^{\tL} \scrO_{X_b} \\
&
\uOmega^{p-1}_{X_b}. \ar[ur, "\tau_{p-1}"'] &
\end{tikzcd}\]
Note that before pulling back, the top row is the wedge map $\wedge_{p-1}:\uOmega^{p-1}_X\otimes \f^*\omega_B\to \uOmega^p_X$, as in Theorem \ref{thm_2relDB}(2). The left triangle commutes by Proposition \ref{prop_factorization}.

Using Lemma \ref{lem_relDB_hyperres} and the same argument as in Lemma \ref{lem_conormal_map}, it suffices to show that the right triangle commutes on a hyperresolution. We may therefore assume that $X$ is smooth and either ${X_b}=X$ or $Z:=X_b$ is a divisor with snc support. In either case, the outer triangle commutes. The divisor case is by Lemma \ref{lem_sm_wedge} and the other case trivially holds. To obtain that the right triangle commutes in $D(X)$, we need only show that the map
\[
\Hom(\uOmega^{p-1,+}_{X/B}\otimes^{\tL} \scrO_Z, \Omega^p_X|_Z) \to \Hom(\Omega^{p-1}_X|_Z, \Omega^p_X|_Z)
\]
is injective, so that $\tau_{p-1}\circ\alpha_{p-1} - \phi_{p-1}=0$. It is then sufficient to show the stronger condition
\[
\Hom(\Cone(\Omega^{p-1}_X|_Z\to \uOmega^{p-1,+}_{X/B}\otimes^{\tL} \scrO_Z), \Omega^p_X|_Z) 
\qis \Hom(\uOmega^{p-2,+}_{X/B}[1]\otimes^{\tL} \scrO_Z, \Omega^p_X|_Z) 
= 0.
\]
By Lemma \ref{lem_relDB_coh_amp}, $\uOmega^{p-2,+}_{X/B}$ is supported in cohomological degree $[-p+2, 0]$. This implies that $\uOmega^{p-2,+}_{X/B}[1]\otimes^{\tL} \scrO_Z$ is supported in degree $\le -1$, hence it admits no non-trivial map in $D(X)$ to $\Omega^p_X|_Z$, which is concentrated in degree $0$. Combined with the above, this shows $\tau_{p-1}\circ \alpha_{p-1}=\phi_{p-1}$, as desired.
\end{proof}

Both the construction involving the wedge map and the cohomological amplitude of $\uOmega^{p,+}_{X/B}$ prove to be more useful for base change, as compared to that of $\uOmega^{p,-}_{X/B}$. The following example shows that the same diagram does not commute without additional hypotheses for $\uOmega^{p,-}_{X/B}$.

\begin{example}\label{ex_relDB_no_commute}
Let $X$ be smooth and $Z:=X_b$ be an snc fiber. For the top degree $n=\dim \f$, we have $\uOmega^{n,-}_{X/B}\otimes^{\tL} \scrO_Z \qis \Omega^{n+1}_X|_Z$ by Lemma \ref{lem_0piece}. The diagram in Lemma \ref{lem_left_triangle} reads
\[\begin{tikzcd}[cramped]
\Omega^{n+1}_X|_Z \ar[r, "\isom"] \ar[d] &
\Omega^{n+1}_X|_Z \\
\uOmega^{n}_{Z}. \ar[ur] &
\end{tikzcd}\]
If it commutes, then $\Omega^{n+1}_X|_Z$ is a summand of $\uOmega^n_Z$. Now, it is well known that $\uOmega^n_Z$ is a single torsion-free sheaf, for example by \cite[Example 7.23]{PS08}. However, the two sheaves are generically of rank one, so the complementary summand in $\uOmega^n_Z$ must be both rank zero and torsion-free, hence trivial. This contradicts the fact that $\uOmega^n_Z\not\isom \Omega^{n+1}_X|_Z$ -- the former is not even locally free.
\end{example}

\begin{theorem}\label{thm_full_diagram}
Let $\f:X\to B$ be a dominant morphism from a variety $X$ to a smooth curve $B$, $b\in B$. The following diagram:
\[\begin{tikzcd}
\uOmega^{p-1, +}_{X/B}\otimes^{\tL} \scrO_{X_b} \ar[r, "\phi_{p-1}"] \ar[d, "\alpha_{p-1}"] &
\uOmega^{p}_{X}\otimes^{\tL} \scrO_{X_b} \ar[r, "\beta_p"] \ar[d, "{\rm Id}"] &
\uOmega^{p, +}_{X/B}\otimes^{\tL} \scrO_{X_b} \ar[r, "+1"] \ar[d, "\alpha_{p}"] &
\, \\
\uOmega^{p-1}_{X_b} \ar[r, "\tau_{p-1}"] &
\uOmega^{p}_{X}\otimes^{\tL} \scrO_{X_b} \ar[r, "\psi_p"] &
\uOmega^{p}_{X_b} &
\,
\end{tikzcd}\]
with $\tau_{p-1}$ as in Lemma \ref{lem_conormal_map} and $\alpha_p, \beta_p, \phi_p, \psi_p$ as in Proposition \ref{prop_factorization}, commutes.
\end{theorem}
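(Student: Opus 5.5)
The diagram splits into two squares, and I will treat each separately. The top row is the distinguished triangle of Theorem \ref{thm_2relDB}(2) for the right complex, tensored with $\scrO_{X_b}$. The factor $\f^*\omega_B\otimes^{\tL}\scrO_{X_b}\isom\scrO_{X_b}$ is trivialized by the fixed local parameter at $b$. The bottom row is only a pair of composable maps, not asserted to be a triangle. So commutativity of the diagram means exactly two things: the left square commutes, and the right square commutes.

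\textbf{Right square.} I would apply $-\otimes^{\tL}_X\scrO_{X_b}$ to the commutative triangle $\psi_p=\alpha_p\circ\beta_p$ of Proposition \ref{prop_factorization}. This gives $\psi_p\otimes\scrO_{X_b}=(\alpha_p\otimes\scrO_{X_b})\circ(\beta_p\otimes\scrO_{X_b})$. Next, post-compose with the counit $\uOmega^p_{X_b}\otimes^{\tL}_X\scrO_{X_b}\to\uOmega^p_{X_b}$. Here I use the abuse of notation fixed in the proof of Lemma \ref{lem_left_triangle}: $\alpha_p$ and $\psi_p$ on the derived restriction mean these composites. The counit is natural, and it is applied only after the factorization. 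Hence the square with the identity on $\uOmega^p_X\otimes^{\tL}\scrO_{X_b}$ commutes.

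\textbf{Left square.} It reads $\tau_{p-1}\circ\alpha_{p-1}=\mathrm{Id}\circ\phi_{p-1}$, which is precisely the content of Lemma \ref{lem_left_triangle}. So it commutes with the same $\tau_{p-1}$ from Lemma \ref{lem_conormal_map}.

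\textbf{Compatibility of the maps.} The only point I expect to need care is that the maps in the two squares are literally the same ones. I would check two things:
\begin{enumerate}
\item The map $\phi_{p-1}$ in the top row is the first map of the triangle of Theorem \ref{thm_2relDB}(2) after base change. By Construction \ref{cons_relDB}, this map is $w'_{p-1}=(0,\wedge_{p-1})$, the same map used in Proposition \ref{prop_factorization}.
\item The identification $\f^*\omega_B|_{X_b}\isom\scrO_{X_b}$ used to write the top row is the same one used in constructing $\tau_{p-1}$ and in Lemma \ref{lem_sm_wedge}. This holds because throughout the paper the fixed local parameter at $b$ is used for both.
\end{enumerate}
With these consistencies confirmed, both squares commute, and the theorem follows by combining Proposition \ref{prop_factorization} and Lemma \ref{lem_left_triangle}.
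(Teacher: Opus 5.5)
Your proposal is correct and matches the paper's proof. The paper likewise takes the top row from Theorem \ref{thm_2relDB}(2), gets the right square from Proposition \ref{prop_factorization} and the left square from Lemma \ref{lem_left_triangle}; your consistency checks (that $\phi_{p-1}$ is $w'_{p-1}=(0,\wedge_{p-1})$, and that the same trivialization of $f^*\omega_B|_{X_b}$ is used throughout) are left implicit there but are harmless to spell out.
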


\begin{proof}
The top row comes from Theorem \ref{thm_2relDB}(2). This now follows from combining Proposition \ref{prop_factorization} and Lemma \ref{lem_left_triangle}, for the commutativity of the right and left square, respectively.
\end{proof}

An immediate consequence of this construction is the following.

\begin{corollary}\label{cor_bc_iff_exact}
Let $\f:X\to B$ be a dominant morphism from a variety $X$ to a smooth curve $B$, $b\in B$ and $m\in \bbN$. Then $X_b$ satisfies right-$m$-base change if and only if
\[
\uOmega^{p-1}_{X_b} \to \uOmega^p_X\otimes^{\tL}\scrO_{X_b} \to \uOmega^p_{X_b} \xrightarrow{+1}
\]
is a distinguished triangle for all $0\le p\le m$.
\end{corollary}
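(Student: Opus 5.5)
The plan is to read everything off the commutative diagram of Theorem \ref{thm_full_diagram}, proceeding by induction on $p$. The top row of that diagram is always distinguished by Theorem \ref{thm_2relDB}(2) tensored with $\scrO_{X_b}$; as in Lemma \ref{lem_conormal_triangle}, the twist by $\f^*\omega_B$ becomes trivial via the fixed local parameter. The middle vertical map is the identity.

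For the forward direction, assume right-$m$-base change holds. Then for each $0\le p\le m$ the vertical maps $\alpha_{p-1}$ and $\alpha_p$ are quasi-isomorphisms; for $p=0$ this uses that $\uOmega^{-1,+}_{X/B}=0=\uOmega^{-1}_{X_b}$. By commutativity of both squares, the bottom row is then isomorphic, as a diagram, to the top row. Since the top row is distinguished and includes the connecting morphism, the bottom row, completed by the transported connecting map, is also distinguished.

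For the converse, suppose the bottom row is distinguished for all $0\le p\le m$. We show by induction on $p$ that $\alpha_p$ is a quasi-isomorphism.
\begin{itemize}
\item For $p=0$, both left terms vanish. The two middle-to-right maps then agree, since $\alpha_0\circ\beta_0=\psi_0$, and both are quasi-isomorphisms. Hence $\alpha_0$ is one.
\item For $p\ge1$, the induction hypothesis gives that $\alpha_{p-1}$ is a quasi-isomorphism. By the axiom (TR3) there is a morphism of triangles whose first two components are $\alpha_{p-1}$ and $\Id$, and whose third component $\gamma$ is a quasi-isomorphism by the five lemma on cohomology sheaves.
\end{itemize}

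The main obstacle is that $\gamma$ need not equal $\alpha_p$, because cones are not functorial. I would handle it as follows. Both $\gamma\circ\beta_p$ and $\alpha_p\circ\beta_p$ equal $\psi_p$, so $\alpha_p-\gamma$ vanishes on $\beta_p$. It therefore factors through the connecting morphism, i.e.\ through $\uOmega^{p-1,+}_{X/B}\otimes^{\tL}\scrO_{X_b}[1]$. A priori such a factorization could be nonzero, so instead of proving $\alpha_p=\gamma$ I would argue directly on cohomology. Taking the long exact sequences of the two rows, $\alpha_p$ and $\gamma$ induce the same maps on the images of $h^i(\beta_p)$ and $h^i(\psi_p)$. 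Moreover, $\alpha_p$ composed with the bottom connecting map equals the top connecting map followed by $\alpha_{p-1}[1]$ up to a term vanishing on $\beta_p$. Alternatively, and more cleanly, I would apply the octahedral axiom to the composition $\alpha_p\circ\beta_p=\psi_p$. This gives a triangle
\[
\Cone(\beta_p)\to\Cone(\psi_p)\to\Cone(\alpha_p)\xrightarrow{+1},
\]
in which $\Cone(\beta_p)\qis\uOmega^{p-1,+}_{X/B}\otimes^{\tL}\scrO_{X_b}[1]$ and $\Cone(\psi_p)\qis\uOmega^{p-1}_{X_b}[1]$. One then checks that the first map is $\alpha_{p-1}[1]$, using the left square, i.e.\ Lemma \ref{lem_left_triangle}, together with the fact that $\tau_{p-1}$ completes $\psi_p$ to a triangle. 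Since $\alpha_{p-1}[1]$ is a quasi-isomorphism, $\Cone(\alpha_p)=0$, so $\alpha_p$ is a quasi-isomorphism. This identification of the map in the octahedron is the step I expect to require the most care.
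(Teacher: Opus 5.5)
Your forward direction and the base case $p=0$ are fine, and your overall strategy (induction on $p$ through the diagram of Theorem \ref{thm_full_diagram}) is the paper's. The paper's converse is literally ``$\alpha_{-1}$ is a quasi-isomorphism; by induction so is each $\alpha_p$''. That is a five lemma which tacitly treats $(\alpha_{p-1},\Id,\alpha_p)$ as a morphism of triangles. So the obstacle you flag is real, and the paper does not address it either. However, your octahedral repair does not remove it.

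Write $A_q=\uOmega^{q,+}_{X/B}\otimes^{\tL}\scrO_{X_b}$, $C_q=\uOmega^q_{X_b}$, $M=\uOmega^p_X\otimes^{\tL}\scrO_{X_b}$, and let $\delta\colon A_p\to A_{p-1}[1]$ and $\epsilon\colon C_p\to C_{p-1}[1]$ be the connecting maps. The octahedron for $\psi_p=\alpha_p\circ\beta_p$ only guarantees a map $u\colon A_{p-1}[1]\to C_{p-1}[1]$ with $\tau_{p-1}[1]\circ u=\phi_{p-1}[1]$ and $u\circ\delta=\epsilon\circ\alpha_p$.
\begin{itemize}
\item Combined with Lemma \ref{lem_left_triangle}, the first relation gives $\tau_{p-1}[1]\circ(u-\alpha_{p-1}[1])=0$. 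Hence $u-\alpha_{p-1}[1]$ factors through $\epsilon$.
\item The fact that $\tau_{p-1}$ completes $\psi_p$ to a triangle is exactly what prevents you from cancelling $\tau_{p-1}[1]$. This is the same indeterminacy as $\alpha_p-\gamma$ factoring through $\delta$, just relocated.
\item Pinning down $u$ would require $\epsilon\circ\alpha_p=\alpha_{p-1}[1]\circ\delta$, which is the missing third square. With that square, the five lemma finishes without any octahedron.
\end{itemize}
The two commuting squares genuinely do not suffice. Take $M=0$, both rows equal to $A\to 0\to A[1]\xrightarrow{-\Id}A[1]$ with $A\neq 0$, left vertical map $\Id_A$ and right vertical map $0$. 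Both squares commute, yet the right map is not an isomorphism, and the octahedron forces $u=0\neq\Id$. Your cohomology-level variant fails for the same reason: $h^i(\alpha_p)$ is controlled on $\operatorname{im}h^i(\beta_p)$, but not on the quotient $\ker h^{i+1}(\phi_{p-1})$.

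So you need some input specific to $\alpha_p$. One clean option: since $(\alpha_p-\gamma)\circ\beta_p=0$, you have $\alpha_p-\gamma=g\circ\delta$ with $g\in\Hom(A_{p-1}[1],C_p)$. By the inductive hypothesis this group is $\Hom(\uOmega^{p-1}_{X_b}[1],\uOmega^p_{X_b})$. It vanishes as soon as $\uOmega^{p-1}_{X_b}$ is a sheaf in degree $0$, for example when $X_b$ is pre-$(m-1)$-Du Bois: the source then sits in degree $-1$ and the target in degrees $\ge 0$. Hence $\alpha_p=\gamma$ is a quasi-isomorphism. This covers the only use of the converse in the paper, namely Theorem \ref{thm_basechange_strict}, where $X_b$ is strict-$m$-Du Bois. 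For the statement with no hypothesis on $X_b$, you would instead have to build the bottom connecting morphism at the chain level so that $(\alpha_{p-1},\Id,\alpha_p)$ is an honest morphism of triangles. You would then read ``distinguished'' with respect to that particular connecting morphism.
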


\begin{proof}
The only if direction is provided by Lemma \ref{lem_conormal_triangle}. Now suppose the given triangle is exact. We consider the same diagram as in Theorem \ref{thm_full_diagram}:
\[\begin{tikzcd}
\uOmega^{p-1, +}_{X/B}\otimes^{\tL} \scrO_{X_b} \ar[r] \ar[d, "\alpha_{p-1}"] &
\uOmega^{p}_{X}\otimes^{\tL} \scrO_{X_b} \ar[r] \ar[d, "{\rm Id}"] &
\uOmega^{p, +}_{X/B}\otimes^{\tL} \scrO_{X_b} \ar[r, "+1"] \ar[d, "\alpha_{p}"] &
\, \\
\uOmega^{p-1}_{X_b} \ar[r] &
\uOmega^{p}_{X}\otimes^{\tL} \scrO_{X_b} \ar[r] &
\uOmega^{p}_{X_b}. \ar[r, "+1"]&
\,
\end{tikzcd}\]
We have $\uOmega^{-1, +}_{X/B}=0$ by Lemma \ref{lem_0piece}. Then $\alpha_{-1}$ is always a quasi-isomorphism. By induction, so is each $\alpha_p$ for $p\le m$. This is precisely saying $X_b$ satisfies right-$m$-base change.
\end{proof}

Through this, we see that right-$m$-base change is a weaker condition than left-$m$-base change. This should be interpreted as an advantage for the right relative Du Bois complex, as frequently the applications of interest require only the existence of \emph{some} object on the total space that base changes.

\begin{proposition}\label{prop_left_implies_right}
Let $\f:X\to B$ be a dominant morphism from a variety $X$ to a smooth curve $B$, $b\in B$ and $m\in\bbZ$. If $X_b$ satisfies left-$m$-base change, then $X_b$ satisfies right-$m$-base change.
\end{proposition}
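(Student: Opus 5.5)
We reduce the statement to Corollary \ref{cor_bc_iff_exact}: right-$m$-base change holds as soon as the sequence
\[
\uOmega^{p-1}_{X_b} \xrightarrow{\tau_{p-1}} \uOmega^p_X\otimes^{\tL}\scrO_{X_b} \xrightarrow{\psi_p} \uOmega^p_{X_b} \xrightarrow{+1}
\]
is a distinguished triangle for all $0\le p\le m$. Lemma \ref{lem_conormal_triangle} already supplies, under left-$m$-base change, a distinguished triangle with these three objects for $p\le m$. Those objects come from tensoring the left fundamental triangle of Theorem \ref{thm_2relDB}(2) with $\scrO_{X_b}$ and using the identifications $\uOmega^{q,-}_{X/B}\otimes^{\tL}\scrO_{X_b}\qis\uOmega^q_{X_b}$ for $q=p-1,p$. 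So the plan is to check that the maps in that triangle can be taken to be $\tau_{p-1}$ and $\psi_p$. If the identifications cannot be pinned down, we fall back on comparing with the right complex by induction instead.

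The induction runs on $p$, proving that $\alpha_p:\uOmega^{p,+}_{X/B}\otimes^{\tL}\scrO_{X_b}\to\uOmega^p_{X_b}$ is a quasi-isomorphism for $p\le m$. The base case $p=-1$ is trivial, since $\uOmega^{-1,+}_{X/B}=0$ and $\uOmega^{-1}_{X_b}=0$. For the inductive step, consider the diagram of Theorem \ref{thm_full_diagram}. The top row is a triangle, and $\alpha_{p-1}$ is a quasi-isomorphism by induction. Hence the cone of $\tau_{p-1}$ is identified, via the left square, with $\uOmega^{p,+}_{X/B}\otimes^{\tL}\scrO_{X_b}$, and $\alpha_p$ is the induced map from this cone to $\uOmega^p_{X_b}$. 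It therefore suffices to show that $\psi_p$ exhibits $\uOmega^p_{X_b}$ as $\Cone(\tau_{p-1})$.

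From the left side we know that $\uOmega^p_{X_b}$ is the cone of the composite
\[
\uOmega^{p-1}_{X_b}\qis\uOmega^{p-1,-}_{X/B}\otimes^{\tL}\scrO_{X_b}\to\uOmega^p_X\otimes^{\tL}\scrO_{X_b},
\]
and that the map $\uOmega^p_X\otimes^{\tL}\scrO_{X_b}\to\uOmega^{p,-}_{X/B}\otimes^{\tL}\scrO_{X_b}\qis\uOmega^p_{X_b}$ is the natural restriction $\psi_p$. The naturality clause of the base change quasi-isomorphism in Definition \ref{def_bc} is what guarantees this; the required compatibility is exactly that the identification is compatible with the maps from $\uOmega^p_X$.

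The remaining point, and the main obstacle, is that the first map of the left triangle agrees with $\tau_{p-1}$ up to an automorphism of $\uOmega^{p-1}_{X_b}$. I would argue this without computing it. Both maps have the same cocone of $\psi_p$: the left triangle and the triangle already obtained from the right side by induction at level $p-1$ both complete $\psi_p$. So $\uOmega^{p-1}_{X_b}$ is a cocone of $\psi_p$ in both cases, and the two triangles are isomorphic by TR3 together with the five lemma.

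More carefully, it is cleanest to run a simultaneous induction comparing $\uOmega^{p,+}_{X/B}\otimes^{\tL}\scrO_{X_b}$ and $\uOmega^{p,-}_{X/B}\otimes^{\tL}\scrO_{X_b}$ directly. Both receive $\uOmega^p_X\otimes^{\tL}\scrO_{X_b}$ and map compatibly to $\uOmega^p_{X_b}$: the right complex via $\alpha_p$, the left via the base change isomorphism. The map $\alpha_p$ is then a morphism between two cones of maps out of $\uOmega^{p-1}_{X_b}$ that are compatible with the identity on the middle term. If the compatibility of the left identification with $\psi_p$ turns out to be subtle, I would instead use the vanishing of $\Hom$ from objects in degree $\le -1$, via Lemma \ref{lem_relDB_coh_amp}, as in Lemma \ref{lem_left_triangle}, to force the relevant square to commute.
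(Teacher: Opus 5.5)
Your first paragraph is exactly the paper's proof. The paper simply combines Lemma~\ref{lem_conormal_triangle} with Corollary~\ref{cor_bc_iff_exact}, and handles $m<0$ via Theorem~\ref{thm_2relDB}(5), as your base case does. In doing so it tacitly takes the triangle coming from the left complex to be the one with maps $\tau_{p-1}$ and $\psi_p$.

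You are right that this identification is not automatic. The paper constructs no map $\uOmega^{p,-}_{X/B}\otimes^{\tL}\scrO_{X_b}\to\uOmega^p_{X_b}$ factoring $\psi_p$ (see the remark following Proposition~\ref{prop_factorization}). So even the second map being $\psi_p$ has to be read into the word ``natural'' in Definition~\ref{def_bc}. But the argument you give for the first map does not work; it is circular.

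Induction at level $p-1$ together with Lemma~\ref{lem_left_triangle} gives the distinguished triangle $\uOmega^{p-1}_{X_b}\xrightarrow{\tau_{p-1}}\uOmega^p_X\otimes^{\tL}\scrO_{X_b}\xrightarrow{\beta_p}\uOmega^{p,+}_{X/B}\otimes^{\tL}\scrO_{X_b}$. This completes $\beta_p$, not $\psi_p$. It completes $\psi_p=\alpha_p\circ\beta_p$ only if $\alpha_p$ is already an isomorphism, i.e.\ only after the level-$p$ statement is proved. So there is only one triangle on $\psi_p$, and TR3 has nothing to compare it with. Your ``morphism between two cones'' reformulation presupposes the same agreement.

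The fallback fails too. The degree argument in Lemma~\ref{lem_left_triangle} uses that the right complex sits in degrees $\le 0$ on a smooth hyperresolution. Lemma~\ref{lem_relDB_coh_amp} puts $\uOmega^{p,-}_{X/B}$ in degrees $\ge 0$, so the Hom-vanishing has no left analogue. Example~\ref{ex_relDB_no_commute} shows the left version of that commutativity genuinely fails. Left base change fails there as well, so it is no counterexample to the proposition, but the mechanism does not transfer.

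Granting compatibility with $\psi_q$ for $q\le m$, here is what you can actually extract. Let $\tau^-_{p-1}$ be the composite of the inverse base-change isomorphism with $\uOmega^{p-1,-}_{X/B}\otimes^{\tL}\scrO_{X_b}\to\uOmega^p_X\otimes^{\tL}\scrO_{X_b}$. Both $\tau_{p-1}$ and $\tau^-_{p-1}$ compose with $\psi_{p-1}$ to the restricted wedge map, by Theorem~\ref{thm_2relDB}(1) and Lemma~\ref{lem_left_triangle}. Hence their difference factors through the connecting map $\uOmega^{p-1}_{X_b}\to\uOmega^{p-2}_{X_b}[1]$ of the left triangle. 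Nothing in the paper forces that difference to vanish, or forces $\tau^-_{p-1}$ to differ from $\tau_{p-1}$ by an automorphism.

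So you have two options. Either build the identification of the maps into the hypothesis, as the paper implicitly does, in which case your first paragraph is already the whole proof. Or you need a genuinely new input at this step; your fourth and fifth paragraphs should not be relied on.
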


\begin{proof}
This follows from combining Corollary \ref{cor_bc_iff_exact} and Lemma \ref{lem_conormal_triangle}. The case when $m<0$ is automatic by Theorem \ref{thm_2relDB}(5).
\end{proof}

Theorem \ref{thm_full_diagram} also provides an alternative proof for base change if $X_b$ is assumed to be general, in which case we recover the (left-)base change result as in \cite[Theorem 1.3]{JK25}.

\begin{corollary}\label{cor_basechange_general}
Let $\f:X\to B$ be a dominant morphism from a variety $X$ to a smooth curve $B$, $b\in B$. Suppose $X_b$ is a general fiber. Then $X_b$ satisfies base change for the relative Du Bois complex, i.e.
\[
\uOmega^{p, +}_{X/B}\otimes^{\tL}_X\scrO_{X_b} \qis \uOmega^{p, -}_{X/B}\otimes^{\tL}_X\scrO_{X_b} \qis \uOmega^p_{X_b},
\]
for all $p$.
\end{corollary}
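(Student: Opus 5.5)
The plan is to deduce the statement from Corollary \ref{cor_bc_iff_exact}, by checking that the conormal triangle
\[
\uOmega^{p-1}_{X_b} \xrightarrow{\tau_{p-1}} \uOmega^p_X\otimes^{\tL}\scrO_{X_b} \xrightarrow{\psi_p} \uOmega^p_{X_b} \xrightarrow{+1}
\]
is distinguished for every $p$, and then to pass from right to left base change using Proposition \ref{prop_leftright_agree}.

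\textbf{Reduction to generic smoothness.} Since $X_b$ is a general fiber, after shrinking $B$ around $b$ we may assume $\f$ is flat and generically smooth over $B$. Moreover, we may choose a cubical hyperresolution $\pi_\kdot: X_\kdot \to X$ such that every composite $\f\circ\pi_\alpha$ is smooth over a neighborhood of $b$, or misses $b$. This uses generic smoothness for each of the finitely many $X_\alpha$. For such $b$, the fiber product $(X_b)_\kdot := X_\kdot\times_X X_b$ is again a cubical hyperresolution of $X_b$, because being a hyperresolution is compatible with general base change, as in \cite[Lemma 5.5]{Kov25} and \cite{JK25}. In the notation of Construction \ref{eqn_hyperres}, we then have $\widetilde{Z}_\kdot = Z_\kdot$, each $Z_\alpha$ is a smooth fiber of a smooth morphism, and $L\pi_\alpha^*\scrO_{X_b}\qis \scrO_{Z_\alpha}$ by Tor-independence.

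\textbf{Exactness on the hyperresolution.} On each $X_\alpha$ we have the short exact conormal sequence
\[
0\to \Omega^{p-1}_{Z_\alpha}(-Z_\alpha)\to \Omega^p_{X_\alpha}|_{Z_\alpha}\to \Omega^p_{Z_\alpha}\to 0.
\]
By Lemma \ref{lem_sm_diagram_derived}, its first map agrees with $\tau_\alpha$. The second map is restriction of forms, which is the map inducing $\psi_p$. Applying $R(\pi_\kdot)_*$ together with the projection formula, exactly as in the proof of Lemma \ref{lem_conormal_map}, turns this into the displayed triangle on $X_b$. Here we use $R(\pi_\kdot)_*\Omega^p_{Z_\kdot}\qis\uOmega^p_{X_b}$, which holds because $Z_\kdot$ is a hyperresolution of $X_b$.

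\textbf{Main obstacle.} The step I expect to be hardest is checking that $R(\pi_\kdot)_*$ of these sequences gives a distinguished triangle whose maps are exactly $\tau_{p-1}$ and $\psi_p$. This requires the sequences to be compatible with the cubical transition maps. That compatibility holds because the conormal sequences are natural in smooth pairs. By Corollary \ref{cor_bc_iff_exact}, right-$m$-base change then follows for every $m$.

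\textbf{Passing to left base change.} Take $m>\dim\f$. Then $\uOmega^{m}_{X_b}=0$, so condition (2) of Proposition \ref{prop_leftright_agree} holds. That proposition gives
\[
\uOmega^{p,+}_{X/B}\otimes^{\tL}\scrO_{X_b}\qis\uOmega^{p,-}_{X/B}\otimes^{\tL}\scrO_{X_b}
\]
for all $p$, and hence left base change as well, completing the chain of quasi-isomorphisms in the statement.
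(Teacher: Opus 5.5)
Your proposal is correct and follows the paper's route. You show the conormal triangle is exact for a general fiber, apply Corollary \ref{cor_bc_iff_exact} to get right base change in all degrees, and then use Proposition \ref{prop_leftright_agree}, which the paper packages as Corollary \ref{cor_leftright_agree}, to pass to left base change. The one difference is that the paper simply cites \cite[Lemma 5.5]{Kov25} for exactness of the triangle, whereas you re-derive it by restricting a hyperresolution to the general fiber and pushing forward the smooth conormal sequences; this is essentially the content of that lemma.
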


\begin{proof}
Since $X_b$ is general, by \cite[Lemma 5.5]{Kov25}, the triangle
\[
\uOmega^{p-1}_{X_b} \to \uOmega^p_X\otimes^{\tL}\scrO_Z \to \uOmega^p_{X_b} \xrightarrow{+1}
\]
is exact for all $p$. The right-$m$-base change claim follows directly from Corollary \ref{cor_bc_iff_exact} for all $m\in \bbZ$. Therefore, $X_b$ satisfies base change by Corollary \ref{cor_leftright_agree}.
\end{proof}

\medskip

\section{Injectivity theorems}\label{section_inj_thm}

An important ingredient for deformations is the deep Kov\'acs--Schwede-type injectivity theorems, cf. \cite{KS16a,KS16b,PSV24,Kov25}. We recall the most general statement here for convenience. Let $\omega^\kdot_X$ be a dualizing complex on $X$ and denote the shifted Grothendieck duality functor
\[
\bbD_X(-) = R\HOM(-, \omega_X^\kdot)[-\dim X].
\]

\begin{theorem}[\protect{\cite[Theorem 1.1]{Kov25}}]\label{thm_kov_injectivity}
Let $X$ be a variety with pre-$(m-1)$-Du Bois singularities. Then
\[
h^k(\bbD_X(\uOmega^p_X))\to h^k(\bbD_X(h^0(\uOmega^p_X)))
\]
is injective for all $k\in \bbZ$ and $p\le m$.
\end{theorem}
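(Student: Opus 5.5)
The plan is to reduce the statement, via local duality, to a surjectivity statement for local cohomology at points. We then prove that surjectivity using strictness of the Hodge filtration, following the Kov\'acs--Schwede strategy. The key point is that the pre-$(m-1)$-Du Bois hypothesis lets the lower graded pieces of the filtered Du Bois complex be replaced by honest sheaves.

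\emph{Step 1 (reduction).} The question is local. By Grothendieck local duality and Matlis duality at every (not necessarily closed) point $x\in X$, injectivity of $h^k(\bbD_X(\uOmega^p_X))\to h^k(\bbD_X(h^0(\uOmega^p_X)))$ for all $k$ follows from surjectivity of
\[
H^i_x(X, h^0(\uOmega^p_X)) \to H^i_x(X, \uOmega^p_X)
\]
for all $i$ and all $x$. One also has to reduce non-closed points to closed points on a general slice, as in the standard Kov\'acs--Schwede argument.

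\emph{Step 2 (the Hodge-theoretic input).} Fix $p\le m$ and let $Q:=\uOmega^\kdot_X/F^{p+1}$ be the filtered quotient, whose graded pieces are $\uOmega^q_X[-q]$ for $0\le q\le p$. By Steenbrink's mixed Hodge structure on local cohomology and strictness of $F$, the map
\[
H^i_x(X,\bbC_X)\to H^i_x(X,\uOmega^\kdot_X)\to H^i_x(X,Q)
\]
is surjective. I expect this to be the main obstacle. One argues on a compactification, where $H^i_x$ is computed by relative cohomology of a pair, and uses that $\bbC\to\uOmega^\kdot$ is a filtered quasi-isomorphism whose Hodge-to-de Rham spectral sequence degenerates. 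The quotient $Q$ is then a Hodge-theoretic quotient, and surjectivity onto $\Gr_F$-truncations follows from strictness.

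\emph{Step 3 (factoring through Kähler-type pieces).} Since $X$ is pre-$(m-1)$-Du Bois, $\uOmega^q_X\qis h^0(\uOmega^q_X)$ for $q<p$. Consequently $\bbC_X\to Q$ factors through the genuine complex of sheaves
\[
K=[h^0(\uOmega^0_X)\to\cdots\to h^0(\uOmega^p_X)],
\]
with the differentials induced on $h^0$. The map $K\to Q$ is an isomorphism on graded pieces $q<p$, so $\Cone(K\to Q)\qis C[-p]$ with $C=\Cone(h^0(\uOmega^p_X)\to\uOmega^p_X)$. Step 2 gives that $H^i_x(K)\to H^i_x(Q)$ is surjective, hence $H^i_x(Q)\to H^i_x(C[-p])$ is zero. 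The composite
\[
H^i_x(\uOmega^p_X[-p])\to H^i_x(Q)\to H^i_x(C[-p])
\]
(inclusion of $F^p/F^{p+1}$, then projection) is therefore zero. By the triangle $h^0(\uOmega^p_X)\to\uOmega^p_X\to C$, this vanishing is exactly the surjectivity of $H^i_x(h^0(\uOmega^p_X))\to H^i_x(\uOmega^p_X)$. Step 1 then finishes the proof.
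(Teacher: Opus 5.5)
Step 2 is false, and the whole argument rests on it. Take $X$ smooth of dimension $n\ge 1$, $x$ a closed point and $p=0$. Then $\bbH^n_x(X,\uOmega^0_X)=H^n_x(X,\scrO_X)\neq 0$, while $H^n_x(X^{\rm an},\bbC)=0$. More generally, for every $p<n$ the group $\bbH^{n+p}_x(X,\Omega^{\le p}_X)$ is nonzero. It is the cokernel of $d\colon H^n_x(X,\Omega^{p-1}_X)\to H^n_x(X,\Omega^p_X)$, which for $X=\bbA^n$ you can check directly with inverse monomials. But $H^{n+p}_x(X^{\rm an},\bbC)=0$, so the map in Step 2 cannot be surjective.

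The underlying reason is that local cohomology of the truncations $\uOmega^\kdot_X/F^{p+1}$ is not a Hodge-theoretic object. The spectral sequence with $E_1$-terms $H^j_x(X,\uOmega^q_X)$ has infinite-dimensional terms and does not degenerate. The Hodge filtration of the mixed Hodge structure on $H^i_x(X^{\rm an},\bbC)$ is not obtained by applying $R\Gamma_x$ to $F^\kdot\uOmega^\kdot_X$, so there is no strictness to invoke.

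Compactifying does not rescue this. Excision identifies $H^i_x$ with $H^i(\overline X,\overline X\setminus\{x\})$, which is cohomology relative to an \emph{open} subset. On the non-proper $\overline X\setminus\{x\}$, hypercohomology of $\uOmega^\kdot/F^{p+1}$ no longer computes $H/F^{p+1}$; Du Bois complexes of pairs handle cohomology relative to \emph{closed} subsets. By your Step 3, surjectivity of $H^i_x(K)\to H^i_x(Q)$ already implies the theorem, so all of the Hodge-theoretic content was placed in Step 2, and it cannot be obtained locally in this way. Steps 1 and 3 are fine as bookkeeping; in Step 1, closed points already suffice because the kernel is coherent.

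The missing idea is to replace local cohomology by \emph{global} cohomology on a proper variety, twisted by negative powers of a line bundle, where Hodge theory does apply. This is the Kov\'acs--Schwede mechanism of \cite{Kov25}, which the paper imitates in its proof of Theorem~\ref{thm_kahler_injectivity}. It runs as follows.
\begin{enumerate}
\item After compactifying, take a semi-ample $\scrL$ and the cyclic cover $\eta\colon Y\to X$ branched along a general $H\in|\scrL^N|$.
\item On the proper $Y$, degeneration of the Hodge-to-de Rham spectral sequence does give surjectivity of $H^k(Y,\bbC)\to\bbH^k(Y,\uOmega^\kdot_Y/F^{p+1})$. This is exactly the statement you wanted, but globally.
\item The $\scrL^{-i}$-eigensheaf decomposition of $\eta_*$, whose pieces involve the logarithmic complexes $\uOmega^q_X(\log H)\otimes\scrL^{-i}$, transports this to surjectivity statements for twisted cofiltration complexes on $X$.
\item The pre-$(m-1)$-Du Bois hypothesis, together with the generality of $H$, then replaces the lower graded pieces by sheaves, much as in your Step 3. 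This isolates the degree-$p$ defect $\Cone(h^0(\uOmega^p_X)\to\uOmega^p_X)$.
\item Finally, Serre duality against high powers of an ample line bundle converts the twisted global statement into injectivity of $h^k(\bbD_X(\uOmega^p_X))\to h^k(\bbD_X(h^0(\uOmega^p_X)))$.
\end{enumerate}
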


For the purpose of deformations for strict-$m$-Du Bois singularities, we will require an analogous theorem for K\"ahler differentials instead of $h^0$ of the Du Bois complex.

\begin{theorem}\label{thm_kahler_injectivity}
Let $X$ be a variety with strict-$(m-1)$-Du Bois singularities. Then
\[
h^k(\bbD_X(\uOmega^p_X))\to h^k(\bbD_X(\Omega^p_X))
\]
is injective for all $k\in \bbZ$ and $p\le m$.
\end{theorem}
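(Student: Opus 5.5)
We factor the map through Theorem \ref{thm_kov_injectivity}. Since $X$ is strict-$(m-1)$-Du Bois, it is in particular pre-$(m-1)$-Du Bois: for $p\le m-1$ we have $\Omega^p_X\qis\uOmega^p_X$, so $h^0(\uOmega^p_X)\isom\Omega^p_X$ and the higher cohomology vanishes. Hence Theorem \ref{thm_kov_injectivity} applies and gives injectivity of
\[
h^k(\bbD_X(\uOmega^p_X))\to h^k(\bbD_X(h^0(\uOmega^p_X)))
\]
for all $p\le m$. The natural map $\Omega^p_X\to\uOmega^p_X$ factors as $\Omega^p_X\to h^0(\uOmega^p_X)\to\uOmega^p_X$, because $\uOmega^p_X$ lives in non-negative degrees. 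Dualizing, the map in the statement factors as
\[
h^k(\bbD_X(\uOmega^p_X))\to h^k(\bbD_X(h^0(\uOmega^p_X)))\to h^k(\bbD_X(\Omega^p_X)).
\]
For $p\le m-1$ the second arrow is an isomorphism and we are done. So everything reduces to $p=m$, where the second arrow is induced by $\gamma:\Omega^m_X\to h^0(\uOmega^m_X)$.

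For $p=m$ it is not enough to show the second arrow is injective. The cleaner route is to rerun Kov\'acs' argument from \cite{Kov25} with $\Omega^m_X$ in place of $h^0(\uOmega^m_X)$. That argument (following Kov\'acs--Schwede) rests on two inputs. The first is a Du Bois-type splitting: after taking a general hyperplane section or a cyclic cover attached to a section of an ample line bundle, and using $\bbD_X$, injectivity reduces to showing that certain maps on hypercohomology, or on local cohomology at points, are surjective in the direction $H^i(X,\scrF\otimes L^{-1})\to H^i(X,\uOmega^m_X\otimes L^{-1})$. The second is an induction on $p$ via the conormal triangle $\uOmega^{p-1}_H\to\uOmega^p_X\otimes^{\tL}\scrO_H\to\uOmega^p_H$ for general $H$ (\cite[Lemma 5.5]{Kov25}), which is where the hypothesis in degrees $\le m-1$ enters.

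We would run the same induction with Kähler differentials. For a general member $H$ of a basepoint-free system there is the exact sequence $0\to\Omega^{m-1}_H(-H)\to\Omega^m_X|_H\to\Omega^m_H\to 0$ of Lemma \ref{lem_conormal}. Its left exactness uses only torsion-freeness of $\Omega^{m-1}_H$, and this holds because $H$ is again strict-$(m-1)$-Du Bois: by genericity $\Omega^{p}_H\qis\uOmega^p_H$ for $p\le m-1$, and $h^0(\uOmega^p_H)$ is torsion-free. The sequence compares with the Du Bois conormal triangle through the map $\Omega\to\uOmega$. Because the degree $m-1$ terms agree on both sides, the five lemma in cohomology identifies
\[
\operatorname{coker}\bigl(\Omega^m_X|_H\to h^0(\uOmega^m_X\otimes^{\tL}\scrO_H)\bigr)
\]
with the corresponding cokernel on $H$. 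This compatibility is what lets the surjectivity input, which in \cite{Kov25} comes from the Hodge-theoretic surjectivity $H^i(X_{\rm an},\bbC)\to\mathbb H^i(X,\uOmega^0_X)$ lifted to the $\uOmega^m$-graded piece via the $m$-th graded quotient of the Hodge filtration, go through. The map in that input factors through $\Omega^m_X$ just as well, since the Hodge filtration is generated by Kähler forms pulled back from the hyperresolution.

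The main obstacle is exactly that final step: showing the surjectivity statement in \cite{Kov25} factors through $\Omega^m_X$ rather than $h^0(\uOmega^m_X)$, that is, that $\gamma$ is ``surjective enough'' on the relevant local cohomology after twisting. We would first try to prove this by induction on $\dim X$ via general hyperplane sections, using the comparison above together with Lemma \ref{lem_Nakayama} to localize. The base case is $\dim X$ small, or $X$ strict-$m$-Du Bois off finitely many points. There the statement reduces to local cohomology at isolated points, where $\gamma$ is an isomorphism away from the point and the dual statement becomes an injectivity of $\operatorname{Ext}$-sheaves supported at a point.
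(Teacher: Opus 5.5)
Your reduction to $p=m$ is fine. For $p\le m-1$ the map in the statement is already an isomorphism, since $\Omega^p_X\qis\uOmega^p_X$. Rerunning \cite[\S 8]{Kov25} with $\Omega^m_X$ in place of $h^0(\uOmega^m_X)$ is also the paper's strategy. (Injectivity of your second arrow \emph{would} suffice; the real problem is that it is not available.)

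The gap is that the proposal stops exactly where the work begins. The step you call the main obstacle is that the surjectivity input of Kov\'acs's argument can be run with K\"ahler differentials as source. You never establish it, and the suggested induction on $\dim X$ via hyperplane sections down to isolated points is not an argument. Even the isolated-point case is a nontrivial Hodge-theoretic injectivity statement that you do not address.

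Your remark that $\bbC\to\Omega^\bullet\to\uOmega^\bullet$ makes the Hodge surjectivity factor through K\"ahler forms holds only untwisted, on a proper variety. The injectivity argument needs surjectivity on hypercohomology of the cofiltrations twisted by $\scrL^{-i}$. One obtains this by passing to a cyclic cover $\eta:Y\to X$ and splitting $\eta_*$ into eigensheaves. For singular $X$ the eigensheaves of $\eta_*\Omega^p_Y$ are not $\Omega^p_X\otimes\scrL^{-i}$, and your proposal never says what they are or how to descend the surjectivity to $X$.

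The missing idea is the paper's K\"ahler analogue of the Esnault--Viehweg decomposition, built from three facts:
\begin{enumerate}
\item A local computation gives $\eta_*\Omega^p_Y\isom\Omega^p_X\oplus\bigoplus_{i\ge1}\Omega^p_X(\log_K H)\otimes\scrL^{-i}$, with Kato's logarithmic differentials (Proposition \ref{prop_logK_decomp}).
\item For general $H$ there is a residue sequence $0\to\Omega^p_X\to\Omega^p_X(\log_K H)\to\Omega^{p-1}_H\to0$ (Lemma \ref{lem_logK_residue_exact}).
\item Strict-$(m-1)$-Du Bois singularities on $X$, hence on general $H$, give $\Omega^p_X(\log_K H)\qis\uOmega^p_X(\log H)$ for $p\le m-1$ (Corollary \ref{cor_logK_strictDB}).
\end{enumerate}
Together these produce the twisted surjection of Proposition \ref{prop_cofil_surject}, obtained by splitting the surjection on $Y$ equivariantly. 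They also identify the lower graded pieces of the cofiltrations, which is exactly where the strict-$(m-1)$ hypothesis enters.

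Your conormal-sequence comparison $0\to\Omega^{m-1}_H(-H)\to\Omega^m_X|_H\to\Omega^m_H\to0$ is not the relevant compatibility. It only identifies two cokernels on $H$ and yields no surjectivity on twisted hypercohomology. The lower-degree hypothesis has to be fed in through the logarithmic/residue sequence on the cyclic-cover side, not through the conormal sequence.
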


Under the stronger hypotheses of strict-$(m-1)$-Du Bois, this statement also implies the injectivity in Theorem \ref{thm_kov_injectivity} by considering the following natural factorization
\[
\bbD_X(\uOmega^p_X)\to \bbD_X(h^0(\uOmega^p_X))\to \bbD_X(\Omega^p_X).
\]
It easily follows that if the composition is injective on cohomology sheaves, then the first map is also.

\subsection{K\"ahler differentials under cyclic covers.}

First, we study the behavior of K\"ahler differentials under cyclic covers of not necessarily smooth varieties.

\begin{notation}\label{notation_cover}

Let $X$ be a complex variety, $\scrL$ a semi-ample line bundle on $X$. Let $s\in H^0(X,\scrL^N)$ be a section for some $N\gg 0$, so that the zero locus $H=(s=0)$ is an effective Cartier divisor. Now form
\[
\eta: Y=\Spec_X \bigoplus_{i=0}^{N-1} \scrL^{-i} \to X,
\]
the cyclic cover corresponding to the section $s$.
\end{notation}

When $X, Y$ and $H_{\red}$ are assumed to be nonsingular and $N\gg 0$, the classical result of \cite[3.16]{EV92} gives the decomposition
\[
\eta_*(\Omega^p_Y)\isom \Omega^p_X \oplus \bigoplus_{i=1}^{N-1} \Omega^p_X(\log H)\otimes \scrL^{-i},
\]
where $\Omega^p_X(\log H)$ are the sheaves of K\"ahler differentials with logarithmic poles along $H$. Without smoothness assumptions, we will require a separate object.

\begin{definition}\label{def_logK}
Let $X$ be a complex variety, $H\subseteq X$ an effective Cartier divisor. Denote
\[
\Omega^1_{X}(\log_K H) := \Omega^1_{(X, M_H)/(\Spec \bbC, \bbC^*)}
\]
to be Kato's sheaf of logarithmic $1$-forms for the divisorial log structure associated to $H\subseteq X$ over the trivial log structure on $\Spec \bbC$. See \cite[\S 1]{Kat89} and \cite[Chapter IV, \S 1.1]{Ogu18}. We use this notation to preserve the analogy with log K\"ahler differentials, while distinguishing it from the (reflexive) log differentials in the singular case with the additional $K$ for Kato.

By convention, $\Omega^0_{X}(\log_K H):= \scrO_X$. For $p \ge 1$, define
\[
\Omega^p_{X}(\log_K H) := \bigwedge^p \Omega^1_{X}(\log_K H).
\]
The only fact we will need is its local presentation. Suppose $X = \Spec A$, and $H=(s=0)$ for $s\in A$. Let $\theta$ be a formal symbol representing $d\log s$. Then
\[
\Omega^1_{X}(\log_K H) = \Omega^1_{A}(\log_K s) \isom \frac{\Omega^1_{A/\bbC} \oplus A\theta}{(ds-s\theta)}.
\]
In other words, we have a short exact sequence
\begin{equation}\label{eqn_logK_ses}
0\to (ds -s\theta)\to \Omega^1_{A}\oplus A\theta \to \Omega^1_A(\log_K s)\to 0.
\end{equation}
\end{definition}

\begin{lemma}\label{lem_logK_wedge}
Using Definition \ref{def_logK}, there is a short exact sequence
\[
0\to J^p \to \Omega^p_{A}\oplus \left(A\theta \wedge \Omega^{p-1}_A\right) \to \Omega^p_{A}(\log_K s) \to 0
\]
for each $p\ge 1$, where $J^p$ is the submodule generated by 
\begin{align*}
ds\wedge \beta - s\theta \wedge \beta, \quad &\beta \in \Omega^{p-1}_A, \text{ and} \\
ds\wedge \theta\wedge \gamma, \quad &\gamma \in \Omega^{p-2}_A.
\end{align*}
\end{lemma}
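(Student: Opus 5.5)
This is a statement of pure multilinear algebra: exterior powers of a quotient module. The plan is to use the standard presentation of the exterior power of a quotient. If $0\to N\to M\to Q\to 0$ is exact, with $M,Q$ modules over a commutative ring $A$, then $\bigwedge^p Q \isom \bigwedge^p M / (N\wedge \bigwedge^{p-1}M)$. Here the submodule is the image of $N\otimes \bigwedge^{p-1} M$ under multiplication; see \cite[Tag 01CJ]{SP}, the same reference used in Lemma \ref{lem_conormal}.

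We apply this to the sequence \eqref{eqn_logK_ses}, with $M=\Omega^1_A\oplus A\theta$ and $N=A\cdot(ds-s\theta)$. Then $\Omega^p_A(\log_K s)=\bigwedge^p Q$ by Definition \ref{def_logK}.

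The first step is to compute $\bigwedge^p M$. Since $A\theta$ is free of rank one, the standard decomposition of the exterior power of a direct sum gives
\[
\bigwedge^p(\Omega^1_A\oplus A\theta)\isom \Omega^p_A \oplus \left(A\theta\otimes \Omega^{p-1}_A\right),
\]
because $\bigwedge^k(A\theta)=0$ for $k\ge 2$. The second summand is exactly what the statement denotes by $A\theta\wedge\Omega^{p-1}_A$, and it is isomorphic to $\Omega^{p-1}_A$.

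The second step is to identify the kernel $N\wedge\bigwedge^{p-1}M$ with $J^p$. The module $\bigwedge^{p-1}M$ is generated by elements $\beta\in\Omega^{p-1}_A$ and $\theta\wedge\gamma$ with $\gamma\in\Omega^{p-2}_A$. Wedging the generator $ds-s\theta$ with these gives:
\begin{itemize}
\item $(ds-s\theta)\wedge\beta = ds\wedge\beta - s\theta\wedge\beta$;
\item $(ds-s\theta)\wedge\theta\wedge\gamma = ds\wedge\theta\wedge\gamma$, since $\theta\wedge\theta=0$.
\end{itemize}
These are exactly the listed generators of $J^p$, so $J^p$ equals the image of $N\otimes\bigwedge^{p-1}M$. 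This yields the right-exact sequence, and the left map is injective by definition, since $J^p$ is a submodule.

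The only points requiring care are routine:
\begin{itemize}
\item Checking the sign and ordering conventions, $ds\wedge\theta$ versus $\theta\wedge ds$. These affect generators only up to sign, so they do not change the submodule $J^p$.
\item The edge case $p=1$. There $\Omega^{-1}_A=0$, so the second family of generators is absent, and the sequence reduces to \eqref{eqn_logK_ses}.
\end{itemize}
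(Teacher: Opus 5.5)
Your proposal is correct and follows the paper's proof essentially verbatim. Like the paper, you decompose $\bigwedge^p(\Omega^1_A\oplus A\theta)$ using $\bigwedge^{k}(A\theta)=0$ for $k\ge 2$, apply \cite[Tag 01CJ]{SP} to \eqref{eqn_logK_ses}, and read off the generators of $J^p$ by wedging $ds-s\theta$ with the generators $\beta$ and $\theta\wedge\gamma$ of $\bigwedge^{p-1}(\Omega^1_A\oplus A\theta)$; your remarks on signs and on the case $p=1$ are accurate and simply make explicit what the paper leaves implicit.
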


\begin{proof}
Fix $p\ge 1$. Note first that
\[
\bigwedge^p \left( \Omega^1_A \oplus A\theta \right) \isom \Omega^p_A \oplus \left( A\theta \wedge \Omega^{p-1}_A\right).
\]
Using (\ref{eqn_logK_ses}) and \cite[Tag 01CJ]{SP}, we have a right exact sequence
\[
A(ds-s\theta) \wedge \left(\Omega^{p-1}_A \oplus A\theta \wedge \Omega^{p-2}_A \right) \to \Omega^p_A \oplus A\theta \wedge \Omega^{p-1}_A \to \Omega^p_A(\log_K s) \to 0.
\]
Simply by reading off the image of the first map, we obtain the generators of the kernel $J^p$ as claimed.
\end{proof}

Locally, let $Y=\Spec B$ where $B= A[t]/(t^N-s)$. $B$ has a natural $\bbZ/N\bbZ$ grading from the cyclic cover, with $\deg A=0$ and $\deg t=1$. The universal derivation is homogeneous with $\deg(dt)=1$, hence every $\Omega^p_B$ admits a natural grading with each graded pieces $(\Omega^p_B)_i$ a natural $A$-module.

\begin{lemma}\label{lem_logK_cover_local_decomp}
There is a short exact sequence of graded $B$-modules
\[
0\to I^p \to \left(\Omega^p_A\otimes_A B\right) \oplus \left(\Omega^{p-1}_A\otimes_A B\right)\wedge B\, dt \to \Omega^p_B \to 0
\]
for each $p\ge 1$, where $I^p$ is the submodule generated by
\begin{align*}
b\,ds \wedge \beta - Nbt^{N-1}dt \wedge \beta, \quad &b\in B, \beta \in \Omega^{p-1}_A, \text{ and} \\
b\,ds\wedge dt \wedge \gamma, \quad &b\in B, \gamma \in \Omega^{p-2}_A.
\end{align*}
The degree-$i$ part gives a short exact sequence of $A$-modules
\[
0\to (I^p)_i \to t^i \Omega^p_A\oplus \left( At^{i-1}dt \wedge \Omega^{p-1}_A \right) \to (\Omega^p_B)_i \to 0
\]
for $i\ge 1$, where $(I^p)_i$ is generated by
\begin{align*}
t^i\,ds \wedge \beta - Nst^{i-1}dt \wedge \beta, \quad &\beta \in \Omega^{p-1}_A, \text{ and} \\
t^{i-1}\,ds\wedge dt \wedge \gamma, \quad &\gamma \in \Omega^{p-2}_A.
\end{align*}
For $i=0$, we have $(\Omega^p_B)_0\isom \Omega^p_A$.
\end{lemma}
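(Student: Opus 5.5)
We compute $\Omega^p_B$ directly from the presentation $B=A[t]/(t^N-s)$ and then read off the graded pieces. For the first sequence, the second fundamental exact sequence for $A\to A[t]\to B$ together with $\Omega^1_{A[t]}\isom (\Omega^1_A\otimes_A A[t])\oplus A[t]\,dt$ gives a right exact sequence
\[
B\cdot d(t^N-s) \to (\Omega^1_A\otimes_A B)\oplus B\,dt \to \Omega^1_B \to 0,
\]
where $d(t^N-s)=Nt^{N-1}dt-ds$. This is the exact analogue of \eqref{eqn_logK_ses}, with $ds-Nt^{N-1}dt$ playing the role of $ds-s\theta$.

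We then take $p$-th exterior powers as in the proof of Lemma \ref{lem_logK_wedge}. Over $B$ we have
\[
\bigwedge^p\bigl((\Omega^1_A\otimes B)\oplus B\,dt\bigr)\isom(\Omega^p_A\otimes B)\oplus(\Omega^{p-1}_A\otimes B)\wedge dt,
\]
since $dt\wedge dt=0$. Applying \cite[Tag 01CJ]{SP}, the kernel of the surjection onto $\Omega^p_B$ is the image of
\[
(ds-Nt^{N-1}dt)\wedge\bigl((\Omega^{p-1}_A\otimes B)\oplus(\Omega^{p-2}_A\otimes B)\wedge dt\bigr).
\]
Reading off this image on generators, wedging with $\beta\in\Omega^{p-1}_A$ gives the first family of generators, $b\,ds\wedge\beta-Nbt^{N-1}dt\wedge\beta$. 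Wedging with $dt\wedge\gamma$ kills the $dt$ term and leaves $b\,ds\wedge dt\wedge\gamma$. This defines $I^p$ and proves the first sequence.

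Next we verify the grading. Assign $\deg A=0$, $\deg t=\deg dt=1$, and $\deg ds=0$. The relation $ds-Nt^{N-1}dt$ is then homogeneous of degree $0\equiv N \pmod N$, because $t^{N-1}dt$ has degree $N$. So every map in the sequence is homogeneous and $I^p$ is a graded submodule. Taking degree-$i$ parts is exact, so we only need to identify each degree-$i$ piece, for $1\le i\le N-1$.

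\begin{itemize}
\item As an $A$-module, $B=\bigoplus_{j=0}^{N-1}At^j$, so the degree-$i$ part of $\Omega^p_A\otimes B$ is $t^i\Omega^p_A$.
\item The degree-$i$ part of $(\Omega^{p-1}_A\otimes B)\wedge dt$ is $At^{i-1}dt\wedge\Omega^{p-1}_A$, since we need $\deg b=i-1$.
\item For the first family of generators of $I^p$, choose the homogeneous element $b=t^i$. Then $t^{N-1+i}=st^{i-1}$, which gives $t^i ds\wedge\beta-Nst^{i-1}dt\wedge\beta$.
\item For the second family, take $b=t^{i-1}$, which gives $t^{i-1}ds\wedge dt\wedge\gamma$.
\end{itemize}

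The mild point to check is that these elements generate $(I^p)_i$ as an $A$-module, and not merely as a $B$-module. A general homogeneous element of $I^p$ of degree $i$ is a $B$-combination of the generators with homogeneous coefficients $b=at^j$. Multiplying a generator by $t^j$ only shifts $i$, reducing powers of $t$ via $t^N=s$ and absorbing the factor $s$ into $A$. So each such term is an $A$-multiple of the listed degree-$i$ generator.

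For $i=0$ the same computation gives the degree-$0$ part
\[
\Omega^p_A\oplus A\,t^{N-1}dt\wedge\Omega^{p-1}_A
\]
modulo $(ds-Nt^{N-1}dt)\wedge\beta$ and $t^{N-1}ds\wedge dt\wedge\gamma$. Here the main thing to verify carefully is the identification $(\Omega^p_B)_0\isom\Omega^p_A$. The first relation lets us eliminate every term $t^{N-1}dt\wedge\beta$ in favor of $N^{-1}ds\wedge\beta$, using that $N$ is invertible over $\bbC$. We then check that no relation is imposed on $\Omega^p_A$ itself. To do this, write down an explicit inverse to $\Omega^p_A\to(\Omega^p_B)_0$, namely the map sending $t^{N-1}dt\wedge\beta\mapsto N^{-1}ds\wedge\beta$ and fixing $\Omega^p_A$. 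It kills the first relation by construction, and it kills the second since it sends $t^{N-1}ds\wedge dt\wedge\gamma\mapsto -N^{-1}ds\wedge ds\wedge\gamma=0$. This gives the isomorphism and completes the lemma.
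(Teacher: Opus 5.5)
Your proposal is correct and takes essentially the same route as the paper. Both use the conormal sequence for $B=A[t]/(t^N-s)$ over $\bbC$, take exterior powers via \cite[Tag 01CJ]{SP}, read off the generators of $I^p$, and then pass to graded pieces using the $A$-basis $1,t,\dots,t^{N-1}$ and $t^{N+i-1}=st^{i-1}$. Your check that the listed elements generate $(I^p)_i$ over $A$, and your explicit inverse in degree $0$, just spell out what the paper leaves terse (namely, that the degree-$0$ relations let the summand $At^{N-1}dt\wedge\Omega^{p-1}_A$ be eliminated in favor of $\Omega^p_A$).
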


\begin{proof}
The conormal sequence for the hypersurface $B$ reads
\[
B(t^N-s)\xrightarrow{d} \Omega^1_{A[t]}|_B \to \Omega^1_B \to 0.
\]
Note that $\Omega^1_{A[t]}|_B \isom (\Omega^1_A\otimes_A B)\oplus B\, dt$. Consequently,
\[
\bigwedge^p \Omega^1_{A[t]}|_B \isom \Omega^p_{A[t]}|_B \isom (\Omega^p_A\otimes B) \oplus (\Omega^{p-1}_A\otimes B)\wedge B\, dt
\]
Taking exterior power of the conormal sequence using \cite[Tag 01CJ]{SP}, we have a right exact sequence
\[
B(t^N-s) \otimes \left((\Omega^{p-1}_A\otimes B) \oplus (\Omega^{p-2}_A\otimes B)\wedge B\, dt \right) \to (\Omega^p_A\otimes B) \oplus (\Omega^{p-1}_A\otimes B)\wedge B\, dt \to \Omega^p_B \to 0
\]
The first map is induced by the exterior differentiation from the conormal sequence $d:B(t^N-s)\to \Omega^1_{A[t]}|_B$. Therefore, it is given by wedging elements of $\Omega^{p-1}_{A[t]}|_B$ with $d(bt^N -bs)$ for $b\in B$. Similar to before, denoting the image of the first map as $I^p$, we simply read off the generators from the first term as claimed.

Now, $B$ is free over $A$ with basis $1, t,\dots, t^{N-1}$. The presentation for the degree-$i$ part of the sequence directly follows once we note that $\deg(dt)=1$ and $t^{N+i-1}=st^{i-1}$ for $i\ge 1$. 

For degree $i=0$, the middle term becomes
\[
\Omega^p_A \oplus \left( A t^{N-1}dt \wedge \Omega^{p-1}_A\right)
\]
with relations from $(I^p)_0$ given by generators
\[
ds \wedge \beta = Nt^{N-1}dt \wedge \beta, \quad \text{and} \quad 0 = t^{N-1}\, ds \wedge dt \wedge \gamma,
\]
with $\beta \in \Omega^{p-1}_A, \gamma \in \Omega^{p-2}_A$. The right hand side of both types of relations are contained in the summand $A t^{N-1}dt \wedge \Omega^{p-1}_A$. Therefore, we see that
\[
\left( \Omega^p_A \oplus A t^{N-1}dt \wedge \Omega^{p-1}_A\right) / (I^p)_0 \isom \Omega^p_A.
\]
This shows that $\Omega^p_A\isom (\Omega^p_B)_0$, and concludes the proof.
\end{proof}

This local computation gives the decomposition akin to Esnault--Viehweg without smoothness assumptions. Note that $s \in H^0(X, \scrL^N)$ is also not required to be general.

\begin{proposition}\label{prop_logK_decomp}
Using Notation \ref{notation_cover} and Definition \ref{def_logK}, there are natural isomorphisms of $\bbZ/N\bbZ$-graded $\scrO_X$-modules
\[
\eta_*\Omega^p_Y \isom \Omega^p_X \oplus \bigoplus_{i=1}^{N-1}\Omega^p_X(\log_K H)\otimes \scrL^{-i}
\]
for $p\ge 0$.
\end{proposition}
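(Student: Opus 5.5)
The isomorphism is local on $X$ and compatible with gradings, so I will first reduce to the affine case. I take $X=\Spec A$ with $\scrL$ trivialized, $s\in A$ and $Y=\Spec B$, $B=A[t]/(t^N-s)$. Since $\eta$ is affine, $\eta_*\Omega^p_Y$ corresponds to $\Omega^p_B$ viewed as an $A$-module, and its $\bbZ/N\bbZ$-grading splits it as $\bigoplus_{i=0}^{N-1}(\Omega^p_B)_i$. Lemma \ref{lem_logK_cover_local_decomp} already identifies the degree-$0$ piece: $(\Omega^p_B)_0\isom\Omega^p_A$. It therefore remains to show that $(\Omega^p_B)_i\isom \Omega^p_A(\log_K s)$ for $1\le i\le N-1$, with the twist by $\scrL^{-i}$ accounting for the factor $t^i$.

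For fixed $i\ge 1$, I would define an $A$-linear map
\[
\Omega^p_A\oplus (A\theta\wedge\Omega^{p-1}_A)\to t^i\Omega^p_A\oplus (At^{i-1}dt\wedge\Omega^{p-1}_A),\qquad \omega\mapsto t^i\omega,\quad \theta\wedge\beta\mapsto N t^{i-1}dt\wedge\beta.
\]
This map is an isomorphism of free presentations, since $N$ is invertible over $\bbC$. Heuristically it is $\theta=d\log s\mapsto N\,d\log t$, multiplied by $t^i$. Next I check that it carries the relation module $J^p$ of Lemma \ref{lem_logK_wedge} onto $(I^p)_i$ of Lemma \ref{lem_logK_cover_local_decomp}.

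The first type of generator is $ds\wedge\beta-s\theta\wedge\beta$. It maps to $t^i ds\wedge\beta - Nst^{i-1}dt\wedge\beta$, which is exactly the first type of generator of $(I^p)_i$. The second type is $ds\wedge\theta\wedge\gamma$. Since $ds\wedge\gamma\in\Omega^{p-1}_A$, it maps to $\pm N t^{i-1}dt\wedge ds\wedge\gamma$, which is the second type of generator up to the unit $\pm N$. So the generators correspond bijectively up to units, and the map induces an isomorphism on cokernels $\Omega^p_A(\log_K s)\isom(\Omega^p_B)_i$. For $p=0$ the statement is just $B=\bigoplus_i At^i$ with $At^i\isom \scrL^{-i}$.

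The step I expect to need the most care is globalization and naturality. I must check that the local isomorphisms do not depend on the choice of trivialization of $\scrL$ or of the equation $s$, and that they glue. The plan is to track the cocycle: changing the trivialization by a unit $u$ rescales $t\mapsto ut$ and $s\mapsto u^Ns$. Then $\theta=d\log s$ changes by $N\,du/u$, and $dt/t$ changes by $du/u$. Both changes are compatible with $\theta\mapsto N\,dt/t$, and the factor $t^i$ transforms as a section of $\scrL^{-i}$, which is where the twist comes from. Kato's sheaf is intrinsic to the log structure $M_H$, so the identification $\theta\leftrightarrow N\,d\log t$ is canonical: it is the pullback of log forms along $Y\to X$. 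With that, the local isomorphisms glue to the stated graded isomorphism.
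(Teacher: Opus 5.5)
Your proposal is correct and follows essentially the same route as the paper: reduce to $B=A[t]/(t^N-s)$, take the degree-$0$ piece from Lemma \ref{lem_logK_cover_local_decomp}, and for $1\le i\le N-1$ match the presentation of Lemma \ref{lem_logK_wedge} with that of $(\Omega^p_B)_i$ via $\theta\mapsto N\,dt/t$ (times $t^i$), then glue. Your explicit factor $N$ in $\theta\wedge\beta\mapsto Nt^{i-1}dt\wedge\beta$ is exactly what makes the first-type generators correspond. The paper's diagram writes $\theta\mapsto t^{i-1}dt$, although its text says ``$Ndt/t$ in place of $\theta$'', so you have it right. Your cocycle check under $t\mapsto ut$, $s\mapsto u^Ns$ also spells out the gluing that the paper only states in one sentence.
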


\begin{proof}
For $p=0$, this is the standard cyclic cover decomposition $\eta_*\scrO_Y = \bigoplus_{i=0}^N \scrL^{-i}$.

For $p\ge 1$, the $i=0$ case is directly given by Lemma \ref{lem_logK_cover_local_decomp}. When $i\ge 1$, the local presentations given in Lemma \ref{lem_logK_wedge} and \ref{lem_logK_cover_local_decomp} corresponds exactly with $Ndt/t$ in place of the symbol $\theta$. Specifically, we have a diagram
\[\begin{tikzcd}
0 \ar[r] &
J^p \ar[r] \ar[d] &
\Omega^p_{A}\oplus \left(A\theta \wedge \Omega^{p-1}_A\right) \ar[r] \ar[d, "{(\cdot t^i, \cdot t^{i-1}dt)}"] &
\Omega^p_{A}(\log_K s) \ar[r] \ar[d, dotted, "\isom"] &
0 \\
0 \ar[r] &
(I^p)_i \ar[r] &
t^i \Omega^p_A\oplus \left( At^{i-1}dt \wedge \Omega^{p-1}_A \right) \ar[r] &
(\Omega^p_B)_i \ar[r] &
0.
\end{tikzcd}\]
The middle isomorphism is given by sending $\theta$ to $t^{i-1}dt$. Under this identification, the generators of $J^p$ correspondes to the generators of $(I^p)_i$, as given in Lemma \ref{lem_logK_wedge} and \ref{lem_logK_cover_local_decomp}. Note that here we're using the fact that $N$ is invertible as we are over $\bbC$. Therefore, the isomorphism descends to the quotient. 

These local isomorphism simply identifies $Ndt/t$ with $\theta$, thus the effects of any transition functions on them are the same for either obtaining the eigensheaves of $\eta_*\Omega^p_Y$ or $\Omega^p_X(\log_K H)$. Thus after gluing, we obtain the desired isomorphism
\[
(\eta_*\Omega^p_Y)_i \isom \Omega^p_X(\log_K H)\otimes \scrL^{-i}.
\]
This completes the proof.
\end{proof}

\subsection{A residue sequence.} To utilize this decomposition for the injectivity theorem, we also need a residue-type sequence (cf. \cite[2.3b]{EV92}). We proceed with more local calculations.

\begin{lemma}\label{lem_logK_residue}
Using Notation \ref{notation_cover} and Definition \ref{def_logK}, there is a right exact sequence
\[
\Omega^{p}_X \to \Omega^p_X(\log_K H) \to \Omega^{p-1}_H \to 0
\]
for each $p\ge 1$.
\end{lemma}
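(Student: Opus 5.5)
The question is local, so we may assume $X=\Spec A$ with $H=(s=0)$ and work with the presentation of Lemma \ref{lem_logK_wedge}. The plan is to build the residue map explicitly on the presentation, check that it kills the relations $J^p$, and then compute its kernel.

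\textbf{Definition of the residue map.} On the middle term $\Omega^p_A\oplus(A\theta\wedge\Omega^{p-1}_A)$, define
\[
\rho(\omega,\ \theta\wedge\beta)=\bar\beta\in\Omega^{p-1}_{A/s},
\]
where $\bar\beta$ is the image of $\beta$ under the surjection $\Omega^{p-1}_A\to\Omega^{p-1}_{A/s}$. For this to be well defined on the second summand, note that an element $\theta\wedge\beta$ is determined only up to the ambiguity in $\beta$. I would handle this by using the decomposition
\[
\bigwedge^p(\Omega^1_A\oplus A\theta)\isom \Omega^p_A\oplus\bigl(\theta\otimes\Omega^{p-1}_A\bigr),
\]
which is valid because $A\theta$ is free of rank one. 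Hence the second summand is canonically $\Omega^{p-1}_A$, and $\rho$ is well defined.

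\textbf{$\rho$ kills $J^p$.} We check the two types of generators.
\begin{itemize}
\item $ds\wedge\beta-s\theta\wedge\beta\mapsto -\overline{s\beta}=0$, since $s=0$ on $H$.
\item $ds\wedge\theta\wedge\gamma=\pm\,\theta\wedge ds\wedge\gamma\mapsto\pm\,\overline{ds\wedge\gamma}=0$, since $d\bar s=0$ in $\Omega^\kdot_{A/s}$.
\end{itemize}
So $\rho$ descends to a map $\Omega^p_A(\log_K s)\to\Omega^{p-1}_{A/s}$, and it is surjective because $\Omega^{p-1}_A\to\Omega^{p-1}_{A/s}$ is surjective.

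\textbf{Exactness in the middle.} The composite $\Omega^p_A\to\Omega^p_A(\log_K s)\to\Omega^{p-1}_{A/s}$ is zero by construction. Now suppose the class of $(\omega,\theta\wedge\beta)$ maps to zero. Then $\bar\beta=0$, so
\[
\beta=s\beta'+ds\wedge\gamma
\]
for some $\beta'\in\Omega^{p-1}_A$ and $\gamma\in\Omega^{p-2}_A$, since $\ker(\Omega^{p-1}_A\to\Omega^{p-1}_{A/s})$ is generated by $s\Omega^{p-1}_A$ and $ds\wedge\Omega^{p-2}_A$. Modulo $J^p$ we have:
\begin{itemize}
\item $\theta\wedge s\beta'\equiv ds\wedge\beta'$, which lies in the image of $\Omega^p_A$;
\item $\theta\wedge ds\wedge\gamma\equiv 0$.
\end{itemize}
Hence the class lies in the image of $\Omega^p_A$, which gives exactness in the middle.

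\textbf{Globalization.} It remains to check that the local residue maps glue, i.e. that $\rho$ is independent of the choice of local equation. If $s$ is replaced by $us$ with $u$ a unit, then $\theta\mapsto\theta+du/u$. The added term $(du/u)\wedge\beta$ lies in $\Omega^p_A$, whose image under $\rho$ is zero, so the residue is unchanged and the local sequences glue to the stated right exact sequence.

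The main obstacle I expect is this well-definedness and gluing: one has to be careful that Kato's $\Omega^1(\log_K H)$ really transforms as $\theta\mapsto\theta+d\log u$ under a change of equation. That is exactly the defining property of the divisorial log structure, and I would cite it from \cite{Kat89}.
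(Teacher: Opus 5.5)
Your proposal is correct and follows the paper's proof step for step. The residue $\alpha+\theta\wedge\beta\mapsto\bar\beta$ is defined on the presentation of Lemma \ref{lem_logK_wedge} and checked on the two types of generators of $J^p$; then $\beta=s\beta'+ds\wedge\gamma$, together with $s\theta\wedge\beta'\equiv ds\wedge\beta'$ and $ds\wedge\theta\wedge\gamma\equiv 0$, shows the kernel lies in the image of $\Omega^p_A$. Your extra check that the residue is unchanged under $s\mapsto us$ (via $\theta\mapsto\theta+du/u$) makes explicit the gluing step that the paper leaves implicit in ``readily reduced to a local calculation.''
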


\begin{proof}
As before, this is readily reduced to a local calculation. So using the same notation as in Definition \ref{def_logK} and Lemma \ref{lem_logK_wedge}, we define
\[
\res: \Omega^p_A\oplus A\theta\wedge \Omega^{p-1}_A \to \Omega^{p-1}_{A/(s)}
\]
by $\res(\alpha + \theta\wedge \beta) = \bar{\beta}$. It is surjective since any $(p-1)$-form on $A/(s)$ lifts to $A$. Using the presentation in Lemma \ref{lem_logK_wedge}, we check that
\[
\res(ds\wedge \beta - s\theta\wedge \beta) = 0, \quad \text{and} \quad \res(ds\wedge \theta \wedge \gamma) = 0.
\]
Therefore, the map descends to the quotient $\Omega^p_A\oplus A\theta\wedge \Omega^{p-1}_A/J^p$, giving
\[
\res: \Omega^p_A(\log_K s) \to \Omega^p_{A/(s)}.
\]
Combining with the map $\Omega^p_A\to \Omega^p_A(\log_K s)$ induced by the sum, we have a sequence
\[
\Omega^{p}_A \to  \Omega^p_A(\log_K s) \to \Omega^{p-1}_{A/(s)} \to 0.
\]
By construction, the composition is zero, so it remains to show that the kernel of the residue map is contained in $\Omega^{p-1}_A$. Before that, we need to also consider the conormal sequence associated to $A\to A/(s)$. Its exterior power yields a right exact sequence through \cite[Tag 01CJ]{SP}:
\[
\Omega^{p-2}_A \otimes (s) \to \Omega^{p-1}_A\to \Omega^{p-1}_{A/(s)} \to 0.
\]
The first map is given by $\gamma \otimes a \mapsto \gamma \wedge d(a)$ for $\gamma \in \Omega^{p-2}_A$ and $a\in (s)$. We see that any $\beta \in \Omega^{p-1}_A$ with $\bar{\beta}=0$ satisfies $\beta = s\beta' + \gamma \wedge ds$ for some $\beta'\in \Omega^{p-1}_A$ and $\gamma \in \Omega^{p-2}_A$.

Now, let $\alpha+ \theta\wedge \beta \in \ker \res$ be a representative with $\alpha\in \Omega^p_A$, $\beta\in \Omega^{p-1}_A$, so that $\bar{\beta}=0\in \Omega^{p-1}_{A/(s)}$. We therefore have
\[
\alpha+ \theta\wedge \beta = \alpha+ s\theta\wedge \beta' + \theta\wedge \gamma \wedge ds = \alpha+ ds\wedge \beta' \in \Omega^p_X
\]
using the relations $s\theta\wedge \beta'= ds\wedge \beta'$ and $ds \wedge \theta \wedge \gamma =0$ given as in Lemma \ref{lem_logK_wedge}. This proves the desired exactness.
\end{proof}

\begin{lemma}\label{lem_logK_residue_exact}
Using Notation \ref{notation_cover} and Definition \ref{def_logK}, suppose further that $H$ is general. Then there is a short exact sequence
\[
0\to \Omega^{p}_X \to \Omega^p_X(\log_K H) \to \Omega^{p-1}_H \to 0
\]
for each $p\ge 1$.
\end{lemma}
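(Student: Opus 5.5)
Since Lemma \ref{lem_logK_residue} already gives right exactness, the only thing left is injectivity of $\Omega^p_X \to \Omega^p_X(\log_K H)$ when $H$ is general. The question is local, so I work with $X=\Spec A$ and $H=(s=0)$ as in Definition \ref{def_logK}. Using the presentation of Lemma \ref{lem_logK_wedge}, a form $\alpha\in\Omega^p_A$ maps to zero exactly when $\alpha\in J^p$. Concretely, this means there are $\beta\in\Omega^{p-1}_A$ and $\gamma\in\Omega^{p-2}_A$ with
\[
\alpha = ds\wedge\beta + ds\wedge\gamma' , \qquad s\beta = ds\wedge \gamma
\]
(absorbing the $ds\wedge\theta\wedge\gamma$ generator into the $\theta$-component, up to sign). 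So injectivity reduces to the following claim: whenever $s\beta\in ds\wedge\Omega^{p-2}_A$, the form $ds\wedge\beta$ vanishes in $\Omega^p_A$, perhaps after adjusting $\beta$ by an element of $ds\wedge\Omega^{p-2}_A$.

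\textbf{Step 1: use generality to control torsion along $H$.} I use that $H$ is general in a basepoint-free linear system (here $\scrL^N$ with $N\gg0$). Two consequences follow.
\begin{itemize}
\item[(a)] $H$ contains no associated point of $\Omega^j_X$, $\Omega^j_X/(ds\wedge\Omega^{j-1}_X)$, or $\Omega^j_H$ for any $j$, since these are finitely many points to avoid. Hence $s$ is a nonzerodivisor on all of these modules.
\item[(b)] By Bertini, $H$ meets each stratum of a Whitney (or smooth) stratification of $X$ transversally.
\end{itemize}

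\textbf{Step 2: kill the relation.} From $s\beta = ds\wedge\gamma$, the class of $\beta$ in $\Omega^{p-1}_A/(ds\wedge\Omega^{p-2}_A)$ is annihilated by $s$. By (a) this class is zero, so $\beta = ds\wedge\gamma''$ and therefore $ds\wedge\beta=0$. The $\theta$-terms are handled the same way. Hence $\alpha=0$, which gives the required injectivity; combined with Lemma \ref{lem_logK_residue}, this yields the short exact sequence.

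\textbf{Main obstacle.} The step I expect to be delicate is justifying (a) for the module $\Omega^{p-1}_A/(ds\wedge\Omega^{p-2}_A)$. This module depends on $s$ itself, so the choice of $s$ is not a priori independent of its associated points. To handle this I would argue in a family. Let $V\subseteq H^0(X,\scrL^N)$ be the linear system, and on $X\times V$ consider the universal section $\mathbf s$ and the sheaf $\Omega^{p-1}_{X\times V/V}/(d\mathbf s\wedge\Omega^{p-2})$. Generic flatness and generic freeness over $V$ let me restrict to a general fiber. On the total space the problem is easier: basepoint-freeness makes the universal divisor smooth over $X$ (it is locally a linear hyperplane bundle), so there $\mathbf s$ avoids the associated points of this quotient. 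Generic freeness then transfers this property to a general $s$.

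If the family argument gets stuck, my fallback is an alternative route. I would identify $\Omega^1_X(\log_K H)$ locally with $\Omega^1$ of the degree-$0$ part of a log-smooth chart. Away from $\Sing X$ the sequence is the classical one from \cite[2.3]{EV92}, and I would extend injectivity across $\Sing X$ using that general $H$ imposes the nonzerodivisor condition of (a) on $\Omega^p_X$ and on the torsion of $\Omega^p_X(\log_K H)$.
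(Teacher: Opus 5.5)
Your reduction is correct once cleaned up: $(\alpha,0)\in J^p$ exactly when $\alpha=ds\wedge\beta$ and $s\beta=-ds\wedge\gamma$ for some $\beta\in\Omega^{p-1}_A$, $\gamma\in\Omega^{p-2}_A$. The extra $ds\wedge\gamma'$ term is spurious, and there are no leftover $\theta$-terms to handle.

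The problem is Step~2. It rests entirely on $s$ being a non-zero-divisor on $\Omega^{p-1}_A/(ds\wedge\Omega^{p-2}_A)$, and you only assert this. That module is the sheaf of relative forms $\Omega^{p-1}_{X/\bbA^1}$ for the local map $s\colon X\to\bbA^1$. So you are asking for flatness of relative K\"ahler differentials along the fibre $H$, which is the kind of vertical-torsion issue the paper notes is not automatic. Your justification (``the universal divisor is smooth over $X$'') says nothing about the associated points of the quotient $\Omega^{p-1}_{X\times V/V}/(d\mathbf s\wedge\Omega^{p-2})$. The fallback is too vague to evaluate. As written, the proof is incomplete at its load-bearing step.

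The missing observation is that this harder statement is unnecessary. Your own relation gives $s\alpha=ds\wedge(s\beta)=-ds\wedge ds\wedge\gamma=0$, so the kernel of $\Omega^p_A\to\Omega^p_A(\log_K s)$ is $s$-torsion. The $s$-independent part of your (a) then gives $\alpha=0$ immediately: a general $s$ is a non-zero-divisor on $\Omega^p_X$, by avoiding the finitely many associated points of $\Omega^p_X$.

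This is the paper's proof, phrased there as follows. After inverting $s$ one has $\theta=ds/s$, so $\Omega^p_X\to\Omega^p_X(\log_K H)$ is an isomorphism off $H$. Hence its kernel is a subsheaf of $\Omega^p_X$ killed by a power of $s$.

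If you want to keep your route, it can be repaired:
\begin{enumerate}
\item Work on a chart where a member $s_0$ of the system is a unit, and set $t_i=s_i/s_0$. The normalized universal equation is $\mathbf s=c_0+\sum_{i\ge1}c_it_i$.
\item The differential $d\mathbf s$ relative to $V$ does not involve $c_0$. Hence the quotient is $\mathcal Q'[c_0]$, and $\mathbf s$, being monic in $c_0$, is a non-zero-divisor on it.
\item Generic flatness of $\mathcal Q/\mathbf s\mathcal Q$ over $V$, together with a $\mathrm{Tor}$ argument, passes this to a general fibre.
\end{enumerate}
But that proves more than the lemma needs, with considerably more work.
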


\begin{proof}
The question is local and we may assume $X$ is affine. Away from $H$, $s$ is invertible hence $\theta = ds/s$. We see that both generating relations of $J^p$ as in Lemma \ref{lem_logK_wedge} become zero. Consequently, $\Omega^{p}_X\to \Omega^p_X(\log_K H)$ is injective on $X\setminus H$. In other words, its kernel is a torsion subsheaf of $\Omega^p_X$ supported on $H$, and this holds for an arbitrary $s$.

By prime avoidance, we may choose $H$ general such that $s$ is a non-zero-divisor on $\Omega^p_X$. In other words, multiplication by $s$ is injective on $\Omega^p_X$. Since $\ker(\Omega^p_X\to \Omega^p_X(\log_K H))$ is annihilated by a power of $s$, it must be trivial. This implies the desired left exactness on all of $X$.
\end{proof}

\begin{corollary}\label{cor_logK_strictDB}
Using Notation \ref{notation_cover} and Definition \ref{def_logK}, suppose that $H$ is general. Suppose $X$ is strict-$m$-Du Bois. Then
\[
\Omega^p_X(\log_K H)\qis \uOmega^p_X(\log H)
\]
for $p\le m$. See \cite[3.I]{Kov25} for definition of the logarithmic Du Bois complex $\uOmega^p_X(\log H)$.
\end{corollary}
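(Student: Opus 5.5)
Plan: compare the Kato log residue sequence of Lemma \ref{lem_logK_residue_exact} with the residue triangle for the logarithmic Du Bois complex, and conclude by the five lemma. By \cite[3.I]{Kov25} (cf. \cite[2.3b]{EV92} in the smooth case), for a Cartier divisor $H$ there is a natural distinguished triangle
\[
\uOmega^p_X \to \uOmega^p_X(\log H) \to \uOmega^{p-1}_H \xrightarrow{+1},
\]
obtained from the residue sequences on a hyperresolution of the pair $(X,H)$. We also need a natural comparison map $\Omega^p_X(\log_K H)\to h^0(\uOmega^p_X(\log H))\to \uOmega^p_X(\log H)$. I would build it from the universal property of Kato's sheaf. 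On a hyperresolution $\pi_\kdot: X_\kdot\to X$ with $H_\kdot=\pi_\kdot^{-1}H$ of snc support, $\pi_\alpha^*s$ is a section whose zero locus is supported on a normal crossings divisor, so $d\log(\pi_\alpha^* s)$ is a section of $\Omega^1_{X_\alpha}(\log H_\alpha)$. The assignment $\theta\mapsto d\log s$ kills the relation $ds-s\theta$, so the presentation (\ref{eqn_logK_ses}) gives a map $\Omega^1_X(\log_K H)\to \pi_{\alpha*}\Omega^1_{X_\alpha}(\log H_\alpha)$. Taking wedge powers and passing to the hyperresolution gives the comparison map in degree $p$.

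Next I would check that this map fits into a morphism of triangles from
\[
0\to\Omega^p_X\to\Omega^p_X(\log_K H)\to\Omega^{p-1}_H\to 0
\]
(Lemma \ref{lem_logK_residue_exact}, using that $H$ is general) to the Du Bois residue triangle. The outer vertical maps are the natural maps $\Omega^p_X\to\uOmega^p_X$ and $\Omega^{p-1}_H\to\uOmega^{p-1}_H$. Commutativity can be checked locally on generators: the residue of Lemma \ref{lem_logK_residue} sends $\theta\wedge\beta\mapsto\bar\beta$, and the Poincaré residue sends $d\log s\wedge\pi^*\beta$ to the restriction of $\beta$. To lift commutativity from the level of sheaves to $D(X)$, I would use that all three source objects are sheaves in degree $0$ and that the targets have no negative cohomology. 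Then the relevant $\Hom$ groups only see $h^0$, which is the same kind of degree argument as in Lemma \ref{lem_left_triangle}.

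Finally, the outer vertical maps are quasi-isomorphisms. The left one holds because $X$ is strict-$m$-Du Bois and $p\le m$. The right one requires $H$ to be strict-$(m-1)$-Du Bois. I expect this to be the main obstacle: a general member of a basepoint-free linear system should inherit the strict condition. I would prove it with \cite[Lemma 5.5]{Kov25}, which gives the exact triangle $\uOmega^{p-1}_H\to\uOmega^p_X\otimes^{\tL}\scrO_H\to\uOmega^p_H$. Since $H$ is general, $s$ is a non-zero-divisor on each $\Omega^q_X$ and $\uOmega^q_X\qis\Omega^q_X$, so $\uOmega^p_X\otimes^{\tL}\scrO_H\qis\Omega^p_X|_H$. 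The Kähler conormal sequence $0\to\Omega^{p-1}_H\to\Omega^p_X|_H\to\Omega^p_H\to0$ is left exact for general $H$ by Lemma \ref{lem_conormal} and genericity. Using Lemma \ref{lem_conormal_map}, this sequence maps compatibly to the Du Bois triangle. Inducting on $p$ with the five lemma then gives $\Omega^{q}_H\qis\uOmega^{q}_H$ for all $q\le m$, and in particular for $q\le m-1$. The five lemma applied to the morphism of triangles then yields $\Omega^p_X(\log_K H)\qis\uOmega^p_X(\log H)$ for all $p\le m$; the case $p=0$ is trivial.
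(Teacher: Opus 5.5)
Your proposal is correct and follows the paper's proof. You map the Kato residue sequence of Lemma \ref{lem_logK_residue_exact} to the residue triangle for $\uOmega^p_X(\log H)$ and finish with the five lemma; the paper takes that triangle from \cite[5.7.1]{Kov25}, and, as there, it needs $H$ general rather than an arbitrary Cartier divisor, which is your hypothesis anyway. The differences are minor: you spell out the middle comparison map and the commutativity via the degree argument, which the paper leaves implicit, and you re-derive the strictness of $H$ by induction on $p$ from \cite[Lemma 5.5]{Kov25} and the conormal comparison, where the paper simply cites the Bertini-type result \cite[Corollary 3.3]{SVV23}.
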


\begin{proof}
The case $p=0$ follows by convention for both the K\"ahler and Du Bois objects, since we have $\Omega^p_X(\log_K H)=\scrO_X$ and $\uOmega^p_X(\log H)\qis \uOmega^p_X$. For $H$ general, \cite[5.7.1]{Kov25} gives a triangle
\[
\uOmega^p_X \to \uOmega^p_X(\log H)\to \uOmega^{p-1}_H \xrightarrow{+1}
\]
for all $p\ge 1$. Combine this with Lemma \ref{lem_logK_residue_exact}, we have a diagram
\[\begin{tikzcd}
0 \ar[r] &
\Omega^p_X \ar[r] \ar[d] &
\Omega^p_X(\log_K H) \ar[r] \ar[d] &
\Omega^{p-1}_H \ar[r] \ar[d] &
0 \\
&
\uOmega^p_X \ar[r] &
\uOmega^p_X(\log H) \ar[r] &
\uOmega^{p-1}_H. \ar[r, "+1"] &
\,
\end{tikzcd}\]
Now, since $H$ is general, \cite[Corollary 3.3]{SVV23} gives that $H$ has strict-$m$-Du Bois singularities as well. The desired quasi-isomorphism now follows from the five lemma.
\end{proof}

\subsection{Cofiltrations and cohomologies.}

\begin{notation}\label{def_logK_cofil}
We will use the same notations as in cf. \cite[\S 3.B, 3.E, 3.I]{Kov25} for the filtration and cofiltrations associated to the de Rham complex, the Du Bois complex and the logarithmic Du Bois complex. Specifically, we will denote
\begin{align*}
& \nf^p_X = F^p\Omega^\kdot_X, && \nf^X_p = \Omega^\kdot_X/F^p, \\
& \uf^p_X = F^p\uOmega^\kdot_X, && \uf^X_p = \Cone(\uf^{p+1}_X\to \uOmega^\kdot_X), \\
& \uf^p_X(\log H) = F^p\uOmega^\kdot_X(\log H), \text{ and} && \uf^X_p(\log H) = \Cone(\uf^{p+1}_X(\log H)\to \uOmega^\kdot_X(\log H)).
\end{align*}
\end{notation}

\begin{lemma}\label{lem_logK_decomp_cofil}
Using Notation \ref{notation_cover} and \ref{def_logK_cofil}, there exists filtered objects $\nf^X_p (\log_K H) \otimes \scrL^{-i}$ with 
\[
\Gr^k_{\rm filt}(\nf^X_p (\log_K H) \otimes \scrL^{-i}) \qis \Omega^k_X(\log_K H)\otimes \scrL^{-i}[-k]
\]
for $k\le p$ such that
\[
\eta_* \nf^Y_p \isom \nf^X_p \oplus \bigoplus_{i=1}^{N-1} \nf^X_p (\log_K H) \otimes \scrL^{-i}
\]
for each $p\ge 0$.
\end{lemma}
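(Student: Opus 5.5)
We will construct the filtered objects by hand, imitating the construction of the stupid filtration on the de Rham complex, and transport the grading from Proposition \ref{prop_logK_decomp}. First, on $Y$ we have $\nf^Y_p = \Omega^\kdot_Y/F^{p+1}$, the truncated de Rham complex $\scrO_Y \to \Omega^1_Y \to \cdots \to \Omega^p_Y$ with its stupid filtration. Since $\eta$ is finite, $\eta_*$ is exact, so $\eta_*\nf^Y_p$ is the complex $\eta_*\scrO_Y\to \eta_*\Omega^1_Y\to\cdots\to \eta_*\Omega^p_Y$, filtered by the stupid filtration, and its graded pieces are $\eta_*\Omega^k_Y[-k]$.

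The key step is to check that the exterior derivative $d:\Omega^k_Y\to \Omega^{k+1}_Y$ is homogeneous of degree $0$ for the $\bbZ/N\bbZ$-grading. Locally $B=A[t]/(t^N-s)$ with $\deg t=\deg dt=1$ and $\deg A=0$, so $d(a\,t^i\,\omega)=t^i\,da\wedge\omega+i\,a\,t^{i-1}dt\wedge\omega+ a t^i d\omega$ is visibly of degree $i$. The grading is the eigenspace decomposition for the $\mu_N$-action on $Y$ over $X$, which is global, and $d$ is $\mu_N$-equivariant; hence $\eta_*$ of the truncated de Rham complex splits as a direct sum of subcomplexes $(\eta_*\nf^Y_p)_i$, $0\le i\le N-1$, each compatible with the stupid filtration. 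We then \emph{define} $\nf^X_p(\log_K H)\otimes\scrL^{-i} := (\eta_*\nf^Y_p)_i$ for $1\le i\le N-1$, with the induced stupid filtration.

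It remains to identify the pieces. For $i=0$, Lemma \ref{lem_logK_cover_local_decomp} gives $(\Omega^k_B)_0\isom\Omega^k_A$ via the inclusion $A\subseteq B$, and this is compatible with $d$ since $d$ on $\Omega^\kdot_A\to\Omega^\kdot_B$ is functorial; so $(\eta_*\nf^Y_p)_0\isom\nf^X_p$ as filtered complexes. For $i\ge1$, the graded pieces are $(\eta_*\Omega^k_Y)_i[-k]\isom \Omega^k_X(\log_K H)\otimes\scrL^{-i}[-k]$ by Proposition \ref{prop_logK_decomp}, giving the stated associated graded. Summing over $i$ yields the filtered isomorphism.

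The main point needing care is the global well-definedness: one must ensure that the eigensheaf decomposition of the differential is independent of local trivializations of $\scrL$, which follows from the $\mu_N$-equivariance of $d$ rather than from the local coordinates. Optionally one may also describe the induced differential on $\Omega^\kdot_X(\log_K H)\otimes\scrL^{-i}$ explicitly (it is $\nabla = d + \tfrac{i}{N}\theta\wedge$ under $\theta\leftrightarrow N\,dt/t$), but this is not needed for the statement.
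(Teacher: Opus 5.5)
Your proposal is correct and takes essentially the same route as the paper. The paper's proof just cites Proposition \ref{prop_logK_decomp} together with the argument of \cite[Remark 6.5, Corollary 6.6]{Kov25}: split $\eta_*$ of the truncated de Rham complex into $\mu_N$-eigencomponents (the differential is equivariant), and take the eigencomponents themselves as the formal filtered log-connection objects, so your explicit homogeneity check of $d$, your identification of the degree-$0$ piece with $\nf^X_p$, and your reading $\nf^Y_p=\Omega^\kdot_Y/F^{p+1}$ (the one consistent with the range $k\le p$) simply spell out that cited argument.
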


\begin{proof}
This follows from Proposition \ref{prop_logK_decomp} and the same argument as in \cite[Remark 6.5, Corollary 6.6]{Kov25}. Note very well that each $\nf^X_p (\log_K H) \otimes \scrL^{-i}$ is a complex of filtered connections on $\nf^X_p(\log_K H)$ with respect to $\scrL^{-i}$ (cf \cite[Definition 2.27]{Kov25}). Instead of tensoring, it is a formal object obtained from the cyclic cover, akin to the logarithmic connections of Esnault--Viehweg.
\end{proof}

Now we can derive a key surjection statement for the K\"ahler differentials as a replacement for \cite[Corollary 7.6]{Kov25}.

\begin{proposition}\label{prop_cofil_surject}
Using Notation \ref{notation_cover}, further assume $X$ is a connected proper variety. Then there is a surjective map
\[
\bbH^k(X, \nf^X_p(\log_K H)\otimes \scrL^{-i}) \to \bbH^k(X, \uf^X_p(\log H)\otimes \scrL^{-i})
\]
for all $k \in \bbZ$ and all $0<i<N$.
\end{proposition}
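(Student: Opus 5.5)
The plan is to follow the proof of \cite[Corollary 7.6]{Kov25}, with the Kähler decomposition of Lemma \ref{lem_logK_decomp_cofil} replacing the Du Bois decomposition used there. The surjectivity should come from Hodge theory on the cyclic cover $Y$, which is again proper. For a proper variety, the natural map $\nf^Y_p = \Omega^\kdot_Y/F^{p+1} \to \uf^Y_p$ induces a surjection on hypercohomology
\[
\bbH^k(Y, \Omega^\kdot_Y/F^{p+1}) \twoheadrightarrow \bbH^k(Y, \uf^Y_p).
\]
This holds because $H^k(Y,\bbC)\to \bbH^k(Y,\Omega^\kdot_Y)\to \bbH^k(Y,\uOmega^\kdot_Y)\qis H^k(Y,\bbC)$ composes to the identity. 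Moreover, the Hodge-to-de Rham spectral sequence for $\uOmega^\kdot_Y$ degenerates at $E_1$, so $\bbH^k(Y,\uOmega^\kdot_Y)\to\bbH^k(Y,\uf^Y_p)$ is surjective. Composing the two surjections and using the factorization through $\nf^Y_p$ then gives the claimed surjection on $Y$.

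The second step is to push forward along the finite map $\eta$. Since $\eta$ is finite, $\bbH^k(Y,-)=\bbH^k(X,\eta_*-)$. Both sides decompose compatibly into $\bbZ/N\bbZ$-eigencomponents: on the Kähler side by Lemma \ref{lem_logK_decomp_cofil}, and on the Du Bois side by the analogous decomposition
\[
\eta_*\uf^Y_p\qis \uf^X_p\oplus\bigoplus_{i\ge 1}\uf^X_p(\log H)\otimes\scrL^{-i}
\]
from \cite[Corollary 6.6]{Kov25}. The comparison map $\eta_*\nf^Y_p\to\eta_*\uf^Y_p$ is equivariant for the Galois action $\mu_N$, because it is natural and functorial under the automorphisms of $Y$ over $X$. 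It therefore respects the eigen-decomposition, and a surjection of $\mu_N$-representations restricts to a surjection on each eigenspace. Taking the $i$-th summand for $0<i<N$ yields the statement.

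I expect the main obstacle to be justifying that the map on the $i$-th eigencomponents is really the natural map $\nf^X_p(\log_K H)\otimes\scrL^{-i}\to\uf^X_p(\log H)\otimes\scrL^{-i}$. This requires the identification of eigensheaves in Proposition \ref{prop_logK_decomp} to be compatible with the one used in \cite{Kov25}, under which $Ndt/t$ corresponds to $d\log s$ on a hyperresolution. For the plan, however, only surjectivity onto the target eigencomponent is needed, and this follows from equivariance alone. So I would define the map in the statement to be the eigencomponent of $\eta_*$ of the natural map, and verify its compatibility with the log structure locally on a hyperresolution only if necessary.

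A secondary point is the surjectivity in the first step when $Y$ is disconnected or non-normal. Du Bois theory handles arbitrary proper $Y$, so no extra hypothesis should be needed. Connectedness of $X$ is only used to make sense of the cyclic cover as in \cite{Kov25}.
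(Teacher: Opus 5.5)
Your proposal is correct and follows the paper's proof: a surjection $\bbH^k(Y,\nf^Y_p)\to\bbH^k(Y,\uf^Y_p)$ on the proper cover, then $\mu_N$-equivariance of the natural map after the exact pushforward $\eta_*$, split into eigencomponents via Lemma \ref{lem_logK_decomp_cofil} and \cite[Corollary 6.6]{Kov25}. The only difference is that you justify the surjection on $Y$ directly (the composite $\bbC_Y\to\Omega^\kdot_Y\to\uOmega^\kdot_Y$ being a quasi-isomorphism, plus $E_1$-degeneration for $\uOmega^\kdot_Y$), whereas the paper simply asserts it by reference to \emph{loc.\ cit.}
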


\begin{proof}
The proof proceeds in the same way as in \emph{loc. cit.} using Lemma \ref{lem_logK_decomp_cofil}. There is a surjective map $\bbH^k(Y, \nf^Y_p)\to \bbH^k(Y, \uf^Y_p)$. Since the map $\Omega^p_Y\to \uOmega^p_Y$ is functorial and $\bbZ/N\bbZ$-equivariant, after the exact pushforward $\eta_*$ it decomposes into surjections on each component in Lemma \ref{lem_logK_decomp_cofil}. The decomposition for the Du Bois side is already supplied by \cite[Corollary 6.6]{Kov25}.
\end{proof}

Now we have all the necessary ingredients to replicate the injectivity theorem for K\"ahler differentials.

\begin{proof}[Proof of Theorem \ref{thm_kahler_injectivity}]
This follows essentially verbatim from \cite[\S 8]{Kov25} with each $h^0(\uOmega^p_X)$ object replaced with $\Omega^p_X$. We will only point out where each new ingredient from the K\"ahler picture enters. First, with the Du Bois defect replaced with
\[
\uOmega^{p,+}_X=\Cone(\Omega^p_X \to \uOmega^p_X),
\]
Theorem 8.3 and the coherent replacement Corollary 8.6 in \emph{loc. cit.} remain otherwise unchanged. In the key Theorem 8.7, we now use the surjection of hypercohomologies from cofiltration of Kato's log K\"ahler differentials (Proposition \ref{prop_cofil_surject}). For statements (v)-(vi), we assume $X$ is strict-$(m-1)$-Du Bois, $H$ is general, and use Corollary \ref{cor_logK_strictDB} to obtain the isomorphisms on lower degree cofiltration complexes. The final duality arguments to obtain injectivity on the cohomology sheaves of $\bbD_X(\uOmega^p_X)\to \bbD_X(\Omega^p_X)$ are formal, and hence unchanged. Note that the compactification argument remains exactly the same, and no properness of $X$ is required.
\end{proof}

\medskip

\section{Deformation of strict-$m$-Du Bois singularities}\label{section_deform_strict}

In this section we show that strict-$m$-Du Bois singularities are invariant under deformations.

For ease of use, we first streamline the following Elkik-type lemmas used in various deformation statements (cf. \cite{Elk81,Kov00,KS16a,KS16b}).

\begin{lemma}\label{lem_diagram_chase}
Let $Z\subseteq X$ be a Cartier divisor. Let $x\in Z$ and $t\in \scrO_{X, x}$ local equation for $Z$. Suppose we have the following diagram of objects in $D^b_{\rm coh}(\scrO_{X, x})$ with rows being distinguished triangles:
\[\begin{tikzcd}
\, &
A \ar[l, "+1"'] &
A \ar[l, "\cdot t"'] &
B \ar[l] \\
\, &
C \ar[l, "+1"'] \ar[u, "\alpha"'] &
C \ar[l, "\cdot t"'] \ar[u, "\alpha"'] &
D. \ar[l] \ar[u, "\beta"']
\end{tikzcd}\]
Suppose that $\alpha_i:h^i(C)\to h^i(A)$ are injective and $\beta_i:h^i(D) \to h^i(B)$ are surjective for all $i$. Then $\alpha, \beta$ are both quasi-isomorphisms.
\end{lemma}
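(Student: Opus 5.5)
The plan is to run the long exact sequences of cohomology for the two rows and compare them through the vertical maps. Rotating the triangles, each row reads $B\to A\xrightarrow{\cdot t} A\to B[1]$ (resp.\ $D\to C\xrightarrow{\cdot t} C\to D[1]$). This gives a commutative ladder of long exact sequences of finitely generated $\scrO_{X,x}$-modules:
\[
\cdots\to h^i(D)\to h^i(C)\xrightarrow{\cdot t} h^i(C)\to h^{i+1}(D)\to\cdots
\]
mapping to
\[
\cdots\to h^i(B)\to h^i(A)\xrightarrow{\cdot t} h^i(A)\to h^{i+1}(B)\to\cdots,
\]
with vertical maps $\beta_i,\alpha_i,\alpha_i,\beta_{i+1}$. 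Here I use that $\alpha$ commutes with multiplication by $t$, which is part of the commutativity of the diagram. Since all objects lie in $D^b_{\rm coh}$, only finitely many cohomology modules are nonzero, and each is finitely generated.

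First I show that each $\alpha_i$ is surjective. Let $Q_i=\coker\alpha_i$, a finitely generated module. I apply the four lemma (the surjectivity half) to the segment
\[
h^i(C)\xrightarrow{\cdot t} h^i(C)\to h^{i+1}(D)\to h^{i+1}(C).
\]
The hypotheses are that $\beta_{i+1}$ is surjective and $\alpha_{i+1}$ is injective. Chasing an element $a\in h^i(A)$ then shows that $a\in\operatorname{im}\alpha_i + t\,h^i(A)$. Concretely, map $a$ to $h^{i+1}(B)$ and lift it to $h^{i+1}(D)$ using surjectivity of $\beta_{i+1}$. Its image in $h^{i+1}(C)$ maps to zero in $h^{i+1}(A)$, so by injectivity of $\alpha_{i+1}$ it is zero. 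Hence it comes from some $c\in h^i(C)$, and $a-\alpha_i(c)$ lies in the kernel of $h^i(A)\to h^{i+1}(B)$, which is $t\,h^i(A)$. Therefore $Q_i=tQ_i$. Since $t$ lies in the maximal ideal of $\scrO_{X,x}$, Nakayama's lemma gives $Q_i=0$. So each $\alpha_i$ is an isomorphism, and $\alpha$ is a quasi-isomorphism.

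Next I treat $\beta$. With every $\alpha_i$ an isomorphism, the five lemma applied to the segment
\[
h^{i-1}(C)\to h^{i-1}(C)\to h^i(D)\to h^i(C)\to h^i(C)
\]
shows that $\beta_i$ is an isomorphism. Alternatively, $\beta$ is a map between cones of the isomorphism $\alpha$ compatible with $\cdot t$. I prefer the cohomological five lemma, since it avoids the non-functoriality of cones. One caveat is that the diagram is assumed to commute as a morphism of triangles; that is what makes the ladder commute, including the connecting maps.

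The main obstacle is the Nakayama step. The point to check is that the diagram chase genuinely yields $\coker\alpha_i = t\cdot\coker\alpha_i$, and not merely a statement about the $t$-torsion of the cokernel. Besides that, I need the finiteness of the $h^i$, which is guaranteed because we work in $D^b_{\rm coh}(\scrO_{X,x})$ over a local ring with $t$ in its maximal ideal.
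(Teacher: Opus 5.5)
Your proposal is correct and follows the paper's argument. The paper also chases the ladder of long exact sequences to show that multiplication by $t$ is surjective on the cokernel of each $\alpha_i$, applies Nakayama's lemma to conclude that $\alpha$ is a quasi-isomorphism, and then gets $\beta$ from the five lemma; you simply spell out the chase (surjectivity of $\beta_{i+1}$, injectivity of $\alpha_{i+1}$, and commutativity of the connecting-map square) that the paper leaves implicit.
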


\begin{proof}
Under the given assumptions, a simple diagram chase after taking cohomology shows that
\[
\coker \alpha_i \xrightarrow{\cdot t}\coker \alpha_i
\] 
is surjective for all $i$. Therefore, $\coker \alpha_i = 0$ by Nakayama's lemma and we have that $\alpha$ is a quasi-isomorphism. The five lemma implies the same for $\beta$.
\end{proof}

We recall a useful $\Tor$-independence statement. 

\begin{lemma}\label{lem_h0_tor_ind}
Let $Z\subseteq X$ be a Cartier divisor. Then $h^0(\uOmega^p_X)$ and $\scrO_Z$ are $\Tor$-independent. That is
\[
h^0(\uOmega^p_X)|_Z \qis h^0(\uOmega^p_X)|^{\tL}_Z.
\]
\end{lemma}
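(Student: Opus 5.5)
The claim is local on $X$, so we may work near a point $x\in Z$ with $t\in\scrO_{X,x}$ a local equation for $Z$. Since $Z$ is Cartier and $X$ is a variety, $t$ is a non-zero-divisor on $\scrO_X$. The derived restriction $h^0(\uOmega^p_X)|^{\tL}_Z$ is then computed by the two-term complex
\[
h^0(\uOmega^p_X)(-Z) \xrightarrow{\cdot t} h^0(\uOmega^p_X).
\]
Its only possible non-vanishing higher Tor is $h^{-1}$, namely the $t$-torsion submodule of $h^0(\uOmega^p_X)$. So the plan reduces the statement to the following assertion: a local equation of a Cartier divisor is a non-zero-divisor on the coherent sheaf $h^0(\uOmega^p_X)$. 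Equivalently, $h^0(\uOmega^p_X)$ has no associated points contained in $Z$.

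To prove this, I would use torsion-freeness of $h^0(\uOmega^p_X)$. This is a standard fact (see \cite{Kov25} and \cite{NN25}): $h^0(\uOmega^p_X)\isom \pi_*\Omega^p_{\widetilde X}$ (or the appropriate $h$-differentials) for a resolution $\pi:\widetilde X\to X$, or more concretely it embeds into $\pi_*$ of a torsion-free sheaf on a resolution. I would realize this via a hyperresolution $\pi_\kdot:X_\kdot\to X$. The zeroth cohomology of $R(\pi_\kdot)_*\Omega^p_{X_\kdot}$ injects into $\pi_{0*}\Omega^p_{X_0}$, where the component $X_0$ dominates $X$; here one uses that $h^0$ of the cubical limit is the equalizer of the degree-zero terms. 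Since $\Omega^p_{X_0}$ is locally free on the smooth variety $X_0$, and $t\circ\pi_0$ is not identically zero on any component of $X_0$ dominating $X$, multiplication by $t$ is injective on $\pi_{0*}\Omega^p_{X_0}$. It is therefore injective on the subsheaf $h^0(\uOmega^p_X)$, and $\Tor_1(h^0(\uOmega^p_X),\scrO_Z)=0$.

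The main obstacle is making the injection $h^0(\uOmega^p_X)\hookrightarrow \pi_{0*}\Omega^p_{X_0}$ precise. Components of a hyperresolution that do not dominate $X$ could a priori contribute torsion to $h^0$ supported on a proper closed subset, possibly inside $Z$. I would handle this by citing the known result that $h^0(\uOmega^p_X)$ is torsion-free, for instance as the pushforward of $h$-differentials $\pi_*\Omega^p_{\widetilde X}$ for a resolution, with $\widetilde X$ smooth and $\pi$ birational. Alternatively, I would argue directly: a torsion section would restrict to zero on the dense open smooth locus. Sections of $h^0(\uOmega^p_X)$ are determined by their restriction to a dense open set, because the $h$-sheaf $\Omega^p_h$ is a subsheaf of the pushforward from the generic points. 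Once torsion-freeness is in hand, the rest is the two-term Koszul computation above.
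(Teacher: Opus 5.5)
Your proposal is correct and follows the paper's route: the paper also deduces the statement from torsion-freeness of $h^0(\uOmega^p_X)$ combined with \cite[Lemma 2.5, 2.6]{Kov25}, and your two-term Koszul computation is the passage from torsion-freeness to $\Tor$-independence done by hand. One caution: $h^0(\uOmega^p_X)$ in general only embeds into $\pi_*\Omega^p_{\widetilde X}$ and is not isomorphic to it (already for $p=0$ it is the structure sheaf of the seminormalization rather than of the normalization), but the embedding is all your argument uses.
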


\begin{proof}
This follows by combining the fact that $h^0(\uOmega^p_X)$ is torsion-free with \cite[Lemma 2.5, 2.6]{Kov25}.
\end{proof}

\begin{lemma}\label{lem_left_inverse_suffice}
Let $Z\subseteq X$ be a Cartier divisor. The following statements hold:
\begin{enumerate}
\item Suppose for $p\le m$ the natural map 
    \[
    h^0(\uOmega_X^p)|_Z \to \uOmega_X^p|^{\tL}_Z
    \]
admits a left inverse. Then $X$ has pre-$m$-Du Bois singularities near $Z$.

\item Suppose for $p\le m$ the natural map
\[
\Omega_X^p|_Z \to \uOmega_X^p|^{\tL}_Z
\]
admits a left inverse, and in addition $\Omega^p_X$ and $\scrO_Z$ are $\Tor$-independent. Then $X$ has strict-$m$-Du Bois singularities near $Z$.
\end{enumerate}
\end{lemma}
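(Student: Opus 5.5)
The plan is to prove both statements by the same Nakayama-type argument: a left inverse after restriction to $Z$ forces the relevant cone to vanish near $Z$. Throughout, write $\scrF$ for $h^0(\uOmega^p_X)$ in case (1) and for $\Omega^p_X$ in case (2), and let $\gamma:\scrF\to\uOmega^p_X$ be the natural map. Set $C=\Cone(\gamma)$. Since $\scrF$ is a sheaf in degree $0$ and $\uOmega^p_X$ lives in degrees $\ge 0$, the complex $C$ has cohomology in degrees $\ge 0$. In case (1) it even lives in degrees $\ge 1$, because $h^0(\gamma)$ is an isomorphism.

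The first step is to derive-restrict the triangle $\scrF\to\uOmega^p_X\to C\xrightarrow{+1}$ to $Z$. In case (1), Lemma \ref{lem_h0_tor_ind} gives $\scrF|^{\tL}_Z\qis\scrF|_Z$. In case (2), the $\Tor$-independence is assumed. So in both cases we obtain a triangle
\[
\scrF|_Z\to\uOmega^p_X|^{\tL}_Z\to C|^{\tL}_Z\xrightarrow{+1},
\]
and the first map is the natural map of the hypothesis.

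The second step uses the left inverse. A left inverse makes the first map a split monomorphism in $D(Z)$, so the connecting morphism $C|^{\tL}_Z\to\scrF|_Z[1]$ vanishes. Hence
\[
\uOmega^p_X|^{\tL}_Z\qis \scrF|_Z\oplus C|^{\tL}_Z .
\]
This splitting alone does not yet show $C|^{\tL}_Z=0$, so more input is needed.

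The third step supplies that input from injectivity, following Lemma \ref{lem_diagram_chase}.
\begin{itemize}
\item First, apply $\bbD$ to the triangle for $C$. The splitting says that $h^k(\bbD_Z(\uOmega^p_X|^{\tL}_Z))\to h^k(\bbD_Z(\scrF|_Z))$ is surjective, indeed split, for every $k$.
\item Second, use duality for the Cartier divisor $Z$: $\bbD_Z(K|^{\tL}_Z)$ agrees up to shift with $\bbD_X(K)|^{\tL}_Z$. This lets us place the restriction triangles for multiplication by $t$ on $\bbD_X(\uOmega^p_X)$ and $\bbD_X(\scrF)$ into a diagram of exactly the shape required by Lemma \ref{lem_diagram_chase}.
\item Third, feed in the injectivity. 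In case (1), argue by induction on $p$, so that $X$ is pre-$(p-1)$-Du Bois near $Z$; then Theorem \ref{thm_kov_injectivity} gives that $h^k(\bbD_X(\uOmega^p_X))\to h^k(\bbD_X(\scrF))$ is injective. In case (2), induction gives strict-$(p-1)$-Du Bois near $Z$, and Theorem \ref{thm_kahler_injectivity} gives the same injectivity.
\end{itemize}
With injectivity on $X$ and surjectivity on $Z$ in hand, Lemma \ref{lem_diagram_chase}, applied stalkwise at points $x\in Z$, yields that $\bbD_X(\uOmega^p_X)\to\bbD_X(\scrF)$ is a quasi-isomorphism near $Z$. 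Dualizing back, $\gamma$ is a quasi-isomorphism near $Z$. For $p\le m$ this is exactly pre-$m$-Du Bois in case (1) and strict-$m$-Du Bois in case (2).

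The main obstacle is the bookkeeping in the third step. Three things must be checked:
\begin{itemize}
\item the dualized restriction triangles have the shifts and the multiplication-by-$t$ arrows in precisely the orientation Lemma \ref{lem_diagram_chase} demands;
\item the dual of a split injection is a split surjection on cohomology sheaves;
\item the induction is legitimate, since the injectivity theorems need the $(p-1)$ condition on a neighborhood of $Z$, which we obtain from the previous stage after shrinking $X$.
\end{itemize}
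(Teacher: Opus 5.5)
Your proposal is correct and follows essentially the paper's route: restrict the two triangles to $Z$ via $\Tor$-independence (Lemma \ref{lem_h0_tor_ind} or the hypothesis), localize at $x\in Z$ and dualize so the left inverse becomes a right inverse of $\beta$, obtain injectivity of $\alpha$ from Theorem \ref{thm_kov_injectivity} or \ref{thm_kahler_injectivity} by induction, and conclude with Lemma \ref{lem_diagram_chase}. One harmless slip: in case (2) the cone $C$ may a priori have $h^{-1}(C)=\ker(\Omega^p_X\to h^0(\uOmega^p_X))\neq 0$, so it lives in degrees $\ge -1$ rather than $\ge 0$, but you never use this.
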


\begin{proof}

For simplicity, we denote $\uOmega_X^p|^{\tL}_Z:= \uOmega_X^p\otimes_X^{\tL} \scrO_Z$. In either case, for $p\le m$, we have exact sequences
\[
0\to h^0(\uOmega_X^p)(-Z) \to h^0(\uOmega_X^p) \to h^0(\uOmega_X^p)|_Z \to 0
\]
using $\Tor$-independence from Lemma \ref{lem_h0_tor_ind}, and
\[
0\to \Omega_X^p(-Z) \to \Omega_X^p \to \Omega_X^p|_Z \to 0
\]
using the $\Tor$-independence assumption. On the other hand, derived-tensoring the divisor sequence by $\uOmega^p_X$, we also have a triangle 
\[
\uOmega_X^p(-Z) \to \uOmega_X^p \to \uOmega_X^p|^{\tL}_Z\xrightarrow{+1},
\]
which gives two commutative diagrams, one of which is
\[
\begin{tikzcd}[cramped]
\Omega_X^p(-Z) \ar[r] \ar[d] &
\Omega_X^p \ar[r] \ar[d] & 
\Omega_X^p|_Z \ar[r, "+1"] \ar[d] &
\,\\
\uOmega_X^p(-Z) \ar[r] &
\uOmega_X^p \ar[r] & 
\uOmega_X^p|^{\tL}_Z \ar[r, "+1"]&
\,,
\end{tikzcd}
\]
and similarly for $h^0(\uOmega^p_X)$. For either situation, we now localize at any point $x\in Z$ and dualize to land exactly in the situation of Lemma \ref{lem_diagram_chase}.

The argument from this point has been used repeatedly in the literature (cf. \cite[Theorem 4.1]{KS16a}). We will only note that the left inverses of $\Omega^p_X|_Z \to\uOmega^p_X|^{\tL}_Z$ and $h^0(\uOmega^p_X)|_Z \to\uOmega^p_X|^{\tL}_Z$ survive and becomes right inverses to $\beta$ after dualizing. For either case, we may assume by induction that $X$ is pre-$(m-1)$-Du Bois or strict-$(m-1)$-Du Bois near $Z$, as appropriate. Therefore we may use Theorem \ref{thm_kov_injectivity} and \ref{thm_kahler_injectivity} to supply the key injectivity for $\alpha$, concluding the proof.
\end{proof}




\begin{proposition}\label{prop_left_inverse_exist}
Let $Z\subseteq X$ be a Cartier divisor. Suppose $Z$ has strict-$m$-Du Bois singularities. Then the natural map $\Omega_X^p|_Z \to \uOmega_X^p|^{\tL}_Z$ admits a left inverse for $p\le m$.
\end{proposition}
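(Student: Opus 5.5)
We would prove this by induction on $p$, using the map $\tau_{p-1}$ of Lemma \ref{lem_conormal_map} to compare the K\"ahler conormal sequence on $Z$ with its Du Bois analogue. Since $Z$ is strict-$m$-Du Bois, it is reduced and $\Omega^q_Z\qis\uOmega^q_Z$ for $q\le m$. Each $\Omega^q_Z$ is then torsion-free, being $h^0(\uOmega^q_Z)$. Lemma \ref{lem_conormal} therefore gives short exact sequences
\[
0\to \Omega^{p-1}_Z(-Z)\to \Omega^p_X|_Z\to \Omega^p_Z\to 0, \qquad p\le m.
\]
The case $p=0$ is the composite $\scrO_X|_Z=\scrO_Z\to \uOmega^0_X|^{\tL}_Z\to \uOmega^0_Z\qis \scrO_Z$, which is the identity. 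Hence a left inverse is obtained by following with the restriction $\psi_0$.

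For general $p\le m$, we consider the diagram
\[\begin{tikzcd}
\Omega^{p-1}_Z(-Z) \ar[r] \ar[d, "\qis"] & \Omega^p_X|_Z \ar[r] \ar[d, "c_p"] & \Omega^p_Z \ar[r, "+1"] \ar[d, "\qis"] & \, \\
\uOmega^{p-1}_Z(-Z) \ar[r, "\tau_{p-1}"] & \uOmega^p_X|^{\tL}_Z \ar[r, "\psi_p"] & \uOmega^p_Z. &
\end{tikzcd}\]
Here $c_p$ is the natural map. The right square commutes by functoriality of $\Omega^p\to\uOmega^p$ under restriction to $Z$. The left square is the content of Lemma \ref{lem_sm_diagram} and \ref{lem_sm_diagram_derived}: $\tau_{p-1}$ is built from the conormal map on a hyperresolution, and the K\"ahler conormal map maps to it. One checks this on the hyperresolution via the natural maps $\Omega^\bullet_X\to R\pi_{\kdot*}\Omega^\bullet_{X_\kdot}$, by the same degree argument as in Lemma \ref{lem_left_triangle}. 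The target $\Omega^p_X|_Z$ sits in degree $0$, while the relevant cones live in nonpositive degrees, so homotopy ambiguities vanish.

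We do not need the bottom row to be a triangle. Since $\psi_p\circ c_p$ is the surjection $\Omega^p_X|_Z\to\Omega^p_Z\qis\uOmega^p_Z$, we define a left inverse $r_p\colon\uOmega^p_X|^{\tL}_Z\to \Omega^p_X|_Z$ as follows.
\begin{enumerate}
\item Take $h^0$: the map $\Omega^p_X|_Z\to h^0(\uOmega^p_X|^{\tL}_Z)$ is injective. Indeed, its composite to $\Omega^p_Z$ is the quotient, and on the kernel $\Omega^{p-1}_Z(-Z)$ it factors through $h^0(\tau_{p-1})$. That map is injective because $\psi_{p}\circ\tau_{p-1}$ need not vanish, but $\tau_{p-1}$ composed with the restriction to a hyperresolution is generically the injective conormal map, and $\Omega^{p-1}_Z$ is torsion-free.
\item Note that $\uOmega^p_X|^{\tL}_Z$ has amplitude in $[-1,\infty)$, with $h^{-1}$ the $t$-torsion of $h^0(\uOmega^p_X)$, which vanishes since $h^0(\uOmega^p_X)$ is torsion-free. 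So $\uOmega^p_X|^{\tL}_Z$ lives in degrees $\ge 0$. There is therefore a truncation map $\uOmega^p_X|^{\tL}_Z\to$ nothing natural downward; instead we use the splitting.
\item Set $\sigma:=\tau_{p-1}\circ(\qis)^{-1}$ on the sub and $\psi_p$ to the quotient. We build $r_p$ by showing that the triangle $\Omega^{p-1}_Z(-Z)\to\uOmega^p_X|^{\tL}_Z\to \Cone$ splits off the map from $\Omega^p_X|_Z$, using $\Hom(\uOmega^p_Z,\Omega^{p-1}_Z(-Z)[1])$ compatibility.
\end{enumerate}

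The main obstacle is exactly this last splitting: producing a genuine morphism in $D(Z)$ out of $\uOmega^p_X|^{\tL}_Z$ rather than a map on $h^0$. Our first attempt would be to show that the bottom row is a distinguished triangle when $Z$ is strict-$m$-Du Bois, proceeding inductively, since the cone of $\tau_{p-1}$ maps to $\uOmega^p_Z$ and both can be compared on the hyperresolution. Then the five lemma on the diagram makes $c_p$ a quasi-isomorphism, which gives an honest (two-sided) inverse. If exactness of the bottom row fails to be provable directly, the fallback is to use $\psi_p$ together with a lift of $\tau_{p-1}$'s left inverse given by induction, and patch them via the vanishing of $\Hom(\uOmega^p_X|^{\tL}_Z,\Omega^{p-1}_Z(-Z)[1])$ on the relevant piece.
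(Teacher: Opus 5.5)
You set up the right diagram, and your $p=0$ case and the commutativity of the two squares are fine. But the proposal never actually produces the retraction, and both routes you offer for the decisive step fail.

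Your first attempt is to show that the bottom row is a distinguished triangle, so that $c_p$ becomes a quasi-isomorphism. This is much stronger than the statement. Combined with Lemma \ref{lem_Nakayama}, it would already give $h^i(\uOmega^p_X)=0$ for $i\neq 0$ near $Z$, which is essentially the deformation theorem itself. In the paper this exactness is Lemma \ref{lem_deform_triangle}. That lemma is deduced from Theorem \ref{thm_deform_strict}, whose proof uses the present proposition together with the Hodge-theoretic injectivity Theorem \ref{thm_kahler_injectivity}, so as a route to this proposition it is circular. Nothing in a local comparison on a hyperresolution forces exactness either, since there the $Z_\alpha$ are merely snc; indeed Corollary \ref{cor_fail_bc_1DB} shows the triangle genuinely fails for $1$-Du Bois divisors.

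Your fallback needs the vanishing of $\Hom(\uOmega^p_X|^{\tL}_Z,\Omega^{p-1}_Z(-Z)[1])$, for which there is no degree reason. The source lies in degrees $\ge 0$ and the target is a sheaf placed in degree $-1$, so this is an $\mathrm{Ext}^1$-type group. Moreover, the inductive hypothesis (a retraction of $c_{p-1}$) does not enter the degree-$p$ diagram at all: the left vertical map is already a quasi-isomorphism by strictness. Finally, your remark that $\psi_p\circ\tau_{p-1}$ ``need not vanish'' is incorrect. It does vanish, since on the hyperresolution it is a conormal map followed by restriction to $\widetilde{Z}_\alpha$, and this vanishing is exactly what makes the argument work.

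The missing idea is to make the bottom row a triangle by construction rather than prove it is one. Put $K^p=\Cone(\tau_{p-1})$.
\begin{itemize}
\item Because $\psi_p\circ\tau_{p-1}=0$, the map $\psi_p$ factors as $\uOmega^p_X|^{\tL}_Z\to K^p\to\uOmega^p_Z$.
\item The K\"ahler short exact sequence maps to the triangle $\uOmega^{p-1}_Z(-Z)\to\uOmega^p_X|^{\tL}_Z\to K^p\xrightarrow{+1}$. The left vertical map is a quasi-isomorphism, and the morphism of triangles supplies a right vertical map $\Omega^p_Z\to K^p$.
\item The composite $\Omega^p_Z\to K^p\to\uOmega^p_Z$ equals the natural quasi-isomorphism. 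Both agree after precomposing with $\Omega^p_X|_Z\twoheadrightarrow\Omega^p_Z$, and precomposition is injective on $\Hom(-,\uOmega^p_Z)$ because $\Hom(\Omega^{p-1}_Z(-Z)[1],\uOmega^p_Z)=0$.
\item That last vanishing runs in the correct direction for degree reasons: $\uOmega^p_Z\qis\Omega^p_Z$ is a sheaf in degree $0$. This is where strictness in degree $p$ is used.
\end{itemize}
Thus $\Omega^p_Z\to K^p$ is split injective. The paper then uses the compatibility of this retraction with the inverse of the left vertical quasi-isomorphism, which it asserts via the factorization through $K^p$, to extend it along the morphism of triangles to a left inverse of $c_p$. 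One never needs $K^p\qis\uOmega^p_Z$, which is what your first attempt would have to establish.
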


\begin{proof}
Fix some $p\le m$. Since $Z$ has strict-$m$-Du Bois singularities, we have $\Omega^p_Z\isom h^0(\uOmega^p_Z)$ is torsion-free. Consider the following diagram, where $\tau_{p-1}$ comes from Lemma \ref{lem_conormal_map} and $\psi_p$ is the functorial restriction map:
\[\begin{tikzcd}
0 \ar[r] &
\Omega_Z^{p-1}(-Z) \ar[r] \ar[d] & 
\Omega_X^p|_Z \ar[r] \ar[d] & 
\Omega_Z^p \ar[r] \ar[d] &
0 \\
&
\uOmega_Z^{p-1}(-Z) \ar[r, "\tau_{p-1}"]& 
\uOmega_X^p|^{\tL}_Z \ar[r, "\psi_p"] & 
\uOmega_Z^p. & 
\,
\end{tikzcd}\]
The first row is exact by Lemma \ref{lem_conormal}. The first square commutes by Lemma \ref{lem_sm_diagram_derived} and \ref{lem_conormal_map}. By naturality of the map $\Omega^p_X\to \uOmega^p_X$, the second square commutes as well. Furthermore, we have $\psi_p\circ \tau_{p-1}=0$. Therefore, taking cone $K^p=\Cone(\tau_{p-1})$, this enlarges to the diagram
\[\begin{tikzcd}
0 \ar[r] &
\Omega_Z^{p-1}(-Z) \ar[r] \ar[d, "\qis"] & 
\Omega_X^p|_Z \ar[r] \ar[d] & 
\Omega_Z^p \ar[r] \ar[d] &
0 \\
&
\uOmega_Z^{p-1}(-Z) \ar[r, "\tau_{p-1}"]& 
\uOmega_X^p|^{\tL}_Z \ar[r] \ar[dr, "\psi_p"'] & 
K^p \ar[d] \ar[r, "+1"] &
\, \\
&
&
&
\uOmega^p_Z. &
\end{tikzcd}\]
We need to show that the composition $\Omega^p_Z\to K^p\to \uOmega^p_Z$ agrees with the natural map $\Omega^p_Z\to \uOmega^p_Z$. This is true by construction after pre-composing with $\Omega^p_X|_Z\to \Omega^p_Z$, so we employ the same idea as in Lemma \ref{lem_left_triangle}. Observe that
\[
\Hom_Z(\Omega^{p-1}_Z(-Z)[1], \uOmega^p_Z)=0
\]
since the former is in cohomological degree $-1$ while the latter is in $0$, as $Z$ is strict-$m$-Du Bois. Therefore,
\[
0= \Hom_Z(\Omega^{p-1}_Z(-Z)[1], \uOmega^p_Z) \to \Hom(\Omega^p_Z,\uOmega^p_Z)\to \Hom(\Omega^p_X|_Z, \uOmega^p_Z)
\]
being exact implies that the second map is injective. Consequently, the composition $\Omega^p_Z \to K^p\to \uOmega^p_Z$ agrees with the natural map $\Omega^p_Z\to \uOmega^p_Z$, which is a quasi-isomorphism for all $p\le m$. We conclude that the map $\Omega^p_Z\to K^p$ admits a left inverse. 

By the factorization, this left inverse is compatible with the inverse of $\Omega^{p-1}(-Z)\to \uOmega^{p-1}_Z(-Z$). Since the two rows are distinguished triangles, the two outer ones extend to a left inverse of the middle map $\Omega^p_X|_Z \to \uOmega^p_X|^{\tL}_Z$, as desired. This concludes the proof.
\end{proof}

Now we can arrive at the desired deformation statement for strict-$m$-Du Bois singularities.

\begin{theorem}\label{thm_deform_strict}
Let $Z\subseteq X$ be a Cartier divisor. Suppose $Z$ has strict-$m$-Du Bois singularities. Then $X$ has strict-$m$-Du Bois singularities near $Z$.
\end{theorem}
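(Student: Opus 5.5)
The plan is to reduce Theorem \ref{thm_deform_strict} to Lemma \ref{lem_left_inverse_suffice}(2), whose two hypotheses are supplied by results already proved. The argument is local near $Z$, so we may shrink $X$ to a neighborhood of $Z$ whenever convenient. We induct on $m$, in the form Lemma \ref{lem_left_inverse_suffice} uses. If $Z$ is strict-$m$-Du Bois, then it is strict-$(m-1)$-Du Bois, so by induction $X$ is strict-$(m-1)$-Du Bois near $Z$. This is exactly what is needed to invoke the Kähler injectivity of Theorem \ref{thm_kahler_injectivity} inside the proof of Lemma \ref{lem_left_inverse_suffice}. The base of the induction is the case $m=0$; there the statement is the classical deformation of Du Bois singularities, and it is also covered by the same argument with $p=0$.

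The first hypothesis of Lemma \ref{lem_left_inverse_suffice}(2) is that $\Omega^p_X|_Z\to\uOmega^p_X|^{\tL}_Z$ admits a left inverse for all $p\le m$. This is exactly Proposition \ref{prop_left_inverse_exist}, since $Z$ is strict-$m$-Du Bois.

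The second hypothesis is the $\Tor$-independence of $\Omega^p_X$ and $\scrO_Z$ for $p\le m$. Since $Z$ is Cartier with local equation $t$, this amounts to $t$ being a non-zero-divisor on $\Omega^p_X$ near $Z$. We get this from Lemma \ref{lem_tf_omega}, whose hypotheses are that $Z$ is reduced and that $\Omega^p_Z$ is torsion-free for $p\le m$. Both hold here. Strict-$m$-Du Bois implies Du Bois, hence seminormal and in particular reduced. Moreover $\Omega^p_Z\qis\uOmega^p_Z$ forces $\Omega^p_Z\isom h^0(\uOmega^p_Z)$, which is torsion-free. Lemma \ref{lem_tf_omega} then gives that $\Omega^p_X$ is torsion-free near $Z$ and that $Z$ contains no associated points of $\Omega^p_X$. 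Hence multiplication by $t$ is injective, so $\Tor_1(\Omega^p_X,\scrO_Z)=0$ and $\Omega^p_X|_Z\qis\Omega^p_X|^{\tL}_Z$.

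With both hypotheses in hand, Lemma \ref{lem_left_inverse_suffice}(2) gives that $X$ is strict-$m$-Du Bois near $Z$. The main subtlety I expect is bookkeeping the induction. The proof of Lemma \ref{lem_left_inverse_suffice} needs strict-$(m-1)$-Du Bois on $X$ near $Z$, not merely on $Z$, before it can apply Theorem \ref{thm_kahler_injectivity}, and that is supplied by the inductive hypothesis. There is also the issue that the neighborhoods shrink at each step. This is harmless, because there are only finitely many values $p\le m$ and each step produces an open neighborhood of $Z$, so we may intersect them.
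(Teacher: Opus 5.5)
Your proposal is correct and follows the paper's proof: it checks the two hypotheses of Lemma \ref{lem_left_inverse_suffice}(2), obtaining $\Tor$-independence from the torsion-freeness supplied by Lemma \ref{lem_tf_omega} and the left inverse from Proposition \ref{prop_left_inverse_exist}. The induction on $m$ that you make explicit is the same one the paper leaves implicit inside the proof of Lemma \ref{lem_left_inverse_suffice}.
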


\begin{proof}
By Lemma \ref{lem_tf_omega}, $\Omega^p_X$ is torsion-free in a neighborhood of $Z$. Therefore by \cite[Lemma 2.5]{Kov25}, $\Omega^p_X$ and $\scrO_Z$ are $\Tor$-independent. The theorem now follows by combining Lemma \ref{lem_left_inverse_suffice}(2) and Proposition \ref{prop_left_inverse_exist}.
\end{proof}

\begin{corollary}\label{cor_deform_strict}
Let $\f:X\to B$ be a proper family over a smooth curve $B$, $b\in B$. Suppose $X_b$ has strict-$m$-Du Bois singularities. Then $X_{b'}$ has strict-$m$-Du Bois singularities for $b'$ in a neighborhood of $b\in B$.
\end{corollary}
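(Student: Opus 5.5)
The plan is to apply Theorem \ref{thm_deform_strict} with $Z=X_b$ and then move from the total space to nearby fibers by a general-member argument.

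First, the fiber $X_b$ is a Cartier divisor in $X$: it is cut out locally by $\f^*t$ for a local parameter $t$ at $b$. The fiber should be reduced, since strict-$m$-Du Bois is only defined for reduced schemes, so Definition \ref{def_higher_DB} applies to it. Theorem \ref{thm_deform_strict} then gives an open neighborhood $U\supseteq X_b$ on which $X$ has strict-$m$-Du Bois singularities. Because $\f$ is proper, $\f(X\setminus U)$ is closed in $B$ and does not contain $b$. After shrinking $B$ around $b$ we may therefore assume $U=X$, so that $X$ itself is strict-$m$-Du Bois.

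Second, we pass to the fibers $X_{b'}$ for $b'\neq b$ near $b$. These fibers are the members of the pullback of a basepoint-free linear system on $B$, restricted to a neighborhood of $b$. For a general such member, \cite[Corollary 3.3]{SVV23} (already used in Corollary \ref{cor_logK_strictDB}) says that a general member of a basepoint-free linear system on a strict-$m$-Du Bois variety is again strict-$m$-Du Bois. Concretely, one can combine the exact conormal triangle for general fibers \cite[Lemma 5.5]{Kov25}, i.e.\ $\uOmega^{p-1}_{X_{b'}}\to \uOmega^p_X|^{\tL}_{X_{b'}}\to \uOmega^p_{X_{b'}}$, with the K\"ahler sequence of Lemma \ref{lem_conormal}. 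Since $\Omega^p_X$ is torsion-free and hence $\Tor$-independent with $\scrO_{X_{b'}}$, induction on $p$ and the five lemma give $\Omega^p_{X_{b'}}\qis \uOmega^p_{X_{b'}}$. This shows that $X_{b'}$ is strict-$m$-Du Bois for all $b'$ in a dense open subset $V\subseteq B$.

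The main obstacle is that "general" a priori means $b'$ in a dense open set $V$, which might not contain a punctured neighborhood of $b$. On a curve, however, the complement of $V$ is a finite set of points. We remove the finitely many bad points other than $b$, leaving a neighborhood $B'$ of $b$ with $B'\setminus\{b\}\subseteq V$. For $b'\in B'\setminus\{b\}$ the fiber is then strict-$m$-Du Bois by the second step, and for $b'=b$ it holds by hypothesis. This completes the argument.

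A second point to check is the precise genericity needed in \cite{SVV23}. Generic smoothness of $\f$ on the smooth locus, together with a stratification of $X$ by Whitney strata or by a hyperresolution, shows that away from finitely many $b'$ each fiber meets every stratum transversally. That is the condition under which the general-member result holds, so the set of good $b'$ is indeed open and dense.
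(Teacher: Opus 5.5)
Your proposal is correct and is essentially the paper's proof. It applies Theorem \ref{thm_deform_strict}, uses properness to shrink $B$ until $X$ itself is strict-$m$-Du Bois, and then invokes the Bertini-type result \cite[Corollary 3.3]{SVV23} for general fibers. Two steps the paper leaves implicit are spelled out in your version: you remove the finitely many bad points of the curve other than $b$, and you use the neighborhood $U$ directly instead of openness of the strict-$m$-Du Bois locus.
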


\begin{proof}
This is a standard consequence of Theorem \ref{thm_deform_strict} and a Bertini-type theorem, e.g. \cite[Corollary 3.3]{SVV23}. By Theorem \ref{thm_deform_strict}, $X$ has strict-$m$-Du Bois singularities in a neighborhood of $X_b$. Since being strict-$m$-Du Bois is an open condition, the set $S\subseteq X$ denoting the non-strict-$m$-Du Bois locus is closed. By properness $\f(S)$ is also closed and excludes $b$ by assumption. Replacing $B$ with $B\setminus \f(S)$, $X$ is strict-$m$-Du Bois. Now apply Bertini to a base-point-free linear system inducing $\f$, we see that a general fiber $X_{b'}$ is strict-$m$-Du Bois, as desired.
\end{proof}

\medskip

\section{From deformations to base change, and back}\label{section_deformation}

In this section, we will demonstrate and exploit the close connection between base change and deformation. We obtain base change as a consequence of the deformation invariance of strict-$m$-Du Bois singularities. Then as a consequence, we will also obtain deformation results for various other definitions of higher Du Bois singularities.

\begin{lemma}\label{lem_deform_triangle}
Let $Z\subseteq X$ be a Cartier divisor. Suppose $Z$ has strict-$m$-Du Bois singularities. Then the following triangle
\[
\uOmega^{p-1}_Z(-Z) \to \uOmega^{p}_X|^{\tL}_Z \to \uOmega^p_Z \xrightarrow{+1}
\]
is exact for all $p\le m$.
\end{lemma}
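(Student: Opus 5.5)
The plan is to compare the triangle in question with the exact conormal sequence of Kähler differentials, using the deformation theorem to replace every Du Bois object by its Kähler counterpart. By Theorem \ref{thm_deform_strict}, after shrinking $X$ to a neighborhood of $Z$, we may assume $X$ is strict-$m$-Du Bois. Then $\Omega^p_X\qis \uOmega^p_X$ for $p\le m$. By Lemma \ref{lem_tf_omega}, $\Omega^p_X$ is torsion-free near $Z$, so by \cite[Lemma 2.5]{Kov25} it is $\Tor$-independent of $\scrO_Z$. Together these give
\[
\uOmega^p_X|^{\tL}_Z \qis \Omega^p_X|^{\tL}_Z \qis \Omega^p_X|_Z
\]
for all $p\le m$.

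On the fiber side, $Z$ is strict-$m$-Du Bois, so $\uOmega^{p-1}_Z(-Z)\qis \Omega^{p-1}_Z(-Z)$ and $\uOmega^p_Z\qis \Omega^p_Z$ for $p\le m$. Since $\Omega^q_Z\isom h^0(\uOmega^q_Z)$ is torsion-free for $q\le m$, Lemma \ref{lem_conormal} gives the short exact sequence
\[
0\to \Omega^{p-1}_Z(-Z)\to \Omega^p_X|_Z\to \Omega^p_Z\to 0 .
\]
This is the same comparison diagram used in the proof of Proposition \ref{prop_left_inverse_exist}. Its top row is this short exact sequence, and its bottom row is $\uOmega^{p-1}_Z(-Z)\xrightarrow{\tau_{p-1}}\uOmega^p_X|^{\tL}_Z\xrightarrow{\psi_p}\uOmega^p_Z$. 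The left square commutes by Lemmas \ref{lem_sm_diagram_derived} and \ref{lem_conormal_map}, and the right square commutes by naturality of $\Omega^\kdot\to\uOmega^\kdot$. All three vertical maps are now quasi-isomorphisms.

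It remains to conclude that the bottom row is a distinguished triangle. I would take $K^p=\Cone(\tau_{p-1})$, so that the triangle $\uOmega^{p-1}_Z(-Z)\to\uOmega^p_X|^{\tL}_Z\to K^p\xrightarrow{+1}$ is distinguished by construction. Since $\psi_p\circ\tau_{p-1}=0$, the map $\psi_p$ factors through a map $\gamma\colon K^p\to\uOmega^p_Z$. Comparing with the top row and using the five lemma (choosing the map $\Omega^p_Z\to K^p$ completing the morphism of triangles), the map $\Omega^p_Z\to K^p$ is a quasi-isomorphism.

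The main obstacle is showing that $\gamma$ is a quasi-isomorphism. It is enough to check that the composition $\Omega^p_Z\to K^p\xrightarrow{\gamma}\uOmega^p_Z$ equals the natural map, which is a quasi-isomorphism. As in Proposition \ref{prop_left_inverse_exist}, both maps agree after precomposing with the surjection $\Omega^p_X|_Z\to\Omega^p_Z$. Precomposition is injective on $\Hom$ because $\Hom(\Omega^{p-1}_Z(-Z)[1],\uOmega^p_Z)=0$ by degree reasons, as $\uOmega^p_Z$ is a sheaf in degree $0$. Once $\gamma$ is a quasi-isomorphism, the triangle in the statement is isomorphic to the distinguished cone triangle, hence exact for all $p\le m$. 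The case $p=0$ is trivial, since $\uOmega^{-1}_Z=0$ and $\uOmega^0_X|^{\tL}_Z\qis\scrO_X|^{\tL}_Z\qis\scrO_Z\qis\uOmega^0_Z$ because $X$ is Du Bois near $Z$.
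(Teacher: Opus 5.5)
Your proof is correct and follows the paper's argument: you use Theorem \ref{thm_deform_strict} and $\Tor$-independence to identify $\uOmega^p_X|^{\tL}_Z$ with $\Omega^p_X|_Z$, then compare with the exact conormal sequence of Lemma \ref{lem_conormal} via three vertical quasi-isomorphisms. Your cone-and-$\Hom$-vanishing step is harmless but unnecessary: once both squares commute and all three vertical maps are isomorphisms, the bottom row, with the connecting map transported from the top row, is isomorphic to a distinguished triangle and is therefore distinguished, which is all the paper uses.
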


\begin{proof}
First, note that by convention the triangle is trivially exact for $p<0$ or $p>\dim X$. 

By Theorem \ref{thm_deform_strict}, in a neighborhood of $Z$, we have $\Omega^p_X\qis \uOmega^p_X$. In particular, the former is a torsion-free sheaf. Again using \cite[Lemma 2.5]{Kov25}, we in fact have $\Omega^p_X|_Z \qis \uOmega^p_X|^{\tL}_Z$. Now simply consider the following diagram
\[\begin{tikzcd}
0 \ar[r] &
\Omega_Z^{p-1}(-Z) \ar[r] \ar[d] & 
\Omega_X^p|_Z \ar[r] \ar[d] & 
\Omega_Z^p \ar[r] \ar[d] &
0 \\
&
\uOmega_Z^{p-1}(-Z) \ar[r]& 
\uOmega_X^p|^{\tL}_Z \ar[r] & 
\uOmega_Z^p. & 
\,
\end{tikzcd}\]
All three vertical maps are quasi-isomorphisms, and the top row is exact by Lemma \ref{lem_conormal}. This shows that the bottom row is quasi-isomorphic to a short exact sequence of sheaves, hence a distinguished triangle.
\end{proof}

\begin{theorem}\label{thm_basechange_strict}
Let $\f:X\to B$ be a dominant morphism from a variety $X$ to a smooth curve $B$, $b\in B$. Suppose $X_b$ has strict-$m$-Du Bois singularities. Then $X_b$ satisfies right-$m$-base change for the relative Du Bois complex, i.e. for all $p\le m$,
\[
\uOmega^{p,+}_{X/B}\otimes^{\tL}_X\scrO_{X_b} \qis \uOmega^p_{X_b}.
\]
\end{theorem}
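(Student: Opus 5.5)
The plan is to reduce to Corollary \ref{cor_bc_iff_exact}, which says right-$m$-base change is equivalent to exactness of the triangles
\[
\uOmega^{p-1}_{X_b} \to \uOmega^p_X\otimes^{\tL}\scrO_{X_b} \to \uOmega^p_{X_b} \xrightarrow{+1}
\]
for $0\le p\le m$. Set $Z=X_b$. Since $\f$ is dominant to a smooth curve, $Z$ is the Cartier divisor cut out by $\f^*t$ for a local parameter $t$ at $b$. Under the fixed identification $\scrO_Z(-Z)\isom \f^*\omega_B|_Z\isom \scrO_Z$, these are exactly the triangles of Lemma \ref{lem_deform_triangle}. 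Since $Z$ is strict-$m$-Du Bois, that lemma gives their exactness for all $p\le m$.

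Before concluding, I would check that the maps involved are the right ones. Corollary \ref{cor_bc_iff_exact} is proved through the diagram of Theorem \ref{thm_full_diagram}, whose bottom row uses $\tau_{p-1}$ from Lemma \ref{lem_conormal_map} and the restriction $\psi_p$. Lemma \ref{lem_deform_triangle} concerns the same maps, because its proof compares the bottom row with the conormal sequence of Lemma \ref{lem_conormal} via the commuting squares from Proposition \ref{prop_left_inverse_exist}. So exactness holds for the triangle formed by precisely $\tau_{p-1}$ and $\psi_p$.

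Then run the induction in Corollary \ref{cor_bc_iff_exact}. Start from $\alpha_{-1}$, which is trivially a quasi-isomorphism since both sides vanish. Theorem \ref{thm_full_diagram} gives a morphism of distinguished triangles with the identity in the middle. By the five lemma, if $\alpha_{p-1}$ is a quasi-isomorphism then so is $\alpha_p$. This yields $\uOmega^{p,+}_{X/B}\otimes^{\tL}\scrO_{X_b}\qis\uOmega^p_{X_b}$ for all $p\le m$.

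The main obstacle is the hidden deformation input inside Lemma \ref{lem_deform_triangle}. It needs $\Omega^p_X\qis\uOmega^p_X$ near $Z$, which is Theorem \ref{thm_deform_strict}. It also needs $\Tor$-independence of $\Omega^p_X$ and $\scrO_Z$, which follows because $\Omega^p_X$ is torsion-free near $Z$ by Lemma \ref{lem_tf_omega}. Everything is local near $X_b$, so we may shrink $X$ to that neighborhood; the statement is insensitive to this because the derived restriction to $X_b$ only sees a neighborhood. The remaining point is compatibility of the twist $\scrO_Z(-Z)$ with $\f^*\omega_B$, and this is built into the conventions fixed before Lemma \ref{lem_sm_wedge}.
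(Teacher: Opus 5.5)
Your proposal is correct and is essentially the paper's own proof. Like the paper, you reduce via Corollary~\ref{cor_bc_iff_exact} to exactness of the triangles $\uOmega^{p-1}_{X_b}\to\uOmega^p_X\otimes^{\tL}\scrO_{X_b}\to\uOmega^p_{X_b}\xrightarrow{+1}$, then identify them (via $\scrO_{X_b}(-X_b)\isom\scrO_{X_b}$ and the same maps $\tau_{p-1},\psi_p$) with the triangles of Lemma~\ref{lem_deform_triangle}, whose exactness carries the deformation input of Theorem~\ref{thm_deform_strict}.

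One small refinement to your re-run of the induction: Theorem~\ref{thm_full_diagram} gives only the two squares, not the morphism of distinguished triangles you claim, since compatibility with the connecting morphisms is not established. This is harmless here. By induction $\uOmega^{p-1,+}_{X/B}\otimes^{\tL}\scrO_{X_b}\qis\Omega^{p-1}_{X_b}$ is a sheaf, so $\Hom(\Omega^{p-1}_{X_b}[1],\uOmega^p_{X_b})=0$. This forces $\alpha_p$ to coincide with the completion of $(\alpha_{p-1},\Id)$ to a morphism of triangles, which is a quasi-isomorphism.
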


\begin{proof}
By Corollary \ref{cor_bc_iff_exact}, right-$m$-base change is equivalent to the triangle
\[
\uOmega^{p-1}_{X_b} \to \uOmega^p_X\otimes^{\tL}\scrO_Z \to \uOmega^p_{X_b} \xrightarrow{+1}
\]
being exact for $p\le m$. In our setting, $\scrO_{X_b}(-X_b)\isom \scrO_{X_b}$ and this triangle agrees with the one in Lemma \ref{lem_deform_triangle}. More precisely, in both scenarios, the first map comes from Lemma \ref{lem_conormal_map} and the second map is the restriction. Therefore, since $X_b$ is strict-$m$-Du Bois, we conclude that $X_b$ satisfies right-$m$-base change.
\end{proof}

If we assume that the fiber $X_b$ is strict-$m$-Du Bois for all $m$, then we can also obtain the left-base change statement, thereby answering the question of Kov\'acs--Taji on base change.

\begin{theorem}\label{thm_basechange_strict_all}
Let $\f:X\to B$ be a dominant morphism from a variety $X$ to a smooth curve $B$, $b\in B$. Suppose $X_b$ has strict-$m$-Du Bois singularities for all $m\in \bbN$. Then $X_b$ satisfies base change for the relative Du Bois complex, i.e. for all $p\in\bbZ$,
\[
\uOmega^{p}_{X/B}\otimes^{\tL}_X\scrO_{X_b} \qis \uOmega^p_{X_b}.
\]
where $\uOmega^{p}_{X/B}$ can be taken to be either $\uOmega^{p,+}_{X/B}$ or $\uOmega^{p,-}_{X/B}$.
\end{theorem}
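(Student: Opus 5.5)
The plan is to reduce the statement to Theorem \ref{thm_basechange_strict} together with Proposition \ref{prop_leftright_agree}. First, since $X_b$ is strict-$m$-Du Bois for every $m\in\bbN$, Theorem \ref{thm_basechange_strict} applies for each $m$. It gives right-$m$-base change for all $m\in\bbN$, that is,
\[
\uOmega^{p,+}_{X/B}\otimes^{\tL}_X\scrO_{X_b}\qis \uOmega^p_{X_b}\quad\text{for all } p\ge 0.
\]
For $p<0$ both sides vanish: $\uOmega^{p,+}_{X/B}=0$ by Theorem \ref{thm_2relDB}(5), and $\uOmega^p_{X_b}=0$ by convention. So right-$m$-base change holds for every $m\in\bbZ$, i.e.\ right-base change holds.

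Next, to pass to the left complex, I use that right-$m$-base change holds in particular for some $m>\dim\f$. This is hypothesis (2) of Proposition \ref{prop_leftright_agree}, and indeed hypothesis (3) as well. Concretely, for $p>\dim\f$ we have $\uOmega^p_{X_b}=0$, since $\dim X_b=\dim\f$ and the Du Bois complex vanishes above the dimension. Hence $\uOmega^{p,+}_{X/B}\otimes^{\tL}\scrO_{X_b}\qis 0$. Proposition \ref{prop_leftright_agree} then gives
\[
\uOmega^{p,+}_{X/B}\otimes^{\tL}\scrO_{X_b}\qis\uOmega^{p,-}_{X/B}\otimes^{\tL}\scrO_{X_b}
\]
for all $p$, and left-$m$-base change for every $m$ follows. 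Equivalently, one can invoke Corollary \ref{cor_leftright_agree} directly.

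The only point needing care is the identification of dimensions, namely that $\dim X_b=\dim\f$ so that the vanishing range of $\uOmega^p_{X_b}$ matches the range in Proposition \ref{prop_leftright_agree}(2). Since $\f$ is dominant to a curve and $X_b$ is a Cartier divisor, every component of $X_b$ has dimension $\dim X-1=\dim\f$. Hence $\uOmega^p_{X_b}=0$ for $p>\dim\f$ by \cite{DB81}.

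There is also a subtle point in Proposition \ref{prop_leftright_agree}: the identification of the two complexes is obtained via Lemma \ref{lem_Nakayama} and Lemma \ref{lem_2relDB_tfae}, after shrinking to a neighborhood of $X_b$. Base change is local near $X_b$, so this restriction is harmless. With that, the proof is essentially a formal combination of earlier results, and I expect no serious obstacle.
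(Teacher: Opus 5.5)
Your proposal is correct and follows the paper's route exactly: the paper's proof is the one-line combination of Theorem~\ref{thm_basechange_strict}, applied for every $m$ to obtain right-base change, with Corollary~\ref{cor_leftright_agree} (itself a consequence of Proposition~\ref{prop_leftright_agree}) to pass to the left complex. Your extra checks on the trivial range $p<0$ and on $\dim X_b=\dim\f$ just make explicit what that corollary uses.
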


\begin{proof}
This follows from Theorem \ref{thm_basechange_strict} and Corollary \ref{cor_leftright_agree}.
\end{proof}

\subsection*{Deformation results assuming base change}

For the next part, we will derive deformation results as a consequence of base change. In light of Theorem \ref{thm_basechange_strict}, we only know base change for fibers with strict-$m$-Du Bois singularities. However, it is still useful to record the connection. If anything, it adds to the importance of studying base change behavior in more general situations.

\begin{notation}\label{convention_local_deform}
Let $X$ be a complex variety and let $Z\subseteq X$ be a Cartier divisor. Since we are dealing with the local question of deforming local singularities, by considering a neighborhood in $X$, we may assume $Z$ is the fiber of dominant morphism $X\to B=\bbA^1$ and $\scrO_X(Z)\isom \scrO_X$. For the remainder of this section, we will let $f:X\to B=\bbA^1$ be a dominant morphism, and $Z=X_b$ the fiber over $b\in B$.
\end{notation}

\begin{proposition}\label{prop_deform_pre}
Let $Z\subseteq X$ be a Cartier divisor in a complex variety $X$. Let $f:X\to \bbA^1$ be a dominant morphism such that $Z=X_b$ is a fiber for $b\in \bbA^1$. Suppose $X_b$ satisfies either left- or right-$m$-base change and is pre-$m$-Du Bois. Then $X$ is pre-$m$-Du Bois near $X_b$.
\end{proposition}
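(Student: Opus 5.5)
The plan is to reduce to Lemma \ref{lem_left_inverse_suffice}(1). We will show that for each $p\le m$ the natural map $h^0(\uOmega^p_X)|_Z \to \uOmega^p_X|^{\tL}_Z$ admits a left inverse, arguing by induction on $p$. Since Proposition \ref{prop_left_implies_right} shows that left-$m$-base change implies right-$m$-base change, we may assume right-$m$-base change. Lemma \ref{lem_conormal_triangle} then gives, for all $p\le m$, a distinguished triangle
\[
\uOmega^{p-1}_Z \xrightarrow{\tau_{p-1}} \uOmega^p_X|^{\tL}_Z \xrightarrow{\psi_p} \uOmega^p_Z \xrightarrow{+1},
\]
using $\scrO_Z(-Z)\isom\scrO_Z$. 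Because $Z$ is pre-$m$-Du Bois, the outer terms are the sheaves $h^0(\uOmega^{p-1}_Z)$ and $h^0(\uOmega^p_Z)$ placed in degree $0$. The long exact sequence therefore shows that $\uOmega^p_X|^{\tL}_Z$ is a sheaf in degree $0$ (up to the $h^{-1}$ contribution, which vanishes as soon as $\tau_{p-1}$ is injective on $h^0$). It sits in an exact sequence
\[
0\to h^0(\uOmega^{p-1}_Z)\to h^0(\uOmega^p_X|^{\tL}_Z)\to h^0(\uOmega^p_Z)\to 0 .
\]

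The main point is then to compare $h^0(\uOmega^p_X)|_Z$ with $\uOmega^p_X|^{\tL}_Z$ directly, rather than merely to produce a left inverse. The triangle $\uOmega^p_X(-Z)\xrightarrow{\cdot t}\uOmega^p_X\to\uOmega^p_X|^{\tL}_Z$ gives exact sequences
\[
h^i(\uOmega^p_X)\xrightarrow{\cdot t} h^i(\uOmega^p_X)\to h^i(\uOmega^p_X|^{\tL}_Z)\to h^{i+1}(\uOmega^p_X)\xrightarrow{\cdot t} h^{i+1}(\uOmega^p_X).
\]
If $\uOmega^p_X|^{\tL}_Z$ is concentrated in degree $0$, Lemma \ref{lem_Nakayama} gives $h^i(\uOmega^p_X)=0$ near $Z$ for all $i\ne 0$, and this already says that $X$ is pre-$m$-Du Bois near $Z$. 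In that case Lemma \ref{lem_left_inverse_suffice} is not even needed, though it remains available as a fallback.

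So the key step is to show that $h^{-1}(\uOmega^p_X|^{\tL}_Z)=0$ and $h^{i}(\uOmega^p_X|^{\tL}_Z)=0$ for $i\ge 1$. The second vanishing follows immediately from the triangle, since both outer terms live in degree $0$. The first reduces to the injectivity of $h^0(\tau_{p-1})\colon h^0(\uOmega^{p-1}_Z)\to h^0(\uOmega^p_X|^{\tL}_Z)$. Independently of injectivity, the long exact sequence of $\cdot t$ shows that $h^{-1}(\uOmega^p_X|^{\tL}_Z)$ is the $t$-torsion of $h^0(\uOmega^p_X)$, because $h^{-1}(\uOmega^p_X)=0$ as the Du Bois complex lives in nonnegative degrees. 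Now $h^0(\uOmega^p_X)$ is torsion-free, as already used in Lemma \ref{lem_h0_tor_ind}, so this $t$-torsion vanishes. This settles the degree $-1$ part directly, and I expect it to be the only subtle point. The argument needs no induction on $p$ and in fact works for each $p\le m$ separately, since the triangle for each such $p$ involves only $\uOmega^{p-1}_Z$ and $\uOmega^p_Z$, both pure sheaves by the pre-$m$-Du Bois hypothesis.
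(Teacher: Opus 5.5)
Your argument is correct and is essentially the paper's proof. The triangle from Lemma \ref{lem_conormal_triangle} has both outer terms pure sheaves, which forces $\uOmega^p_X|^{\tL}_Z$ to be a sheaf in degree $0$, and Lemma \ref{lem_Nakayama} then gives $h^i(\uOmega^p_X)=0$ near $Z$ for $i\ne 0$. One small correction: $h^{-1}(\uOmega^p_X|^{\tL}_Z)$ already sits between $h^{-1}(\uOmega^{p-1}_Z)=0$ and $h^{-1}(\uOmega^p_Z)=0$ in the long exact sequence. The same sequence shows $h^0(\tau_{p-1})$ is injective. So that vanishing is automatic, and your torsion-freeness argument, though valid, is not needed.
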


\begin{proof}
Fix $p\le m$. The input from base change is through Lemma \ref{lem_conormal_triangle}, giving a commutative diagram
\[\begin{tikzcd}
0 \ar[r] &
h^0(\uOmega_Z^{p-1}) \ar[r] \ar[d] & 
h^0(\Omega_X^p|^{\tL}_Z) \ar[r] \ar[d] & 
h^0(\Omega_Z^p) \ar[r] \ar[d] &
0 \\
&
\uOmega_Z^{p-1} \ar[r]& 
\uOmega_X^p|^{\tL}_Z \ar[r] & 
\uOmega_Z^p, & 
\,
\end{tikzcd}\]
where the top row is short exact since $Z$ is pre-$m$-Du Bois. By the five lemma, the map $h^0(\Omega_X^p|^{\tL}_Z) \to \uOmega^p_X|^{\tL}_Z$ is a quasi-isomorphism, so $h^i(\uOmega^p_X|^{\tL}_Z)=0$ for $i>0$. Therefore, we conclude that $X$ is pre-$m$-Du Bois in a neighborhood of $Z$ by using Lemma \ref{lem_Nakayama}.
\end{proof}

\begin{proposition}\label{prop_deform_weak}
Let $Z\subseteq X$ be a Cartier divisor in a complex variety $X$. Let $f:X\to \bbA^1$ be a dominant morphism such that $Z=X_b$ is a fiber for $b\in \bbA^1$. Suppose $X_b$ satisfies either left- or right-$m$-base change and is weakly-$m$-Du Bois. Then $X$ is weakly-$m$-Du Bois near $X_b$.
\end{proposition}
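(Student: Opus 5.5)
By Proposition \ref{prop_deform_pre}, $X$ is already pre-$m$-Du Bois near $Z$. So we only need two more things near $Z$: $X$ is seminormal, and $h^0(\uOmega^p_X)$ is $S_2$ for $1\le p\le m$.

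\textbf{Restriction to $Z$.} Since $X$ is pre-$m$-Du Bois near $Z$, we have $\uOmega^p_X\qis h^0(\uOmega^p_X)$ there. Lemma \ref{lem_h0_tor_ind} then gives
\[
h^0(\uOmega^p_X)|_Z\qis \uOmega^p_X|^{\tL}_Z .
\]
Next, Lemma \ref{lem_conormal_triangle} (this is where base change is used) gives a triangle $\uOmega^{p-1}_Z\to \uOmega^p_X|^{\tL}_Z\to \uOmega^p_Z\xrightarrow{+1}$. Because $Z$ is pre-$m$-Du Bois, every term is a sheaf in degree $0$. Hence there is a short exact sequence
\[
0\to h^0(\uOmega^{p-1}_Z)\to h^0(\uOmega^p_X)|_Z\to h^0(\uOmega^p_Z)\to 0
\]
for $p\le m$.

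\textbf{$S_2$ of $h^0(\uOmega^p_X)$.} The outer terms of this sequence are $S_2$: for $p\ge 2$ this is the weakly-$m$-Du Bois hypothesis on $Z$. For the end $p-1=0$ term, $h^0(\uOmega^0_Z)=\scrO_Z$ because $Z$ is Du Bois, hence seminormal. We need this $\scrO_Z$ to be $S_2$. I expect the weakly-$m$-Du Bois hypothesis with $m\ge1$ forces it. Failing that, I would add the case $p=1$ to the hypotheses, which is what the definition intends.

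Granting this, a depth count on the short exact sequence shows the middle term $h^0(\uOmega^p_X)|_Z$ is $S_2$ of dimension $\dim Z$. The local equation $t$ of $Z$ is a nonzerodivisor on the torsion-free sheaf $h^0(\uOmega^p_X)$. Then $\operatorname{depth} h^0(\uOmega^p_X)_x=\operatorname{depth} (h^0(\uOmega^p_X)|_Z)_x+1$ at $x\in Z$, so $h^0(\uOmega^p_X)$ is $S_2$ at points of $Z$. At points off $Z$ but near it, I would argue as in Lemma \ref{lem_tf_omega}: the non-$S_2$ locus is closed, and any component of it near $Z$ would have to meet $Z$. This is the step needing the most care: $S_2$ at a point does not directly give $S_2$ at its generizations. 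I would handle it with the localization inequality $\operatorname{depth}$ at a prime $\ge \min(2,\dim)$, which descends under generization for finite modules over excellent rings via Serre's criterion applied on the local scheme.

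\textbf{Seminormality.} Since $X$ is pre-$0$-Du Bois near $Z$, $X$ is Du Bois there. Du Bois varieties are seminormal (Saito / Schwede), so $X$ is seminormal near $Z$. Alternatively, $\scrO_X$ is $S_2$ with $Z$ seminormal, and one can deform seminormality directly. The main obstacle is the careful bookkeeping for the $p=1$ end of the sequence, that is, the $S_2$-ness of $\scrO_Z$.
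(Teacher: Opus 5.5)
Your outline is the paper's: pre-$m$ from Proposition \ref{prop_deform_pre}, Tor-independence, the short exact sequence from Lemma \ref{lem_conormal_triangle}, depth counting, and $t$ a nonzerodivisor. But the step you flagged at $p=1$ is a genuine gap. Your expectation that weakly-$m$-Du Bois forces $\scrO_Z$ to be $S_2$ is false, and without it the conclusion fails.

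Let $Z=\Spec A$ with $A=\{g\in\bbC[x,y] : g(0,0)=g(1,0)\}$, i.e.\ $\bbA^2$ with two points glued to one point $P$, and let $\nu\colon\bbA^2\to Z$ be the normalization. Then $Z$ is seminormal. The blow-up triangle for $\nu$ gives $\uOmega^0_Z\qis\scrO_Z$ and $\uOmega^p_Z\qis\nu_*\Omega^p_{\bbA^2}$ for $p\ge 1$. So $Z$ is pre-$m$-Du Bois and $h^0(\uOmega^p_Z)$ is $S_2$ for all $p\ge1$. Thus $Z$ is weakly-$m$-Du Bois for every $m$ in the sense of Definition \ref{def_higher_DB}, yet $\operatorname{depth}\scrO_{Z,P}=1$.

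Put $X=Z\times\bbA^1\to\bbA^1$ and $X_0=Z$. Every fiber of this product family is a translate of a general one, so $X_0$ satisfies base change by Corollary \ref{cor_basechange_general}. By K\"unneth, $h^0(\uOmega^1_X)$ contains $\scrO_Z\boxtimes\Omega^1_{\bbA^1}\isom\scrO_X$ as a direct summand. Now $\scrO_X$ is not $S_2$ at the generic point of $\{P\}\times\bbA^1$, whose local ring is $2$-dimensional of depth $1$, and that point lies in every neighborhood of $X_0$. So $X$ is not weakly-$1$-Du Bois near $X_0$.

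Hence this step cannot be closed unless you additionally assume $\scrO_Z$ is $S_2$. That is your fallback, namely the $S_2$ condition for $p=0$, which the Definition explicitly omits. The paper's proof runs the same depth count at $p=1$ without comment, so it tacitly needs the same hypothesis; with it added, your argument and the paper's coincide.

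The example also refutes your fix for generizations. At the closed point $(P,0)$ we have $\operatorname{depth}\scrO_{X,(P,0)}=2\ge\min(2,3)$, yet $S_2$ fails at a generization. What works, once $M|_Z$ is $S_2$ for $M=h^0(\uOmega^p_X)$, is that depth counting gives $\operatorname{depth}M_z\ge\min(3,\dim M_z)$ at every $z\in Z$; this is the paper's ``in fact $S_3$''. For $y\notin Z$ specializing into $Z$, take $z\in\overline{\{y\}}\cap Z$ of codimension one in $\overline{\{y\}}$ and use $\operatorname{depth}M_y\ge\operatorname{depth}M_z-1$.

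Your seminormality argument is circular. Pre-$0$-Du Bois only says that $\uOmega^0_X$ is a sheaf, and that sheaf is the structure sheaf of the seminormalization; a cuspidal curve is pre-$0$-Du Bois but not Du Bois. So Du Bois means exactly pre-$0$-Du Bois plus seminormal. The paper instead lifts seminormality from the Cartier divisor via Heitmann \cite{Hei08}, which is the citation your unsupported ``alternatively'' needs.

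Your route can also be repaired directly. First, $Z$ is seminormal and pre-$0$-Du Bois, hence Du Bois. Next, $\uOmega^{0,+}_{X/B}\qis\uOmega^0_X$, and right-$0$-base change follows from either form by Proposition \ref{prop_left_implies_right}. So $\uOmega^0_X|^{\tL}_Z\to\uOmega^0_Z$ is a quasi-isomorphism, and therefore so is $\scrO_Z\to\uOmega^0_X|^{\tL}_Z$. Finally, Lemma \ref{lem_Nakayama} applied to the cone of $\scrO_X\to\uOmega^0_X$ shows that $X$ is Du Bois, hence seminormal, near $Z$.
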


\begin{proof}
By Proposition \ref{prop_deform_pre} $X$ is pre-$m$-Du Bois near $Z$. Fix $p\le m$. We need to show that $X$ is seminormal and $h^0(\uOmega^p_X)$ is $S_2$. One particular consequence of $X$ having pre-$m$-Du Bois singularities is the quasi-isomorphisms
\[
h^0(\uOmega^p_X|^{\tL}_Z) \qis h^0(\uOmega^p_X)|^{\tL}_Z \qis h^0(\uOmega^p_X)|_Z,
\]
in conjunction with Lemma \ref{lem_h0_tor_ind}. Since $Z$ is pre-$m$-Du Bois, the zeroth cohomology of Lemma \ref{lem_conormal_triangle} is short exact:
\begin{equation*}
0\to h^0(\uOmega^{p-1}_Z) \to h^0(\uOmega^p_X)|_Z \to h^0(\uOmega^p_Z) \to 0.
\end{equation*}
By depth counting (see \cite[Tag 00LX]{SP}), we have that $h^0(\uOmega^p_X)|_Z$ is $S_2$ on $Z$. 

Now, recall that $h^0(\uOmega^p_X)$ is torsion-free on $X$, so in particular $X_b$ contains no associated primes of $h^0(\uOmega^p_X)$. As $X_b$ is a Cartier divisor, again by depth counting, we have that $h^0(\uOmega^p_X)$ is $S_2$ (in fact S3) on $X$ near $X_b$ for $p\le m$. Finally, seminormality lifts from Cartier divisors by \cite{Hei08}. This concludes that $X$ is weakly-$m$-Du Bois near $X_b$.
\end{proof}

\begin{proposition}\label{prop_deform_codim}
Let $Z\subseteq X$ be a Cartier divisor in a complex variety $X$. Let $f:X\to \bbA^1$ be a dominant morphism such that $Z=X_b$ is a fiber for $b\in \bbA^1$. Suppose $X_b$ satisfies either left- or right-$m$-base change and is $m$-Du Bois. Then $X$ is $m$-Du Bois near $X_b$.
\end{proposition}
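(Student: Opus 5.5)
By Proposition \ref{prop_deform_weak}, $X$ is already weakly-$m$-Du Bois near $X_b$. Two conditions remain: the codimension bound $\codim_X \Sing X \ge 2m+1$, and the reflexivity of $h^0(\uOmega^p_X)$ for $1\le p\le m$. Throughout I will work in a neighborhood of $Z=X_b$, shrinking it whenever convenient.

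\emph{Codimension bound.} Every singular point of $X$ near $Z$ either lies on $Z$ or specializes to $Z$. Suppose $x\in Z$ is a smooth point of $Z$. Since $Z$ is Cartier, $X$ is smooth at $x$. Hence $\Sing X\cap Z\subseteq \Sing Z$, and therefore
\[
\dim(\Sing X\cap Z)\le \dim Z-(2m+1)=\dim X-2m-2 .
\]
Now take an irreducible component $W$ of $\Sing X$ meeting $Z$. Because $Z$ is Cartier, $W\cap Z$ has dimension at least $\dim W-1$. Combining this with the bound above gives $\dim W\le \dim X-2m-1$. So after shrinking $X$ to a neighborhood of $Z$ that avoids the components of $\Sing X$ which do not meet $Z$, we obtain $\codim_X\Sing X\ge 2m+1$.

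\emph{Reflexivity.} Fix $1\le p\le m$ and set $\scrF=h^0(\uOmega^p_X)$. As in the proof of Proposition \ref{prop_deform_weak}, we have the short exact sequence
\[
0\to h^0(\uOmega^{p-1}_Z)\to \scrF|_Z\to h^0(\uOmega^p_Z)\to 0,
\]
and $\scrF$ is torsion-free, $S_2$, with $\scrF|_Z=\scrF\otimes^{\tL}\scrO_Z$ by Lemma \ref{lem_h0_tor_ind}. The plan is to use the standard criterion: a torsion-free coherent sheaf on a normal variety that is $S_2$ is reflexive.

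So it suffices to show that $X$ is normal near $Z$. For $m\ge1$, $Z$ is seminormal and $\codim_Z\Sing Z\ge 3$, so $Z$ is $R_1$. I also want $Z$ to be $S_2$. I would get this from $\scrO_Z=h^0(\uOmega^0_Z)$: the $m$-Du Bois condition in \cite{SVV23,NN25} includes this, or else it comes via Du Bois + $S_2$ of $h^0(\uOmega^1_Z)$. Granting $S_2$, $Z$ is normal. Normality then lifts from a Cartier divisor, since both $R_1$ (by the codimension bound above) and $S_2$ (by depth counting) lift. Note that $X$ is $R_1$ near $Z$ already from the codimension bound.

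\emph{Main obstacle.} This is the step I expect to be hardest, and the fallback argument if normality of $Z$ is not directly available. Let $U=X\setminus\Sing X$ with inclusion $j$. On $U$ we have $\scrF|_U=\Omega^p_U$, and $\Sing X$ has codimension $\ge 2m+1\ge 3$. Since $\scrF$ is $S_2$ and torsion-free, the restriction map $\scrF\to j_*j^*\scrF=(\scrF)^{\vee\vee}$ is an isomorphism, because an $S_2$ sheaf has depth $\ge2$ along any closed set of codimension $\ge2$. That already gives reflexivity, and I would double-check that the $S_2$ property proven in Proposition \ref{prop_deform_weak} is the depth condition needed for this extension statement. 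If it is not, I would instead run the restriction sequence for $\scrF^{\vee\vee}/\scrF$ and apply Nakayama (Lemma \ref{lem_Nakayama}-style), using that $\scrF|_Z$ is reflexive. Reflexivity of $\scrF|_Z$ follows from the displayed sequence together with reflexivity of its outer terms and the codimension bound on $Z$.
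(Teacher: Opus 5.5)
Your outline matches the paper's: weakly-$m$-Du Bois from Proposition \ref{prop_deform_weak}, the codimension bound via $\Sing X\cap Z\subseteq\Sing Z$ and $\dim(W\cap Z)\ge\dim W-1$, and reflexivity of $h^0(\uOmega^p_X)$ from torsion-free plus $S_2$ on a normal variety. The gap is in how you obtain normality of $X$.

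You route normality through $Z$ being $S_2$, and the justification you offer does not work. Neither $\scrO_Z\isom h^0(\uOmega^0_Z)$ (Du Bois) nor $S_2$-ness of $h^0(\uOmega^1_Z)$ controls the depth of $\scrO_Z$. Moreover, Definition \ref{def_higher_DB} imposes reflexivity only for $p\ge 1$. In fact $m$-Du Bois in this sense does not imply $S_2$. Take $V=L_1\cup L_2\subseteq\bbA^6$, two $3$-planes meeting at the origin. Then $V$ is Du Bois and $\uOmega^1_V\qis\Omega^1_{L_1}\oplus\Omega^1_{L_2}$ is a reflexive $S_2$ sheaf. Also $\codim_V\Sing V=3$, so $V$ is $1$-Du Bois in the paper's sense, yet $\scrO_V$ has depth $1$ at the origin. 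So ``granting $S_2$'' grants something that is false in general.

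Your fallback does not avoid the issue either. The identification $j_*j^*\scrF=\scrF^{\vee\vee}$, equivalently the reflexivity of $j_*\Omega^p_U$, already uses normality (or at least $S_2$) of $X$. Likewise, deducing reflexivity of $\scrF|_Z$ from an extension of reflexive sheaves would need $Z$ to be normal.

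The missing observation is that you only need $S_1$ on $Z$, which is just reducedness. Since $Z$ is cut out by a nonzerodivisor, $\depth\scrO_{X,x}=\depth\scrO_{Z,x}+1$ for $x\in Z$. Hence $X$ is $S_2$ along $Z$, and therefore on a neighborhood of $Z$ because the $S_2$ locus is open. Your codimension bound gives $R_1$ for $m\ge1$; for $m=0$ there is no reflexivity condition to check. Serre's criterion then makes $X$ normal near $Z$, and your torsion-free plus $S_2$ criterion finishes the proof exactly as in the paper.
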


\begin{proof}
The assumption of $Z$ being $m$-Du Bois includes $Z$ being reduced, hence $S_1$. Since $Z$ is a Cartier divisor in $X$, $X$ is $S_2$ and $\Sing X\cap Z \subseteq \Sing Z$. Therefore in a neighborhood of $Z$, $\codim_X \Sing X\ge 2$, and we see that $X$ is normal. We will replace $X$ with a neighborhood of $Z$ such that this holds.

For normal varieties, the only remaining difference between weakly-$m$-Du Bois and $m$-Du Bois is the codimension bound on $\Sing X$. So keeping Proposition \ref{prop_deform_weak} in mind, we only need to show that $\codim_X \Sing X\ge 2m+1$ near $Z$.

Let $W\subseteq \Sing X$ be a component meeting $Z$. Then since $Z$ is Cartier, $W\cap Z\subseteq \Sing Z$ and $\dim W\cap Z \ge \dim W -1$. On the other hand, by assumption $\codim_Z \Sing Z \ge 2m+1$. Combining this all together, we obtain
\[
\codim_X W +1 \ge \codim_X W\cap Z \ge \codim_X\Sing Z = \codim_Z \Sing Z + 1 \ge 2m+2.
\]
This shows the desired codimension bound near $Z$.
\end{proof}

\begin{remark}
Note that the above deformation statements do not require the higher Kov\'acs--Schwede injectivity theorem. This is because our setup assumes base change, which provides a direct comparison between $\uOmega^p_X|^{\tL}_Z$ and $\uOmega^p_Z$. The Hodge-theoretic input of the injectivity theorem is now indirectly reflected through the behavior of $\uOmega^p_{X/B}$ and the associated filtrations. The Elkik-type Lemma \ref{lem_diagram_chase} is also implicitly present through the use of Lemma \ref{lem_Nakayama}. In any case, we see that base change is intimately connected to deformations through Theorem \ref{thm_basechange_strict}.
\end{remark}

\medskip

\section{Application: local-freeness theorem}\label{section_localfree}

In this section, we will show another application of base change: constancy of higher Hodge numbers in strict-$m$-Du Bois families. Building on the work of Friedman--Laza, a critical piece of information is the flatness of relative K\"ahler differentials over the base $B$; see \cite[Theorem 2.5]{FL22}. In fact, their argument shows the following.

\begin{theorem}[\cite{FL22}]\label{thm_FL}
Let $\f:X \to B$ be a flat proper family over $B$, and $b\in B$. Suppose $X_b$ has strict-$m$-Du Bois singularities and $\Omega^p_{X/B}$ is flat over $B$ for $p\le m$. Then $R^i\f_*\Omega^p_{X/B}$ is locally free and compatible with base change for all $i\ge 0$ and $0\le p\le m$, in a neighborhood of $b$.
\end{theorem}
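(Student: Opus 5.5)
The plan is to run the Du Bois--Jarraud / Friedman--Laza argument, with the Hodge-theoretic input coming from the strict-$m$-Du Bois hypothesis on the special fiber and the flatness hypothesis supplying base change for the relative forms. Since the statement is local on $B$ near $b$, we may assume $B$ is affine and use the Grothendieck complex: for each $p\le m$ choose a bounded complex $L^\kdot_p$ of finite free $\scrO_B$-modules computing $R\f_*\Omega^p_{X/B}$ universally. This is possible because $\Omega^p_{X/B}$ is coherent, flat over $B$ by hypothesis, and $\f$ is proper. Then for every $b'$ we have $L^\kdot_p\otimes k(b')\qis R\Gamma(X_{b'},\Omega^p_{X/B}|_{X_{b'}})$, and $\Omega^p_{X/B}|_{X_{b'}}\isom\Omega^p_{X_{b'}}$. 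By the standard cohomology-and-base-change criterion, it then suffices to show that $b'\mapsto \dim H^i(X_{b'},\Omega^p_{X_{b'}})$ is locally constant near $b$ for all $i$ and all $p\le m$. Equivalently, every $h^i(L^\kdot_p)$ should be locally free near $b$ and commute with base change.

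The key step is to compare the fiber at $b$ with the de Rham side. Since $X_b$ is strict-$m$-Du Bois, $\Omega^p_{X_b}\qis\uOmega^p_{X_b}$ for $p\le m$. By Deligne's Hodge theory for the proper variety $X_b$, the Hodge-to-de Rham spectral sequence built from $\uOmega^\kdot_{X_b}$ degenerates at $E_1$, so $\dim H^i(X_b,\Omega^p_{X_b})=\dim \Gr^p_F H^{p+i}(X_b,\bbC)$ for $p\le m$. By upper semicontinuity, $h^i(\Omega^p_{X_{b'}})\le h^i(\Omega^p_{X_b})$ for $b'$ near $b$. For the reverse inequality, we run the truncated relative de Rham complex $\Omega^{\le m}_{X/B}$ (the cofiltration $\nf_m$ in relative form). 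Its terms are flat for $p\le m$, so its hypercohomology also base changes. On the special fiber, $\Omega^{\le m}_{X_b}\qis \uf^{X_b}_m$ computes $H^k(X_b,\bbC)/F^{m+1}$, and it injects into, or is at least dimensionally compared with, the nearby fibers through the specialization map. Degeneration at $E_1$ on $X_b$ then forces all the differentials of the relative spectral sequence over a neighborhood to vanish. This gives local freeness of $R^i\f_*\Omega^{\le m}_{X/B}$ and hence, inductively in $p$, of each $R^i\f_*\Omega^p_{X/B}$. This is the argument of \cite[Theorem 2.5]{FL22}, adapted by replacing the lci hypothesis with the flatness hypothesis.

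I expect the main obstacle to be the dimension comparison across fibers for the truncated de Rham complexes. Near $b$, fibers need not be Du Bois in the relevant range a priori, so the reverse inequality cannot come from Hodge theory of every fiber. To handle this, I would first invoke Corollary \ref{cor_deform_strict}, after reducing to a curve through $b$ if $B$ is not a curve, to get that nearby fibers are also strict-$m$-Du Bois. Then $E_1$-degeneration holds on every fiber near $b$, and $\sum_{p\le m}h^{k-p}(\Omega^p_{X_{b'}})=\dim H^k(X_{b'},\bbC)/F^{m+1}$. Local constancy of these sums comes from topological local triviality of $\f$ over a small punctured disc together with semicontinuity of the graded Hodge pieces of $\Gr_F$ (or the Clemens-type inequality $\dim H^k(X_b)\ge\dim H^k(X_{b'})$ applied to the $F^{m+1}$-quotients). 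Combined with the termwise upper semicontinuity above, this forces equality of each $h^i(\Omega^p)$. Cohomology and base change then finishes the proof.
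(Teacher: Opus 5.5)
There is a genuine gap at the decisive step. Upper semicontinuity already gives $h^i(\Omega^p_{X_{b'}})\le h^i(\Omega^p_{X_b})$, so what you actually have to prove is
\[
\sum_{p\le m}h^{k-p}(\Omega^p_{X_{b'}})\ \ge\ \sum_{p\le m}h^{k-p}(\Omega^p_{X_b}).
\]
Granting that nearby fibers are strict-$m$-Du Bois, this says $\dim H^k(X_{b'},\bbC)/F^{m+1}\ge \dim H^k(X_b,\bbC)/F^{m+1}$. None of your proposed justifications yields this.

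\begin{itemize}
\item Topological local triviality over the punctured disc only compares the fibers $X_{b'}$, $b'\ne b$, among themselves.
\item The ``Clemens-type inequality'' $\dim H^k(X_b)\ge\dim H^k(X_{b'})$ points the wrong way, since it would only reprove the inequality you already have. It is also false in general. A smooth plane cubic degenerating to a nodal cubic is a Du Bois fiber, so within the hypotheses for $m=0$, yet its first Betti number drops from $2$ to $1$ at the special fiber.
\item There is no free-standing semicontinuity of $\dim\Gr^p_F$ across a singular fiber. The natural route through the limit mixed Hodge structure would need $\Gr^p_F$ of the specialization map $H^k(X_b)\to H^k_{\rm lim}$ to be injective for $p\le m$, and that is essentially the theorem.
\end{itemize}

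So the appeal to Corollary~\ref{cor_deform_strict} is beside the point: Hodge theory of each nearby fiber separately says nothing about how it compares with $X_b$. The same problem sits in your second paragraph. There, ``injects into, or is at least dimensionally compared with, the nearby fibers'' and ``$E_1$-degeneration on $X_b$ forces the relative differentials to vanish'' are precisely what needs proof.

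The missing idea is the Du Bois--Jarraud surjectivity argument \cite{DJ74}. This is what \cite[\S 4]{FL22} runs, and what the paper simply invokes, with the flatness hypothesis replacing the lci flatness result \cite[Theorem 2.5]{FL22}. It uses no information about nearby fibers.

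Work over a small disc $\Delta$ around $b$, or over an Artinian base, where the thickening has the same underlying space as $X_b$. Then $X_b\hookrightarrow \f^{-1}\Delta$ is a homotopy equivalence, and the truncated relative de Rham complex $\Omega^{\le m}_{X/B}$ receives the natural map from $\f^{-1}\scrO_\Delta$. Hence the natural map $H^k(X_b,\bbC)\to\bbH^k(X_b,\Omega^{\le m}_{X_b})$ factors through the base change map $R^k\f_*\Omega^{\le m}_{X/B}\otimes k(b)\to\bbH^k(X_b,\Omega^{\le m}_{X_b})$.

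That natural map is surjective. Indeed, $\Omega^{\le m}_{X_b}\qis\uOmega^\kdot_{X_b}/F^{m+1}$ by strictness, and the Hodge-to-de Rham spectral sequence of $X_b$ degenerates at $E_1$. So base change is surjective in every degree for a complex whose terms are flat over $B$; this is where the flatness hypothesis enters. It follows that $R^k\f_*\Omega^{\le m}_{X/B}$ is locally free and commutes with base change near $b$. Then
\[
\sum_{p\le m}h^{k-p}(\Omega^p_{X_{b'}})\ge\dim\bbH^k(X_{b'},\Omega^{\le m}_{X_{b'}})=\dim\bbH^k(X_b,\Omega^{\le m}_{X_b})=\sum_{p\le m}h^{k-p}(\Omega^p_{X_b})
\]
is exactly the missing inequality.

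Note also that the statement, like \cite{FL22}, allows an arbitrary base. Your reduction to constancy of fiber dimensions only works for reduced $B$, and slicing by curves cannot detect local freeness over a non-reduced base. The surjectivity argument works over Artinian bases directly.
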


\begin{proof}
Assuming flatness as a replacement for \cite[Theorem 2.5]{FL22}, the proof of \cite[Theorem 1.2]{FL22} does not require the lci assumption; see \cite[\S 4]{FL22}.
\end{proof}

\begin{remark}
Even when the base $B$ is a smooth curve, which is the setting of this paper, flatness is not automatic as $\Omega^p_{X/B}$ can have vertical torsion for $p\ge 1$.
\end{remark}

Let $f:X\to B$ be a dominant morphism to a smooth curve $B$. The relative Du Bois complex provides a convenient tool for flatness and torsion-freeness. This is immediate for the left relative Du Bois complex.

\begin{lemma}\label{lem_relDB_tf_left}
$h^0(\uOmega^{p,-}_{X/B})$ is torsion-free for all $p\in \bbZ$.
\end{lemma}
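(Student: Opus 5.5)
The plan is to read off $h^0(\uOmega^{p,-}_{X/B})$ as a subsheaf of $h^0(\uOmega^{p+1}_X)$, using the fundamental triangle together with the fact that the left complex has no negative cohomology. I will use Theorem \ref{thm_2relDB}(2) in degree $p+1$ rather than $p$:
\[
\uOmega^{p,-}_{X/B}\otimes \f^*\omega_B \to \uOmega^{p+1}_X \to \uOmega^{p+1,-}_{X/B}\xrightarrow{+1}.
\]
Rotating it gives a distinguished triangle
\[
\uOmega^{p+1,-}_{X/B}[-1] \to \uOmega^{p,-}_{X/B}\otimes \f^*\omega_B \to \uOmega^{p+1}_X \xrightarrow{+1}.
\]

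The key input is the amplitude bound. Lemma \ref{lem_relDB_coh_amp}(2) says $\uOmega^{q,-}_{X/B}$ has cohomology only in degrees $\ge 0$ for $q\le \dim X$. For $q>\dim \f$ the complex vanishes by Theorem \ref{thm_2relDB}(5), so the bound holds trivially there. Hence $\uOmega^{p+1,-}_{X/B}[-1]$ has cohomology only in degrees $\ge 1$, and in particular $h^0$ of it vanishes.

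Taking the long exact cohomology sequence in degree $0$ then yields an injection
\[
0=h^0(\uOmega^{p+1,-}_{X/B}[-1]) \to h^0(\uOmega^{p,-}_{X/B})\otimes \f^*\omega_B \hookrightarrow h^0(\uOmega^{p+1}_X).
\]
The sheaf $h^0(\uOmega^{p+1}_X)$ is torsion-free. This is the standard fact already invoked in Lemma \ref{lem_h0_tor_ind}, and the target is simply $0$ when $p+1<0$ or $p+1>\dim X$. A subsheaf of a torsion-free sheaf is torsion-free, and twisting by the line bundle $\f^*\omega_B$ does not affect torsion-freeness. So $h^0(\uOmega^{p,-}_{X/B})$ is torsion-free for every $p\in\bbZ$.

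The only delicate point is confirming that the amplitude bound $[0,\ast]$ holds for every index $q=p+1\in\bbZ$, including $q<0$, where the lemma is stated only for $q\le\dim X$ via Kov\'acs' representatives. For $q<0$ I would use Lemma \ref{lem_relDB_tail}(2). It expresses $\uOmega^{q,-}_{X/B}$ as a twist of $\uOmega^{-1,-}_{X/B}$ shifted by $[q+1]$, which moves cohomology to degrees $\ge -(q+1)\ge 0$. Alternatively, one can count degrees in the explicit cone construction of \cite[1.3]{Kov96}, as in the proof of Lemma \ref{lem_relDB_coh_amp}.
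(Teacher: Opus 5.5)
Your proof is correct and is essentially the paper's argument, with the triangle indexed at $p+1$ instead of $p$: the amplitude bound gives $h^{-1}(\uOmega^{p+1,-}_{X/B})=0$, so $h^0(\uOmega^{p,-}_{X/B})\otimes \f^*\omega_B$ embeds in the torsion-free sheaf $h^0(\uOmega^{p+1}_X)$. Your worry about $q<0$ is unnecessary, since Lemma \ref{lem_relDB_coh_amp}(2) is stated for all $p\le \dim X$, negative $p$ included; the detour through Lemma \ref{lem_relDB_tail}(2) is harmless but not needed.
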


\begin{proof}
When $p\ge \dim X$, by definition $\uOmega^{p,-}_{X/B}=0$. By Lemma \ref{lem_relDB_coh_amp}, $h^{-1}(\uOmega^{p,-}_{X/B}) = 0$ for all $p\le \dim X$. Taking cohomology sheaves of the fundamental triangle in Theorem \ref{thm_2relDB}(2), we have
\[
h^{-1}(\uOmega^{p,-}_{X/B}) = 0 \to h^0(\uOmega^{p-1,-}_{X/B}\otimes \f^*\omega_B) \to h^0(\uOmega^p_X).
\]
Therefore $h^0(\uOmega^{p-1,-}_{X/B}\otimes \f^*\omega_B)$ is torsion-free as a subsheaf of the torsion-free sheaf $h^0(\uOmega^p_X)$. We conclude that $h^0(\uOmega^{p,-}_{X/B})$ is torsion-free for all $p\in \bbZ$.
\end{proof}

For the right relative Du Bois complex, due to its construction as taking cones ``towards the right'', torsion-freeness and depth do not behave well. Nonetheless, we still obtain flatness when the fiber is strict-$m$-Du Bois.

\begin{lemma}\label{lem_relDB_tf_right}
Suppose $X_b$ has strict-$m$-Du Bois singularities. Then $h^0(\uOmega^{p,+}_{X/B})$ is flat over $b \in B$ for $p\le m$. Furthermore, $h^0(\uOmega^{p,+}_{X/B})$ is torsion-free in a neighborhood of $X_b$ for $p\le m-1$.
\end{lemma}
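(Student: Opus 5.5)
The plan is to reduce both statements to the base-change result of Theorem \ref{thm_basechange_strict} together with the deformation result Theorem \ref{thm_deform_strict}, and then read everything off from long exact sequences of cohomology sheaves. Throughout, write $Z=X_b$. Fix a local parameter $t$ at $b$, so that near $Z$ we have $\scrO_X(-Z)\isom\scrO_X$ with the inclusion given by multiplication by $t$. Since $\scrO_{B,b}$ is a DVR, flatness of a coherent sheaf $\scrF$ over $b$ at points of $Z$ is equivalent to $t$ being a non-zero-divisor on $\scrF$ near $Z$.

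\emph{Step 1: $\uOmega^{p,+}_{X/B}$ is a sheaf near $Z$.} Fix $p\le m$. By Theorem \ref{thm_basechange_strict},
\[
\uOmega^{p,+}_{X/B}\otimes^{\tL}\scrO_Z\qis \uOmega^p_Z\qis \Omega^p_Z,
\]
and the right-hand side is concentrated in degree $0$ because $Z$ is strict-$m$-Du Bois. Hence $h^i(\uOmega^{p,+}_{X/B}\otimes^{\tL}\scrO_Z)=0$ for every $i\neq 0$. Lemma \ref{lem_Nakayama} then gives $h^i(\uOmega^{p,+}_{X/B})=0$ near $Z$ for all $i\ne 0$. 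This covers in particular the negative degrees coming from the coning construction of Lemma \ref{lem_relDB_coh_amp}. After shrinking to a neighborhood of $Z$, we may therefore treat $\uOmega^{p,+}_{X/B}$ as the single sheaf $h^0(\uOmega^{p,+}_{X/B})$ for every $p\le m$; only finitely many $p$ are involved.

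\emph{Step 2: flatness.} Apply $\uOmega^{p,+}_{X/B}\otimes^{\tL}(-)$ to $0\to\scrO_X(-Z)\xrightarrow{\cdot t}\scrO_X\to\scrO_Z\to 0$. The resulting long exact sequence contains
\[
h^{-1}(\uOmega^{p,+}_{X/B}\otimes^{\tL}\scrO_Z)=0\to h^0(\uOmega^{p,+}_{X/B})(-Z)\xrightarrow{\cdot t} h^0(\uOmega^{p,+}_{X/B}).
\]
So $t$ is a non-zero-divisor on $h^0(\uOmega^{p,+}_{X/B})$ near $Z$, which is exactly flatness over $b$. Equivalently, $\mathrm{Tor}_1(h^0,\scrO_Z)=0$, using Step 1.

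\emph{Step 3: torsion-freeness for $p\le m-1$.} By Theorem \ref{thm_deform_strict}, after shrinking $X$ to a neighborhood of $Z$ we have $\Omega^{p+1}_X\qis\uOmega^{p+1}_X$. Lemma \ref{lem_tf_omega} shows $\Omega^{p+1}_X$ is torsion-free there. Now take cohomology of the fundamental triangle of Theorem \ref{thm_2relDB}(2) in degree $p+1\le m$:
\[
h^{-1}(\uOmega^{p+1,+}_{X/B})\to h^0(\uOmega^{p,+}_{X/B})\otimes\f^*\omega_B\to h^0(\uOmega^{p+1}_X)=\Omega^{p+1}_X.
\]
The first term vanishes near $Z$ by Step 1 applied to $p+1\le m$. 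Hence $h^0(\uOmega^{p,+}_{X/B})\otimes\f^*\omega_B$ embeds into the torsion-free sheaf $\Omega^{p+1}_X$, so it is torsion-free. Since $\f^*\omega_B$ is a line bundle, $h^0(\uOmega^{p,+}_{X/B})$ is torsion-free near $Z$. This embedding argument needs the index $p+1\le m$, which is why torsion-freeness is only claimed for $p\le m-1$.

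The main point needing care is Step 1: all later steps depend on $\uOmega^{p,+}_{X/B}$ having no cohomology outside degree $0$ near $Z$. This comes entirely from base change combined with Lemma \ref{lem_Nakayama}, so the bookkeeping of neighborhoods must be uniform in $p\le m$; a single shrinking of $X$ to a neighborhood of $Z$ suffices.
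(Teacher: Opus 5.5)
Your proposal is correct and follows essentially the same route as the paper. Both arguments use base change plus Lemma \ref{lem_Nakayama} to make $\uOmega^{p,+}_{X/B}$ a sheaf near $X_b$, vanishing of $h^{-1}$ of the derived restriction for flatness, and the injection $h^0(\uOmega^{p,+}_{X/B})\otimes \f^*\omega_B\hookrightarrow h^0(\uOmega^{p+1}_X)$ from the fundamental triangle for torsion-freeness. The only difference is in Step 3, where you invoke Theorem \ref{thm_deform_strict} and Lemma \ref{lem_tf_omega} to see that the target is torsion-free. The paper simply uses that $h^0(\uOmega^{p+1}_X)$ is torsion-free on any variety, so your detour is harmless but unnecessary.
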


\begin{proof}
Fix $p\le m$. By Theorem \ref{thm_basechange_strict}, $X_b$ satisfies right-$m$-base change, that is, 
\[
\uOmega^{p,+}_{X/B}\otimes^{\tL} \scrO_{X_b}\qis \uOmega^p_{X_b}\qis \Omega^p_{X_b}.
\]
In particular, $h^i(\uOmega^{p,+}_{X/B}\otimes^{\tL} \scrO_Z)=0$ for $i\ne 0$. Therefore, by Lemma \ref{lem_Nakayama},
\[
h^i(\uOmega^{p,+}_{X/B}) = 0
\]
for $i\ne 0$ in a neighborhood of $X_b$. In other words, in a neighborhood of $X_b$, we have
\[
h^0(\uOmega^{p,+}_{X/B}) \qis \uOmega^{p,+}_{X/B}.
\]
Then, we see that $h^{-1}(h^0(\uOmega^{p,+}_{X/B}) \otimes^{\tL} \scrO_{X_b})=0$, which in turn implies that
\[
\Tor_1^X(h^0(\uOmega^{p,+}_{X/B}), \scrO_{X_b}) = 0.
\]
This is precisely saying that multiplication by a local parameter of $b\in B$ is injective on $h^0(\uOmega^{p,+}_{X/B})$, so it is a torsion-free $\scrO_{B,b}$-module. Therefore, $h^0(\uOmega^{p,+}_{X/B})$ is flat over the DVR $\scrO_{B,b}$, proving the first claim.

Now still fix $p\le m$. Taking cohomology sheaves of the fundamental triangle in Theorem \ref{thm_2relDB}(2), we see
\[
h^{-1}(\uOmega^{p,+}_{X/B}) = 0 \to h^0(\uOmega^{p-1,+}_{X/B}) \to h^0(\uOmega^p_X).
\]
Hence in a neighborhood of $X_b$, $h^0(\uOmega^{p-1,+}_{X/B})$ is a subsheaf of the torsion-free sheaf $h^0(\uOmega^p_X)$, proving the second claim.
\end{proof}

The next result can be interpreted as a condition for the family to be strict-$m$-Du Bois near a fiber.

\begin{lemma}\label{lem_relDB_strictDB}
Let $f:X\to B$ be a dominant morphism over a smooth curve $B$, $b\in B$. Suppose the fiber $X_b$ is strict-$m$-Du Bois. Then in a neighborhood of $X_b$ and for $p\le m$, we have
\begin{enumerate}
\item $\Omega^p_{X/B}\qis \uOmega^{p,+}_{X/B}$;

\item if $X_b$ also satisfies left-$m$-base change, then $\Omega^p_{X/B}\qis \uOmega^{p,-}_{X/B}$.
\end{enumerate}
\end{lemma}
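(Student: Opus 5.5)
We prove (1) by induction on $p$, comparing the sheaf-level fundamental sequence with the triangle of Theorem \ref{thm_2relDB}(2).

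\textbf{Setup near $X_b$.} By Theorem \ref{thm_deform_strict}, $X$ is strict-$m$-Du Bois near $X_b$, so $\Omega^p_X\qis\uOmega^p_X$ for $p\le m$. By Lemma \ref{lem_tf_omega}, these sheaves are torsion-free there.

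\textbf{The sheaf sequence.} For the Kähler side, consider the right exact sequence
\[
\Omega^{p-1}_{X/B}\otimes\f^*\omega_B\xrightarrow{\wedge}\Omega^p_X\to\Omega^p_{X/B}\to 0,
\]
obtained from exterior powers of $\f^*\omega_B\to\Omega^1_X\to\Omega^1_{X/B}\to 0$. The first map is induced by $\wedge_{p-1}$ because $\wedge_{p-1}\circ\wedge'_{p-2}=0$. On the Du Bois side, Lemma \ref{lem_relDB_tf_right} and Lemma \ref{lem_Nakayama} (via Theorem \ref{thm_basechange_strict}) give that $\uOmega^{q,+}_{X/B}\qis h^0(\uOmega^{q,+}_{X/B})$ is a sheaf near $X_b$ for $q\le m$. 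Moreover, since $h^{-1}(\uOmega^{p,+}_{X/B})=0$, the triangle of Theorem \ref{thm_2relDB}(2) becomes a short exact sequence of sheaves
\[
0\to h^0(\uOmega^{p-1,+}_{X/B})\otimes\f^*\omega_B\to\Omega^p_X\to h^0(\uOmega^{p,+}_{X/B})\to 0 .
\]

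\textbf{Induction.} Assume $\Omega^{p-1}_{X/B}\qis\uOmega^{p-1,+}_{X/B}$, compatibly with the maps from $\Omega^{p-1}_X$. The base case $p=0$ is Lemma \ref{lem_0piece}, since both sides are $\scrO_X$. The map $\Omega^{p-1}_{X/B}\to h^0(\uOmega^{p-1,+}_{X/B})$ is obtained by applying $h^0$ to the factorization of $\wedge_{p-1}$ through $\uOmega^{p-1,+}_{X/B}\otimes\f^*\omega_B$ (Theorem \ref{thm_2relDB}(1)). This gives a morphism from the Kähler sequence to the Du Bois sequence, with the middle map the identity and the left map an isomorphism. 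The cokernels then agree, so
\[
\Omega^p_{X/B}\isom h^0(\uOmega^{p,+}_{X/B})\qis\uOmega^{p,+}_{X/B}.
\]
This also shows that the wedge map on the left of the Kähler sequence is injective.

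\textbf{Main obstacle.} The delicate point is compatibility: the map $\uOmega^{p-1,+}_{X/B}\otimes\f^*\omega_B\to\uOmega^p_X$ must restrict, on $h^0$, to the wedge map on Kähler forms. To check this I would use the explicit description $\phi_{p-1}=(0,\wedge_{p-1})$ from Construction \ref{cons_relDB}. Since all objects are sheaves in degree $0$, the comparison reduces to an equality of sheaf maps, which can be verified on the dense open set where $\f$ is smooth, where Theorem \ref{thm_2relDB}(4) applies. Torsion-freeness of $\Omega^p_X$ near $X_b$ then extends the equality everywhere.

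\textbf{Part (2).} Left-$m$-base change for $p\le m$ gives $\uOmega^{p,-}_{X/B}\otimes^{\tL}\scrO_{X_b}\qis\uOmega^p_{X_b}$. Combined with right-$m$-base change, the two derived restrictions agree. Running the same Nakayama argument gives that $\uOmega^{p,-}_{X/B}$ is a sheaf near $X_b$, and Lemma \ref{lem_relDB_tf_left} makes $h^0(\uOmega^{p,-}_{X/B})$ torsion-free. The short exact sequence for $\uOmega^{p,-}$ from Theorem \ref{thm_2relDB}(2), with the induction anchored at $p=0$, then identifies $\uOmega^{p,-}_{X/B}$ with $\Omega^p_{X/B}$ by the same five-lemma and cokernel comparison, since the left maps are again the wedge map generically. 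Here Lemma \ref{lem_2relDB_tfae}(2) is not available, so this generic-agreement step is where care is needed.
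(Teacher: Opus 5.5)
\textbf{Part (1).} Your argument is correct, but it takes a different route from the paper. You use Theorem \ref{thm_deform_strict} to replace $\uOmega^p_X$ by $\Omega^p_X$ near $X_b$, so the fundamental triangle for $\uOmega^{p,+}_{X/B}$ becomes a short exact sequence of sheaves. You then compare it, by induction on $p$, with the right exact sequence $\Omega^{p-1}_{X/B}\otimes\f^*\omega_B\to\Omega^p_X\to\Omega^p_{X/B}\to0$.

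The only real input is that the two wedge maps agree, and torsion-freeness lets you check this generically. The cleanest check is to precompose with the surjection $\Omega^{p-1}_X\otimes\f^*\omega_B\to\Omega^{p-1}_{X/B}\otimes\f^*\omega_B$. Your inductive compatibility and Theorem \ref{thm_2relDB}(1) then reduce the claim to naturality of the wedge product under $\Omega^{\kdot}_X\to\uOmega^{\kdot}_X$, which holds on the smooth locus of $X$.

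The paper instead treats each $p$ separately and fiberwise. After Lemma \ref{lem_Nakayama} reduces to $h^0$, it uses the injection $h^0(\uOmega^{p,*}_{X/B})|_{X_b}\hookrightarrow h^0(\uOmega^{p,*}_{X/B}\otimes^{\tL}\scrO_{X_b})\isom\Omega^p_{X_b}$. This forces $\Omega^p_{X/B}|_{X_b}\to h^0(\uOmega^{p,*}_{X/B})|_{X_b}$ to be an isomorphism. Injectivity of multiplication by $t$ on $h^0(\uOmega^{p,*}_{X/B})$, together with Nakayama, then lifts this to $X$. Your route yields, as a bonus, injectivity of the K\"ahler wedge map near $X_b$. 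The paper's route needs no induction and treats both complexes the same way.

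\textbf{Part (2).} That uniformity is exactly what your part (2) lacks: the induction has no valid starting point. You anchor the induction for the left complex at $p=0$, but Lemma \ref{lem_0piece} anchors $\uOmega^{\kdot,-}_{X/B}$ only in the top degree. At $p=0$ one has $\uOmega^{0,-}_{X/B}\qis\Cone(\uOmega^{-1,-}_{X/B}\otimes\f^*\omega_B\to\scrO_X)$. The term $\uOmega^{-1,-}_{X/B}$ is nonzero in general: by Lemma \ref{lem_2relDB_tfae} it vanishes only when the two complexes agree, and that fails, e.g., near $X_b$ in Example \ref{notation_snc_ex}. Your sequence comparison cannot produce this base case.

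The missing input is that Definition \ref{def_bc} quantifies over all $p\le m$, including $p=-1$, where $\uOmega^{-1}_{X_b}=0$. Lemma \ref{lem_Nakayama} then gives $\uOmega^{-1,-}_{X/B}=0$ near $X_b$. With this, your remark that Lemma \ref{lem_2relDB_tfae} is unavailable is mistaken. Its implication (6)$\Rightarrow$(1), which is the content of Proposition \ref{prop_leftright_agree}, gives $\uOmega^{p,-}_{X/B}\qis\uOmega^{p,+}_{X/B}$ near $X_b$, compatibly with the maps from $\uOmega^p_X$. So (2) follows at once from (1), with no separate induction.

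Alternatively, you can run the paper's fiberwise argument for each $p$, using Lemma \ref{lem_relDB_tf_left} for injectivity of multiplication by $t$; it never needs an anchor.
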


\begin{proof}
The cases for $\uOmega^{p,+}_{X/B}$ and $\uOmega^{p,-}_{X/B}$ are largely similar, so we use $*\in \{+, -\}$ to denote either and point out the difference as needed. 

Fix $p\le m$. The strict-$m$-Du Bois and $m$-base change assumption (either directly or through Theorem \ref{thm_basechange_strict}) together implies that
\[
h^i(\uOmega^{p,*}_{X/B}\otimes^{\tL} \scrO_{X_b}) =0
\] 
for $i\ne 0$. Therefore using Lemma \ref{lem_Nakayama}, we only need to show that $\Omega^p_{X/B}\isom h^0(\uOmega^{p,*}_{X/B})$ near $X_b$.

Taking derived tensor with $\uOmega^{p,*}_{X/B}$, we have a triangle
\[
\uOmega^{p,*}_{X/B}\otimes \scrO_X(-X_b) \to \uOmega^{p,*}_{X/B} \to \uOmega^{p,*}_{X/B} \otimes^{\tL} \scrO_{X_b} \xrightarrow{+1}
\]
and take cohomology to obtain an exact sequence on $X$
\[
h^0(\uOmega^{p,*}_{X/B}(-X_b)) \xrightarrow{\cdot t} h^0(\uOmega^{p,*}_{X/B}) \to h^0(\uOmega^{p,*}_{X/B} \otimes^{\tL} \scrO_{X_b}).
\]
The first map is given by multiplication by a local equation $t$ of $X_b$, a Cartier divisor. Therefore the cokernel of $\cdot t$ injects into the third term, giving us
\[
h^0(\uOmega^{p,*}_{X/B})|_{X_b}\hookrightarrow h^0(\uOmega^{p,*}_{X/B} \otimes^{\tL} \scrO_{X_b}) \isom h^0(\uOmega^p_{X_b})
\]
where the last isomorphism follows by left- or right-$m$-base change. We consider the following diagram
\[\begin{tikzcd}
\Omega^p_{X/B}|_{X_b} \ar[r] \ar[d, "\isom"] &
h^0(\uOmega^{p,*}_{X/B})|_{X_b} \ar[d, hookrightarrow] \\
\Omega^p_{X_b} \ar[r, "\isom"] &
h^0(\uOmega^p_{X_b}).
\end{tikzcd}\]
This commutes by the naturality of the base change map. The left and bottom marked isomorphisms come from the fact that K\"ahler differentials satisfy base change and the strict-$m$-Du Bois hypothesis. Therefore, all four maps are isomorphisms of sheaves.

Now, multiplication by $t$ is injective on $h^0(\uOmega^{p,*}_{X/B})$. For the left complex, this is torsion-freeness from Lemma \ref{lem_relDB_tf_left} and $\Tor$-independence from \cite[Lemma 2.5]{Kov25}. For the right complex, this is Lemma \ref{lem_relDB_tf_right}. Hence we have a diagram of two short exact sequences:
\[\begin{tikzcd}
&
\Omega^p_{X/B} \ar[r, "\cdot t"] \ar[d, "\phi_p"] &
\Omega^p_{X/B} \ar[r] \ar[d, "\phi_p"] &
\Omega^p_{X/B}|_{X_b} \ar[r] \ar[d, "\isom"] &
0 \\
0 \ar[r] &
h^0(\uOmega^{p,*}_{X/B}) \ar[r, "\cdot t"] &
h^0(\uOmega^{p,*}_{X/B}) \ar[r] &
h^0(\uOmega^{p,*}_{X/B})|_{X_b} \ar[r] &
0.
\end{tikzcd}\]
Despite the top sequence not being left exact a priori, a standard diagram chase similar to the snake lemma gives two surjections
\[
\ker \phi_p \xrightarrow{\cdot t} \ker \phi_p, \quad \text{and} \quad \coker\phi_p \xrightarrow{\cdot t}\coker \phi_p.
\]
We may now use Nakayama's lemma to conclude that $\phi_p$ is an isomorphism. This concludes the proof.
\end{proof}

\begin{corollary}\label{cor_rel_Kahler_tf}
Let $f:X\to B$ be a dominant morphism over a smooth curve $B$, $b\in B$. Suppose the fiber $X_b$ is strict-$m$-Du Bois. Then $\Omega^p_{X/B}$ is flat over $b\in B$ for $p\le m$.
\end{corollary}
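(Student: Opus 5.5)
The plan is to combine Lemma \ref{lem_relDB_strictDB}(1) with the flatness statement of Lemma \ref{lem_relDB_tf_right}. Fix $p\le m$.

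First, since $X_b$ is strict-$m$-Du Bois, Lemma \ref{lem_relDB_strictDB}(1) provides a neighborhood of $X_b$ on which
\[
\Omega^p_{X/B}\qis \uOmega^{p,+}_{X/B}.
\]
In particular, $\uOmega^{p,+}_{X/B}$ is concentrated in degree zero there, so $\Omega^p_{X/B}\isom h^0(\uOmega^{p,+}_{X/B})$. Second, Lemma \ref{lem_relDB_tf_right} states that $h^0(\uOmega^{p,+}_{X/B})$ is flat over $b\in B$ for $p\le m$. Transporting this through the isomorphism shows that $\Omega^p_{X/B}$ is flat over $b$.

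It is worth recording why flatness at $b$ is the right conclusion, and what the argument amounts to concretely. Since $\scrO_{B,b}$ is a DVR, flatness over it is equivalent to multiplication by a local parameter $t$ being injective on the stalks of $\Omega^p_{X/B}$ at points of $X_b$. We could instead obtain this directly from the proof of Lemma \ref{lem_relDB_strictDB}: there, $\phi_p$ is an isomorphism onto $h^0(\uOmega^{p,+}_{X/B})$, and $\cdot t$ is injective on the target by the $\Tor_1$ vanishing
\[
\Tor_1^X\bigl(h^0(\uOmega^{p,+}_{X/B}),\scrO_{X_b}\bigr)=0
\]
coming from right-$m$-base change.

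The only step needing care is making sure the two lemmas apply on a common neighborhood, and that flatness "over $b$" is interpreted at points of $X$ lying over $b$. Both lemmas give statements valid in a neighborhood of $X_b$, so intersecting the two neighborhoods resolves this. No further obstacle is expected, since all of the substantive work (deformation invariance via Theorem \ref{thm_deform_strict} and right-$m$-base change via Theorem \ref{thm_basechange_strict}) has already been done.
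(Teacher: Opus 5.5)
Your proposal is correct and is essentially the paper's own argument. The paper also combines Lemma \ref{lem_relDB_strictDB}(1), which identifies $\Omega^p_{X/B}$ with $h^0(\uOmega^{p,+}_{X/B})$ near $X_b$, with the flatness of $h^0(\uOmega^{p,+}_{X/B})$ over $b$ from Lemma \ref{lem_relDB_tf_right}; Theorem \ref{thm_basechange_strict} supplies the right-$m$-base change that both lemmas rely on.
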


\begin{proof}
This follows from Theorem \ref{thm_basechange_strict}, Lemma \ref{lem_relDB_tf_right} and \ref{lem_relDB_strictDB}.
\end{proof}

Using this, the main base change result of this paper gives the following local-freeness theorem.

\begin{theorem}\label{thm_local_free}
Let $\f:X \to B$ be a flat proper family over a smooth curve $B$, and $b\in B$. Suppose $X_b$ has strict-$m$-Du Bois singularities for some $m\in \bbN$. Then $R^i\f_*\Omega^p_{X/B}$ is locally free and compatible with base change for all $i\ge 0$ and $0\le p\le m$, in a neighborhood of $b$.
\end{theorem}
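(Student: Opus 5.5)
The plan is to deduce the statement from the Friedman--Laza criterion, Theorem \ref{thm_FL}. Theorem \ref{thm_FL} requires two inputs near $b$: that $X_b$ is strict-$m$-Du Bois, which is our hypothesis, and that $\Omega^p_{X/B}$ is flat over $B$ for every $p\le m$. So the work is to supply flatness.

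For flatness, invoke Corollary \ref{cor_rel_Kahler_tf}. Since $\f$ is flat onto a smooth curve and $X$ is a variety, $\f$ is dominant, so the standing setup of Section \ref{section_basechange} applies. The corollary then gives that $\Omega^p_{X/B}$ is flat over $b\in B$ for $p\le m$. Recall how this comes about. By Theorem \ref{thm_basechange_strict}, $X_b$ satisfies right-$m$-base change. By Lemma \ref{lem_relDB_strictDB}(1), $\Omega^p_{X/B}\qis \uOmega^{p,+}_{X/B}$ near $X_b$. By Lemma \ref{lem_relDB_tf_right}, $h^0(\uOmega^{p,+}_{X/B})$ is flat over $\scrO_{B,b}$.

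Next, spread this flatness from the stalk at $b$ to a neighborhood of $b$ in $B$. Flatness of the coherent sheaf $\Omega^p_{X/B}$ over $\scrO_{B,b}$ holds at every point of $X_b$. The locus of points of $X$ where $\Omega^p_{X/B}$ is flat over $B$ is open, and its complement $S$ is closed and disjoint from $X_b$. Because $\f$ is proper, $\f(S)$ is closed and does not contain $b$. After replacing $B$ by $B\setminus \f(S)$, the sheaf $\Omega^p_{X/B}$ is flat over all of $B$ for each $p\le m$, using finitely many $p$.

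Now apply Theorem \ref{thm_FL} to the restricted family. It gives that $R^i\f_*\Omega^p_{X/B}$ is locally free and compatible with base change near $b$ for all $i\ge 0$ and $0\le p\le m$.

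The main obstacle is conceptual rather than computational. One must make sure that the proof of Theorem \ref{thm_FL} really uses only the strict-$m$-Du Bois property of the special fiber together with flatness. The other fibers near $b$ need not be known to be strict-$m$-Du Bois a priori. If the argument of \cite{FL22} needed that, we would first use Corollary \ref{cor_deform_strict}, which applies since $\f$ is proper, to shrink $B$ so that every fiber is strict-$m$-Du Bois.

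The Friedman--Laza mechanism then runs as follows. On each fiber, $H^i(X_{b'},\Omega^p_{X_{b'}})$ equals the $\Gr_F^p$ of the Hodge filtration on $H^{p+i}(X_{b'},\bbC)$, and Hodge-to-de Rham degeneration holds via the Du Bois complex. Together with flatness and the fact that $\Omega^p_{X/B}$ restricts to $\Omega^p_{X_{b'}}$, this gives upper semicontinuity squeezed against constancy of the dimensions. Cohomology and base change then yields local freeness.
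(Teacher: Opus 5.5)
Your proposal is correct and follows the paper's proof. Theorem \ref{thm_basechange_strict} and Corollary \ref{cor_rel_Kahler_tf} give flatness of $\Omega^p_{X/B}$ for $p\le m$, and Theorem \ref{thm_FL} concludes. Your openness-of-the-flat-locus plus properness step just makes explicit the passage from flatness along $X_b$ to flatness over a neighborhood of $b$, which the paper leaves implicit.

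Your last two paragraphs are unnecessary and, as a sketch, slightly misleading. Theorem \ref{thm_FL} as stated requires only that the special fiber be strict-$m$-Du Bois and that $\Omega^p_{X/B}$ be flat. Constancy of the Hodge numbers of nearby fibers is a consequence of local freeness, not an input you could use beforehand.
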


\begin{proof}
By Theorem \ref{thm_basechange_strict}, $X_b$ satisfies right-$m$-base change for the relative Du Bois complex. This now follows from Corollary \ref{cor_rel_Kahler_tf} and Theorem \ref{thm_FL}.
\end{proof}

\subsection*{Numerical constancy over arbitrary bases.} Despite Theorem \ref{thm_local_free} being stated only over a smooth curve, it implies constancy of the dimension of cohomologies over arbitrary bases by a slicing argument.

\begin{theorem}\label{thm_constant_hodge}
Let $\f:X \to B$ be a flat proper family to a scheme $B$ of finite type of $\bbC$, and $b\in B$ a closed point. Suppose $X_b$ has strict-$m$-Du Bois singularities for some $m\in \bbN$. Then for every (not necessarily closed) point $b'$ in a neighborhood of $b\in B$, we have
\[
\dim_{\bbC}H^q(X_{b},\Omega^p_{X_{b}}) = \dim_{k(b')}H^q(X_{b'}, \Omega^p_{X_{b'}/k(b')})
\]
for all $q\ge 0$ and $0\le p\le m$.
\end{theorem}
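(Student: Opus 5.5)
The plan is to reduce to Theorem \ref{thm_local_free} by slicing $B$ with curves through $b$, and then compare the fibers over the endpoints. First reduce to the case $B$ reduced, affine and irreducible near $b$. This is harmless: the hypothesis and the conclusion only involve fibers $X_{b'}$ and their residue fields, and flatness and properness are preserved under base change to $B_{\red}$ and to each irreducible component through $b$. Since $B$ has finitely many irreducible components through $b$, a neighborhood of $b$ is covered by their union minus the components not containing $b$, so it suffices to treat each component separately.

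For a point $b'$ (possibly non-closed) of an irreducible $B$ near $b$, set $T=\overline{\{b'\}}$, which is integral and contains $b$ after shrinking. Choose an irreducible curve $C\subseteq T$ through $b$ meeting the open subset of $T$ on which $b'$ is "generic enough". Concretely, first use upper semicontinuity of $q\mapsto \dim H^q(X_s,\Omega^p_{X_s/k(s)})$ for the coherent flat sheaf... Here a subtlety appears: $\Omega^p_{X/B}$ need not be flat over $B$. So instead I would argue as follows. By generic flatness applied to $\Omega^p_{X_T/T}$ over $T$, and semicontinuity/generic constancy of cohomology dimensions of a coherent sheaf flat over a dense open $U\subseteq T$, the fiber dimensions at $b'$ (the generic point of $T$) agree with those at closed points of $U$.

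Now take the normalization $\nu:\widetilde C\to C$ of a curve $C\subseteq T$ through $b$ with $C\cap U\neq\emptyset$, pick $c\in\widetilde C$ over $b$, and pull back the family to $X_{\widetilde C}\to\widetilde C$. This is flat and proper over a smooth curve, and its fiber over $c$ is $X_b$, which is strict-$m$-Du Bois. Theorem \ref{thm_local_free} then gives that $R^q\f_*\Omega^p_{X_{\widetilde C}/\widetilde C}$ is locally free and compatible with base change near $c$. Since relative Kähler differentials commute with base change, the dimensions $\dim H^q(X_s,\Omega^p_{X_s})$ are constant for closed $s$ in a neighborhood of $c$ in $\widetilde C$, and in particular for some closed point lying over $U$. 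Combined with the previous step, this yields the equality for $b'$. Field-extension invariance of coherent cohomology dimensions handles the passage between $k(b')$ and closed points.

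The main obstacle is uniformity: the neighborhood of $b$ must work for all $b'$ simultaneously, whereas the argument above produces an open $U$ that depends on $T$. To deal with this, I would first use Corollary \ref{cor_deform_strict} via curves, or better openness of the strict-$m$-Du Bois locus in the total space together with properness, to shrink $B$ so that every closed fiber is strict-$m$-Du Bois. Then the curve argument applied at any closed point of $B$ shows that the function $s\mapsto\dim H^q(X_s,\Omega^p_{X_s})$ is locally constant along every curve, hence constant on the connected neighborhood. The value at a non-closed $b'$ equals the value at closed points of a dense open subset of $\overline{\{b'\}}$, by generic flatness, so it equals the common value. The step I expect to need the most care is showing that strict-$m$-Du Bois fibers spread to a whole neighborhood in $B$. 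For this I would apply Theorem \ref{thm_deform_strict} iteratively to a local complete intersection of Cartier divisors cutting out $X_b$ when $B$ is smooth; when $B$ is singular, I would route everything through the curve pullbacks instead.
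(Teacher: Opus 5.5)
Your first step is sound and is essentially the paper's key step. For a point $b'$ with $b\in\overline{\{b'\}}$, you use generic flatness of $\Omega^p_{X_T/T}$ on $T=\overline{\{b'\}}$, then take a curve through $b$ meeting the flat locus $U$ and apply Theorem \ref{thm_local_free} at the point over $b$. This does give $h^{p,q}(b')=h^{p,q}(b)$. Note, however, that $T$ need not contain $b$ for an arbitrary $b'$ near $b$ (e.g.\ a closed point $b'\neq b$), and no shrinking fixes that.

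The genuine gap is the uniformity step. You propose to shrink $B$ so that \emph{every} closed fiber is strict-$m$-Du Bois, but none of your routes yields this.
\begin{enumerate}
\item Openness of the strict-$m$-Du Bois locus of the total space, or iterating Theorem \ref{thm_deform_strict} along $f^*t_1,\dots,f^*t_d$, only shows that $X$ is strict-$m$-Du Bois near $X_b$. That says nothing about the fibers $X_{b'}$ once $\dim B\ge 2$. The Bertini step in Corollary \ref{cor_deform_strict} controls only general fibers: over a curve that is a cofinite set, but over a higher-dimensional base it is merely a dense open that need not contain $b$.
\item Over a singular base the total space need not be strict-$m$-Du Bois at all, even with smooth fibers (take $X=B\times F$).
\item Routing through curve pullbacks gives, for each curve through $b$, a neighborhood on that curve. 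Without constructibility of the bad fiber locus, these need not contain a Zariski neighborhood of $b$. That is exactly the uniformity problem you were trying to avoid, and constructibility of the non-strict-$m$-Du Bois fiber locus over an arbitrary base is not available from the paper.
\end{enumerate}
Your chaining argument along curves would be fine once such a neighborhood existed, but obtaining that neighborhood is the problem.

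The missing idea is constructibility of the function $h^{p,q}(b')=\dim_{k(b')}H^q(X_{b'},\Omega^p_{X_{b'}/k(b')})$ on $B$. Your own generic-flatness tool gives this by Noetherian induction: take a flattening stratification of $B$ for $\Omega^p_{X/B}$ (\cite[Tag 0ASY]{SP}) and use upper semicontinuity on each flat stratum. Then $\Sigma=\{b' : h^{p,q}(b')\neq h^{p,q}(b)\}$ is constructible. If $b\in\overline{\Sigma}$, the generic point of an irreducible component of $\overline{\Sigma}$ through $b$ lies in $\Sigma$ and specializes to $b$, which your first step rules out. So the complement of $\overline{\Sigma}$ is the required uniform neighborhood, and it covers non-closed points as well.

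This is how the paper argues. It applies Theorem \ref{thm_local_free} only at the fiber over $b$, on a single curve whose generic point maps into $\Sigma$. It never needs the nearby fibers to be strict-$m$-Du Bois.
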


\begin{proof}
Fix $0\le p\le m$ and $q\ge 0$. For $b'\in B$, denote 
\[
h^{p,q}(b'):=\dim_{k(b')}H^q(X_{b'}, \Omega^p_{X_{b'}/k(b')}).
\]
As a consequence of generic flatness stratification and the semicontinuity theorem, the function $h^{p,q}$ is constructible (cf \cite[Tag 05LG]{SP}). To see this, by \cite[Tag 0ASY]{SP} applied to $\Omega^p_{X/B}$, let
\[
B\supset B_0\supset \dots \supset B_t = \emptyset
\]
be a finite chain of closed subschemes such that on every locally closed stratum $S_i:= B_i\setminus B_{i+1}$, the pullback of $\Omega^p_{X/B}$ to $X_{S_i}$ is flat over $S_i$. Now, by \cite[Tag 0BDN]{SP}, $h^{p,q}|_{S_i}$ is upper-semicontinuous, with locally constructible level sets. Note that since $B$ is Noetherian and quasi-compact, the notions of locally constructible and constructible coincides. Since we have a finite stratification and $h^{p,q}$ vanishes automatically for $q\ge \dim X$, the finitely many level sets of $h^{p,q}$ are constructible subsets of $B$. Therefore, $h^{p,q}:B\to \bbN$ is a constructible function.

Now, consider the locus where $h^{p,q}$ differs from $h^{p,q}(b)$:
\[
\Sigma := \{b'\in B : h^{p,q}(b') \ne h^{p,q}(b)\}.
\]
It is a constructible proper subset of $B$, as $b\notin Y$. Suppose for contradiction that $h^{p,q}$ is not constant on any Zariski neighborhood of $b\in B$. We must have $b\in \overline{\Sigma}$. There exists an irreducible component $Z\subseteq \overline{\Sigma}$ such that $b\in Z$ and $\Sigma\cap Z$ is dense in $Z$. Since $\Sigma\cap Z$ is automatically constructible, it follows from \cite[Tag 053Z]{SP} that it contains an open $U\subseteq \Sigma\cap Z\subseteq Z$ where $U$ is dense in $Z$. Note that $b\notin U$, and $b$ is not isolated in $Z$ by density.

From our setup, we now obtain an irreducible closed subset $Z\subseteq B$, whose generic point $\eta\in Z$ specializes to $b$, and has a dense open $U$ contained in $\Sigma$. By \cite[Tag 054F]{SP}, locally there is a smooth integral curve $\nu:C\to B$ such that there exists a closed point $c\in C$ with $\nu(c)=b$ and the image of a generic closed point $c'\in C$ lands in $\Sigma$. We have a diagram
\[\begin{tikzcd}
X_C=X\times_B C \ar[r] \ar[d] &
X \ar[d] \\
C \ar[r, "\nu"] &
B.
\end{tikzcd}\]
After base change, the family $X_C\to C$ is flat and proper. For any closed points $b'\in B, c'\in C$ over $\bbC$ algebraically closed with $\nu(c')=b'$, we have $X_{b'}\isom (X_C)_{c'}$. In particular, the fiber $(X_C)_c$ is strict-$m$-Du Bois. For a generic closed point $c' \in C$, since $b'=\nu(c')\in \Sigma$,
\[
\dim_{\bbC}H^q((X_C)_{c'}, \Omega^p_{(X_C)_{c'}}) = h^{p,q}(b')\ne h^{p,q}(b) = \dim_{\bbC}H^q((X_C)_c, \Omega^p_{(X_C)_c}).
\]
This contradicts Theorem \ref{thm_local_free}. Therefore there exists a neighborhood of $b\in B$ that intersects $\Sigma$ trivially. In other words, $h^{p,q}$ is constant in a neighborhood of $b$. Now, intersecting over finitely many nonvanishing $0\le p\le m$ and $0\le q\le \dim X$, we obtain the desired statement.
\end{proof}

\begin{remark}
The numerical constancy in Theorem \ref{thm_constant_hodge} should be viewed as weaker than local freeness of higher derived images in Theorem \ref{thm_local_free}. Because of this, we can allow arbitrary base $B$. In particular, the functions $h^{p,q}$ depend only on ordinary fibers over points of the underlying topological space of $B$, which explains why $B$ does not need to be reduced.
\end{remark}

As standard corollaries (cf. \cite[Corollary 1.4]{FL22}), we collect several direct consequences of this result for easy applications.

\begin{corollary}\label{cor_constant_hodge_sm}
Let $\f:X \to B$ be a flat proper family to an irreducible scheme $B$ finite type of $\bbC$, and $b\in B$ a closed point. Suppose $X_{b}$ has strict-$m$-Du Bois singularities for some $m\in \bbN$. Then, for every $b'\in B$ such that $X_{b'}$ is smooth, we have
\[
\dim_{\bbC}H^q(X_{b},\Omega^p_{X_{b}}) = \dim_{\bbC}H^q(X_{b'}, \Omega^p_{X_{b'}})
\]
for all $q\ge 0$ and $0\le p\le m$.
\end{corollary}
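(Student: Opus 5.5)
The plan is to deduce this directly from Theorem \ref{thm_constant_hodge}. That theorem gives an open neighborhood $V\ni b$ in $B$ on which $h^{p,q}(b'')=h^{p,q}(b)$ for every (not necessarily closed) point $b''\in V$, for all $q\ge 0$ and $0\le p\le m$. Since $B$ is irreducible, $V$ is dense and contains the generic point $\xi$ of $B$. So it suffices to show $h^{p,q}(b')=h^{p,q}(\xi)$ for any $b'$ with $X_{b'}$ smooth. Here $h^{p,q}(\xi)$ means the dimension over $k(\xi)$ of $H^q(X_\xi,\Omega^p_{X_\xi/k(\xi)})$.

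First, observe that the smooth locus is open. The set of $x\in X$ at which $\f$ is smooth is open, and $\f$ is proper. Hence the set $B^{\rm sm}$ of $b''$ with $X_{b''}$ smooth is open: it is the complement of the image of the non-smooth locus. If $b'\in B^{\rm sm}$, then $B^{\rm sm}$ is a nonempty open subset of the irreducible $B$, so it contains $\xi$.

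Second, restrict to the open set $U:=B^{\rm sm}$ with its reduced structure. Note that $h^{p,q}$ depends only on the fibers, so nonreducedness of $B$ is irrelevant, and passing to $B_{\rm red}$ keeps the family flat and proper with the same fibers. Over $U$ the family is smooth and proper, so $\Omega^p_{X_U/U}$ is locally free and commutes with base change to fibers. By the classical Deligne degeneration of Hodge to de Rham, $R^q\f_*\Omega^p_{X_U/U}$ is locally free and compatible with base change. Equivalently, on the reduced base $U$, semicontinuity applied to both $h^{p,q}$ and the total $\sum_{p+q=k}h^{p,q}$ shows each $h^{p,q}$ is locally constant, since the Betti numbers are locally constant. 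As $U$ is irreducible, $h^{p,q}$ is constant on $U$, so $h^{p,q}(b')=h^{p,q}(\xi)$.

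Combining the two steps gives $h^{p,q}(b)=h^{p,q}(\xi)=h^{p,q}(b')$, which is the claim; the closed point $b'$ may be taken with residue field $\bbC$. The only step requiring care is the Hodge-theoretic constancy over the smooth locus. There I would cite Deligne's theorem on local freeness of the Hodge bundles of smooth proper families over reduced bases, after first reducing to $U_{\rm red}$ and, if needed, to a smooth dense open subset of $U$ together with curves through $b'$, as in the slicing argument of Theorem \ref{thm_constant_hodge}.
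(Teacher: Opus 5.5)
Your proof is correct, but it handles $b'$ by a different route from the paper.

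\textbf{The paper's route.} A smooth fiber is strict-$m$-Du Bois for every $m$, so Theorem \ref{thm_constant_hodge} applies at $b'$ as well as at $b$. The two resulting open neighborhoods meet because $B$ is irreducible, and $h^{p,q}$ is constant on each, so $h^{p,q}(b)=h^{p,q}(b')$ at once.

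\textbf{Your route.} You apply the theorem only at $b$ and pass through the generic point $\xi$. You then prove constancy of $h^{p,q}$ on the smooth locus $B^{\rm sm}$ using external classical input: openness of $B^{\rm sm}$ via properness, and Deligne's local freeness and base change for $R^q\f_*\Omega^p$ of a smooth proper family. This is valid, but it reproves by outside means exactly what the paper's own theorem already gives at $b'$. The paper's argument is therefore shorter and self-contained.

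\textbf{What your route buys.} It works verbatim for a non-closed $b'$, whereas Theorem \ref{thm_constant_hodge} is stated at closed points. This gain is slight, since the statement's $\dim_{\bbC}$ already implicitly takes $b'$ closed. In any case one could pick a closed point inside the open set $B^{\rm sm}$ and apply the theorem there.

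\textbf{An unnecessary hedge.} The caution in your last paragraph is not needed. Deligne's theorem on local freeness and base change of $R^q\f_*\Omega^p_{X/B}$ holds for smooth proper morphisms over an arbitrary base of characteristic $0$. So no reduction to $U_{\rm red}$ and no slicing by curves is required.
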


\begin{proof}
A smooth fiber $X_{b'}$ is strict-$m$-Du Bois for all $m$. Since $B$ is irreducible, the neighborhood around $b, b'\in B$ given by Theorem \ref{thm_constant_hodge} intersects nontrivially, hence the claim.
\end{proof}

\begin{corollary}\label{cor_constant_hodge_all}
Let $\f:X \to B$ be a flat proper family to a connected scheme $B$ finite type of $\bbC$. Suppose for some fixed $m\in \bbN$, $X_{b}$ has strict-$m$-Du Bois singularities for all closed $b\in B$. Then,
\[
\dim_{k(b)}H^q(X_{b},\Omega^p_{X_{b}/k(b)}) = \dim_{k(b')}H^q(X_{b'}, \Omega^p_{X_{b'}/k(b')})
\]
for all $b, b'\in B$, $q\ge 0$ and $0\le p\le m$.
\end{corollary}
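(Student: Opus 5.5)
The plan is to reduce Corollary \ref{cor_constant_hodge_all} to the pointwise local constancy of Theorem \ref{thm_constant_hodge} together with a connectedness argument. Fix $0\le p\le m$ and $q\ge 0$, and consider the function $h^{p,q}:B\to\bbN$ from the proof of Theorem \ref{thm_constant_hodge}. We will show that $h^{p,q}$ is locally constant on all of $B$, including non-closed points. Since $B$ is connected, $h^{p,q}$ is then constant, which is exactly the claim.

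First handle closed points. For each closed point $b\in B$, the fiber $X_b$ is strict-$m$-Du Bois by hypothesis. Theorem \ref{thm_constant_hodge} therefore gives an open neighborhood $U_b\ni b$ on which $h^{p,q}$ is constant, equal to $h^{p,q}(b)$, at every (not necessarily closed) point of $U_b$.

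Next, show that the sets $U_b$ cover $B$. Since $B$ is of finite type over $\bbC$, it is Jacobson: every nonempty locally closed subset contains a closed point. In particular, every point $b'\in B$ specializes to some closed point $b\in\overline{\{b'\}}$. Any open set containing $b$ also contains every generization of $b$, so $b'\in U_b$. Hence $\{U_b\}_{b \text{ closed}}$ is an open cover of $B$, and $h^{p,q}$ is constant on each member.

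Finally, conclude by connectedness. A function that is constant on each member of an open cover is locally constant. Its level sets are therefore open, and they partition $B$. Connectedness of $B$ forces a single level set, so $h^{p,q}(b)=h^{p,q}(b')$ for all $b,b'\in B$. Doing this for each of the finitely many relevant $(p,q)$ gives the statement.

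The only mildly delicate step is the covering claim for non-closed points. This is where we use that the neighborhoods in Theorem \ref{thm_constant_hodge} are Zariski open and that $B$ is Jacobson. Everything else is formal.
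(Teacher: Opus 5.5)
Your proposal is correct and follows the paper's own argument: apply Theorem \ref{thm_constant_hodge} at every closed point to see that the level sets of $h^{p,q}$ are open, then conclude by connectedness of $B$. You also check explicitly that every non-closed point lies in one of these neighborhoods, because each point of a finite-type $\bbC$-scheme specializes to a closed point; the paper leaves this step implicit.
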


\begin{proof}
Theorem \ref{thm_constant_hodge} applied to every closed fiber implies that the level sets of $h^{p,q}$ are open in $B$. By connectedness, there is only one constant value.
\end{proof}

\medskip

\section{Failure of base change}\label{section_example}

\cite[Example 4.7]{JK25} already exhibited an example of failure of base change for the top degree of the (left) relative Du Bois complex $\uOmega^n_{X/B}$. In this section we sharpen this result by illustrating that right-$m$-base change can fail for every degree beyond the strict-$m$-Du Bois hypothesis on the fiber. Additionally, the notion of left base change behaves substantially worse for any fiber that is not strict-$m$-Du Bois for all $m\in \bbN$.

First we show and recall some general facts. As before, $\f: X\to B$ is a dominant morphism from a variety $X$ to a smooth curve $B$. Fix $b\in B$ and consider the fiber $Z:= X_b$. Using $*\in \{+, -\}$ to denote either complex, the fundamental triangle in Theorem \ref{thm_2relDB}(2) reads
\[
\uOmega_{X/B}^{p-1, *} \otimes \f^*\omega_B \to \uOmega_X^p \to \uOmega_{X/B}^{p, *} \xrightarrow{+1}.
\]
Pulling this back to the fiber $X_b$ gives
\[
\uOmega_{X/B}^{p-1, *}|^{\tL}_{X_b} \to \uOmega_X^p|^{\tL}_{X_b} \to \uOmega_{X/B}^{p, *}|^{\tL}_{X_b} \xrightarrow{+1}.
\]
For simplicity in computations, we denote by $\scrF^\kdot|^{\tL}_{X_b}:= \scrF^\kdot\otimes^{\tL}\scrO_{X_b}$. Using induction either from $p=-1$ or $p=\dim X$, we immediately obtain:

\begin{lemma}\label{lem_rel_pullback}
Let $b\in B$, and denote $n:=\dim \f=\dim X-1$. For the left relative Du Bois complex, we have
\begin{align*}
\uOmega_{X/B}^{p,-}|^{\tL}_{X_b} & \qis \coCone(\uOmega^{p+1}_X \to \uOmega^{p+1, -}_{X/B})|^{\tL}_{X_b} \\
& \qis \coCone(\uOmega^{p+1}_X \to \coCone(\uOmega^{p+2}_X \to \cdots \coCone(\uOmega^n_X\to \uOmega^{n+1}_X) \cdots))|^{\tL}_{X_b}
\end{align*}
for all $p \le n$, where $\coCone(a) = \Cone(a)[-1]$. For the right relative Du Bois complex, we have
\begin{align*}
\uOmega_{X/B}^{p,+}|^{\tL}_{X_b} & \qis \Cone(\uOmega^{p-1, +}_{X/B}\to \uOmega^{p}_X)|^{\tL}_{X_b} \\
& \qis \Cone(\Cone(\cdots \Cone(\uOmega^0_X\to \uOmega^1_X) \cdots \to \uOmega^{p-1}_X ) \to \uOmega^p_X)|^{\tL}_{X_b}
\end{align*}
for all $p \ge 0$. If we assume that $X$ is smooth, then in a neighborhood of $b\in B$, we have
\[
\uOmega_{X/B}^{p,-}|^{\tL}_{X_b} \qis \left[ \Omega^{p+1}_X \xrightarrow{\wedge_{p+1}} \cdots \xrightarrow{\wedge_{n}} \Omega^{n+1}_X \right] |_{X_b}
\]
for all $p\le n$, with $\Omega^{p+1}_X$ in cohomological degree $0$. We also have
\[
\uOmega_{X/B}^{p,+}|^{\tL}_{X_b} \qis \left[ \scrO_X \xrightarrow{\wedge_0} \Omega^1_X \xrightarrow{\wedge_1} \cdots \xrightarrow{\wedge_{p-1}} \Omega^p_X \right] |_{X_b}
\]
for all $p\ge 0$, with $\Omega^p_X$ in cohomological degree $0$. 
\end{lemma}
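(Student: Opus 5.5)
We argue by iterating the fundamental triangle of Theorem \ref{thm_2relDB}(2) and then derived-restricting to $X_b$. Since $\otimes^{\tL}_X\scrO_{X_b}$ is exact, it commutes with cones and co-cones. Because $\f^*\omega_B|_{X_b}\isom\scrO_{X_b}$ via the fixed local parameter, every twist by $\f^*\omega_B$ disappears after restriction.

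For the left complex, we rotate the triangle at index $p+1$ to obtain
\[
\uOmega^{p,-}_{X/B}\otimes\f^*\omega_B \qis \coCone(\uOmega^{p+1}_X\to\uOmega^{p+1,-}_{X/B}).
\]
Restricting to $X_b$ drops the twist and gives the first line. We then induct downward from $p=n$. The base case is Lemma \ref{lem_0piece}, which gives $\uOmega^{n,-}_{X/B}\qis\uOmega^{n+1}_X\otimes\f^*\omega_B^{-1}$. Substituting repeatedly yields the nested co-cone expression.

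For the right complex, the triangle at index $p$ directly gives
\[
\uOmega^{p,+}_{X/B}\qis\Cone(\uOmega^{p-1,+}_{X/B}\otimes\f^*\omega_B\to\uOmega^p_X).
\]
We induct upward from $\uOmega^{-1,+}_{X/B}=0$, or equivalently $\uOmega^{0,+}_{X/B}\qis\uOmega^0_X$ by Lemma \ref{lem_0piece}.

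When $X$ is smooth, we can take the trivial hyperresolution, so $K^\kdot_p=\Omega^p_X$ and $\wedge_p$ is the honest wedge with $\f^*dt$. We then read off the explicit representatives from Construction \ref{cons_relDB}. By induction, $M^\kdot_p$ is the complex
\[
\scrO_X\otimes\f^*\omega_B^{\otimes -p}\to\cdots\to\Omega^{p-1}_X\otimes\f^*\omega_B^{-1}\to\Omega^p_X,
\]
with $\Omega^p_X$ in degree $0$. The differentials are $\pm$ the maps $w'_{k}=(0,\wedge_k)$, and $\wedge_k$ is exactly the map from the previous piece. In this step we check two things: that the off-diagonal matrix entries only connect consecutive terms, and that signs can be normalized away by an isomorphism of complexes.

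The left complex in \cite[1.3]{Kov96} is handled the same way. There we iterate co-cones of $\wedge$, which gives $[\Omega^{p+1}_X\to\cdots\to\Omega^{n+1}_X]$ with $\Omega^{p+1}_X$ in degree $0$ and the twists absorbed.

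Finally, all terms are locally free, so the derived restriction is the termwise restriction $|_{X_b}$. Over a neighborhood of $b$ we may trivialize $\omega_B$ by $dt$, which removes the twists and accounts for the phrase ``in a neighborhood of $b$''.

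The main obstacle is the last identification. We must verify that the iterated mapping cones of Construction \ref{cons_relDB} (and of \cite{Kov96}) are really the stated single complexes, with no extra contributions from the cone matrices. This amounts to an honest degree bookkeeping of $M^j_p=(M^{j+1}_{p-1}\otimes\f^*\omega_B)\oplus K^j_p$ in the smooth case, where each $K_p$ is concentrated in one degree. I expect it to go through by induction.
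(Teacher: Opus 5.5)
Your proposal is correct and follows essentially the same route as the paper. The paper's proof is terse: it iterates the defining triangle for the two nested-cone formulas, and for smooth $X$ it uses local freeness of the $\Omega^k_X$ after trivializing $\f^*\omega_B$ near $b$. Your chain-level bookkeeping of $M^\kdot_p$ spells this out.

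One harmless slip: in $M^\kdot_p$ the twist on $\Omega^k_X$ is $\f^*\omega_B^{\otimes(p-k)}$, with positive exponent, since $M^j_p$ contains $M^{j+1}_{p-1}\otimes\f^*\omega_B$. The negative powers belong to the left complex. Either way the twists disappear near $b$.
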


\begin{proof}
The first two follow directly from the defining triangle. Pick a neighborhood of $b\in B$ to trivialize $\f^*\omega_B$, the last two claims then follow when $X$ is smooth since each $\Omega^p_X$ is locally free. We only require a neighborhood of $b\in B$ to suppress $\f^*\omega_B$ for simplicity of notation.
\end{proof}

We are interested in the situation where $Z=X_b$ is a (simple) normal crossing divisor in $X$. Recall the following well-known fact, which is a direct computation from the cubical hyperresolution of an snc variety given by the \v{C}ech resolution; see \cite[Example 7.23]{PS08}.

\begin{lemma}\label{lem_snc_cech}
Let $Z=\sum Z_i$ be an snc variety. Then $\uOmega_Z^p \qis \Omega_Z^p/{\rm torsion}$ is a sheaf and
\[
0\to \uOmega^p_Z \to \bigoplus_i \Omega^p_{Z_i} \to \bigoplus_{i<j} \Omega^p_{Z_i\cap Z_j} \to \dots \to \Omega^p_{\cap_i Z_i} \to 0
\]
is a resolution of $\uOmega_Z^p$. In particular, we have
\[
\uOmega_Z^n \qis \bigoplus_i \Omega_{Z_i}^n,
\]
where $n=\dim Z$.
\hfill\qedsymbol
\end{lemma}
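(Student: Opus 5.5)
We reduce to the \v{C}ech (semi-simplicial) cubical hyperresolution of the snc variety $Z=\sum_{i=1}^r Z_i$. The hyperresolution is indexed by nonempty subsets $I\subseteq\{1,\dots,r\}$, with $X_I = Z_I := \bigcap_{i\in I} Z_i$ mapped by the closed immersions $\iota_I: Z_I\to Z$. Since $Z$ is snc, each $Z_I$ is smooth of dimension $n-|I|+1$, and all these maps are finite. By the definition of the Du Bois complex via hyperresolutions, $\uOmega^p_Z$ is then represented by the total complex
\[
\bigoplus_{|I|=1}\iota_{I*}\Omega^p_{Z_I}\to \bigoplus_{|I|=2}\iota_{I*}\Omega^p_{Z_I}\to\cdots,
\]
with alternating-sign restriction maps. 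No higher direct images appear because the $\iota_I$ are closed immersions. This gives exactly the displayed complex, so we obtain a quasi-isomorphism between $\uOmega^p_Z$ and the complex of the lemma starting in degree $0$.

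It remains to show this complex is exact in positive degrees and that its $h^0$ is $\Omega^p_Z/\mathrm{torsion}$. The question is local and analytic-local (or étale-local), so we may take $Z=(x_1\cdots x_k=0)\subseteq\mathbb{A}^{N}$ with $Z_i=(x_i=0)$. The complex then splits as a product: $\mathbb{A}^N$ is a product of the coordinate axes for $x_1,\dots,x_k$ and the remaining smooth factor. Exactness in positive degrees reduces, via a Künneth-type decomposition of forms by monomial degree, to the exactness of the Čech-type complex for the union of coordinate hyperplanes. For this we use the standard argument: for each multi-degree, the complex is the augmented simplicial cochain complex of a full simplex (or a cone), hence acyclic in positive degrees. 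This computation is the main obstacle; the cleanest approach is to cite the Mayer--Vietoris/\v{C}ech computation in \cite[Example 7.23]{PS08} and only sketch the monomial argument.

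For $h^0$, note that $\Omega^p_Z\to\bigoplus_i\Omega^p_{Z_i}$ has kernel supported on $\Sing Z$, since it is an isomorphism on the smooth locus. So the kernel is torsion, and the map factors through $\Omega^p_Z/\mathrm{torsion}$, which is injective into $\bigoplus_i\Omega^p_{Z_i}$ because the target is torsion-free on $Z$. Identifying the image with the kernel of the first differential is again the local monomial computation: a tuple of forms agreeing on pairwise intersections glues to a form on $Z$. Finally, for $p=n$ we have $\Omega^n_{Z_I}=0$ whenever $|I|\ge2$, since $\dim Z_I<n$. The resolution therefore collapses to $\uOmega^n_Z\qis\bigoplus_i\Omega^n_{Z_i}$.
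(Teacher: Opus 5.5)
Your proposal is correct and takes the same route as the paper. The paper also reads off the resolution from the \v{C}ech cubical hyperresolution $\{Z_I\}$ of the snc variety, and defers both the exactness in positive degrees and the identification of $h^0$ with $\Omega^p_Z/\mathrm{torsion}$ to \cite[Example 7.23]{PS08}; your \'etale-local monomial argument validly supplies that cited computation (in multidegree $(a,J)$ the complex is the cochain complex of the full simplex on the indices $i\notin \mathrm{supp}(a)\cup J$), and your top-degree collapse matches the paper's.
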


The situation for right-$m$-base change is surprisingly clean. Indeed, the $m=0$ case is already shown in Lemma \ref{lem_basechange_0}.

\begin{proposition}\label{prop_right_bc_fails}
Let $\f: X\to B$ be a dominant morphism from a variety $X$ to a smooth curve $B$, $b\in B$. Suppose $X$ is smooth, and suppose $X_b$ is strict-$(m-1)$-Du Bois. Then 
\begin{enumerate}
\item $X_b$ satisfies right-$m$-base change if and only if $X_b$ is strict-$m$-Du Bois;

\item $X_b$ satisfies left-$m$-base change only if $X_b$ is strict-$m$-Du Bois.
\end{enumerate}
\end{proposition}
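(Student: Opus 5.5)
For (1), the ``if'' direction is exactly Theorem \ref{thm_basechange_strict}, so the work is the converse. I will assume right-$m$-base change together with strict-$(m-1)$-Du Bois of $Z:=X_b$, and prove $\Omega^m_Z\qis\uOmega^m_Z$. Since $X$ is smooth, Lemma \ref{lem_rel_pullback} identifies
\[
\uOmega^{p,+}_{X/B}|^{\tL}_{Z}\qis \left[\scrO_X\xrightarrow{\wedge_0}\Omega^1_X\to\cdots\xrightarrow{\wedge_{p-1}}\Omega^p_X\right]|_{Z},
\]
with $\Omega^p_X|_Z$ in degree $0$. Its $h^0$ is therefore $\coker(\Omega^{p-1}_X|_Z\xrightarrow{\wedge}\Omega^p_X|_Z)$. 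Locally $\wedge$ is wedging with $d\f^*t$, so this cokernel is $\Omega^p_X/(dt\wedge\Omega^{p-1}_X + t\Omega^p_X)=\Omega^p_Z$. Here I use that $Z=(t=0)$ is the scheme-theoretic fiber and apply the conormal presentation of $\Omega^p_Z$ from \cite[Tag 01CJ]{SP}, as in Lemma \ref{lem_conormal}. Thus right-$m$-base change gives $\uOmega^m_Z\qis \uOmega^{m,+}_{X/B}|^{\tL}_Z$, whose $h^0$ is $\Omega^m_Z$. The plan is to check that the composite identification $\Omega^m_Z\to h^0(\uOmega^m_Z)$ is the natural map, and then show that the higher cohomology of the Koszul-type complex above vanishes.

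For naturality, I will use the factorization of Proposition \ref{prop_factorization}. The map $\Omega^m_X|_Z\to\uOmega^{m,+}_{X/B}|^{\tL}_Z\xrightarrow{\alpha_m}\uOmega^m_Z$ equals the restriction $\psi_m$, which on $h^0$ factors through the natural map $\Omega^m_Z\to h^0(\uOmega^m_Z)$. Since $\Omega^m_X|_Z\to\Omega^m_Z$ is surjective, the induced $h^0$-map coincides with the natural one. Consequently, right-$m$-base change forces $h^0(\uOmega^m_Z)\isom\Omega^m_Z$ via the natural map.

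The main obstacle is the vanishing of the negative cohomology of the displayed complex, i.e.\ exactness of $0\to\scrO_Z\to\Omega^1_X|_Z\to\cdots\to\Omega^m_X|_Z$ in degrees below the top. Since $\uOmega^m_Z$ has no negative cohomology, base change hands us this exactness directly, so nothing extra is needed there. What still needs care is positive cohomology: $\uOmega^m_Z$ could a priori have $h^i\neq0$ for $i>0$, while the complex lives in degrees $[-m,0]$. Matching amplitudes therefore gives $h^i(\uOmega^m_Z)=0$ for $i>0$, hence $\uOmega^m_Z\qis h^0(\uOmega^m_Z)\isom\Omega^m_Z$. This yields strict-$m$-Du Bois, and the hypothesis strict-$(m-1)$ covers the smaller degrees. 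I will also check that $\alpha_m$, rather than some abstract isomorphism, is what is used, which Definition \ref{def_bc} grants by naturality.

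For (2), I will invoke Proposition \ref{prop_left_implies_right}: left-$m$-base change implies right-$m$-base change, and then part (1) gives that $X_b$ is strict-$m$-Du Bois.
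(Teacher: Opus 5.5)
Your argument is correct, but it takes a different route from the paper's.

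\textbf{The paper's route.} The paper fixes $p\le m$ and uses the strict-$(m-1)$ hypothesis twice:
\begin{enumerate}
\item Lemma \ref{lem_conormal}, which needs torsion-freeness of $\Omega^{p-1}_Z$, gives the short exact sequence $0\to\Omega^{p-1}_Z\to\Omega^p_X|_Z\to\Omega^p_Z\to 0$.
\item Theorem \ref{thm_basechange_strict} in degree $p-1$ gives $\uOmega^{p-1,+}_{X/B}|^{\tL}_Z\qis\Omega^{p-1}_Z$.
\end{enumerate}
Feeding both into the cone triangle of Lemma \ref{lem_rel_pullback} shows $\uOmega^{p,+}_{X/B}|^{\tL}_Z\qis\Omega^p_Z$ before any base change in degree $p$ is assumed. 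Right-$m$-base change then literally reads $\uOmega^m_Z\qis\Omega^m_Z$.

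\textbf{Your route.} You read off only $h^0$ of the Koszul complex, for which right-exactness of the exterior conormal sequence suffices and no torsion-freeness is needed. The amplitude mismatch then does the rest: the complex sits in $[-p,0]$, while $\uOmega^p_Z$ lives in nonnegative degrees. Proposition \ref{prop_factorization} pins the resulting identification to the natural map.

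\textbf{What your route buys.}
\begin{enumerate}
\item The ``only if'' direction no longer passes through Theorem \ref{thm_basechange_strict}. Hence it avoids the deformation Theorem \ref{thm_deform_strict} and the injectivity Theorem \ref{thm_kahler_injectivity}.
\item Your argument works verbatim for every $p\le m$, so it shows the strict-$(m-1)$ hypothesis is superfluous for the ``only if'' direction of (1). By Proposition \ref{prop_left_implies_right}, it is also superfluous for (2).
\item It makes explicit that the quasi-isomorphism obtained is the natural one, a point the paper leaves implicit.
\end{enumerate}

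\textbf{What the paper's route buys.} It gives the identification $\uOmega^{m,+}_{X/B}|^{\tL}_Z\qis\Omega^m_Z$ under strict-$(m-1)$ alone, independent of base change in degree $m$. Equivalently, your Koszul complex is exact in negative degrees, which you obtain only as a consequence of base change. Consequently, failure of right-$m$-base change is measured exactly by the failure of $\Omega^m_Z\to\uOmega^m_Z$ to be a quasi-isomorphism.
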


\begin{proof}
The first claim implies the second claim via Proposition \ref{prop_left_implies_right}. The if direction of the first claim also follows from the base change Theorem \ref{thm_basechange_strict}.

Fix $p\le m$. Since $Z:= X_b\subseteq X$ is a Cartier divisor that is strict-$(m-1)$-Du Bois, Lemma \ref{lem_conormal} gives a short exact sequence
\[
0\to \Omega^{p-1}_Z \xrightarrow{\wedge_{p-1}|_Z} \Omega^p_X|_Z \to \Omega^p_Z \to 0.
\]
We have used the fact that $\scrO_Z(-Z)\isom \f_b^*\omega_B \isom \scrO_Z$ to identify the conormal map as wedging with differentials from the base. Furthermore, again by Theorem \ref{thm_basechange_strict}, we have
\[
\uOmega_{X/B}^{p-1,+}|^{\tL}_{Z} \qis \uOmega^{p-1}_{Z}\qis \Omega^{p-1}_Z.
\]
Therefore, using Lemma \ref{lem_rel_pullback}, we see that
\[
\uOmega_{X/B}^{p,+}|^{\tL}_{Z} \qis \Cone(\uOmega_{X/B}^{p-1,+}|^{\tL}_{Z}\to \Omega^p_X|_Z) \qis \Omega^p_Z.
\]
We conclude that $Z$ satisfying right-$m$-base change must imply that $\uOmega^p_{Z}\qis \Omega^p_{Z}$.
\end{proof}

\begin{remark}\label{rem_bc_fails}
It is natural to ask whether base change can hold under the weaker assumption of (weakly-)$m$-Du Bois + normal to exclude snc examples (cf. \cite[Example 4.7]{JK25}), as a strengthening of Theorem \ref{thm_basechange_strict}. In light of Proposition \ref{prop_right_bc_fails}, we see that there is no hope for right-$m$-base change to hold for any notion weaker than strict-$m$-Du Bois without additional assumptions on the family. By Proposition \ref{prop_left_implies_right}, the stronger notion of left-$m$-base change therefore also cannot hold.
\end{remark}

For left base change, we fix the setup for the example we will use for the rest of this section to illustrate its failure.

\begin{example}\label{notation_snc_ex}
Let $B = \bbA^1_t$, $n\ge 1$, and define
\[
X := V(xy-tz^2) \subset \bbP^2_{[x,y,z]} \times \bbP^{n-1} \times B.
\]
Let $\f:X\to B$ be the projection. We easily see that the total space $X$ is smooth, and $\f$ is flat and projective of relative dimension $n$. Furthermore, let $b=0\in \bbA^1$ and
\[
Z := \f^{-1}(b) = V(xy) \subseteq \bbP^2_{[x,y,z]} \times \bbP^{n-1}.
\]
We see that $Z$ is a simple normal crossing divisor in $X$, and has two components isomorphic to $\bbP^1\times \bbP^{n-1}$. Recall that $Z$ is weakly-$m$-Du Bois for all $m$ but is strict-$m$-Du Bois if and only if $m=0$.
\end{example}

\begin{lemma}\label{lem_snc_ex_local}
In Example \ref{notation_snc_ex}, the map
\[
\wedge_{n}: \Omega^n_X \otimes \f^*\omega_B \to \Omega^{n+1}_X
\]
is not surjective after restriction to $Z$.
\end{lemma}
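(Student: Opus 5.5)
Since the statement is local, I will check it in affine coordinates at a point of the double locus $Z_1\cap Z_2=\{x=y=0\}$ of $Z$. There we may set $z=1$, and $X$ is locally $V(xy-t)\times \bbA^{n-1}$, with coordinates $x,y$ on the first factor and $u_1,\dots,u_{n-1}$ affine coordinates on $\bbP^{n-1}$. Since $t=xy$, the threefold $V(xy-t)\subseteq \bbA^3$ is identified with $\bbA^2_{x,y}$, and $X$ is locally $\bbA^{n+1}$ with coordinates $x,y,u_1,\dots,u_{n-1}$. The morphism $\f$ becomes $(x,y,u)\mapsto xy$. This local model shows at once that $X$ is smooth and that $Z=V(xy)$ is snc, as claimed in Example~\ref{notation_snc_ex}.

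Next I compute the wedge map. Trivialize $\f^*\omega_B$ by $\f^*dt = d(xy) = y\,dx + x\,dy$. The sheaf $\Omega^n_X$ is free on the $n$-forms obtained by omitting one coordinate differential from $dx\wedge dy\wedge du_1\wedge\cdots\wedge du_{n-1}$. The map $\wedge_n$ sends $\xi\mapsto (y\,dx+x\,dy)\wedge\xi$, and its image in $\Omega^{n+1}_X\isom \scrO_X\cdot dx\wedge dy\wedge du$ is the ideal generated by the coefficients. Concretely:
\begin{itemize}
\item the form $dy\wedge du$ maps to $y\,dx\wedge dy\wedge du$;
\item the form $dx\wedge du$ maps to $\pm x\,dx\wedge dy\wedge du$;
\item every form containing both $dx$ and $dy$ maps to $0$.
\end{itemize}
Hence the image of $\wedge_n$ is $(x,y)\cdot\Omega^{n+1}_X$.

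Restricting to $Z$ is right exact, so the image of $\wedge_n|_Z$ is $(x,y)\Omega^{n+1}_X|_Z$. Its cokernel is therefore $\Omega^{n+1}_X\otimes \scrO_X/(x,y)$, a line bundle on the double locus $Z_1\cap Z_2\isom \bbP^{n-1}$. This is nonzero at every point of the double locus, which proves that $\wedge_n|_Z$ is not surjective. Equivalently, this is the intrinsic statement that $\f^*dt$ vanishes exactly along $\Sing Z$, because $\f$ fails to be smooth there.

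The only step requiring care is justifying the reduction to the local chart. The claim amounts to the observation that the coherent cokernel of $\wedge_n|_Z$ has nonempty support, so it suffices to exhibit a single point where it is nonzero. Moreover, the identification $\f^*\omega_B\isom\scrO_X$ near $b=0$ does not affect surjectivity.
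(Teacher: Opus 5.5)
Your proof is correct and is essentially the paper's argument: a local computation in the chart $z\neq 0$, where $\f = xy$, showing that the image of $\wedge_n$ lies in (indeed equals) $(x,y)\cdot\Omega^{n+1}_X$, so surjectivity fails along the double locus of $Z$. The only difference is that the paper first treats $n=1$ and then reaches general $n$ through the K\"unneth decomposition of $\Omega^n_X$ on $X_2\times\bbP^{n-1}$, whereas you work directly with affine coordinates $u_1,\dots,u_{n-1}$ on $\bbP^{n-1}$ (one harmless slip: $V(xy-t)\subseteq\bbA^3$ is a surface, not a threefold).
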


\begin{proof}
We show this using a local calculation. First we consider the case $n=1$, so $X=V(xy-tz^2)\subset \bbP^2_{[x,y,z]}\times B$. Fix chart $z\ne 0$ and continue using $x, y$ for the affine coordinates. The map $f$ is given by the equation $t=xy$. Therefore, for any $\alpha = a\, dx + b\, dy\in \Omega^1_X$, the wedge map is given by
\[
\wedge_1(\alpha \otimes dt) = \alpha \wedge dt = \alpha \wedge (y\, dx + x\, dy) = (ax-by)\, dx\wedge dy.
\]
We see that the image of $\wedge_1$ is contained in the ideal $(x, y)\cdot \Omega^2_X$. Since this fails to be surjective at the node in $Z$, the restriction to $Z$ remains not surjective.

Now, the same calculation works for arbitrary $n$. Since $X$ is a product of $X_2:=V(xy-tz^2)\subset \bbP^2_{[x,y,z]}\times B$ with $\bbP^{n-1}$, we have a K\"unneth product decomposition
\[
\Omega^n_X \isom \Omega^1_{X_2}\boxtimes \Omega^{n-1}_{\bbP^{n-1}} \oplus \Omega^2_{X_2}\boxtimes \Omega^{n-2}_{\bbP^{n-1}} \quad \text{and} \quad \Omega^{n+1}_X \isom \Omega^2_{X_2}\boxtimes \Omega^{n-1}_{\bbP^{n-1}}.
\]
For the component $\Omega^2_{X_2}\boxtimes \Omega^{n-2}_{\bbP^{n-1}}$, wedging with the pullback of $dt$ kills any $2$-form on $X_2$. Then, $\wedge_{n}$ exhibits the same failure of surjectivity on the remaining component $\Omega^1_{X_2}\boxtimes \Omega^{n-1}_{\bbP^{n-1}} \to \Omega^2_{X_2}\boxtimes \Omega^{n-1}_{\bbP^{n-1}}$.
\end{proof}

\begin{proposition}\label{prop_left_bc_fails}
Let $\f:X\to B$ be as in Example \ref{notation_snc_ex}. Then for all $0\le p< n$,
\[
h^{n-p}(\uOmega_{X/B}^{p,-}|^{\tL}_{X_b}) \not \isom h^{n-p}(\uOmega^p_{X_b}) = 0.
\]
Consequently, left-$m$-base change fails for $X_b$ for every $m\ge 0$, despite $X_b$ being (0-)Du Bois.
\end{proposition}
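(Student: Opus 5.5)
Since $X$ is smooth, I will compute everything through the explicit Koszul-type description in Lemma \ref{lem_rel_pullback}. After trivializing $\f^*\omega_B$ near $b$,
\[
\uOmega_{X/B}^{p,-}|^{\tL}_{Z} \qis \left[ \Omega^{p+1}_X|_Z \xrightarrow{\wedge_{p+1}} \cdots \xrightarrow{\wedge_{n}} \Omega^{n+1}_X|_Z \right],
\]
with $\Omega^{p+1}_X|_Z$ in degree $0$, so $\Omega^{n+1}_X|_Z$ sits in degree $n-p$. This is the top nonzero degree, and hence
\[
h^{n-p}(\uOmega_{X/B}^{p,-}|^{\tL}_{Z}) \isom \coker\left(\wedge_n|_Z: \Omega^n_X|_Z \to \Omega^{n+1}_X|_Z\right).
\]
By Lemma \ref{lem_snc_ex_local} this cokernel is nonzero, supported along the double locus $V(x,y)\times\bbP^{n-1}$. (The lemma is stated with the twist by $\f^*\omega_B$, which is harmless after trivialization.)

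For the right-hand side, Lemma \ref{lem_snc_cech} shows that $\uOmega^p_Z$ is a single sheaf in degree $0$, since $Z$ is snc. So $h^{n-p}(\uOmega^p_Z)=0$ whenever $n-p>0$, that is, for $0\le p<n$. Comparing the two sides gives the claimed non-isomorphism for every $0\le p<n$.

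For the consequence, observe that left-$m$-base change for any $m\ge 0$ includes the case $p=0$. Since $n\ge 1$, the index $p=0$ lies in the range $0\le p<n$, so the degree-$n$ cohomology of $\uOmega^{0,-}_{X/B}|^{\tL}_Z$ is nonzero while that of $\uOmega^0_Z$ vanishes. Hence no quasi-isomorphism can exist, natural or otherwise. Finally, $Z$ is Du Bois because it is snc, which is also recorded in Example \ref{notation_snc_ex}.

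The main point to get right is the degree bookkeeping in Lemma \ref{lem_rel_pullback}. I need the top term of the complex to really be in degree $n-p$, and I need the top cohomology of a bounded complex of sheaves to be the cokernel of its last differential. Both are immediate once the indexing is fixed. The other subtle point is that the restricted wedge map in Lemma \ref{lem_snc_ex_local} is exactly the last differential of the restricted complex. This holds because the complex is built termwise from locally free sheaves, so restriction is underived and commutes with the cokernel by right exactness.
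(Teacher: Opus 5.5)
Your proposal is correct and follows the paper's proof: you identify $h^{n-p}$ of the restricted left complex with $\coker(\wedge_n)|_{X_b}$ via Lemma \ref{lem_rel_pullback}, get nonvanishing from Lemma \ref{lem_snc_ex_local}, and use Lemma \ref{lem_snc_cech} to see that $\uOmega^p_{X_b}$ is a single sheaf in degree $0$. Your degree bookkeeping and the observation that $p=0$ already lies in the range $0\le p<n$ (since $n\ge 1$) make explicit what the paper leaves implicit in deducing the failure for every $m\ge 0$.
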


\begin{proof}
Using Lemma \ref{lem_rel_pullback}, we see that
\[
h^{n-p}(\uOmega_{X/B}^{p,-}|^{\tL}_{X_b}) \isom \coker (\wedge_n: \Omega^n_X \to \Omega^{n+1}_X)|_{X_b},
\]
but this is non-zero by Lemma \ref{lem_snc_ex_local}. On the other hand, for all $p< n$, Lemma \ref{lem_snc_cech} gives $\uOmega^p_{X_b}$ is a single sheaf. Therefore left-$m$-base change fails for every $m \ge 0$.
\end{proof}

We summarize the failure of left and right base change in the following proposition.

\begin{proposition}\label{prop_bc_fail_summary}
There exists a family $\f:X \to B$ and a fiber $X_b$ for $b\in B$ such that
\begin{enumerate}
\item $X_b$ is weakly-$m$-Du Bois for all $m\in \bbN$, but does not satisfy right-$1$-base change.

\item $X_b$ is weakly-$m$-Du Bois for all $m\in \bbN$, but does not satisfy left-$0$-base change.
\end{enumerate}
\end{proposition}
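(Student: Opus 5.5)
The family of Example \ref{notation_snc_ex} works for both statements, with $n\ge 1$, $B=\bbA^1_t$, $b=0$ and $X_b=Z=V(xy)\subseteq \bbP^2\times\bbP^{n-1}$. The total space $X$ is smooth and $Z$ is a simple normal crossing divisor with two smooth components meeting transversally. The plan is to check the singularity claims on $Z$ first and then read off the two failures from Proposition \ref{prop_right_bc_fails} and Proposition \ref{prop_left_bc_fails}.

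\emph{Singularity claims.} By Lemma \ref{lem_snc_cech}, each $\uOmega^p_Z\qis \Omega^p_Z/{\rm torsion}$ is a single sheaf. Hence $Z$ is pre-$m$-Du Bois for every $m$. Since snc varieties are seminormal, it remains to show that $h^0(\uOmega^p_Z)$ is $S_2$. For this I would use the resolution $0\to \uOmega^p_Z\to \bigoplus_i\Omega^p_{Z_i}\to \Omega^p_{Z_1\cap Z_2}\to 0$. The middle term is locally free on the smooth components, so it has depth $n$. The last term is locally free on the intersection, so it has depth $n-1$. Depth counting then gives depth $\ge \min(n,(n-1)+1)=n$ for $\uOmega^p_Z$, so it is $S_2$ (indeed Cohen--Macaulay). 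Therefore $Z$ is weakly-$m$-Du Bois for all $m\in\bbN$, as recorded in Example \ref{notation_snc_ex}. In particular $Z$ is Du Bois, that is, strict-$0$-Du Bois.

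\emph{Part (1).} I would apply Proposition \ref{prop_right_bc_fails}(1) with $m=1$. Its hypotheses hold: $X$ is smooth and $Z$ is strict-$0$-Du Bois. So right-$1$-base change holds if and only if $Z$ is strict-$1$-Du Bois. The one real check is that $Z$ is not strict-$1$-Du Bois. At a point of the double locus, $\Omega^1_Z$ has torsion: the class of $x\,dy$ is nonzero in $\Omega^1_Z$ but is killed by $y$, because $y\,x\,dy=0$ and, since $d(xy)=0$, also $x\,dy=-y\,dx$. On the other hand, $\uOmega^1_Z=\Omega^1_Z/{\rm torsion}$ is torsion-free. So $\Omega^1_Z\not\qis\uOmega^1_Z$, and right-$1$-base change fails.

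\emph{Part (2).} This is Proposition \ref{prop_left_bc_fails} with $p=0$, which requires $n\ge 1$. It gives $h^{n}(\uOmega^{0,-}_{X/B}|^{\tL}_{X_b})\ne 0=h^n(\uOmega^0_{X_b})$, so left-$0$-base change fails. There is no substantial obstacle here. The only step needing care is the torsion computation in part (1); the remaining steps are citations of the propositions above.
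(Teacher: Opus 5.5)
Your proposal is correct and is essentially the paper's proof. The paper also uses the snc fiber of Example \ref{notation_snc_ex}, deducing (1) from Proposition \ref{prop_right_bc_fails} and (2) from Proposition \ref{prop_left_bc_fails}; your depth count and torsion computation verify the facts the example simply recalls (weakly-$m$-Du Bois for all $m$, strict-$m$-Du Bois only for $m=0$). One small tightening: $x\,dy$ is torsion because it is killed by both $y$ and $x$, hence by the non-zero-divisor $x+y$. Being killed by the zero-divisor $y$ alone does not show this.
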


\begin{proof}
We may use Example \ref{notation_snc_ex}. Then, (1) follows from Proposition \ref{prop_right_bc_fails} and (2) is precisely Proposition \ref{prop_left_bc_fails}.
\end{proof}

\subsection*{Failure for a $1$-Du Bois divisor.} As base change and deformations are closely connected, these examples suggest that the corresponding deformation problem may also fail. In fact, here we produce many cone examples whose total space $X$ is not pre-$1$-Du Bois but has a $1$-Du Bois Cartier divisor. Through Proposition \ref{prop_deform_codim}, the corresponding family must not satisfy either left- or right-$1$-base change.

We first recall the computation of the Du Bois complex on affine cones by \cite{PS25}. Let $Y$ be a projective variety with an ample line bundle $\scrL$, and
\[
X=C(Y,\scrL) = \Spec \left(\bigoplus_{k\ge 0} H^0(Y, \scrL^k) \right)
\] 
the affine cone over $Y$ associated to $\scrL$.

\begin{theorem}[\protect{\cite[Theorem 3.1(1)]{PS25}}]\label{thm_PS_cone}
With the cone notation above, we have
\[
\Gamma(X, h^0(\uOmega^0_X)) \isom \bbC \oplus \bigoplus_{k\ge 1} \bbH^0(Y, \uOmega^0_Y \otimes \scrL^k),
\]
and for $i\ge 1$,
\[
\Gamma(X, h^i(\uOmega^0_X)) \isom \bigoplus_{k\ge 1} \bbH^i(Y, \uOmega^0_Y \otimes \scrL^k),
\]
and lastly for $p\ge 1$ and all $i\ge 0$,
\[
\Gamma(X, h^i(\uOmega^p_X)) \isom \bigoplus_{k\ge 1} \bbH^i(Y, \uOmega^p_Y \otimes \scrL^k) \oplus \bigoplus_{k\ge 1} \bbH^i(Y, \uOmega^{p-1}_Y \otimes \scrL^k).
\]
\end{theorem}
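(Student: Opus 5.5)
The plan is to compute $\uOmega^p_X$ through the standard resolution of the cone vertex and then to read off global sections weight by weight, using the $\bbG_m$-action on $X$. Let $v\in X$ be the vertex. Let $\mu:\widetilde X=\Spec_Y\bigoplus_{k\ge 0}\scrL^k\to X$ be the blow-up of $v$, so that $\widetilde X$ is the total space of $\scrL^{-1}$ over $Y$. Its exceptional divisor is the zero section $E\isom Y$, with $\mu(E)=v$. This is an abstract blow-up square, and the standard cdh/descent property of the Du Bois complex \cite{DB81,GNPP88} gives a distinguished triangle
\[
\uOmega^p_X\to R\mu_*\uOmega^p_{\widetilde X}\oplus \uOmega^p_{v}\to R\mu_*\uOmega^p_E\xrightarrow{+1}.
\]
Since $X$ is affine, taking $\Gamma$ amounts to taking $\bbH^\kdot$. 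The term $\uOmega^p_v$ is $\bbC$ for $p=0$ and vanishes for $p\ge 1$.

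Next I compute $R\Gamma(\widetilde X,\uOmega^p_{\widetilde X})$ via the affine projection $q:\widetilde X\to Y$. Locally on $Y$, $\widetilde X$ is a product $U\times\bbA^1_s$. By the Künneth property of the Du Bois complex, $\uOmega^p_{\widetilde X}$ sits in a triangle
\[
q^*\uOmega^p_Y\to \uOmega^p_{\widetilde X}\to q^*\uOmega^{p-1}_Y\otimes \Omega^1_{\widetilde X/Y}\xrightarrow{+1},
\]
and these local descriptions glue because the triangle is intrinsic: it comes from the relative cotangent sequence of the smooth morphism $q$. Pushing forward, $q_*\scrO_{\widetilde X}=\bigoplus_{k\ge0}\scrL^k$, and the relative $1$-forms $q_*\Omega^1_{\widetilde X/Y}$ are generated by $ds$. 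Here $ds$ has weight $1$ in the fiber coordinate, so the second term contributes $\bigoplus_{k\ge1}\uOmega^{p-1}_Y\otimes\scrL^k$. The outcome is a $\bbG_m$-weight decomposition
\[
\bbH^i(\widetilde X,\uOmega^p_{\widetilde X})\text{ built from }\bigoplus_{k\ge0}\bbH^i(Y,\uOmega^p_Y\otimes\scrL^k)\text{ and }\bigoplus_{k\ge1}\bbH^i(Y,\uOmega^{p-1}_Y\otimes\scrL^k).
\]
The triangle splits on each weight piece. The reason is that the $\bbG_m$-action makes the Euler vector field, i.e. contraction with $s\,\partial_s$, act as a splitting in weights $k\ge1$, while weight $0$ receives only the first term.

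Then I compare with $E$. Restriction $R\Gamma(\widetilde X,\uOmega^p_{\widetilde X})\to R\Gamma(E,\uOmega^p_E)=R\Gamma(Y,\uOmega^p_Y)$ is $\bbG_m$-equivariant, with target of weight $0$. On the weight-$0$ summand $\bbH^i(Y,\uOmega^p_Y)$ it is the identity, because $E\hookrightarrow\widetilde X\to Y$ is the identity. It therefore kills exactly the $k=0$ part, and the long exact sequence of the blow-up triangle breaks into short pieces. For $p\ge1$ this leaves precisely the claimed sum over $k\ge1$ of both types of terms. For $p=0$ the vertex adds the summand $\bbC$ in degree $0$: in weight $0$ the map $\bbC\oplus\bbH^0(Y,\uOmega^0_Y)\to\bbH^0(Y,\uOmega^0_Y)$ is surjective with kernel $\bbC$, and in degrees $i\ge1$ it is an isomorphism. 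This gives the three displayed formulas.

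I expect the main obstacle to be the second step, namely making the Künneth-type triangle for $\uOmega^p_{\widetilde X}$ global and natural, and justifying the weight-wise splitting. To handle this I would first work on a hyperresolution $Y_\kdot\to Y$. There $\widetilde X_\kdot$ is the total space of a line bundle over a smooth $Y_\alpha$, the sequence $0\to q^*\Omega^p_{Y_\alpha}\to\Omega^p\to q^*\Omega^{p-1}_{Y_\alpha}\otimes\Omega^1_{\widetilde X_\alpha/Y_\alpha}\to0$ is exact and functorial, and the Euler contraction is defined compatibly. Applying $R(\pi_\kdot)_*$ then yields the triangle and its splitting on nonzero weights.
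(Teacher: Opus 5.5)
The paper does not prove this statement; it quotes it from \cite[Theorem 3.1(1)]{PS25}. Measured against what the computation requires, most of your architecture is sound. This includes the abstract blow-up triangle for $\mu:\widetilde X\to X$, and the triangle $q^*\uOmega^p_Y\to\uOmega^p_{\widetilde X}\to q^*\uOmega^{p-1}_Y\otimes q^*\scrL\xrightarrow{+1}$ built on $\widetilde X\times_Y Y_\kdot$. That fiber product is a cubical hyperresolution of $\widetilde X$ because $q$ is smooth; use it rather than ``gluing intrinsic local triangles,'' which does not construct anything in a derived category. The weight bookkeeping is also sound, as is the fact that restriction to $E$ is the identity in weight $0$ and zero in weights $k\ge1$. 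The gap is the step you flagged yourself: the splitting of the weight-$k$ triangle for $k\ge 1$. It does not come from the Euler contraction, and it does not exist in $D^b_{\rm coh}(Y)$. Write a weight-$k$ form locally as $\alpha s^k+\beta\wedge s^{k-1}ds$. Then $\iota_\xi(\alpha s^k+\beta\wedge s^{k-1}ds)=\pm\beta s^k$, so $\iota_\xi$ is the $\scrO_Y$-linear \emph{quotient} map onto $\uOmega^{p-1}_Y\otimes\scrL^k$, not a section of it. Worse, if $s_a=g_{ab}s_b$ then $s_a^{k-1}ds_a=g_{ab}^k(s_b^{k-1}ds_b+s_b^k\,d\log g_{ab})$. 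So already for $Y$ smooth and $p=1$, the weight-$k$ piece of $q_*\Omega^1_{\widetilde X}$ is an extension of $\scrL^k$ by $\Omega^1_Y\otimes\scrL^k$ with class given by the cocycle $\{d\log g_{ab}\}$. This is a nonzero multiple of $c_1(\scrL)\in H^1(Y,\Omega^1_Y)$, and it is nonzero because $\scrL$ is ample. Hence no $\scrO_Y$-linear splitting exists, and applying $R(\pi_\kdot)_*$ cannot produce a split triangle. In the smooth case the connecting maps $\bbH^i(Y,\uOmega^{p-1}_Y\otimes\scrL^k)\to\bbH^{i+1}(Y,\uOmega^p_Y\otimes\scrL^k)$ are cup product with this nonzero class, and their vanishing is exactly what must be proved.

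The missing idea is that a $\bbC$-linear splitting suffices for hypercohomology, and Cartan's formula supplies one. The operator $d\iota_\xi+\iota_\xi d$ acts by $k$ on weight-$k$ forms, and $\iota_\xi$ kills $\beta s^k$ for $\beta$ pulled back from $Y$. Hence $\tfrac1k d$, a differential operator rather than an $\scrO$-linear map, is a section of $\iota_\xi$ on each weight-$k$ piece with $k\ge1$. On the hyperresolution these sections commute with the transition maps of $\widetilde X\times_Y Y_\kdot$: pullback commutes with $d$, and the maps are $\bbC^*$-equivariant. Hypercohomology of the $\scrO$-module complexes can be computed with functorial $\bbC$-linear resolutions (e.g.\ Godement) over the cubical diagram. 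Therefore each weight-$k$ long exact sequence breaks into split short exact sequences of vector spaces, so the cup products with $c_1(\scrL)$ vanish on these groups. With this replacement the rest of your argument goes through. It also shows why the total space rather than the punctured $\bbC^*$-bundle is the right choice. In weight $0$ the factor $\tfrac1k$ is unavailable and the connecting map would be the Lefschetz operator. In your setup, however, weight $0$ contains only $q^*\uOmega^p_Y$, and that part is cancelled by $E$.
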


\begin{example}\label{ex_deform}
Let $S$ be a smooth projective surface, and $\scrA$ a very ample line bundle on $S$. Let $Y = S\times \bbP^1$, $\scrL=\scrA\boxtimes \scrO_{\bbP^1}(1)$. Let $L_1, L_2\in |\scrA|$ be two general members to form a pencil $S\dashrightarrow \bbP^1$. Then, let $D$ be its graph closure in $Y=S\times \bbP^1$, so that $D\in |\scrL|$. Indeed, if $L_i$ corresponds to section $s_i\in H^0(S, \scrA)$, then $D$ corresponds to the section $\sigma = us_1+ vs_2$, where $u,v \in H^0(\bbP^1,\scrO_{\bbP^1}(1))$ is a basis. Since $L_1, L_2$ are general, we may assume the common zero locus $L_1\cap L_2$ is reduced and $0$-dimensional. Therefore, $D\isom \Bl_{L_1\cap L_2}S$ is smooth and birational to $S$.

Finally, let $X= C(Y,\scrL)$ and $Z = C(D, \scrL|_D)$. We will study the Du Bois complex of $X$ and $Z$ from cohomological invariants on $S$. Observe that making $\scrA$ sufficiently positive will make both $X$ and $Z$ pre-$m$-Du Bois. The delicate balance is in controlling the positivity to make vanishing fail on $X$ but hold on $Z$.
\end{example}

We are interested in the cohomologies of the sheaf $\Omega^1_Y\otimes \scrL^k$ for $k\ge 0$. The differential $\Omega^1_Y$ decomposes as sums from the product $Y=S\times \bbP^1$ as
\[
\Omega^1_Y\isom (\Omega^1_S \boxtimes \scrO_{\bbP^1}) \oplus (\scrO_S\boxtimes \scrO_{\bbP^1}(-2)).
\]
Therefore, for $k\ge 0$,
\[
\Omega^1_Y\otimes \scrL^k\isom (\Omega^1_S \otimes \scrA^k) \boxtimes \scrO_{\bbP^1}(k) \oplus \scrA^k \boxtimes \scrO_{\bbP^1}(k-2).
\]
It is now easy to make $X$ not pre-$1$-Du Bois:

\begin{lemma}\label{lem_ex_deform_ambient}
In Example \ref{ex_deform}, suppose that
\[
H^1(S, \Omega^1_S\otimes \scrA) \ne 0.
\]
Then $X$ is not pre-$1$-Du Bois.
\end{lemma}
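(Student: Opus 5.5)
The plan is to use the cone formula of Theorem \ref{thm_PS_cone} with $p=1$, $i=1$. Pre-$1$-Du Bois requires $h^i(\uOmega^1_X)=0$ for $i>0$; since $X$ is affine, it suffices to show $\Gamma(X,h^1(\uOmega^1_X))\ne 0$. Since $Y=S\times\bbP^1$ is smooth, we have $\uOmega^p_Y\qis\Omega^p_Y$. Theorem \ref{thm_PS_cone} then gives
\[
\Gamma(X, h^1(\uOmega^1_X)) \isom \bigoplus_{k\ge 1} H^1(Y, \Omega^1_Y \otimes \scrL^k) \oplus \bigoplus_{k\ge 1} H^1(Y, \scrL^k).
\]
It is enough to exhibit one nonzero summand, and the natural choice is $k=1$ in the first sum.

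For $k=1$ we use the decomposition recorded just before the lemma:
\[
\Omega^1_Y\otimes\scrL \isom (\Omega^1_S\otimes\scrA)\boxtimes\scrO_{\bbP^1}(1)\oplus \scrA\boxtimes\scrO_{\bbP^1}(-1).
\]
We then apply the K\"unneth formula to each summand. For the first summand,
\[
H^1\big((\Omega^1_S\otimes\scrA)\boxtimes\scrO_{\bbP^1}(1)\big)\supseteq H^1(S,\Omega^1_S\otimes\scrA)\otimes H^0(\bbP^1,\scrO_{\bbP^1}(1)),
\]
and the right-hand side is nonzero: the first factor is nonzero by hypothesis and the second is $2$-dimensional. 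The second summand contributes nothing, since $\scrO_{\bbP^1}(-1)$ has no cohomology. As this is a direct summand of $H^1(Y,\Omega^1_Y\otimes\scrL)$, we get $H^1(Y,\Omega^1_Y\otimes\scrL)\ne0$. Hence $h^1(\uOmega^1_X)\ne 0$, so $\uOmega^1_X$ is not quasi-isomorphic to $h^0(\uOmega^1_X)$, and $X$ is not pre-$1$-Du Bois.

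The only point requiring care is translating the graded global sections in Theorem \ref{thm_PS_cone} into nonvanishing of the sheaf $h^1(\uOmega^1_X)$. This follows because $X$ is affine: a coherent sheaf on an affine scheme is zero if and only if its global sections vanish. The rest is routine K\"unneth bookkeeping.
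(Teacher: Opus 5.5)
Your proposal is correct and follows the same route as the paper. By the K\"unneth formula, $H^1(Y,\Omega^1_Y\otimes\scrL)$ contains $H^1(S,\Omega^1_S\otimes\scrA)^{\oplus 2}\neq 0$; the paper records this as an equality, using that $H^1(\bbP^1,\scrO_{\bbP^1}(1))=0$ and that $\scrA\boxtimes\scrO_{\bbP^1}(-1)$ is acyclic. Theorem \ref{thm_PS_cone} then gives $h^1(\uOmega^1_X)\neq 0$, so $X$ is not pre-$1$-Du Bois.
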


\begin{proof}
Taking cohomology using the K\"unneth formula, we see that
\[
H^1(Y, \Omega^1_Y\otimes \scrL) \isom H^1(S, \Omega^1_S \otimes \scrA) \otimes H^0(\bbP^1, \scrO_{\bbP^1}(1)) \isom H^1(S, \Omega^1_S \otimes \scrA)^{\oplus 2} \ne 0.
\]
Therefore by Theorem \ref{thm_PS_cone}, $h^1(\uOmega^1_X)\ne 0$, and $X$ is not pre-$1$-Du Bois.
\end{proof}

Making $Z$ pre-$1$-Du Bois requires more assumptions. The first two conditions below are expected in light of Theorem \ref{thm_PS_cone}, and the third technical condition exists to accommodate the required non-vanishing for $H^1(S, \Omega^1_S\otimes \scrA)$ in (2) for the previous Lemma.

\begin{lemma}\label{lem_ex_deform_divisor}
In Example \ref{ex_deform}, suppose the following holds:
\begin{enumerate}
\item $H^i(S, \scrA^k)= 0$ for all $i>0, k\ge 0$ (in particular $H^i(S, \scrO_S)=0$);
\item $H^i(S, \Omega^1_S\otimes \scrA^k)=0$ for all $i>0, k\ge 0$ except for possibly when $i=1$ and $k=0, 1$;
\item the map induced by multiplication of the two general sections $s_1, s_2\in H^0(S, \scrA)$ defining $L_1, L_2\in |\scrA|$ induces a surjection
\[
(\cdot s_1, \cdot s_2): H^1(S, \Omega^1_S)\to H^1(S, \Omega^1_S\otimes \scrA)^{\oplus 2}.
\]
\end{enumerate}
Then $Z$ is $1$-Du Bois.
\end{lemma}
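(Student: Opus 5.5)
The plan is to check each condition of Definition \ref{def_higher_DB}(3) for $m=1$ directly on the cone $Z=C(D,\scrL|_D)$. Everything reduces to cohomology vanishing on $D$, which we read off from $Y=S\times\bbP^1$ using the K\"unneth formula and hypotheses (1)--(3).

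\textbf{Easy conditions.} Since $D\isom \Bl_{L_1\cap L_2}S$ is a smooth surface, $Z$ is a $3$-dimensional cone with at worst an isolated singularity at the vertex. Hence $\codim_Z\Sing Z\ge 3=2m+1$.

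\textbf{Reduction via Theorem \ref{thm_PS_cone}.} Apply Theorem \ref{thm_PS_cone} to $(D,\scrL|_D)$. Since $D$ is smooth, $\uOmega^p_D=\Omega^p_D$, so:
\begin{itemize}
\item $Z$ is Du Bois as soon as $H^i(D,\scrL^k|_D)=0$ for all $i>0$, $k\ge 1$;
\item $Z$ is pre-$1$-Du Bois if in addition $H^i(D,\Omega^1_D\otimes\scrL^k|_D)=0$ for all $i>0$, $k\ge 1$.
\end{itemize}
Du Bois implies seminormal. In degree $0$, the same theorem identifies the graded pieces of $\Gamma(Z,h^0(\uOmega^p_Z))$ with $H^0(D,\Omega^p_D\otimes\scrL^k)\oplus H^0(D,\Omega^{p-1}_D\otimes\scrL^k)$. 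These are exactly the sections of $\Omega^p$ on the punctured cone $Z\setminus\{0\}$, which is a $\bbG_m$-bundle over $D$. So $h^0(\uOmega^p_Z)$ agrees with $j_*$ of its restriction to the smooth locus. This gives reflexivity, hence $S_2$; if needed, one can also phrase it as vanishing of $H^0_{\frak m}$ and $H^1_{\frak m}$ via the grading. What remains is the two families of vanishings on $D$.

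\textbf{Vanishing for $\scrL^k|_D$.} Use the restriction sequence
\[
0\to\scrL^{k-1}\to\scrL^k\to\scrL^k|_D\to 0
\]
on $Y$, where $\scrL^k=\scrA^k\boxtimes\scrO_{\bbP^1}(k)$. By K\"unneth and (1), $H^i(Y,\scrL^k)=0$ for $i>0$, $k\ge 0$. Hence $H^i(D,\scrL^k|_D)=0$ for $i>0$, $k\ge 1$.

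\textbf{Vanishing for $\Omega^1_D\otimes\scrL^k|_D$.} Use two sequences:
\[
0\to \scrL^{k-1}|_D\to\Omega^1_Y\otimes\scrL^k|_D\to\Omega^1_D\otimes\scrL^k\to0
\]
(the conormal sequence twisted by $\scrL^k$) and
\[
0\to\Omega^1_Y\otimes\scrL^{k-1}\xrightarrow{\cdot\sigma}\Omega^1_Y\otimes\scrL^k\to\Omega^1_Y\otimes\scrL^k|_D\to0 .
\]
From the decomposition $\Omega^1_Y\otimes\scrL^k\isom(\Omega^1_S\otimes\scrA^k)\boxtimes\scrO(k)\oplus\scrA^k\boxtimes\scrO(k-2)$, together with (1) and (2), the groups $H^i(Y,\Omega^1_Y\otimes\scrL^k)$ vanish for $i\ge 1$, with two exceptions: $H^1$ for $k=0,1$, and $H^1(\scrO_S\boxtimes\scrO(-2))=\bbC$ when $k=0$.

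For $k\ge 2$ everything vanishes immediately. Also, in the conormal sequence the term $H^i(D,\scrL^{k-1}|_D)$ vanishes for $i>0$ once $k\ge 2$; for $k=1$ it is $H^i(D,\scrO_D)=H^i(S,\scrO_S)=0$ by (1), using birational invariance. Since $D$ is a surface, $H^2(D,\Omega^1_D\otimes\scrL^k)$ is a quotient of $H^2(D,\Omega^1_Y\otimes\scrL^k|_D)$, so it too is controlled by the second sequence. Thus the problem reduces to showing $H^i(D,\Omega^1_Y\otimes\scrL|_D)=0$ for $i=1,2$.

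\textbf{The main obstacle: $k=1$.} From the second sequence with $k=1$, the vanishing for $i=1,2$ amounts to two things:
\begin{itemize}
\item the map $\cdot\sigma\colon H^1(Y,\Omega^1_Y)\to H^1(Y,\Omega^1_Y\otimes\scrL)$ is surjective;
\item $H^2(Y,\Omega^1_Y)=0$ and $H^2(Y,\Omega^1_Y\otimes\scrL)=0$.
\end{itemize}
For the $H^2$ terms: $H^2(Y,\Omega^1_Y)=H^2(S,\Omega^1_S)\oplus H^1(S,\scrO_S)\otimes H^1(\scrO(-2))=0$ by (1) and (2). Likewise $H^2(Y,\Omega^1_Y\otimes\scrL)=H^2(S,\Omega^1_S\otimes\scrA)\otimes H^0(\scrO(1))\oplus 0=0$ by (2).

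For surjectivity: the target is $H^1(S,\Omega^1_S\otimes\scrA)\otimes H^0(\bbP^1,\scrO(1))=H^1(S,\Omega^1_S\otimes\scrA)^{\oplus2}$, with the splitting given by the basis $u,v$. The source $H^1(Y,\Omega^1_Y)$ contains the summand $H^1(S,\Omega^1_S)\otimes H^0(\scrO_{\bbP^1})$. On this summand, multiplication by $\sigma=us_1+vs_2$ is precisely $(\cdot s_1,\cdot s_2)$, which is surjective by (3).

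I would carefully verify that this K\"unneth identification is compatible with $\sigma$. This compatibility, together with keeping track of which $k=0,1$ exceptions in (2) actually enter the argument, is the delicate step. Once it is in place, all the required vanishings hold, and $Z$ is $1$-Du Bois.
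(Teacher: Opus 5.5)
Your vanishing computation on $D$ is essentially the paper's: K\"unneth on $Y=S\times\bbP^1$, the restriction sequences for $\scrL^k$ and $\Omega^1_Y\otimes\scrL^k$ along $D$, hypothesis (3) to get surjectivity of $\cdot\sigma$ on $H^1$ at $k=1$, and the conormal sequence of $D\subset Y$. Your bookkeeping of the $H^2$ terms is, if anything, cleaner than the paper's. The codimension and seminormality checks are also fine.

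\textbf{The gap: reflexivity of $h^0(\uOmega^1_Z)$.} The paper passes from pre-$1$-Du Bois to $1$-Du Bois by citing \cite[Proposition 5.10]{SVV23}: for a cone over a smooth projective variety, the only extra requirement is $H^1(D,\scrO_D)=0$. You instead assert that the pieces $H^0(D,\Omega^1_D\otimes\scrL^k)\oplus H^0(D,\scrL^k)$, $k\ge 1$, from Theorem \ref{thm_PS_cone} are exactly the sections of $\Omega^1$ on $U=Z\setminus\{0\}$. That is false in general.

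\emph{Why it fails.} Let $\pi\colon U\to D$ be the projection and $j\colon U\hookrightarrow Z$ the inclusion. The weight-$k$ part of $\pi_*\Omega^1_U$ is an extension $E_k$ of $\scrL^k|_D$ by $\Omega^1_D\otimes\scrL^k|_D$, whose class is a nonzero multiple of $c_1(\scrL|_D)$. So $\Gamma(U,\Omega^1_U)=\bigoplus_{k\in\bbZ}H^0(D,E_k)$ has components in all weights, while Theorem \ref{thm_PS_cone} only produces weights $k\ge 1$. In weight $0$, the connecting map $H^0(D,\scrO_D)\to H^1(D,\Omega^1_D)$ is cup product with $c_1(\scrL|_D)\neq 0$. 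Hence $H^0(D,E_0)\isom H^0(D,\Omega^1_D)$, the pullbacks of holomorphic $1$-forms on $D$. These are sections of $j_*\Omega^1_U$ that do not come from $h^0(\uOmega^1_Z)$.

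For instance, the cone over an abelian surface is pre-$1$-Du Bois, yet $h^0(\uOmega^1)$ is not reflexive. Your argument as written would conclude otherwise. What actually makes reflexivity hold is $H^0(D,\Omega^1_D)=0$, which is exactly the condition $H^1(D,\scrO_D)=0$ in \cite{SVV23}.

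\textbf{How to fix it with facts you already have.}
\begin{enumerate}
\item You showed $H^1(D,\scrO_D)=H^1(S,\scrO_S)=0$. Hodge symmetry gives $H^0(D,\Omega^1_D)=0$, hence $H^0(D,E_0)=0$.
\item For $k<0$, $H^0(D,E_k)=0$: $H^0(D,\scrL^k|_D)=0$, and $H^0(D,\Omega^1_D\otimes\scrL^k|_D)=0$ by Akizuki--Nakano.
\item For $k\ge 1$, your vanishing $H^1(D,\Omega^1_D\otimes\scrL^k|_D)=0$ gives $h^0(E_k)=h^0(\Omega^1_D\otimes\scrL^k|_D)+h^0(\scrL^k|_D)$.
\item Since $h^0(\uOmega^1_Z)$ is torsion-free, the equivariant map $h^0(\uOmega^1_Z)\to j_*\Omega^1_U$ is injective. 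The dimension counts above then make it an isomorphism weight by weight.
\end{enumerate}
With this added, your direct route works and amounts to reproving the needed case of \cite[Proposition 5.10]{SVV23} rather than citing it.
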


\begin{proof}
Using the same K\"unneth formula computation in the previous Lemma and assumptions (1)-(2), we have
\[
H^i(Y, \Omega^1_Y\otimes \scrL^k)=0
\]
for every $k\ge 2$ and $i>0$. Moreover, for $i>2$
\[
H^i(Y,\Omega^1_Y)=H^i(Y,\Omega^1_Y\otimes \scrL)= 0,
\]
and
\begin{align*}
H^1(Y, \Omega^1_Y)\isom & H^1(S,\Omega^1_S)\oplus H^0(S, \scrO_S), \\
H^1(Y, \Omega^1_Y\otimes \scrL)\isom & H^1(S,\Omega^1_S\otimes \scrA)\otimes H^0(\bbP^1,\scrO_{\bbP^1}(1)).
\end{align*}
Note that $\Omega^1_Y$ is locally free. For any $k\ge 0$, we have the restriction sequence on $Y$ associated to $D$
\[
0\to \Omega^1_Y\otimes \scrL^{k-1} \to\Omega^1_Y\otimes \scrL^k \to (\Omega^1_Y\otimes \scrL^k )|_D\to 0,
\]
where the first map is given by multiplication by the defining section $\sigma \in H^0(Y, \scrO_Y(D))$. Using the associated long exact sequences on cohomology and the vanishing computed before, we have
\[
H^i(D, (\Omega^1_Y\otimes \scrL^k )|_D)= 0
\]
for all $i>0$ when $k\ge 2$, and for $i>2$ when $k=1$. When $k=i=1$, we consider the multiplication map $H^1(Y, \Omega^1_Y) \to H^1(Y, \Omega^1_Y\otimes \scrL)$ induced by the section $\sigma = us_1+vs_2 \in H^0(Y,\scrL)$. Using the K\"unneth decomposition before, on the non-trivial first component, this gives the map
\[
H^1(S,\Omega^1_S) \to H^1(S,\Omega^1_S\otimes \scrA)\otimes H^0(\bbP^1,\scrO_{\bbP^1}(1))\isom H^1(S,\Omega^1_S\otimes \scrA)^{\oplus 2}
\]
corresponding to multiplication by the two sections $s_1, s_2\in H^0(S,\scrA)$ used in constructing $\sigma$. Now by assumption (3), this map is surjective. Therefore, we obtain the vanishing
\[
H^1(D, (\Omega^1_Y\otimes \scrL )|_D)= 0.
\]
Since $D$ is smooth and birational to $S$, $H^i(D, \scrO_D)\isom H^i(S, \scrO_S)=0$ for $i>0$. Now, by K\"unneth and assumption (1), we also have
\[
H^i(Y, \scrL^k) = 0
\]
for $i>0, k\ge 0$. We perform the same calculation for the sequence
\[
0 \to \scrL^{k-1}\to \scrL^k \to \scrL^k|_D \to 0
\]
to see that
\[
H^i(D, \scrL^k|_D) =0.
\]
for $i>0, k\ge 1$. Finally, we consider the conormal sequence for the smooth divisor $D$ in $Y$:
\[
0 \to \scrL^{k-1}|_D \to (\Omega^1_Y\otimes \scrL^k )|_D \to \Omega^1_D \otimes \scrL^k|_D\to 0.
\]
Chasing the long exact sequence on cohomology once more, we conclude that
\[
H^i(D, \Omega^1_D \otimes \scrL^k|_D) = 0
\]
for $i>0, k\ge 1$. We remark that when $k\ge 2$, this directly follows, and for $k=1$ the only vanishing required is $H^1(D, \Omega^1_D \otimes \scrL|_D)\isom H^2(D, \scrO_D)=0$. Combining these vanishings with Theorem \ref{thm_PS_cone}, we see that $Z$ is pre-$1$-Du Bois. 

For cone $Z$ over smooth surface $D$, the only difference between pre-$1$-Du Bois and $1$-Du Bois is the vanishing of $H^1(D,\scrO_D)$, cf. \cite[Proposition 5.10]{SVV23}. Therefore, $Z$ is in fact $1$-Du Bois.
\end{proof}

\begin{lemma}\label{lem_ex_deform_cartier}
In Example \ref{ex_deform}, suppose $H^i(S, \scrA^k)=0$ for all $i>0, k\ge 0$. Then $Z$ is a Cartier divisor in $X$.
\end{lemma}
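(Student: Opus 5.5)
We need to show that $Z=C(D,\scrL|_D)$ is cut out in $X=C(Y,\scrL)$ by a single equation, namely the degree-one element $\sigma\in H^0(Y,\scrL)$, the section defining $D$. Write $R=\bigoplus_{k\ge0}H^0(Y,\scrL^k)$ and $R_D=\bigoplus_{k\ge0}H^0(D,\scrL^k|_D)$, so that $X=\Spec R$ and $Z=\Spec R_D$.

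The plan is to establish two facts: the restriction map $R\to R_D$ is surjective, and its kernel is the principal ideal $\sigma R$. Together these give $Z=V(\sigma)$ as a closed subscheme of $X$. Since $R$ is a domain (as $Y$ is integral) and $\sigma\ne0$, $\sigma$ is a non-zero-divisor, so $Z$ is an effective Cartier divisor.

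Both facts come from the restriction sequences
\[
0\to \scrL^{k-1}\xrightarrow{\cdot\sigma}\scrL^k\to \scrL^k|_D\to 0
\]
on $Y$, for $k\ge 0$. Taking global sections gives exactness of
\[
0\to H^0(Y,\scrL^{k-1})\xrightarrow{\cdot \sigma} H^0(Y,\scrL^k)\to H^0(D,\scrL^k|_D)\to H^1(Y,\scrL^{k-1}).
\]
\begin{itemize}
\item The kernel of $R_k\to (R_D)_k$ is $\sigma R_{k-1}$ degree by degree, which gives $\ker(R\to R_D)=\sigma R$.
\item Surjectivity in degree $k$ requires $H^1(Y,\scrL^{k-1})=0$ for all $k\ge1$. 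For $k=0$ both sides are $\bbC$, since $D$ is connected, being birational to the connected surface $S$.
\end{itemize}

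The key input is therefore $H^1(Y,\scrL^{j})=0$ for $j\ge0$. By K\"unneth on $Y=S\times\bbP^1$,
\[
H^1(Y,\scrA^j\boxtimes\scrO_{\bbP^1}(j))=H^1(S,\scrA^j)\otimes H^0(\bbP^1,\scrO(j))\oplus H^0(S,\scrA^j)\otimes H^1(\bbP^1,\scrO(j)).
\]
The first term vanishes by hypothesis. The second vanishes because $H^1(\bbP^1,\scrO(j))=0$ for $j\ge0$.

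The main subtlety to check is that $R_D$ really is the coordinate ring used to define $Z=C(D,\scrL|_D)$ in Example \ref{ex_deform}, i.e.\ the full section ring rather than the image of $R$. The surjectivity established above shows these coincide, so this point is settled by the same computation. Normality and finite generation of $R$ are standard for $\scrL$ ample on a normal projective $Y$, so the cones are varieties and the identification is legitimate.
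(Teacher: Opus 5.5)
Your proposal is correct and is essentially the paper's proof: the vanishing $H^1(Y,\scrL^k)=0$ (via K\"unneth) makes the restriction sequences exact on global sections, and summing over $k$ gives $R_D\cong R/\sigma R$, so $Z=V(\sigma)$. You also spell out two points the paper leaves implicit, namely the degree-zero case and the fact that $\sigma$ is a non-zero-divisor because $R$ is a domain, which is fine.
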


\begin{proof}
By the same proof as in Lemma \ref{lem_ex_deform_divisor}, the assumption implies that
\[
H^1(Y, \scrL^k)=0
\]
for $k\ge 0$. Now the restriction sequence is exact on global sections, which reads
\[
0\to H^0(Y, \scrL^{k-1}) \xrightarrow{\cdot \sigma} H^0(Y, \scrL^{k}) \to H^0(D, \scrL^{k}|_D)\to 0
\]
where as in Example \ref{ex_deform} the section $\sigma\in H^0(Y,\scrL)$ cuts out $D$. Summing over all $k\ge 0$,
\[
H^0(Z, \scrO_Z) = \bigoplus_{k\ge 0}H^0(D, \scrL^k|_D) \isom \bigoplus_{k\ge 0}H^0(Y, \scrL^k)/(\sigma) = H^0(X, \scrO_X)/(\sigma).
\]
This is precisely saying that $Z$ is cut out scheme-theoretically by the equation $\sigma$, now viewed as a section of $H^0(X, \scrO_X)$. Therefore $Z$ is Cartier, as desired.
\end{proof}

We are now ready to assemble our counterexample to deforming $1$-Du Bois singularities.

\begin{proposition}\label{prop_ex_deform}
Let $S\subset \bbP^3$ be a general smooth cubic surface and let $\scrA:= \scrO_S(2)$. Construct $Y=S\times \bbP^1$, $D$, $\scrL$, and cones $X, Z$ as in Example \ref{ex_deform}. Then $Z$ is a $1$-Du Bois Cartier divisor in $X$, but $X$ is not pre-$1$-Du Bois.
\end{proposition}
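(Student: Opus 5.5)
The proof reduces to checking, for $S$ a general smooth cubic surface and $\scrA=\scrO_S(2)$, the hypotheses of Lemmas \ref{lem_ex_deform_ambient}, \ref{lem_ex_deform_divisor} and \ref{lem_ex_deform_cartier}. Those lemmas then give directly that $Z$ is a $1$-Du Bois Cartier divisor in $X$ and that $X$ is not pre-$1$-Du Bois. I will verify condition (1) first, then (2) together with the non-vanishing $H^1(S,\Omega^1_S\otimes\scrA)\ne 0$, and finally (3).

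\textbf{Condition (1).} The required vanishing is $H^i(S,\scrO_S(2k))=0$ for $i>0$ and $k\ge 0$. This follows from the sequence $0\to\scrO_{\bbP^3}(2k-3)\to\scrO_{\bbP^3}(2k)\to\scrO_S(2k)\to 0$ together with the cohomology of $\bbP^3$. Here $H^3(\bbP^3,\scrO(2k-3))=0$ because $2k-3\ge -3>-4$. Equivalently, $S$ is rational and $\omega_S=\scrO_S(-1)$, so one can use Kodaira vanishing.

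\textbf{Condition (2) and the non-vanishing.} Serre duality gives $H^2(S,\Omega^1_S(2k))\cong H^0(S,T_S(-2k-1))^\vee$, and this vanishes since $H^0(T_S)=0$ for a cubic surface. For $H^1(S,\Omega^1_S(2k))$ I would use two sequences: the conormal sequence $0\to\scrO_S(-3)\to\Omega^1_{\bbP^3}|_S\to\Omega^1_S\to 0$, and the Euler sequence restricted to $S$. Twisting by $\scrO_S(2k)$, these show that $H^1(S,\Omega^1_S(2k))$ vanishes for $k\ge 2$. In the range $k\ge 2$ the relevant groups are $H^1(\Omega^1_{\bbP^3}|_S(2k))$, which vanishes via Bott vanishing and restriction, and $H^2(\scrO_S(2k-3))$, which vanishes by condition (1).

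For $k=1$, the same sequences give
\[
H^1(S,\Omega^1_S(2)) \cong \coker\bigl(H^0(\Omega^1_{\bbP^3}|_S(2))\to H^0(\Omega^1_S(2))\bigr)\ \text{ (roughly)},
\]
and the cleaner way to see it is nonzero is via duality: $H^1(\Omega^1_S(2))^\vee\cong H^1(T_S(-3))$. Using the normal sequence, this group is identified with $\coker\bigl(H^0(T_{\bbP^3}|_S(-3))\to H^0(\scrO_S)\bigr)\cong\bbC$. So $H^1(S,\Omega^1_S\otimes\scrA)$ is $1$-dimensional, and in particular nonzero, as Lemma \ref{lem_ex_deform_ambient} requires. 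I expect this to match the Jacobian ring description: $H^1(\Omega^1_S(2))$ is dual to the degree-$0$ part that survives the Jacobian ideal.

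\textbf{Condition (3), the main obstacle.} The task is to show that $(\cdot s_1,\cdot s_2)\colon H^1(S,\Omega^1_S)\cong\bbC^7\to H^1(S,\Omega^1_S(2))^{\oplus 2}\cong\bbC^2$ is surjective for general quadrics $s_1,s_2$. Since $s_1,s_2$ are general and the target has dimension $2$, it suffices to show that the multiplication pairing
\[
H^0(S,\scrO_S(2))\otimes H^1(\Omega^1_S)\to H^1(\Omega^1_S(2))\cong\bbC
\]
is nonzero, with kernel not forcing the two components to be dependent. Dually this is the map
\[
H^1(T_S(-3))\to H^1(T_S(-1))=H^1(\Omega^1_S)^\vee,
\]
given by multiplication by the quadric; the generator of the source is the Kodaira--Spencer type class of $\bbP^3$-deformations. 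I would identify it with the image of the extension class of the normal sequence and check nondegeneracy. One approach is through Griffiths' residue/Jacobian ring description: the primitive classes in $H^1(\Omega^1_S)$ correspond to degree-$4$ elements of the Jacobian ring $R_F$ (more precisely, the relevant graded pieces). Multiplication by a quadric then goes from $R_F$ in degree $4$ to the socle-related degree $6$, which has dimension $1$. Macaulay duality makes $R^2\times R^4\to R^6$ perfect, so two general quadrics give independent functionals on $R^4$, and the map is surjective onto $\bbC^2$. If the bookkeeping with the Jacobian ring proves awkward, a fallback is to check surjectivity on one explicit Fermat cubic and invoke openness of surjectivity in the choice of $S$, $s_1$ and $s_2$.
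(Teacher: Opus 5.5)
Your reduction to Lemmas \ref{lem_ex_deform_ambient}, \ref{lem_ex_deform_divisor} and \ref{lem_ex_deform_cartier} is the paper's. Your checks of (1), (2) and of $H^1(S,\Omega^1_S\otimes\scrA)\cong\bbC$ are fine; you use restriction sequences, Bott vanishing and $H^0(S,T_S)=0$ where the paper uses Kodaira vanishing and the restricted Euler sequence. There are two small slips in that part:
\begin{itemize}
\item The ``roughly'' cokernel formula is wrong. The map $H^0(\Omega^1_{\bbP^3}|_S(2))\to H^0(\Omega^1_S(2))$ is surjective because $H^1(S,\scrO_S(-1))=0$, so that cokernel is zero. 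The correct statement is $H^1(S,\Omega^1_S(2))\cong H^2(S,\scrO_S(-1))\cong\bbC$ via the connecting map, since $H^1$ and $H^2$ of $\Omega^1_{\bbP^3}|_S(2)$ vanish.
\item Your dual computation tacitly uses $H^1(S,T_{\bbP^3}|_S(-3))=0$, which is true but should be stated.
\end{itemize}

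The genuine gap is condition (3), which is the only non-formal point. Your criterion has to be stated precisely: the map $H^1(S,\Omega^1_S)\to H^0(S,\scrO_S(2))^\vee$ induced by the multiplication pairing must have image of dimension at least $2$. ``Nonzero'' only gives rank $\ge 1$, which is not enough. You never establish this rank.

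The Jacobian ring argument, as written, has the wrong degrees. For a smooth cubic surface, $R_F$ has Hilbert function $1,4,6,4,1$, its socle is in degree $4$, and $H^{1,1}_{\rm prim}(S)\cong R^2$. In particular $R^6=0$, so ``$R^2\times R^4\to R^6$ is perfect'' is false. The correct pairing is $R^2\times R^2\to R^4\cong\bbC$. Even then, you would still need to prove that Griffiths' isomorphisms intertwine multiplication by sections of $\scrO_S(2)$ on $H^1(\Omega^1_S(k))$ with multiplication in $R_F$; you only assert this. The Fermat fallback is not carried out.

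The missing step is an elementary dimension count, which is what the paper does. Multiplication by $s_i$ commutes with the connecting maps of the twisted conormal sequences. So, after the isomorphisms $H^1(S,\Omega^1_S(2))\cong H^2(S,\scrO_S(-1))\cong H^0(S,\scrO_S)^\vee$, the map $(\cdot s_1,\cdot s_2)$ becomes $\delta\colon H^1(S,\Omega^1_S)\to H^2(S,\scrO_S(-3))\cong H^0(S,\scrO_S(2))^\vee$ followed by the dual of $\bbC^2\to H^0(S,\scrO_S(2))$, $e_i\mapsto s_i$. Since $H^2(S,\Omega^1_S)=0$, the image of $\delta$ has dimension $h^2(\scrO_S(-3))-h^2(\Omega^1_{\bbP^3}|_S)=10-4=6\ge 2$.

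In your dual picture this says the following. The connecting map $H^0(S,\scrO_S(2))\to H^1(S,T_S(-1))$ of the normal sequence has kernel equal to the image of $H^0(S,T_{\bbP^3}|_S(-1))\cong\bbC^4$, which is the span of the partials of $F$. Hence it has rank $6$, and its image is exactly $R^2$. This is the correct form of your heuristic.
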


\begin{proof}
We need to show that the assumptions in Lemmas \ref{lem_ex_deform_ambient}, \ref{lem_ex_deform_divisor}, \ref{lem_ex_deform_cartier} are met for a general $S$.

As usual, we denote $\scrO_S(1)= \scrO_{\bbP^3}(1)|_S$. For any Fano hypersurface, Kodaira vanishing implies that 
\[
H^i(S, \scrO_S(k)) = 0
\]
for $i>0, k\ge 0$. We consider the Euler sequence on $\bbP^3$ restricted to $S$:
\[
0\to \Omega^1_{\bbP^3}(k)|_S \to \scrO_S(k-1)^{\oplus 4}\to \scrO_S(k) \to 0.
\]
Since the homogeneous coordinate ring of $S$ is generated in degree $1$, the second map above is surjective on global sections for $k\ge 1$. Using this and the vanishing of $H^i(S, \scrO_S(k))$ for $i>0$, we see that
\[
H^i(S, \Omega^1_{\bbP^3}(k)|_S) = 0
\]
for $i>0, k\ge 1$. Now we utilize the conormal sequence
\[
0\to \scrO_S(k-3) \to \Omega^1_{\bbP^3}(k)|_S \to \Omega^1_S(k) \to 0.
\]
For all $k\ge 1$, we see that 
\[
H^2(S, \Omega^1_S(k))=0,
\]
and by Serre duality
\[
H^1(S, \Omega^1_S(k))\isom H^2(S, \scrO_S(k-3))\isom H^0(S, \scrO_S(2-k))^\vee.
\]
Therefore, the higher cohomologies of $\Omega^1_S(k)$ vanish for all $k\ge 2$, except for the term
\[
H^1(S, \Omega^1_S(2))\isom H^0(S, \scrO_S)\isom \bbC.
\]
In addition, a cubic surface $S$ is rational, so by Hodge symmetry and duality we have
\[
h^2(S, \Omega^1_S)= h^{1,2}(S) = h^{1,0}(S) = 0.
\]
These address all the required vanishing and non-vanishing in Lemma \ref{lem_ex_deform_divisor}(1-2) and \ref{lem_ex_deform_ambient}. The only remaining piece is the surjectivity in Lemma \ref{lem_ex_deform_divisor}(3), which is established in Lemma \ref{lem_ex_deform_surject}.
\end{proof}

We can check the technical condition in Lemma \ref{lem_ex_deform_divisor}(3) directly in the following case.

\begin{lemma}\label{lem_ex_deform_surject}
Let $S\subset \bbP^3$ be a general smooth cubic surface, and let $s_1, s_2\in H^0(S, \scrO_S(2))$ be two general sections. Then the multiplication map
\[
(\cdot s_1, \cdot s_2): H^1(S, \Omega^1_S)\to H^1(S, \Omega^1_S\otimes \scrA)^{\oplus 2}
\]
is surjective.
\end{lemma}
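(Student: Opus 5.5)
The plan is to reduce the surjectivity to a statement about degree-$2$ forms via Serre duality, using the conormal sequence of $S\subset\bbP^3$ exactly as in the proof of Proposition \ref{prop_ex_deform}. Recall $\omega_S\isom\scrO_S(-1)$, and write $F$ for the cubic equation of $S$. The conormal sequence
\[
0\to \scrO_S(k-3)\to \Omega^1_{\bbP^3}(k)|_S\to \Omega^1_S(k)\to 0
\]
has connecting maps $\delta_k: H^1(S,\Omega^1_S(k))\to H^2(S,\scrO_S(k-3))$. These commute with multiplication by any $s\in H^0(S,\scrO_S(2))$, since multiplication by $s$ defines a morphism from the twist-$k$ sequence to the twist-$(k+2)$ sequence. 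For $k=2$, the proof of Proposition \ref{prop_ex_deform} shows that $\delta_2$ is an isomorphism $H^1(\Omega^1_S(2))\isom H^2(\scrO_S(-1))\isom\bbC$.

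So the map $(\cdot s_1,\cdot s_2)$ is identified with the composite
\[
H^1(S,\Omega^1_S)\xrightarrow{\delta_0} H^2(S,\scrO_S(-3))\xrightarrow{(\cdot s_1,\cdot s_2)} H^2(S,\scrO_S(-1))^{\oplus 2}.
\]
I will dualize both maps with Serre duality, using $\omega_S=\scrO_S(-1)$. The second map becomes
\[
\bbC^2=H^0(\scrO_S)^{\oplus 2}\to H^0(S,\scrO_S(2)),\qquad (a,b)\mapsto as_1+bs_2.
\]
The dual of $\delta_0$ is the map $H^0(T_{\bbP^3}(-1)|_S)\to H^0(\scrO_S(2))$ coming from the dual normal sequence $0\to T_S(-1)\to T_{\bbP^3}(-1)|_S\to \scrO_S(2)\to 0$. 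Therefore the image of $\delta_0$ is exactly the annihilator of
\[
W:=\operatorname{im}\bigl(H^0(T_{\bbP^3}(-1)|_S)\to H^0(\scrO_S(2))\bigr).
\]
Surjectivity of the composite is then equivalent to two conditions: the dual of the second map is injective, and its image meets $W$ only in $0$. Concretely, this means $\langle s_1,s_2\rangle$ is $2$-dimensional and $\langle s_1,s_2\rangle\cap W=0$ inside $H^0(S,\scrO_S(2))$.

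Next I compute $W$. The Euler sequence restricted to $S$ and twisted, $0\to\scrO_S(-1)\to\scrO_S^{\oplus4}\to T_{\bbP^3}(-1)|_S\to0$, together with $H^0(\scrO_S(-1))=H^1(\scrO_S(-1))=0$, gives $H^0(T_{\bbP^3}(-1)|_S)=\bbC^4$, spanned by the images of the vector fields $\partial/\partial x_i$. The map to the normal bundle sends $\partial/\partial x_i$ to $\partial F/\partial x_i$ restricted to $S$. Hence $W$ is spanned by the four partial derivatives of $F$, so $\dim W\le 4$.

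Finally, $h^0(S,\scrO_S(2))=10$, because $S$ is projectively normal and cubic, so $H^0(\scrO_S(2))=H^0(\scrO_{\bbP^3}(2))$. Since $2+4\le 10$, a general $2$-dimensional subspace $\langle s_1,s_2\rangle$ meets the fixed subspace $W$ of dimension at most $4$ trivially. This is an open dense condition on the pair $(s_1,s_2)$, and it is compatible with the other genericity requirements on $s_1,s_2$ in Example \ref{ex_deform}, so general sections satisfy it and the map is surjective.

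The main obstacle is bookkeeping rather than substance. One must check that the Serre dual of $\delta_0$ is indeed the normal-bundle map $H^0(T_{\bbP^3}(-1)|_S)\to H^0(\scrO_S(2))$ up to sign, and that the commuting square between $\delta_0$ and $\delta_2$ is correct. I would verify both by dualizing the short exact sequence term by term with $\omega_S=\scrO_S(-1)$.
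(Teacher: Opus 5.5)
Your proposal is correct and is essentially the paper's argument. It uses the same morphism of twisted conormal sequences with $\delta_2$ an isomorphism, and the same Serre duality reduction to injectivity of $\bbC^2\to H^0(S,\scrO_S(2))\to(\operatorname{im}\delta_0)^\vee=H^0(S,\scrO_S(2))/W$. The closing count is also the same: the paper writes it as $\dim\operatorname{im}\delta_0=10-4=6\ge 2$ instead of identifying $W$ as the span of the $\partial F/\partial x_i$.

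One slip to fix: $H^0(T_{\bbP^3}(-1)|_S)\to H^0(S,\scrO_S(2))$ is the Serre dual of $H^2(S,\scrO_S(-3))\to H^2(S,\Omega^1_{\bbP^3}|_S)$, whose kernel is $\operatorname{im}\delta_0$. It is not the dual of $\delta_0$ itself, but your conclusion $\operatorname{im}\delta_0=W^{\perp}$ is still right.
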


\begin{proof}
By naturality of multiplication, we have the following diagram of two conormal sequences
\[\begin{tikzcd}
0 \ar[r] &
\scrO_S(-3) \ar[r] \ar[d, "\cdot s_i"] &
\Omega^1_{\bbP^3}|_S \ar[r] \ar[d, "\cdot s_i"] &
\Omega^1_S \ar[r] \ar[d, "\cdot s_i"] &
0 \\
0 \ar[r] &
\scrO_S(-1) \ar[r] &
\Omega^1_{\bbP^3}|_S(2) \ar[r] &
\Omega^1_S(2) \ar[r] &
0.
\end{tikzcd}\]
Taking direct sum of the two induced the bottom rows for $i=1,2$, this yields a commutative diagram on cohomology groups
\[\begin{tikzcd}
H^1(S, \Omega^1_S) \ar[r, "\delta"] \ar[d, "\protect{(\cdot s_1, \cdot s_2)}"] &
H^2(S, \scrO_S(-3)) \ar[r, "\isom"] \ar[d, "\protect{(\cdot s_1, \cdot s_2)}"] &
H^0(S, \scrO_S(2))^\vee \ar[d, "\protect{(\cdot s_1, \cdot s_2)^\vee}"] \\
H^1(S, \Omega^1_S(2))^{\oplus 2} \ar[r, "\isom"] &
H^2(S, \scrO_S(-1))^{\oplus 2} \ar[r, "\isom"] &
(H^0(S, \scrO_S)^{\oplus 2})^\vee
\end{tikzcd}\]
where $(-)^\vee=\Hom_{\bbC}(-,\bbC)$ and the right square is obtained by Serre duality. As in Proposition \ref{prop_ex_deform}, we have $H^i(S, \Omega^1_{\bbP^3}(k)|_S)=0$ for $i>0, k\ge 1$, which justifies the first bottom map being an isomorphism. The right vertical map is dual to
\[
\bbC^2 \isom H^0(S, \scrO_S)^{\oplus 2} \to H^0(S, \scrO_S(2))
\]
sending each coordinate to $s_i$. It suffices to show that $\dim \im \delta \ge 2$. Indeed, viewing $\im \delta \subseteq H^0(S, \scrO_S(2))^\vee$, we may choose $s_i$ such that the image of the two generators under
\[
H^0(S, \scrO_S)^{\oplus 2} \to H^0(S, \scrO_S(2)) \to (\im \delta)^\vee
\]
is linearly independent. In this case, the composition is injective, and consequently the dual is surjective. Passing back through the isomorphism, this shows that $(\cdot s_1, \cdot s_2)\circ \delta$ maps onto $H^2(S, \scrO_S(-1))^{\oplus 2}$, which implies the desired surjectivity of the left vertical map.

We will now count the dimension of $\im \delta$. Using the restricted Euler sequence
\[
0\to \Omega^1_{\bbP^3}|_S \to \scrO_S(-1)^{\oplus 4}\to \scrO_S \to 0,
\]
and the vanishing from before, we see that 
\[
h^2(S, \Omega^1_{\bbP^3}|_S)=4\cdot h^2(S, \scrO_S(-1))=4\cdot h^0(S, \scrO_S)= 4.
\]
Now going back to the first conormal sequence, since $H^2(S, \Omega^1_S)=0$, we have 
\[
\dim \im \delta = h^2(S, \scrO_S(-3)) -4 = h^0(S, \scrO_S(2))-4 =10 - 4 = 6 \ge 2.
\]
Therefore we can choose $s_i$ to be independent. This concludes the proof.
\end{proof}

Finally, this example also implies the failure of base change for $1$-Du Bois fibers.

\begin{corollary}\label{cor_fail_bc_1DB}
There exists a family $\f:X \to B$ and a fiber $X_b$ for $b\in B$ such that $X_b$ is $1$-Du Bois, but does not satisfy either left- or right-$1$-base change.
\end{corollary}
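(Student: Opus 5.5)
We reuse the cone example of Proposition \ref{prop_ex_deform}: $Z$ is a $1$-Du Bois Cartier divisor in $X$, but $X$ is not pre-$1$-Du Bois near the vertex. The idea is to realize $Z$ as a fiber of a morphism to $B=\bbA^1$ and then argue by contradiction via Proposition \ref{prop_deform_codim}.

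To set up the family, note that the proof of Lemma \ref{lem_ex_deform_cartier} shows $Z$ is cut out scheme-theoretically by the single global function $\sigma \in H^0(X,\scrO_X)$, namely the degree-one element $\sigma\in H^0(Y,\scrL)$. We take $\f=\sigma: X\to \bbA^1$. This is a morphism with $Z=\f^{-1}(0)=X_0$. It is dominant because $\sigma$ is a nonconstant function on the integral affine variety $X$; indeed, $X$ is integral since $Y$ is integral and $\scrL$ is ample. Setting $b=0$ places us exactly in Notation \ref{convention_local_deform}, with a global $\f$ rather than merely a local one.

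Now suppose $X_b=Z$ satisfied either left- or right-$1$-base change. Since $Z$ is $1$-Du Bois by Lemma \ref{lem_ex_deform_divisor}, Proposition \ref{prop_deform_codim} with $m=1$ would make $X$ $1$-Du Bois, hence pre-$1$-Du Bois, in a neighborhood of $Z$. That neighborhood contains the cone vertex, which lies on $Z$ because $\sigma$ has positive degree. But Lemma \ref{lem_ex_deform_ambient} gives $h^1(\uOmega^1_X)\neq 0$. This is a nonzero graded module over the cone coordinate ring, so its support is a nonempty closed $\bbC^*$-stable subset. Every such subset contains the vertex, since the closure of any $\bbC^*$-orbit on an affine cone contains it. So $X$ is not pre-$1$-Du Bois at any neighborhood of the vertex, a contradiction. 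Hence neither left- nor right-$1$-base change holds.

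Since Proposition \ref{prop_left_implies_right} shows left base change implies right, it suffices to rule out right-$1$-base change. The argument above handles both cases at once anyway.

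The step needing care is the localization claim: that non-pre-$1$-Du Bois-ness persists at the vertex. This is not a failure merely somewhere away from $Z$. The $\bbC^*$-equivariance argument gives it. Alternatively, one can observe that $X$ is smooth away from the vertex, since $Y$ is smooth, so the non-pre-$1$-Du Bois locus must be exactly the vertex, which lies in $Z$.
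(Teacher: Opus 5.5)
Your proposal is correct and follows the paper's proof. Like the paper, you realize the $1$-Du Bois Cartier divisor $Z$ of Proposition \ref{prop_ex_deform} as a fiber of a morphism to $\bbA^1$ and derive a contradiction from Proposition \ref{prop_deform_codim}. Your extra checks, that $\sigma$ gives a global dominant map and that the failure of pre-$1$-Du Bois occurs at the cone vertex (which lies on $Z$), spell out a localization step the paper leaves implicit.
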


\begin{proof}
Let $Z\subseteq X$ be as constructed from a general cubic surface as in Proposition \ref{prop_ex_deform}. Since $Z$ is a Cartier divisor, we may shrink to a neighborhood of $Z$ in $X$ to realize it as the fiber of a morphism $\f:X\to \bbA^1$, with $Z=X_b$. Therefore, the claim follows from Propositions \ref{prop_deform_codim} and \ref{prop_ex_deform}.
\end{proof}

\medskip

\bibliographystyle{alpha} 
\bibliography{ref}

@misc{SP,
  author       = {The {Stacks {P}roject {A}uthors}},
  title        = {The {S}tacks {P}roject},
  howpublished = {\url{https://stacks.math.columbia.edu}},
  year         = {2026},
}

@article {FL22,
    AUTHOR = {Friedman, Robert and Laza, Radu},
     TITLE = {Higher {D}u {B}ois and higher rational singularities},
      NOTE = {Appendix by Morihiko Saito},
   JOURNAL = {Duke Math. J.},
  FJOURNAL = {Duke Mathematical Journal},
    VOLUME = {173},
      YEAR = {2024},
    NUMBER = {10},
     PAGES = {1839--1881},
      ISSN = {0012-7094,1547-7398},
   MRCLASS = {14B05 (14B07 14F10)},
  MRNUMBER = {4776417},
       DOI = {10.1215/00127094-2023-0051},
       URL = {https://doi.org/10.1215/00127094-2023-0051},
}

@article {JKSY21,
    AUTHOR = {Jung, Seung-Jo and Kim, In-Kyun and Saito, Morihiko and Yoon,
              Youngho},
     TITLE = {Higher {D}u {B}ois singularities of hypersurfaces},
   JOURNAL = {Proc. Lond. Math. Soc. (3)},
  FJOURNAL = {Proceedings of the London Mathematical Society. Third Series},
    VOLUME = {125},
      YEAR = {2022},
    NUMBER = {3},
     PAGES = {543--567},
      ISSN = {0024-6115,1460-244X},
   MRCLASS = {14B05 (14F10 14F17 32S25)},
  MRNUMBER = {4480883},
MRREVIEWER = {Jihao\ Liu},
       DOI = {10.1112/plms.12464},
       URL = {https://doi.org/10.1112/plms.12464},
}

@book {Kol13,
    AUTHOR = {Koll\'{a}r, J\'{a}nos},
     TITLE = {Singularities of the minimal model program},
    SERIES = {Cambridge Tracts in Mathematics},
    VOLUME = {200},
      NOTE = {With a collaboration of S\'{a}ndor Kov\'{a}cs},
 PUBLISHER = {Cambridge University Press, Cambridge},
      YEAR = {2013},
     PAGES = {x+370},
      ISBN = {978-1-107-03534-8},
   MRCLASS = {14E30 (14B05)},
  MRNUMBER = {3057950},
MRREVIEWER = {Tommaso\ De Fernex},
       DOI = {10.1017/CBO9781139547895},
       URL = {https://doi.org/10.1017/CBO9781139547895},
}

@article {Kov00,
    AUTHOR = {Kov\'{a}cs, S\'{a}ndor},
     TITLE = {Rational, log canonical, {D}u {B}ois singularities. {II}.
              {K}odaira vanishing and small deformations},
   JOURNAL = {Compositio Math.},
  FJOURNAL = {Compositio Mathematica},
    VOLUME = {121},
      YEAR = {2000},
    NUMBER = {3},
     PAGES = {297--304},
      ISSN = {0010-437X,1570-5846},
   MRCLASS = {14F17 (14B05)},
  MRNUMBER = {1761628},
MRREVIEWER = {Gerhard\ Pfister},
       DOI = {10.1023/A:1001830707422},
       URL = {https://doi.org/10.1023/A:1001830707422},
}

@misc{Kov25,
      title={Complexes of differential forms and singularities: The injectivity theorem}, 
      author={Kov\'{a}cs, S\'{a}ndor J.},
      year={2026},
      eprint={2505.09912},
      archivePrefix={arXiv},
      primaryClass={math.AG},
      url={https://arxiv.org/abs/2505.09912}, 
}

@misc{NN25,
      title={Higher {D}u {B}ois and Higher Rational Pairs}, 
      author={Haoming Ning and Brian Nugent},
      year={2025},
      note={arXiv:2510.19813},
      eprint={2510.19813},
      archivePrefix={arXiv},
      primaryClass={math.AG},
      url={https://arxiv.org/abs/2510.19813}, 
}

@incollection {KS16a,
    AUTHOR = {Kov\'acs, S\'andor J. and Schwede, Karl},
     TITLE = {Du {B}ois singularities deform},
 BOOKTITLE = {Minimal models and extremal rays ({K}yoto, 2011)},
    SERIES = {Adv. Stud. Pure Math.},
    VOLUME = {70},
     PAGES = {49--65},
 PUBLISHER = {Math. Soc. Japan, [Tokyo]},
      YEAR = {2016},
      ISBN = {978-4-86497-036-5},
   MRCLASS = {14B07 (14B05 14F17 14F18)},
  MRNUMBER = {3617778},
MRREVIEWER = {Jan\ Stevens},
       DOI = {10.2969/aspm/07010049},
       URL = {https://doi.org/10.2969/aspm/07010049},
}

@article {KS16b,
    AUTHOR = {Kov\'acs, S\'andor J. and Schwede, Karl},
     TITLE = {Inversion of adjunction for rational and {D}u {B}ois pairs},
   JOURNAL = {Algebra Number Theory},
  FJOURNAL = {Algebra \& Number Theory},
    VOLUME = {10},
      YEAR = {2016},
    NUMBER = {5},
     PAGES = {969--1000},
      ISSN = {1937-0652,1944-7833},
   MRCLASS = {14J17 (14B05 14B25 14D06 14E30)},
  MRNUMBER = {3531359},
MRREVIEWER = {I.\ Dolgachev},
       DOI = {10.2140/ant.2016.10.969},
       URL = {https://doi.org/10.2140/ant.2016.10.969},
}

@article {MOPW21,
    AUTHOR = {Musta\c{t}\u{a}, Mircea and Olano, Sebasti\'{a}n and Popa,
              Mihnea and Witaszek, Jakub},
     TITLE = {The {D}u {B}ois complex of a hypersurface and the minimal
              exponent},
   JOURNAL = {Duke Math. J.},
  FJOURNAL = {Duke Mathematical Journal},
    VOLUME = {172},
      YEAR = {2023},
    NUMBER = {7},
     PAGES = {1411--1436},
      ISSN = {0012-7094,1547-7398},
   MRCLASS = {14D07 (14B05 14F10 14F17 32S35)},
  MRNUMBER = {4583654},
       DOI = {10.1215/00127094-2022-0074},
       URL = {https://doi.org/10.1215/00127094-2022-0074},
}

@misc{MP22,
      title={On k-rational and k-{D}u {B}ois local complete intersections}, 
      author={Mircea Musta\c{t}\u{a} and Mihnea Popa},
      year={2022},
      eprint={2207.08743},
      archivePrefix={arXiv},
      primaryClass={math.AG}
}

@misc{SVV23,
      title={On $k$-{D}u {B}ois and $k$-rational singularities}, 
      author={Wanchun Shen and Sridhar Venkatesh and Anh Duc Vo},
      year={2023},
      eprint={2306.03977},
      archivePrefix={arXiv},
      primaryClass={math.AG}
}

@article {DB81,
    AUTHOR = {Du Bois, Philippe},
     TITLE = {Complexe de de {R}ham filtr\'e{} d'une vari\'et\'e{}
              singuli\`ere},
   JOURNAL = {Bull. Soc. Math. France},
  FJOURNAL = {Bulletin de la Soci\'et\'e{} Math\'ematique de France},
    VOLUME = {109},
      YEAR = {1981},
    NUMBER = {1},
     PAGES = {41--81},
      ISSN = {0037-9484},
   MRCLASS = {14C30},
  MRNUMBER = {613848},
MRREVIEWER = {J.\ H. M. Steenbrink},
       URL = {http://www.numdam.org/item?id=BSMF_1981__109__41_0},
}

@book {GNPP88,
    AUTHOR = {Guill\'en, F. and {Navarro Aznar}, V. and {Pascual Gainza}, P. and
              Puerta, F.},
     TITLE = {Hyperr\'esolutions cubiques et descente cohomologique},
    SERIES = {Lecture Notes in Mathematics},
    VOLUME = {1335},
      NOTE = {Papers from the Seminar on Hodge-Deligne Theory held in
              Barcelona, 1982},
 PUBLISHER = {Springer-Verlag, Berlin},
      YEAR = {1988},
     PAGES = {xii+192},
      ISBN = {3-540-50023-5},
   MRCLASS = {14F20 (14F40 32G20 32L20)},
  MRNUMBER = {972983},
MRREVIEWER = {Gerhard\ Pfister},
       DOI = {10.1007/BFb0085054},
       URL = {https://doi.org/10.1007/BFb0085054},
    shorthand = {GNPP88}
}

@article {Elk81,
    AUTHOR = {Elkik, Ren\'ee},
     TITLE = {Rationalit\'e{} des singularit\'es canoniques},
   JOURNAL = {Invent. Math.},
  FJOURNAL = {Inventiones Mathematicae},
    VOLUME = {64},
      YEAR = {1981},
    NUMBER = {1},
     PAGES = {1--6},
      ISSN = {0020-9910,1432-1297},
   MRCLASS = {14B05 (14J30)},
  MRNUMBER = {621766},
MRREVIEWER = {G.\ Horrocks},
       DOI = {10.1007/BF01393930},
       URL = {https://doi.org/10.1007/BF01393930},
}

@article {DJ74,
    AUTHOR = {Du {B}ois, Philippe and Jarraud, Pierre},
     TITLE = {Une propri\'et\'e{} de commutation au changement de base des
              images directes sup\'erieures du faisceau structural},
   JOURNAL = {C. R. Acad. Sci. Paris S\'er. A},
  FJOURNAL = {Comptes Rendus Hebdomadaires des S\'eances de l'Acad\'emie des
              Sciences. S\'erie A. Sciences Math\'ematiques},
    VOLUME = {279},
      YEAR = {1974},
     PAGES = {745--747},
      ISSN = {0302-8429},
   MRCLASS = {14F05},
  MRNUMBER = {376678},
MRREVIEWER = {J.\ S.\ Joel},
}

@misc{PSV24,
      title={Injectivity and Vanishing for the {D}u {B}ois Complexes of Isolated Singularities}, 
      author={Mihnea Popa and Wanchun Shen and Anh Duc Vo},
      year={2024},
      eprint={2409.18019},
      archivePrefix={arXiv},
      primaryClass={math.AG},
      url={https://arxiv.org/abs/2409.18019}, 
}

@book {PS08,
    AUTHOR = {Peters, Chris A. M. and Steenbrink, Joseph H. M.},
     TITLE = {Mixed {H}odge structures},
    SERIES = {Ergebnisse der Mathematik und ihrer Grenzgebiete. 3. Folge. A
              Series of Modern Surveys in Mathematics [Results in
              Mathematics and Related Areas. 3rd Series. A Series of Modern
              Surveys in Mathematics]},
    VOLUME = {52},
 PUBLISHER = {Springer-Verlag, Berlin},
      YEAR = {2008},
     PAGES = {xiv+470},
      ISBN = {978-3-540-77015-2},
   MRCLASS = {14C30 (14D07 32G20 32J25 32S35)},
  MRNUMBER = {2393625},
MRREVIEWER = {Matt\ Kerr},
}

@book {EV92,
    AUTHOR = {Esnault, H\'el\`ene and Viehweg, Eckart},
     TITLE = {Lectures on vanishing theorems},
    SERIES = {DMV Seminar},
    VOLUME = {20},
 PUBLISHER = {Birkh\"auser Verlag, Basel},
      YEAR = {1992},
     PAGES = {vi+164},
      ISBN = {3-7643-2822-3},
   MRCLASS = {14F17 (14F40 32L10 32L20)},
  MRNUMBER = {1193913},
MRREVIEWER = {Marko\ Roczen},
       DOI = {10.1007/978-3-0348-8600-0},
       URL = {https://doi.org/10.1007/978-3-0348-8600-0},
}

@book{Kol23, 
    place={Cambridge}, 
    series={Cambridge Tracts in Mathematics}, 
    title={Families of Varieties of General Type}, 
    publisher={Cambridge University Press}, 
    author={Kollár, János}, 
    year={2023}, 
    collection={Cambridge Tracts in Mathematics}
}

@incollection {KT25,
    AUTHOR = {Kov\'acs, S\'andor J. and Taji, Behrouz},
     TITLE = {The relative {D}u {B}ois complex---on a question of {S}.
              {Z}ucker},
 BOOKTITLE = {Higher dimensional algebraic geometry---a volume in honor of
              {V}. {V}. {S}hokurov},
    SERIES = {London Math. Soc. Lecture Note Ser.},
    VOLUME = {489},
     PAGES = {151--162},
 PUBLISHER = {Cambridge Univ. Press, Cambridge},
      YEAR = {2025},
      ISBN = {978-1-009-39624-0},
   MRCLASS = {14B05 (14E30)},
  MRNUMBER = {4844630},
}

@article {Kov96,
    AUTHOR = {Kov\'acs, S\'andor J.},
     TITLE = {Smooth families over rational and elliptic curves},
   JOURNAL = {J. Algebraic Geom.},
  FJOURNAL = {Journal of Algebraic Geometry},
    VOLUME = {5},
      YEAR = {1996},
    NUMBER = {2},
     PAGES = {369--385},
      ISSN = {1056-3911,1534-7486},
   MRCLASS = {14J10 (14J15)},
  MRNUMBER = {1374712},
MRREVIEWER = {Marco\ Andreatta},
}

@incollection {Kov97,
    AUTHOR = {Kov\'acs, S\'andor J.},
     TITLE = {Relative de {R}ham complex for non-smooth morphisms},
 BOOKTITLE = {Birational algebraic geometry ({B}altimore, {MD}, 1996)},
    SERIES = {Contemp. Math.},
    VOLUME = {207},
     PAGES = {89--100},
 PUBLISHER = {Amer. Math. Soc., Providence, RI},
      YEAR = {1997},
      ISBN = {0-8218-0769-2},
   MRCLASS = {14F40 (14D07 14F05)},
  MRNUMBER = {1462927},
MRREVIEWER = {I.\ Dolgachev},
       DOI = {10.1090/conm/207/02722},
       URL = {https://doi-org.offcampus.lib.washington.edu/10.1090/conm/207/02722},
}

@misc{JK25,
      title={General base change for relative {D}u {B}ois complexes}, 
      author={Caleb Ji and Sándor Kovács},
      year={2025},
      eprint={2508.02848},
      archivePrefix={arXiv},
      primaryClass={math.AG},
      url={https://arxiv.org/abs/2508.02848}, 
}

@Unpublished{CDO26,
    author       = {Chen, Qianyu and Dirks, Bradley and Olano, Sebasti\'an},
    title        = {FILTRATIONS ON LOCAL COHOMOLOGY, INJECTIVITY THEOREM, AND HIGHER SINGULARITIES},
    year         = {2026},
    note         = {Preprint},
}

@incollection {Hei08,
    AUTHOR = {Heitmann, Raymond C.},
     TITLE = {Lifting seminormality},
      NOTE = {Special volume in honor of Melvin Hochster},
   JOURNAL = {Michigan Math. J.},
  FJOURNAL = {Michigan Mathematical Journal},
    VOLUME = {57},
      YEAR = {2008},
     PAGES = {439--445},
      ISSN = {0026-2285,1945-2365},
   MRCLASS = {13F45},
  MRNUMBER = {2492461},
MRREVIEWER = {Hiroshi\ Tanimoto},
       DOI = {10.1307/mmj/1220879417},
       URL = {https://doi-org.offcampus.lib.washington.edu/10.1307/mmj/1220879417},
}

@article {Kov02,
    AUTHOR = {Kov\'acs, S\'andor J.},
     TITLE = {Logarithmic vanishing theorems and {A}rakelov-{P}arshin
              boundedness for singular varieties},
   JOURNAL = {Compositio Math.},
  FJOURNAL = {Compositio Mathematica},
    VOLUME = {131},
      YEAR = {2002},
    NUMBER = {3},
     PAGES = {291--317},
      ISSN = {0010-437X,1570-5846},
   MRCLASS = {14F17 (14J10)},
  MRNUMBER = {1905025},
MRREVIEWER = {Scott\ R.\ Nollet},
       DOI = {10.1023/A:1015592420937},
       URL = {https://doi-org.offcampus.lib.washington.edu/10.1023/A:1015592420937},
}

@article {KT23,
    AUTHOR = {Kov\'acs, S\'andor J. and Taji, Behrouz},
     TITLE = {Hodge sheaves underlying flat projective families},
   JOURNAL = {Math. Z.},
  FJOURNAL = {Mathematische Zeitschrift},
    VOLUME = {303},
      YEAR = {2023},
    NUMBER = {3},
     PAGES = {Paper No. 75, 34},
      ISSN = {0025-5874,1432-1823},
   MRCLASS = {14D07 (14D06 14E05 14E30 14F06)},
  MRNUMBER = {4552140},
MRREVIEWER = {Yajnaseni\ Dutta},
       DOI = {10.1007/s00209-023-03219-4},
       URL = {https://doi-org.offcampus.lib.washington.edu/10.1007/s00209-023-03219-4},
}

@article {KT24,
    AUTHOR = {Kov\'acs, S\'andor J. and Taji, Behrouz},
     TITLE = {Arakelov inequalities in higher dimensions},
   JOURNAL = {J. Reine Angew. Math.},
  FJOURNAL = {Journal f\"ur die Reine und Angewandte Mathematik. [Crelle's
              Journal]},
    VOLUME = {806},
      YEAR = {2024},
     PAGES = {115--145},
      ISSN = {0075-4102,1435-5345},
   MRCLASS = {14D05 (14D07 14E30)},
  MRNUMBER = {4685085},
MRREVIEWER = {Lo\"is\ Faisant},
       DOI = {10.1515/crelle-2023-0075},
       URL = {https://doi-org.offcampus.lib.washington.edu/10.1515/crelle-2023-0075},
}

@article {PS25,
    AUTHOR = {Popa, Mihnea and Shen, Wanchun},
     TITLE = {Du {B}ois complexes of cones over singular varieties, local
              cohomological dimension, and {$K$}-groups},
   JOURNAL = {Rev. Roumaine Math. Pures Appl.},
  FJOURNAL = {Revue Roumaine de Math\'ematiques Pures et Appliqu\'ees.
              Romanian Journal of Pure and Applied Mathematics},
    VOLUME = {70},
      YEAR = {2025},
    NUMBER = {1-2},
     PAGES = {133--155},
      ISSN = {0035-3965},
   MRCLASS = {14B05 (14C30 19E08)},
  MRNUMBER = {4876698},
       DOI = {10.59277/rrmpa.2025.133.155},
       URL = {https://doi-org.offcampus.lib.washington.edu/10.59277/rrmpa.2025.133.155},
}

@incollection {Kat89,
    AUTHOR = {Kato, Kazuya},
     TITLE = {Logarithmic structures of {F}ontaine-{I}llusie},
 BOOKTITLE = {Algebraic analysis, geometry, and number theory ({B}altimore,
              {MD}, 1988)},
     PAGES = {191--224},
 PUBLISHER = {Johns Hopkins Univ. Press, Baltimore, MD},
      YEAR = {1989},
      ISBN = {0-8018-3841-X},
   MRCLASS = {14F30 (14G20)},
  MRNUMBER = {1463703},
MRREVIEWER = {Adolfo\ Quir\'os},
}

@book {Ogu18,
    AUTHOR = {Ogus, Arthur},
     TITLE = {Lectures on logarithmic algebraic geometry},
    SERIES = {Cambridge Studies in Advanced Mathematics},
    VOLUME = {178},
 PUBLISHER = {Cambridge University Press, Cambridge},
      YEAR = {2018},
     PAGES = {xviii+539},
      ISBN = {978-1-107-18773-3},
   MRCLASS = {14D06 (14A20 14M25)},
  MRNUMBER = {3838359},
MRREVIEWER = {Howard\ M.\ Thompson},
       DOI = {10.1017/9781316941614},
       URL = {https://doi-org.offcampus.lib.washington.edu/10.1017/9781316941614},
}

\end{document}